\documentclass{article}
\usepackage{graphicx} 
\usepackage{amsfonts}
\usepackage{amsmath}
\usepackage{amssymb}
\usepackage{amsthm}
\usepackage{geometry}
\usepackage{color}
\usepackage{comment}
\newtheorem{theorem}{Theorem}[section]
\newtheorem{proposition}[theorem]{Proposition}
\newtheorem{lemma}[theorem]{Lemma}
\newtheorem{remark}[theorem]{Remark}
\newtheorem{definition}[theorem]{Definition}
\newtheorem{assumption}[theorem]{Assumption}
\newtheorem{corollary}[theorem]{Corollary}

\usepackage[colorlinks=true, linkcolor=blue, anchorcolor=blue, citecolor=blue]{hyperref}
\DeclareMathOperator{\E}{\mathbb{E}}
\DeclareMathOperator{\R}{\mathbb{R}}
\DeclareMathOperator{\tr}{tr}
\DeclareMathOperator{\divergence}{div}

\title{Master Equations for Mean Field Game of Controls:\\
A Unification of Weak Solution Notions}

\author{Mengzhen Li\thanks{School of Mathematics, Shandong University, Jinan, 250100, China.\\Department of Mathematics, City University of Hong Kong, Hong Kong SAR, China. 
  (mengzhli@cityu.edu.hk).}
\and Xintian Liu\thanks{School of Mathematical Sciences, Dalian University of Technology, Dalian, Liaoning, 116024, China.
  (xtliu@dlut.edu.cn).}
\and Chenchen Mou\thanks{Department of Mathematics, City University of Hong Kong, Hong Kong SAR, China.
  (chencmou@cityu.edu.hk).}
\and Zhen Wu\thanks{School of Mathematics, Shandong University, Jinan, 250100, China.\\State Key Laboratory of Cryptography and Digital Economy Security, Shandong University, Jinan, 250100, China.
  (wuzhen@sdu.edu.cn).}}

\begin{document}
\date{}
\maketitle

\begin{abstract}
    In this manuscript, we study the master equation for mean field game of controls, assuming only
that the data are Lipschitz continuous in the measure variable. Accordingly, we propose a weaker
notion of solution called weak solutions for the master equation. We establish the global well-posedness of the master equation for such solution under the Lasry-Lions monotonicity and displacement $\lambda$-monotonicity conditions, respectively. The arguments rely on the analysis of the associated Pontryagin forward-backward stochastic differential equation system, especially the stability in the measure variable. Finally, we review several existing notions of non-smooth solution from the literature, namely good solutions, weak viscosity solutions, Lipschitz solutions, monotone solutions, and examine their relationship with our weak solution. We show that under appropriate assumptions all of them are equivalent by their definitions, and thus our manuscript provides a unification of weak solution
notions for the master equations derived from mean field game of controls.
\end{abstract}

\noindent\textbf{Keywords. }mean field game of controls, master equation, weak solution, forward-backward stochastic differential equation.

\noindent\textbf{MSCcodes. }35Q89, 49N80, 60H30, 91A16, 93E20.

\section{Introduction}
\subsection{Background}
The mean field games (MFG) theory was introduced by Lasry-Lions \cite{lasry2006jeux-i,lasry2006jeux-ii,lasry2007mean} and Huang-Caines-Malham\'{e} \cite{huang2007large, huang2006large} independently. In a classical MFG model, the agents interact through the law of the state, which is known as the mean field term. To generalize the model for applications \cite{cardaliaguet2018trade,carmona2015weak}, the mean field game of controls (MFGC) allow interactions through the joint law of the state and the control. In this manuscript, we shall study the following master equation for MFGC:
\begin{equation}
\label{introduction master}
\begin{gathered}
    \mathcal{L}V(t,x,\mu):=\partial_t V+\frac{\widehat{\sigma}^2}{2}\tr(\partial_{xx}V)+H(x,\partial_x V,\rho)+\mathcal{M}V=0,\\
    V(T,x,\mu)=G(x,\mu),\quad\text{where}\\
    \mathcal{M}V(t,x,\mu):=\tr\Big(\bar{\widetilde{\E}}\Big[\frac{\widehat{\sigma}^2}{2}\partial_{\tilde{x}\mu}V(t,x,\mu,\tilde{\xi})+\sigma_0^2\partial_{x\mu}V(t,x,\mu,\tilde{\xi})+\frac{\sigma_0^2}{2}\partial_{\mu\mu}V(t,x,\mu,\bar{\xi},\tilde{\xi})\\
    +\partial_{\mu}V(t,x,\mu,\tilde{\xi})\cdot\partial_p H\big(\tilde{\xi},\partial_x V(t,\tilde{\xi},\mu),\rho\big)\Big]\Big),\quad \rho=\big(id,\partial_p H(\cdot,\partial_x V(t,\cdot,\mu),\rho)\big)\#\mu.
\end{gathered}
\end{equation}
Here $\widehat{\sigma}^2:=\sigma^2+\sigma_0^2$, where $\sigma,\,\sigma_0$ represent the intensity of idiosyncratic noise and common noise respectively; the derivative with respect to the measure variable should be understood in the sense of Lions derivative; $\tilde{\xi},\bar{\xi}$ are independent random variables with law $\mu$, and $\bar{\widetilde{\E}}$ is the expectation on $\tilde{\xi}$ and $\bar{\xi}$. We emphasize that the master equation includes probability measure $\mu$ as an infinite-dimensional variable, and non-local term $\mathcal{M}V$, which brings more complicated structure.
\par There is abundant literature on classical solutions to master equation for classical MFG in the past years, whose interaction is only through the law of the state. We refer to \cite{bensoussan2019control,cardaliaguet2023splitting,gangbo2015existence} for the local well-posedness of the MFG master equation. To study the global well-posedness, the monotonicity conditions are required as crucial structural assumptions. Initially, the global well-posedness results for the MFG master equation were established under Lasry-Lions monotonicity conditions. In \cite{cardaliaguet2019master}, the authors studied the master equation with non-degenerate idiosyncratic noise and separable Hamiltonian under Lasry-Lions monotonicity condition. We refer to \cite{bertucci2024discrete,bertucci2019remark,carmona2018probabilistic,chassagneux2022probabilistic} for more works under Lasry-Lions monotonicity conditions. Another typical monotonicity condition is called displacement monotonicity, which is introduced in \cite{ahuja2016weak}. In \cite{gangbo2022displacement}, the authors proved the global well-posedness for the MFG master equation with non-separable Hamiltonian under displacement monotonicity condition. The displacement monotonicity can be further weakened to the displacement $\lambda$-monotonicity; see e.g. \cite{mou2022propagation}. We refer to \cite{bansil2025global,gangbo2022potential} for more works under displacement monotonicity. Relatedly, \cite{cirant2026apriori} derived estimates for $N$-player Nash systems under Lasry-Lions or displacement semimonotonicity conditions uniformly in the number of players $N$, with which the convergenve problem was discussed as applications. We mention that there are also other types of monotonicity conditions raised in recent years; see e.g. \cite{graber2023monotonicity,mou2025anti}. 
\par However, there are few studies on the master equation for MFGC. In \cite{mou2022propagation}, the authors proved that several types of monotonicities can be propagated along any classical solution to the MFGC master equation. In \cite{liu2026global}, the authors adapted both the Lasry-Lions monotonicity and displacement $\lambda$-monotonicity to MFGC setting, and proved the global well-posedness for  the classical solution to the MFGC master equation under these monotonicity conditions. In \cite{jackson2026unconditional}, the authors also studied the classical solution to the MFGC master equation but by the corresponding $N$-player Nash systems under a similar displacement $\lambda$-monotonicity condition and Lasry-Lions monotonicity condition.

\subsection{Motivations}
\par Both the monotonicity conditions and the regularities of data contribute to the global well-posedness of the MFG master equation. When monotonicity conditions are absent, the uniqueness of mean field equilibrium and the global well-posedness of the MFG master equation may fail. In \cite{cecchin2022selection,cecchin2025weak}, the authors established well-posedness results for the conservative master equation in potential mean field games. In the general case, when there exist multiple mean field equilibria, the master equation becomes ill-posed; see \cite{mou2024minimal}. On the other hand, when the data is non-smooth, one cannot expect classical solutions to the master equation; see e.g. \cite[Example 10.1]{mou2024nonsmooth}. There are a few works considering the master equation with non-smooth data for classical MFG, in which weaker notions of solution are proposed. In \cite{mou2024nonsmooth}, the authors introduced three notions named as weak solutions, good solutions, and weak viscosity solutions. \cite{bertucci2024lipschitz} introduced a notion termed Lipschitz solutions as a general concept of solution for merely Lipschitz functions. 
In addition, the monotone solution was presented in \cite{bertucci2021monotone, bertucci2023monotone} in the monotone regime. 
\par In this manuscript, we aim to weaken the regularity assumptions, propose a notion of weak solution, and consider the global well-posedness of the MFGC master equation. To be detailed, we do not require any differentiability with respect to the measure variable $\mu$, and only need the Lipschitz continuity of data in $\mu$. Moreover, we shall adapt several notions of non-smooth solution raised for classical MFG to MFGC setting, and show their equivalence with the weak solution by definitions, providing a unification of weak solution notions for MFGC master equation. This equivalence result is new also in the classical MFG setting.

\subsection{Our contributions}
\subsubsection{Non-smooth solutions}
\par The notion of weak solution, which originates from \cite{mou2024nonsmooth}, is in the spirit of applying integration by parts to the MFG system. We shall apply this method to our MFGC system, which is a system of forward-backward stochastic PDE (FBSPDE) given later in (\ref{MFGC system}). However, to adapt to data with quadratic growth, we generalize the idea in \cite{mou2024nonsmooth}, and consider the vectorial MFGC system (\ref{vectorial MFGC system}), which is a formal differentiation of the MFGC system with respect to the space variable $x$. Then the weak solution $V$ to the master equation (\ref{introduction master}) can be recovered by an expectation whenever $\partial_x V$ is determined through the vectorial MFGC system. 
\par Using the weak solution framework, we next establish the global well-posedness of master equations for MFGC with Lipschitz data with a similar method to \cite{mou2024nonsmooth}. To establish existence of weak solutions, we construct the vectorial weak solution through the decoupling field of the associated Pontryagin system of forward-backward stochastic differential equations (FBSDE). Since our data are only assumed to be Lipschitz continuous, we employ the smooth mollifiers on Wasserstein space constructed in \cite{cosso2023smooth,mou2024nonsmooth}. Then the well-posedness of the mollified master equation follows from the results for classical solutions in \cite{liu2026global}. Finally we reach the conclusion by weak convergence arguments. To prove uniqueness, we show that any vectorial weak solution must coincide with the decoupling field to the FBSDE system. We begin with choosing smooth mollifiers as test functions in the definition of weak solutions so that the involved integrals become smooth. Applying It\^{o}-Wentzell's formula, then determining the convergence for each terms, we conclude the uniqueness of weak solutions from that of the FBSDE. 
\par After investigating weak solutions, we compare this notion with several types of non-smooth solutions for the master equation found in the literature, and show they are all equivalent through their definitions. These include weak solutions, good solutions, weak viscosity solutions, Lipschitz solutions, and monotone solutions. While originally defined for the MFG setting, we adapt them to our MFGC formulation. This yields a synthesis of weak solution concepts for MFGC master equation. By the equivalence among the different solution notions, we obtain well-posedness for all solution notions mentioned above.
\par As discussed above, the notion of weak solution in this manuscript, as well as that in \cite{mou2024nonsmooth} for classical MFG, is inspired by the application of integration by parts. A major difference, however, is that we allow the data to have up to quadratic growth. Consequently, we must consider the differentiated system first and make additional analysis in the mollification argument to overcome the weaker assumptions. In fact, once the missing regularity assumptions are imposed, our notion becomes consistent, to a certain extent, with the weak solution defined in \cite{mou2024nonsmooth}. In particular, we mention that our method also works for models with degenerate idiosyncratic noise, which is not covered there. Therefore, our notion can be viewed as a quadratic-growth and possibly degenerate extension to MFGC of the weak formulation in \cite{mou2024nonsmooth}.
\par While the weak solution is defined via integration by parts applied to the BSPDE, the weak viscosity solution in \cite{mou2024nonsmooth} is defined by studying the viscosity solution to the BSPDE. Different from the previous two notions, the good solutions in \cite{mou2024nonsmooth} are defined directly through the master equation rather than the FBSPDE. This notion is based on the stability argument. Roughly speaking, when the master equation with a sequence of mollified data has a sequence of classical solution, the limit solution is defined to be the good solution. The notion of Lipschitz solution was introduced in \cite{bertucci2024lipschitz}. The idea is to consider the associated linear transport equation, whose solution can be represented by the Feynman-Kac formula. Then this solution serves as the new data for the transport equation. A Lipschitz function that acts as a fixed point of this procedure is then identified as a Lipschitz solution. All of these notions can be naturally adapted to the MFGC setting.
\par The concept of a monotone solution originated in the MFG literature within a Wasserstein-space framework under Lasry-Lions monotonicity, as studied in \cite{bertucci2021monotone,bertucci2023monotone} for the case with only idiosyncratic noise. For equations driven by common noise and without idiosyncratic noise, \cite{cardaliaguet2022monotone} introduced a Hilbert-space formulation under the same monotonicity condition, relying on Lions’ lifting method \cite{lions2007college}. Because the noise structures differ, the probability-measure spaces used in \cite{bertucci2021monotone,bertucci2023monotone} and \cite{lions2007college} also differ, resulting in distinct definitions of monotone solution. A further notion of monotone solution, based on displacement monotonicity, was investigated in \cite{meynard2025monotone}. Our aim here is to connect monotone solutions with our definition of weak solution in the MFGC framework. Accordingly, we adopt only the Hilbert-space approach of \cite{cardaliaguet2022monotone} to establish the equivalence between our notion of weak solution and monotone solutions in the presence of only common noise, under both the Lasry-Lions monotonicity and displacement $\lambda$-monotonicity conditions. In the displacement $\lambda$-monotonicity setting, we further extend the notion of a monotone solution to that of a displacement $\lambda$-monotone solution. We expect that analogous equivalence results should hold for Wasserstein-space formulations of monotone solutions, but we do not pursue this question here.

\subsubsection{Pontryagin system of FBSDE}
\par Since our construction of weak solution is closely related to the Pontryagin system of FBSDE for MFGC, it is crucial to study the global well-posedness of these FBSDEs. It is known that the uniform Lipschitz continuity of decoupling field helps us extend a local solution to a global solution. Unlike \cite{gangbo2022displacement,mou2024nonsmooth}, due to the possible degeneracy of idiosyncratic noise, we establish the Lipschitz continuity in $x$ through the structure of the underlying control problem rather than backward stochastic differential equation (BSDE) estimates. Moreover, when the data are nonconvex, the stochastic maximum principle alone does not guarantee that the solution of the Pontryagin FBSDE system corresponds to an optimal control, preventing a direct identification with the associated control problem. To overcome this difficulty, we introduce an auxiliary control problem by a standard transformation (see \cite{arnold1989mechanics,bansil2025hidden,bansil2025classical}) of the data. The displacement $\lambda$-monotonicity of the original data becomes the standard displacement monotonicity of the transformed data, which yields convexity and allows us to recover the value function representation of the Pontryagin FBSDE system. The regularity of the associated value function then provides the desired Lipschitz estimate in $x$. To establish the uniform $\mathrm{W}_1$-Lipschitz estimate in $\mu$, we benefit from the monotonicity conditions. In particular, we mention that the $\mathrm{W}_1$-Lipschitz estimate under displacement $\lambda$-monotonicity condition benefits from the representation formula established in \cite{liu2026global}.

\subsubsection{Fixed-point condition}
\par Compared with classical MFG, the master equation for MFGC has an additional fixed-point condition of the form:
\begin{equation}\label{introduction fixed-point}
    \rho=\mathcal{L}\big(X,\partial_p H(X,Y,\rho)\big),
\end{equation}
where $X,Y$ are square-integrable random variables. Under appropriate conditions, the above fixed-point equation has a unique solution $\rho^*:=\Phi\big(\mathcal{L}(X,Y)\big)$, where $\Phi:\mathcal{P}_2(\R^{2d})\rightarrow\mathcal{P}_2(\R^{2d})$. To study the master equation for MFGC, it is important to consider the solvability of (\ref{introduction fixed-point}), and the regularity of $\Phi$, which is a map from infinite-dimensional space to infinite-dimensional space. In \cite{mou2022propagation}, the authors provided several concrete forms of $\Phi$ in special cases, as well as the definition for the linear functional derivative of $\Phi$. In \cite{jackson2025quantitative,jackson2026unconditional}, the authors proved the solvability of the fixed-point equation under certain monotonicity condition, and studied the regularities of the finite-dimensional fixed-point maps in corresponding $N$-player games. 
\par From (\ref{introduction master}) and (\ref{introduction fixed-point}), we see that the regularity of $H$ in $\mathcal{L}(X,Y)$ depends on the regularity of $\Phi$. To study the weak solution we require that $H$ is $\mathrm{W}_1$-Lipschitz continuous in $\mathcal{L}(X,Y)$, that is, we need to study the Lipschitz continuity of the fixed-point map $\Phi$. With a simple assumption on convexity (see Assumption \ref{convexity assumption}), we show that $\Phi$ has $\mathrm{W}_1$-Lipschitz continuity straightforwardly. We emphasize that this assumption is also crucial for the a priori Lipschitz estimate of the solution to the master equation. We also prove a stability result for $\Phi$ with respect to different $H$. By contrast, \cite{jackson2025quantitative,jackson2026unconditional} relied on a similar displacement $\lambda$-monotonicity condition to establish Lipschitz continuity of the fixed-point map, together with an extra condition to derive estimates for the classical solutions, leading to short time horizons for their well-posedness results unless $\lambda=0$. On this basis, by a canonical transformation in \cite{arnold1989mechanics,bansil2025hidden,bansil2025classical}, it is expected to conclude a global well-posedness result for all $\lambda\geq 0$ in \cite{jackson2026unconditional}. Nevertheless, our assumptions are not completely equivalent to theirs and still cover different cases even when $\lambda=0$ (see Remark \ref{remark compare with alpar}).

\subsection*{Organization}
The remainder of this manuscript is organized as follows. In Section 2, we provide the problem formulation, the key monotonicity conditions, and an analysis on the fixed-point map. In Section 3, we investigate the global well-posedness of the FBSDEs for MFGC, through which we define the candidate solution for the master equation. In Section 4, we introduce the notion of weak solution and prove its global well-posedness. In Section 5, we adapt several types of non-smooth solution in the literature to MFGC setting, and examine the relation among them and the weak solution.

\section{Preliminaries}
\subsection{The probability spaces and function spaces}
Throughout the paper, we shall work with the following probability spaces. Let $[0,T]$ be a finite time horizon. Let $(\Omega_0,\mathbb{F}^0,\mathbb{P}_0)$ and $(\Omega_1,\mathbb{F}^1,\mathbb{P}_1)$ be two filtered probability spaces on which there are defined $d$-dimensional Brownian motions $B^0$ and $B$, respectively. For $\mathbb{F}^i=\{\mathcal{F}_t^i\}_{0\leq t\leq T}$, $i=0,1$, we assume $\mathcal{F}_t^0=\mathcal{F}_t^{B^0}$, $\mathcal{F}_t^1=\mathcal{F}_0^1\vee\mathcal{F}_t^B$, and $\mathbb{P}_1$ has no atom in $\mathcal{F}_0^1$ so it can support any measure on $\R^d$ with finite second order moment. We denote by
$\mathbb{E}_0:=\mathbb{E}^{\mathbb{P}_0}$ and 
$\mathbb{E}_1:=\mathbb{E}^{\mathbb{P}_1}$
the expectations with respect to $\mathbb{P}_0$ and $\mathbb{P}_1$, respectively. Consider the product space
\begin{equation}
    \Omega:=\Omega_0\times\Omega_1,\quad \mathbb{F}=\{\mathcal{F}_t\}_{0\leq t\leq T}:=\{\mathcal{F}_t^0\otimes\mathcal{F}_t^1\}_{0\leq t\leq T},\quad \mathbb{P}:=\mathbb{P}_0\otimes\mathbb{P}_1,\quad \E:=\E^{\mathbb{P}}.
\end{equation}
Given $\mathcal{F}_t$-measurable random variable $\xi$, we use $\tilde{\xi},\bar{\xi}$ to denote conditionally independent copies of $\xi$ by possible extending to product sample space, conditioning on $\mathcal{F}_t^0$.
\par For any dimension $d$ and any constant $p\geq 1$, let $\mathcal{P}(\R^d)$ denote the set of probability measures on $\R^d$. For $\mu\in \mathcal{P}(\R^d)$, define $$M_p(\mu):=\left(\int_{\mathbb{R}^d}\vert x \vert^p\mu(\mathrm{d}x)\right)^{\frac{1}{p}}.$$ We denote by $\mathcal{P}_p(\R^d)$ the subset of probability measures with finite $p$-th moment, equipped with the $p$-Wasserstein distance $\mathrm{W}_p$.   Moreover, for any sub-$\sigma$-algebra $\mathcal{G}\subset\mathcal{F}_T$, $\mathbb{L}^p(\mathcal{G})$ denotes the set of $\R^d$-valued, $\mathcal{G}$-measurable, and $p$-integrable random variables. For any $\mu\in\mathcal{P}_p(\R^d)$, let $\mathbb{L}^p(\mathcal{G};\mu)$ denote the set of $\xi\in\mathbb{L}^p(\mathcal{G})$ with law $\mathcal{L}(\xi)=\mu$. Similarly, for any sub-filtration $\mathbb{G}\subset\mathbb{F}$, $\mathbb{L}(\mathbb{G};\R^d)$ denotes the set of $\mathbb{G}$-progressively measurable $\R^d$-valued processes.
\par For a function $V:\mathcal{P}_2(\R^d)\rightarrow\R$, we use $\partial_{\mu}V:\mathcal{P}_2(\R^d)\times\R^d\rightarrow\R^d$ to denote its Lions derivative. Let $C^0(\mathcal{P}_2(\R^d))$ denote the set of $\mathrm{W}_1$-continuous $V$; $C^1(\mathcal{P}_2(\R^d))$ denote the subset of $V\in C^0(\mathcal{P}_2(\R^d))$ such that $\partial_{\mu}V$ exists and is continuous on $\mathcal{P}_2(\R^d)\times\R^d$. Moreover, denote $\Theta:=[0,T]\times\R^d\times\mathcal{P}_2(\R^d)$. Let $C^{1,2,2}(\Theta)$ denote the set of $V\in C^0(\Theta)$ such that $\partial_t V,\partial_x V,\partial_{\mu}V,\partial_{xx}V,\partial_{x\mu}V,\partial_{\tilde{x}\mu}V,\partial_{\mu\mu}V$ exist and are continuous. Let $C_{Lip}^0(\Theta)$ denote the set of $V\in C^0(\Theta)$ such that $V$ is uniformly Lipschitz continuous in $(x,\mu)$, under $\mathrm{W}_1$ for $\mu$, uniformly in $t\in[0,T]$; $C_{Lip}^{0,1}(\Theta)$ denote the set of $V\in C^0(\Theta)$ such that $\partial_x V$ exists and $\partial_xV\in C_{Lip}^0(\Theta;\R^d)$. For $V\in C_{Lip}^0(\Theta:\R^d)$, $\partial_x V$ exists almost everywhere, then we say $V$ is curl-free in $x$ if $\partial_x V=\partial_x V^{\top}$ for a.e. $x$.
\par Given $t_0\in[0,T]$, denote $B_t^{t_0}:=B_t-B_{t_0},\,B_t^{0,t_0}:=B_t^0-B_{t_0}^0,\,t\in[t_0,T]$. Let $\mathcal{A}_{t_0}$ denote the set of admissible controls $\alpha:[t_0,T]\times\R^d\times C([t_0,T];\R^d)\rightarrow\R^d$ which are progressively measurable and adapted in the path variable and square-integrable. Let $\mathbb{L}^2(\mathbb{F}^{B^{0,t_0}};\mathcal{P}_2(\R^{2d}))$ denote the set of $\mathbb{F}^{B^{0,t_0}}$-progressively measurable stochastic probability measure flows $\{\nu.\}=\{\nu_t\}_{t\in[t_0,T]}\subset\mathcal{P}_2(\R^{2d})$.
\subsection{The master equation and mean field game of controls system}
The data of our mean field game of controls (MFGC) is given by
\[
L:\R^d\times\R^d\times\mathcal{P}_2(\R^{2d})\rightarrow\R;\quad G:\R^d\times\mathcal{P}_2(\R^d)\rightarrow\R
\]
and $\sigma,\,\sigma_0\in[0,\infty)$. For simplicity, we assume that $L$ does not depend on time.
\par Given $t_0\in[0,T]$, $x\in\R^d$, $\alpha\in\mathcal{A}_{t_0}$, and $\{\nu.\}\in\mathbb{L}^2(\mathbb{F}^{B^{0,t_0}};\mathcal{P}_2(\R^{2d}))$, the state of the agent satisfies the following controlled stochastic differential equation (SDE) on $[t_0,T]$:
\begin{equation}\label{controlled SDE}
\begin{gathered}
    X_t^{\{\nu.\},\alpha}=x+\int_{t_0}^t \alpha_s\,\mathrm{d}s+\sigma B_t^{t_0}+\sigma_0 B_t^{0,t_0};\\
    \text{where }X^{\{\nu.\},\alpha}=X^{t_0,\{\nu.\};x,\alpha},\quad \alpha_t:=\alpha(t,X_t^{\{\nu.\},\alpha},B_{[t_0,t]}^{0,t_0}).
\end{gathered}
\end{equation}
Let $\pi_1\#\nu_T$ denote the first $\R^d$ marginal measure of $\nu_t$. The expected cost is given by
\begin{equation}
    J(t_0,x;\{\nu.\},\alpha):=\E\Big[G(X_T^{\{\nu.\},\alpha},\pi_1\#\nu_T)+\int_{t_0}^T L(X_t^{\{\nu.\},\alpha},\alpha_t,\nu_t)\,\mathrm{d}t\Big].
\end{equation}
\begin{definition}
\label{MFE}
    For any $(t,\mu)\in[0,T]\times\mathcal{P}_2(\R^d)$, we say $(\alpha^*,\{\nu.^*\})\in\mathcal{A}_t\times\mathbb{L}^2(\mathbb{F}^{B^{0,t}};\mathcal{P}_2(\R^{2d}))$ is a mean field equilibrium (MFE) at $(t,\mu)$ if
    \begin{equation}
    \begin{gathered}
        J(t,x;\{\nu.^*\},\alpha^*)=\inf_{\alpha\in\mathcal{A}_t}J(t,x;\{\nu.^*\},\alpha),\quad\text{for }\mu-\text{a.e. }x\in\R^d;\\
        \pi_1\#\nu_t^*=\mu,\quad \nu_s^*:=\mathcal{L}^0\big(X_s^*,\alpha^*(s,X_s^*,B_{[t,s]}^{0,t})\big),\quad \text{where}\\
        X_s^*=\xi+\int_t^s \alpha^*(r,X_r^*,B_{[t,r]}^{0,t})\,\mathrm{d}r+\sigma B_s^t+\sigma_0 B_s^{0,t},\quad\xi\in\mathbb{L}^2(\mathcal{F}_t^1,\mu).
    \end{gathered}
    \end{equation}
\end{definition}
\noindent Here we use $\mathcal{L}^0$ to denote the law conditioning on the common noise, that is, $\mathcal{L}^0(X_t):=\mathcal{L}(X_t|\mathcal{F}_t^0)$ for an $\mathcal{F}_t$-measurable random variable $X_t$.
\par When there is a unique MFE for each $(t,\mu)\in[0,T]\times\mathcal{P}_2(\R^d)$, denoted as $(\alpha^*(t,\mu;\cdot),\{\nu.^*(t,\mu)\})$, then the game problem leads to the following value function for the agent:
\begin{equation}\label{value function}
    V(t,x,\mu):=J(t,x;\{\nu.^*(t,\mu)\},\alpha^*(t,\mu;\cdot))\quad\text{for any }x\in\R^d.
\end{equation}
We define the following Hamiltonian: for $(x,p,\nu)\in\R^d\times\R^d\times\mathcal{P}_2(\R^{2d})$,
\begin{equation}
    H(x,p,\nu):=\inf_{a\in\R^d}h(x,p,\nu,a),\quad h(x,p,\nu,a):=p\cdot a+L(x,a,\nu).
\end{equation}
By It\^{o}'s formula, the value function (\ref{value function}) formally satisfies the following master equation: for independent copies $\xi,\tilde{\xi},\bar{\xi}$ with law $\mu$,
\begin{equation}
\label{master equation}
\begin{gathered}
    \mathcal{L}V(t,x,\mu):=\partial_t V+\frac{\widehat{\sigma}^2}{2}\tr(\partial_{xx}V)+H(x,\partial_x V,\rho)+\mathcal{M}V=0,\\
    V(T,x,\mu)=G(x,\mu),\quad\text{where}\\
    \mathcal{M}V(t,x,\mu):=\tr\Big(\bar{\widetilde{\E}}\Big[\frac{\widehat{\sigma}^2}{2}\partial_{\tilde{x}\mu}V(t,x,\mu,\tilde{\xi})+\sigma_0^2\partial_{x\mu}V(t,x,\mu,\tilde{\xi})+\frac{\sigma_0^2}{2}\partial_{\mu\mu}V(t,x,\mu,\bar{\xi},\tilde{\xi})\\
    +\partial_{\mu}V(t,x,\mu,\tilde{\xi})\cdot\partial_p H\big(\tilde{\xi},\partial_x V(t,\tilde{\xi},\mu),\rho\big)\Big]\Big),\quad \rho=\big(id,\partial_p H(\cdot,\partial_x V(t,\cdot,\mu),\rho)\big)\#\mu.
\end{gathered}
\end{equation}
To deal with the fixed-point condition $\rho=\big(id,\partial_p H(\cdot,\partial_x V(t,\cdot,\mu),\rho)\big)\#\mu$, we consider the map
\begin{equation}\label{fixed-point map}
    \rho\mapsto\mathcal{L}\big(X,\partial_p H(X,Y,\rho)\big),
\end{equation}
where $\widehat{\sigma}^2:=\sigma^2+\sigma_0^2$.
\par Throughout this paper, we always assume the above map (\ref{fixed-point map}) has a unique fixed point for any $\mathcal{L}(X,Y)\in\mathcal{P}_2(\R^{2d})$, denoted as $\Phi(\mathcal{L}(X,Y))$, and $\Phi$ is a Lipschitz continuous map under $\mathrm{W}_1$. For example, the fixed-point condition in (\ref{master equation}) can be rewritten as
\begin{equation}
    \rho=\Phi\big(\mathcal{L}(\xi,\partial_x V(t,\xi,\mu))\big).
\end{equation}
\begin{remark}
    We note that $\Phi:\mathcal{P}_2(\R^{2d})\rightarrow\mathcal{P}_2(\R^{2d})$ is a map from infinite-dimensional space to infinite-dimensional space. Due to this complicated structure, we do not expect enough regularities for $\Phi$. However, to study the well-posedness of the master equation, the regularities of $H$ with respect to the measure variable is necessary.
    \par To overcome this difficulty, we shall study the weak solution to the master equation under the condition that $\Phi$ is $\mathrm{W}_1$-Lipschitz continuous. In the later discussion in Section \ref{subsection fixed-point}, we will prove that under appropriate conditions, there exists a unique $\mathrm{W}_1$-Lipschitz continuous map $\Phi$.
\end{remark}

\par The master equation (\ref{master equation}) is associated to the following forward-backward stochastic partial differential equation (FBSPDE) known as the MFGC system: for $\mathcal{F}_{t_0}^1$-measurable random variable $\xi$,
\begin{equation}
\label{MFGC system}
\left\{\begin{aligned}
    &\mathrm{d}\mu_t(x)=\Big[\frac{\widehat{\sigma}^2}{2}\mathrm{tr}(\partial_{xx}\mu_t(x))-\divergence\big(\mu_t(x)\partial_p H(x,\partial_x v(t,x),\rho_t)\big)\Big]\,\mathrm{d}t-\sigma_0\partial_x\mu_t(x)\cdot \mathrm{d}B_t^0,\\
    &\mathrm{d}v(t,x)=w(t,x)\cdot \mathrm{d}B_t^0-\Big[\frac{\widehat{\sigma}^2}{2}\tr(\partial_{xx}v(t,x))+\sigma_0\tr(\partial_x w(t,x))\\
    &\qquad\qquad\quad+H(x,\partial_x v(t,x),\rho_t)\Big]\mathrm{d}t,\\
    &\rho_t=\big(id,\partial_p H(\cdot,\partial_x v(t,\cdot),\rho_t)\big)\#\mu_t,\quad\mu_{t_0}=\mathcal{L}(\xi),\quad v(T,x)=G(x,\mu_T).
\end{aligned}\right.
\end{equation}
The first equation is a stochastic partial differential equation (SPDE) with solution $\mu$, the second equation is a backward stochastic partial differential equation (BSPDE) with solution $(v,w)$, and $\mu,w,v$ are all $\mathbb{F}^0$-progressively measurable. For the SPDE, $\mu_t$ should be understood as a solution in the sense of distribution.
\par We shall further consider the following vectorial master equation, which is a formal differentiation with respect to $x_i$ of the master equation (\ref{master equation}): for $i=1,\ldots,d$,
\begin{equation}\label{vectorial master equation}
\begin{gathered}
    \partial_t U^i(t,x,\mu)+\frac{\widehat{\sigma}^2}{2}\tr(\partial_{xx}U^i)+\partial_{x_i} H(x,U,\rho)+\partial_p H(x,U,\rho)\cdot\partial_{x_i} U\\
    +\tr\Big(\bar{\widetilde{\E}}\big[\frac{\widehat{\sigma}^2}{2}\partial_{\tilde{x}\mu}U^i(t,x,\mu,\tilde{\xi})+\sigma_0^2\partial_{x\mu}U^i(t,x,\mu,\tilde{\xi})+\frac{\sigma_0^2}{2}\partial_{\mu\mu}U^i(t,x,\mu,\bar{\xi},\tilde{\xi})\\
    +\partial_{\mu}U^i(t,x,\mu,\tilde{\xi})\cdot\partial_p H(\tilde{\xi},U(t,\tilde{\xi},\mu),\rho)\big]\Big)=0,\\
    \rho:=\big(id,\partial_p H(\cdot,U(t,\cdot,\mu),\rho)\big)\#\mu,\quad U^i(T,x,\mu)=\partial_{x_i} G(x,\mu).
\end{gathered}
\end{equation}
Accordingly, we can differentiate the MFGC system (\ref{MFGC system}) with respect to $x_i$ to obtain the following vectorial MFGC system: for $i=1,\ldots,d,$
\begin{equation}\label{vectorial MFGC system}
\left\{\begin{aligned}
    &\mathrm{d}\mu_t(x)=\Big[\frac{\widehat{\sigma}^2}{2}\mathrm{tr}(\partial_{xx}\mu_t(x))-\divergence\big(\mu_t(x)\partial_p H(x,u(t,x),\rho_t)\big)\Big]\,\mathrm{d}t-\sigma_0\partial_x\mu_t(x)\cdot \mathrm{d}B_t^0,\\
    &\mathrm{d}u^i(t,x)=r^i(t,x)\,\mathrm{d}B_t^0-\Big[\frac{\widehat{\sigma}^2}{2}\tr(\partial_{xx}u^i(t,x))+\sigma_0\tr(\partial_x r^i(t,x))\\
    &\qquad\qquad\quad+\partial_{x_i}H(x,u(t,x),\rho_t)+\partial_p H(x,u(t,x),\rho_t)\cdot\partial_{x_i} u(t,x)\Big]\mathrm{d}t,\\
    &\rho_t=\big(id,\partial_p H(\cdot,u(t,\cdot),\rho_t)\big)\#\mu_t,\quad\mu_{t_0}=\mathcal{L}(\xi),\quad u^i(T,x)=\partial_{x_i}G(x,\mu_T).
\end{aligned}\right.
\end{equation}
If the master equation (\ref{master equation}) has a classical solution $V$, then we have the following relation among the master equation (\ref{master equation}), MFGC system (\ref{MFGC system}), and their vectorial versions (\ref{vectorial master equation})(\ref{vectorial MFGC system}): for any $(t_0,\mu_{t_0})$, we have $v(t,x)=V(t,x,\mu_t)$, $u(t,x)=U(t,x,\mu_t)$, $U(t,x,\mu)=\partial_x V(t,x,\mu)$ and $u(t,x)=\partial_x v(t,x)$.
\par Alternatively, given $t_0\in[0,T]$ and $\xi\in\mathbb{L}^2(\mathcal{F}_{t_0})$, we may consider the following Mckean-Vlasov forward-backward stochastic differential equation (FBSDE) on $[t_0,T]$:
\begin{equation}\label{FBSDE xi}
\left\{\begin{aligned}
    &X_t^{\xi}=\xi+\int_{t_0}^t\partial_p H(X_s^{\xi},\nabla Y_s^{\xi},\rho_s)\,\mathrm{d}s+\sigma B_t^{t_0}+\sigma_0 B_t^{0,t_0},\\
    &\nabla Y_t^{\xi}=\partial_x G(X_T^{\xi},\mu_T)+\int_t^T\partial_x H(X_s^{\xi},\nabla Y_s^{\xi},\rho_s)\,\mathrm{d}s\\
    &\qquad\qquad-\int_t^T\nabla Z_s^{\xi} \,\mathrm{d}B_s-\int_t^T\nabla Z_s^{0,\xi} \,\mathrm{d}B_s^0,\\
    &\rho_t=\mathcal{L}^0\big(X_t^{\xi},\partial_p H(X_t^{\xi},\nabla Y_t^{\xi},\rho_t)\big),\quad\mu_t=\mathcal{L}^0(X_t^{\xi}).
\end{aligned}\right.
\end{equation}
Given the $\mu,\rho$ in (\ref{FBSDE xi}), we further consider the FBSDE
\begin{equation}\label{FBSDE xi,x}
\left\{\begin{aligned}
    &X_t^{\xi,x}=x+\int_{t_0}^t\partial_p H(X_s^{\xi,x},\nabla Y_s^{\xi,x},\rho_s)\,\mathrm{d}s+\sigma B_t^{t_0}+\sigma_0 B_t^{0,t_0},\\
    &\nabla Y_t^{\xi,x}=\partial_x G(X_T^{\xi,x},\mu_T)+\int_t^T\partial_x H(X_s^{\xi,x},\nabla Y_s^{\xi,x},\rho_s)\,\mathrm{d}s\\
    &\qquad\qquad-\int_t^T\nabla Z_s^{\xi,x} \,\mathrm{d}B_s-\int_t^T\nabla Z_s^{0,\xi,x}\,\mathrm{d}B_s^0
\end{aligned}\right.
\end{equation}
The FBSDEs (\ref{FBSDE xi})-(\ref{FBSDE xi,x}) connect to the master equation as follows:
\begin{equation}
\begin{gathered}
    \nabla Y_t^{\xi}=\partial_x V(t,X_t^{\xi},\mu_t),\,\nabla Z_t^{\xi}=\sigma\partial_{xx}V(t,X_t^{\xi},\mu_t),\\
    \nabla Y_t^{\xi,x}=\partial_x V(t,X_t^{\xi,x},\mu_t),\,\nabla Z_t^{\xi,x}=\sigma\partial_{xx}V(t,X_t^{\xi,x},\mu_t).
\end{gathered}
\end{equation}
Moreover, given the $(X^{\xi,x},\nabla Y^{\xi,x})$ in (\ref{FBSDE xi,x}) and $\mu,\rho$ in (\ref{FBSDE xi}), for $\xi\in\mathbb{L}^2(\mathcal{F}_{t_0}^1;\mu)$, we can define
\begin{equation}\label{define V}
    V(t_0,x,\mu):=\E\Big[G(X_T^{\xi,x},\mu_T)+\int_{t_0}^T\big[H(X_s^{\xi,x},\nabla Y_s^{\xi,x},\rho_s)-\nabla Y_s^{\xi,x}\cdot\partial_p H(X_s^{\xi,x},\nabla Y_s^{\xi,x},\rho_s)\big]\,\mathrm{d}s\Big],
\end{equation}
which solves the master equation (\ref{master equation}).
\begin{remark}\label{remark hat H}
    Recalling the map $\Phi$ defined by (\ref{fixed-point map}), we can define the composite Hamiltonian $\widehat{H}(\cdot,\cdot,\nu):=H(\cdot,\cdot,\Phi(\nu))$. With this notation, all the terms involving Hamiltonian $H$ in each system introduced in this subsection, can be represented by $\widehat{H}$. For example, in the FBSDE (\ref{FBSDE xi}), the fixed-point equation implies $\rho_t=\Phi(\mathcal{L}^0(X_t^{\xi},\nabla Y_t^{\xi}))$, and thus $\partial_p H(X_s^{\xi},\nabla Y_s^{\xi},\rho_s)=\partial_p\widehat{H}(X_s^{\xi},\nabla Y_s^{\xi},\mathcal{L}^0(X_s^{\xi},\nabla Y_s^{\xi}))$. This kind of representation with $\widehat{H}$ is convenient for the mollification argument in later discussion.
\end{remark}
\subsection{Main assumptions and the monotonicities}
We first give the following regularity assumptions on $G$ and $L$. Since we focus on the weak solutions, we do not require any differentiability in the measure variable.
\begin{assumption}\label{regularity assumption}
    $\partial_x G:\R^d\times\mathcal{P}_2(\R^d)\rightarrow\R^d$ exists and is uniformly Lipschitz continuous in $x,\mu$ with Lipschitz constant $C_{Lip}$; $\partial_x L,\partial_a L:\R^d\times\R^d\times\mathcal{P}_2(\R^{2d})\rightarrow\R^d$ exist and are uniformly Lipschitz continuous in $x,a,\rho$ with Lipschitz constant $C_{Lip}$, where the Lipschitz continuity in measure is under $\mathrm{W}_1$.
\end{assumption}
\par The follow assumption states that $G$ and $L$ are continuous and have at most quadratic growth. In the later Remark \ref{remark: W1 continuous}, we will explain why we require the growth in measure to depend on $M_1(\mu)$ rather than $M_2(\mu)$.
\begin{assumption}\label{quadratic growth assumption}
    $G\in C^0(\R^d\times\mathcal{P}_2(\R^d)),\,L\in C^0(\R^d\times\R^d\times\mathcal{P}_2(\R^{2d}))$, where the continuity in measure is under $\mathrm{W}_1$. Moreover, there exists $C>0$ such that for any $x,a\in\R^d,\mu\in\mathcal{P}_2(\R^d),\rho\in\mathcal{P}_2(\R^{2d})$,
    \begin{equation}
    \begin{gathered}
        |G(x,\mu)|\leq C\Big(1+|x|^2+\big(M_1(\mu)\big)^2\Big),\\
        |L(x,a,\rho)|\leq C\Big(1+|x|^2+|a|^2+\big(M_1(\rho)\big)^2\Big).
    \end{gathered}
    \end{equation}
\end{assumption}

We also make the following convexity assumption on $L$:
\begin{assumption}\label{convexity assumption}
    (i) There exists $c_0>0$ such that for any $x,a_1,a_2\in\R^d,\rho\in\mathcal{P}_2(\R^{2d})$,
    \begin{equation}
        L(x,a_2,\rho)-L(x,a_1,\rho)-\partial_a L(x,a_1,\rho)\cdot(a_2-a_1)\geq\frac{c_0}{2}|a_2-a_1|^2.
    \end{equation}
    (ii) There exists $c_1<c_0$ such that for any $x,a\in\R^d$ and square-integrable $\R^d$-valued random variables $X^1, X^2$, $\alpha^1, \alpha^2$,
    \begin{equation}
        |\partial_a L(x,a,\mathcal{L}(X^1,\alpha^1))-\partial_a L(x,a,\mathcal{L}(X^2,\alpha^2))|\leq C\E[|X^1-X^2|]+c_1\E[|\alpha^1-\alpha^2|].
    \end{equation}
\end{assumption}
\par Next we provide the crucial Lasry-Lions and displacement $\lambda$-monotonicity conditions on the terminal cost $G$ and the nonseparable Lagrangian $L$:
\begin{definition}
    Let $G:\R^d\times\mathcal{P}_2(\R^d)\rightarrow\R$.\\
    (i) We say $G$ is Lasry-Lions monotone, if for any square-integrable $\R^d$-valued random variables $X^1,X^2$,
    \begin{equation}\label{G Lasry-Lions}
        \E[G(X^1,\mathcal{L}(X^1))-G(X^1,\mathcal{L}(X^2))-G(X^2,\mathcal{L}(X^1))+G(X^2,\mathcal{L}(X^2))]\geq 0,
    \end{equation}
    (ii) We say $G$ is displacement $\lambda$-monotone for some $\lambda\geq 0$, if $G(\cdot,\mu)\in C^1(\R^d)$ and for any square-integrable $\R^d$-valued random variables $X^1,X^2$,
    \begin{equation}\label{G displacement}
        \E[\langle\partial_x G(X^1,\mathcal{L}(X^1))-\partial_x G(X^2,\mathcal{L}(X^2)),X^1-X^2\rangle]\geq -\lambda\E[|X^1-X^2|^2].
    \end{equation}
\end{definition}
\begin{definition}
    Let $L:\R^d\times\R^d\times\mathcal{P}_2(\R^{2d})\rightarrow \R$.\\
    (i) We say $L$ is Lasry-Lions monotone, if for any square-integrable $\R^d$-valued random variables $X^1, X^2$, $\alpha^1, \alpha^2$,
    \begin{equation}\label{L Lasry-Lions}
    \begin{aligned}
        &\E[L(X^1,\alpha^1,\mathcal{L}(X^1,\alpha^1))-L(X^1,\alpha^1,\mathcal{L}(X^2,\alpha^2))\\
        &\quad-L(X^2,\alpha^2,\mathcal{L}(X^1,\alpha^1))+L(X^2,\alpha^2,\mathcal{L}(X^2,\alpha^2))]\geq 0.
    \end{aligned}
    \end{equation}
    (ii) We say $L$ is displacement $\lambda$-monotone for some $\lambda\geq 0$, if $L(\cdot,\cdot,\rho)\in C^1(\R^d\times\R^d)$ and for any square-integrable $\R^d$-valued random variables $X^1,X^2,\alpha^1,\alpha^2$,
    \begin{equation}\label{L displacement}
    \begin{aligned}
        &\E[\langle\partial_a L(X^1,\alpha^1,\mathcal{L}(X^1,\alpha^1))-\partial_a L(X^2,\alpha^2,\mathcal{L}(X^2,\alpha^2)),\alpha^1-\alpha^2\rangle\\
        &\quad+\langle\partial_x L(X^1,\alpha^1,\mathcal{L}(X^1,\alpha^1))-\partial_x L(X^2,\alpha^2,\mathcal{L}(X^2,\alpha^2)),X^1-X^2\rangle]\\
        \geq&2\lambda\E[\langle X^1-X^2,\alpha^1-\alpha^2\rangle].
    \end{aligned}
    \end{equation}
\end{definition}
\begin{remark}
    By the Legendre transform, the above assumptions on the Lagrangian
$L$ can be equivalently formulated in terms of the Hamiltonian $H$:\\
    (i) The Lipschitz continuity of $\partial_x L,\ \partial_a L$ and convexity of $L$ in Assumption \ref{regularity assumption} and \ref{convexity assumption}(i) are equivalent to the Lipschitz continuity of $\partial_x H,\ \partial_p H$, and the concavity of $H$: for any $x,p_1,p_2\in\R^d,\rho\in\mathcal{P}_2(\R^{2d})$,
    \begin{equation}
        H(x,p_2,\rho)-H(x,p_1,\rho)-\partial_p H(x,p_1,\rho)\cdot(p_2-p_1)\leq-\frac{C_{Lip}^{-1}}{2}|p_2-p_1|^2.
    \end{equation}
    (ii) Recall 
$
\widehat H
$
introduced in Remark 2.3. By \cite[Assumption 3.1]{liu2026global}, for any $\psi\in C^0_{Lip}(\R^{d}\times\mathcal{P}_2(\R^{d});\R^{d})$, if \(L\) satisfies the Lasry-Lions monotonicity condition \eqref{L Lasry-Lions}, then, under Assumption \ref{convexity assumption} (i), for any $\mu_1,\mu_2\in\mathcal{P}_2(\R^{d})$, denote $\rho_i=(id,\psi(\cdot,\mu_i))\#\mu_i$, $i=1,2$,
\begin{equation}
\begin{aligned}
&
\int_{\mathbb R^d}
\Big[
\widehat H(x,\psi(x,\mu_1),\rho_1)
-\widehat H(x,\psi(x,\mu_2),\rho_2)
\Big](\mu_1-\mu_2)(\mathrm{d}x)
\\
&-
\int_{\mathbb R^d}
(\psi(x,\mu_1)-\psi(x,\mu_2))\cdot
\big(\partial_p\widehat H(x,\psi(x,\mu_1),\rho_1)\,\mu_1(\mathrm{d}x)\\
&-
\partial_p\widehat H(x,\psi(x,\mu_2),\rho_2)\,\mu_2(\mathrm{d}x)\geq 0.
\end{aligned}
\end{equation} 
Moreover, for any square-integrable $\R^d$-valued random variables $X^1,X^2,P^1,P^2$, denote $\rho_i:=\Phi(\mathcal L(X^i,P^i))$, $i=1,2.$, the displacement $\lambda$-monotonicity condition becomes
    \begin{equation}\label{H displacement}
    \begin{aligned}
        &\E\big[\big\langle\partial_x H(X^1,P^1,\rho_1)-\partial_x H(X^2,P^2,\rho_2),X^1-X^2\big\rangle\\
        &\quad-\big\langle\partial_p H(X^1,P^1,\rho_1)-\partial_p H(X^2,P^2,\rho_2),P^1-P^2\big\rangle\big]\\
        \geq&2\lambda\E\big[\big\langle \partial_p H(X^1,P^1,\rho_1)-\partial_p H(X^2,P^2,\rho_2),X^1-X^2\big\rangle\big].
    \end{aligned}
    \end{equation}

\end{remark}
\par In addition, we need the following two assumptions, which are used to derive the a priori Lipschitz continuity of the solution with respect to $x$. In particular, the first one is applied under Lasry-Lions monotonicity condition, and the second one is applied under displacement $\lambda$-monotonicity condition.
\begin{assumption}\label{G L convexity assumption}
    $G(\cdot,\mu)$ is convex on $\mathbb{R}^d$ for any $\mu\in\mathcal{P}_2(\mathbb{R}^d)$, and that $L(\cdot,\cdot,\rho)$ is convex on $\mathbb{R}^d\times \mathbb{R}^d$ for any $\rho\in \mathcal{P}_2(\mathbb{R}^{2d})$.
\end{assumption}

\begin{assumption}\label{locally bounded assumption}
    For any $p\in\R^d$, there exists $C(p)>0$ such that for any $\rho\in\mathcal{P}_2(\R^{2d})$,
    \begin{equation}\label{locally bounded}
        \big|[\partial_a L(0,\cdot,\rho)]^{-1}(p)\big|\leq C(p),
    \end{equation}
    where the existence of $\partial_a L^{-1}$ is ensured by Assumption \ref{convexity assumption}(i).
\end{assumption}

\subsection{The fixed-point map}\label{subsection fixed-point}
In this subsection we give a brief analysis on the fixed-point map $\Phi$ defined by (\ref{fixed-point map}). To ensure the well-posedness of the master equation, we require that $\partial_x H(\cdot,\cdot,\Phi(\nu)),\,\partial_p H(\cdot,\cdot,\Phi(\nu))$ are $\mathrm{W}_1$-Lipschitz continuous in $\nu$. Therefore, we are looking for a $\mathrm{W}_1$-Lipschitz continuous map $\Phi:\mathcal{P}_2(\R^{2d})\rightarrow\mathcal{P}_2(\R^{2d})$.
\par In \cite[Lemma 3.2]{jackson2025quantitative}, the authors proved that for any bounded random variables $X,Y$, there exists a fixed point with bounded support to the map (\ref{fixed-point map}) under Assumption \ref{regularity assumption} and the following coercivity condition.
\begin{assumption}\label{coercivity assumption}
    There exists $C>0$ such that for any $(x,a,\rho)\in\R^d\times\R^d\times\mathcal{P}_2(\R^{2d})$,
    \begin{equation}
        \frac{1}{C}|a|^2-C\big(1+|x|^2+\mathrm{W}_2(\rho,\delta_{(0,0)})\big)\leq L(x,a,\rho)\leq C|a|^2+C\big(1+|x|^2+\mathrm{W}_2(\rho,\delta_{(0,0)})\big).
    \end{equation}
\end{assumption}
\par With this existence result, moreover, we benefit from the convexity conditions Assumption \ref{convexity assumption} and prove that (\ref{fixed-point map}) uniquely defines a $\mathrm{W}_1$-Lipschitz continuous map $\Phi$, which is necessary in our later analysis on master equation.
\begin{proposition}\label{existence fixed-point}
    Let Assumptions \ref{regularity assumption}, \ref{convexity assumption}, \ref{coercivity assumption} hold. Then (\ref{fixed-point map}) uniquely defines a $\mathrm{W}_1$-Lipschitz continuous map $\Phi:\mathcal{P}_2(\R^{2d})\rightarrow\mathcal{P}_2(\R^{2d})$.
\end{proposition}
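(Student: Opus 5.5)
The plan is to reduce everything to a contraction-type estimate on the control component. Fix $(X,Y)$ with $\mathcal{L}(X,Y)\in\mathcal{P}_2(\R^{2d})$. A fixed point of (\ref{fixed-point map}) is a measure of the form $\rho=\mathcal{L}(X,\alpha)$ where
\[
\alpha=\partial_pH(X,Y,\rho)=[\partial_aL(X,\cdot,\rho)]^{-1}(-Y).
\]
Equivalently,
\[
\partial_aL(X,\alpha,\mathcal{L}(X,\alpha))+Y=0 .
\]
By Assumption \ref{convexity assumption}(i), $a\mapsto\partial_aL(x,a,\rho)$ is $c_0$-strongly monotone, so $\partial_pH$ is well defined and $c_0^{-1}$-Lipschitz in $p$.

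\emph{Uniqueness and Lipschitz continuity.} Take two inputs $(X^1,Y^1)$ and $(X^2,Y^2)$, coupled optimally for $\mathrm{W}_1$, and let $\rho_i=\mathcal{L}(X^i,\alpha^i)$ be corresponding fixed points, with the $\alpha^i$ realized on the same probability space as the functions above. Subtract the two identities and take the inner product with $\alpha^1-\alpha^2$. Strong convexity, the $C_{Lip}$-Lipschitz bound of $\partial_aL$ in $x$, and Assumption \ref{convexity assumption}(ii) give pointwise
\[
c_0|\alpha^1-\alpha^2|\le C_{Lip}|X^1-X^2|+|Y^1-Y^2|+C\,\E|X^1-X^2|+c_1\E|\alpha^1-\alpha^2| .
\]
Taking expectations and using $c_1<c_0$ gives
\[
\E|\alpha^1-\alpha^2|\le \frac{C}{c_0-c_1}\,\E\big[|X^1-X^2|+|Y^1-Y^2|\big].
\]
Hence
\[
\mathrm{W}_1(\rho_1,\rho_2)\le \E\big[|X^1-X^2|+|\alpha^1-\alpha^2|\big]\le C'\,\mathrm{W}_1\big(\mathcal{L}(X^1,Y^1),\mathcal{L}(X^2,Y^2)\big).
\]
Taking the inputs identical yields uniqueness. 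One point needs care here: two different fixed points for the same law might be realized by different $\alpha$'s. This is handled because $\alpha$ is a deterministic function of $(X,Y)$ given $\rho$, namely $\alpha=[\partial_aL(X,\cdot,\rho)]^{-1}(-Y)$. So both fixed points live on the same space and the estimate applies directly. The same observation shows that $\Phi(\nu)$ depends only on $\nu$ and not on the chosen representative $(X,Y)$.

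\emph{Existence.} The plan is a Banach fixed-point argument in $\mathbb{L}^1$ rather than reliance on \cite[Lemma 3.2]{jackson2025quantitative}. Define
\[
T(\alpha):=[\partial_aL(X,\cdot,\mathcal{L}(X,\alpha))]^{-1}(-Y)
\]
on $\mathbb{L}^2$. Assumption \ref{coercivity assumption}, together with the Lipschitz growth of $\partial_aL$, gives $T(\alpha)\in\mathbb{L}^2$. From strong monotonicity and (ii),
\[
c_0|T\alpha-T\beta|\le c_1\E|\alpha-\beta|,
\]
so $T$ is a contraction with constant $c_1/c_0<1$ in $\mathbb{L}^1$. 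To stay inside $\mathbb{L}^2$, I would show the iterates are bounded in $\mathbb{L}^2$. This uses the linear-growth bound
\[
|T\alpha|\le C(1+|X|+|Y|+\E|X|)+\frac{c_1}{c_0}\E|\alpha|,
\]
which follows from comparing with $\partial_aL(0,0,\delta_0)$ through the Lipschitz assumptions. The $\mathbb{L}^1$-limit is then in $\mathbb{L}^2$ by Fatou, and $T$ is $\mathbb{L}^1$-continuous, so the limit is a fixed point.

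\emph{Main obstacle.} The most delicate step is confirming that the coupling argument legitimately compares arbitrary fixed points. Assumption \ref{convexity assumption}(ii) is phrased for random variables on a common space, so I must represent both $\rho_i$ on a single space via the functional form $\alpha^i=f_{\rho_i}(X^i,Y^i)$. The deterministic-function representation above is what makes this possible. I also need to check that $\mathrm{W}_1$-Lipschitz continuity extends to the whole of $\mathcal{P}_2$, which follows from the coupling argument with no extra growth restriction.
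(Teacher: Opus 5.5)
Your proposal is correct, and the uniqueness and $\mathrm{W}_1$-Lipschitz part is essentially the paper's computation. In both, strong monotonicity of $\partial_a L$ in $a$ is combined with Assumption \ref{convexity assumption}(ii) and $c_1<c_0$. Your remark that every fixed point has the form $\rho=\mathcal{L}(X,f_\rho(X,Y))$ with $f_\rho=\partial_pH(\cdot,\cdot,\rho)$ is useful. It justifies comparing two fixed points on one space, and it shows that $\Phi(\nu)$ does not depend on the representative; the paper leaves both points implicit.

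The genuine difference is existence.
\begin{itemize}
\item The paper invokes \cite[Lemma 3.2]{jackson2025quantitative}, which uses the coercivity Assumption \ref{coercivity assumption} to get fixed points with bounded support for bounded inputs. It proves the Lipschitz bound on $\mathcal{P}_\infty(\R^{2d})$, then derives an extra $\mathrm{W}_2$-Lipschitz estimate so that $\Phi(\nu_n)$ is Cauchy in $\mathcal{P}_2$. It extends $\Phi$ by density and only sketches why the extension is still a fixed point.
\item You observe that Assumption \ref{convexity assumption}(ii) with $c_1<c_0$ already makes $\alpha\mapsto\partial_pH(X,Y,\mathcal{L}(X,\alpha))$ a contraction. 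This gives existence directly on all of $\mathcal{P}_2(\R^{2d})$. It avoids the external lemma, the density extension and the $\mathrm{W}_2$ estimate.
\item Your argument never actually uses Assumption \ref{coercivity assumption}. The linear-growth bound for $T\alpha$ follows from the Lipschitz assumptions alone, so your route proves the statement under Assumptions \ref{regularity assumption} and \ref{convexity assumption} only.
\item The paper's route does tie in with an existence result that does not rely on (ii). Since (ii) is needed for uniqueness and the Lipschitz bound anyway, your approach loses nothing.
\end{itemize}

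One simplification: your pointwise bound $|T\alpha-T\beta|\le\frac{c_1}{c_0}\E|\alpha-\beta|$ already gives $\|T\alpha-T\beta\|_{\mathbb{L}^2}\le\frac{c_1}{c_0}\E|\alpha-\beta|\le\frac{c_1}{c_0}\|\alpha-\beta\|_{\mathbb{L}^2}$. So $T$ is a contraction on $\mathbb{L}^2$ itself, and you can drop the $\mathbb{L}^1$ iteration, the uniform $\mathbb{L}^2$ bound on the iterates, and the Fatou step.
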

\begin{proof}
    For any $\nu^1,\nu^2\in\mathcal{P}_{\infty}(\R^{2d})$, where $\mathcal{P}_{\infty}$ denotes the set of probability measure with bounded support, we choose bounded random variables $(X^i,Y^i)$ such that $\nu^i=\mathcal{L}(X^i,Y^i),i=1,2$, and $\mathrm{W}_1(\nu^1,\nu^2)=\E[|X^1-X^2|+|Y^1-Y^2|]$. By \cite[Lemma 3.2]{jackson2025quantitative}, there exist $\rho^i:=\Phi(\nu^i)\in\mathcal{P}_{\infty}(\R^{2d})$ such that $$\rho^i=\mathcal{L}\big(X^i,\partial_p H(X^i,Y^i,\rho^i)\big),\ i=1,2.$$
    Let $\alpha^i:=\partial_p H\big(X^i,Y^i,\rho^i\big)$, then we have the relation $\partial_a L\big(X^i,\alpha^i,\rho^i\big)=\partial_a L\big(X^i,\alpha^i,\mathcal{L}(X^i,\alpha^i)\big)=-Y^i$.
    \par By Assumptions \ref{regularity assumption}, \ref{convexity assumption}, we can compute that
    \begin{equation}\label{fixed point estimate}
    \begin{aligned}
        &|Y^1-Y^2|\cdot|\alpha^1-\alpha^2|\\
        \geq&\langle\partial_a L\big(X^1,\alpha^1,\mathcal{L}(X^1,\alpha^1)\big)-\partial_a L\big(X^2,\alpha^2,\mathcal{L}(X^2,\alpha^2)\big),\alpha^1-\alpha^2\rangle\\
        =&\langle\partial_a L\big(X^1,\alpha^1,\mathcal{L}(X^1,\alpha^1)\big)-\partial_a L\big(X^1,\alpha^2,\mathcal{L}(X^1,\alpha^1)\big),\alpha^1-\alpha^2\rangle\\
        &+\langle\partial_a L\big(X^1,\alpha^2,\mathcal{L}(X^1,\alpha^1)\big)-\partial_a L\big(X^2,\alpha^2,\mathcal{L}(X^2,\alpha^2)\big),\alpha^1-\alpha^2\rangle\\
        \geq&c_0|\alpha^1-\alpha^2|^2-|\partial_a L\big(X^1,\alpha^2,\mathcal{L}(X^1,\alpha^1)\big)-\partial_a L\big(X^2,\alpha^2,\mathcal{L}(X^2,\alpha^2)\big)|\cdot|\alpha^1-\alpha^2|,\\
    \end{aligned}
    \end{equation}
    which implies
    \[
    \begin{aligned}
        &\E[|Y^1-Y^2|]\\
        \geq&c_0\E[|\alpha^1-\alpha^2|]-\E[|\partial_a L\big(X^1,\alpha^2,\mathcal{L}(X^1,\alpha^1)\big)-\partial_a L\big(X^2,\alpha^2,\mathcal{L}(X^2,\alpha^2)\big)|]\\
        \geq&(c_0-c_1)\E[|\alpha^1-\alpha^2|]-C\E[|X^1-X^2|].
    \end{aligned}
    \]
    Since $c_0>c_1$, we have
    \begin{equation}
        \E[|\alpha^1-\alpha^2|]\leq C\E[|X^1-X^2|+|Y^1-Y^2|].
    \end{equation}
    Therefore, we conclude that
    \begin{equation}
        \mathrm{W}_1(\rho^1,\rho^2)\leq\E[|X^1-X^2|+|\alpha^1-\alpha^2|]\leq C\E[|X^1-X^2|+|Y^1-Y^2|],
    \end{equation}
    that is,
    \begin{equation}\label{fixed point W1}
        \mathrm{W}_1(\Phi(\nu^1),\Phi(\nu^2))\leq C\mathrm{W}_1(\nu^1,\nu^2).
    \end{equation}
    This implies (\ref{fixed-point map}) uniquely defines a $\mathrm{W}_1$-Lipschitz continuous map $\Phi$ on $\mathcal{P}_{\infty}(\R^{2d})$. 
    \par Next we extend this $\Phi$ to a $\mathrm{W}_1$-Lipschitz continuous map $\Phi:\mathcal{P}_2(\R^{2d})\rightarrow\mathcal{P}_2(\R^{2d})$. For this aim, we shall use a $\mathrm{W}_2$-Lipschitz estimate for $\Phi$. Note that (\ref{fixed point estimate}) also implies
    \begin{equation}
    \begin{aligned}
        &\E[|Y^1-Y^2|\cdot|\alpha^1-\alpha^2|]\\
        \geq&c_0\E[|\alpha^1-\alpha^2|^2]-c_1\E[|\alpha^1-\alpha^2|]^2\\
        &-C\E[|X^1-X^2|]\E[|\alpha^1-\alpha^2|]-C\E[|X^1-X^2|\cdot|\alpha^1-\alpha^2|],
    \end{aligned}
    \end{equation}
    which implies
    \begin{equation}
        \E[|\alpha^1-\alpha^2|^2]\leq C\E[|X^1-X^2|^2+|Y^1-Y^2|^2],
    \end{equation}
    so we can similarly show that for any $\nu^1,\nu^2\in\mathcal{P}_{\infty}(\R^{2d})$,
    \begin{equation}\label{fixed point W2}
        \mathrm{W}_2(\Phi(\nu^1),\Phi(\nu^2))\leq C\mathrm{W}_2(\nu^1,\nu^2).
    \end{equation}
    Since $\mathcal{P}_{\infty}(\R^{2d})$ is dense in $\mathcal{P}_2(\R^{2d})$, we can uniquely extend $\Phi$ to a $\mathrm{W}_1$-Lipschitz continuous map on $\mathcal{P}_2(\R^{2d})$. In fact, for any $\nu\in\mathcal{P}_2(\R^{2d})$, we can choose $\nu_n\in\mathcal{P}_{\infty}(\R^{2d})$ such that $\mathrm{W}_2(\nu_n,\nu)\rightarrow 0,\,n\rightarrow\infty$. By (\ref{fixed point W2}), we see that $\Phi(\nu_n)$ is Cauchy sequence in $\mathcal{P}_2(\R^{2d})$, so there exists a unique $\rho\in\mathcal{P}_2(\R^{2d})$ such that $\mathrm{W}_2(\Phi(\nu_n),\rho)\rightarrow 0,\,n\rightarrow\infty$, and we define $\Phi(\nu):=\rho$, which does not depend on the chosen of approximating sequence. Since $\mathrm{W}_1$-distance is dominated by $\mathrm{W}_2$-distance, we can show that the extension remains the $\mathrm{W}_1$-Lipschitz estimate (\ref{fixed point W1}). Finally, by a similar approximating procedure, we can show that $\Phi(\nu)$ still serves as the fixed point of (\ref{fixed-point map}).
\end{proof}
\par We can also show a stability result with respect to the given data for MFGC.
\begin{proposition}\label{fixed-point stability}
    Suppose $L^1,L^2$ satisfy Assumptions \ref{regularity assumption}, \ref{convexity assumption}, \ref{coercivity assumption}. Let $H^1,H^2$ be the corresponding Hamiltonians and $\Phi^1,\Phi^2$ be the corresponding fixed-point maps. Then we have the following stability estimate: for any $\nu=\mathcal{L}(X,Y)$,
    \begin{equation}
        \mathrm{W}_1(\Phi^1(\nu),\Phi^2(\nu))\leq C\E[|\partial_p H^1(X,Y,\Phi^2(\nu))-\partial_p H^2(X,Y,\Phi^2(\nu))|].
    \end{equation}
\end{proposition}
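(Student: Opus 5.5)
We mimic the proof of Proposition \ref{existence fixed-point}, now with the same pair $(X,Y)$ on both sides but two different Lagrangians. Fix $\nu=\mathcal{L}(X,Y)$ and set $\rho^i:=\Phi^i(\nu)$, $i=1,2$, which exist by Proposition \ref{existence fixed-point}. Define
\[
\alpha^i:=\partial_p H^i(X,Y,\rho^i),\qquad \rho^i=\mathcal{L}(X,\alpha^i).
\]
Using the same coupling $(X,\alpha^1)$, $(X,\alpha^2)$ gives
\[
\mathrm{W}_1(\rho^1,\rho^2)\leq \E[|\alpha^1-\alpha^2|].
\]
So it suffices to bound $\E[|\alpha^1-\alpha^2|]$.

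To do this, introduce the intermediate control $\beta:=\partial_p H^1(X,Y,\rho^2)$. This is the optimal control for $L^1$ when the measure is frozen at $\rho^2$. By the triangle inequality,
\[
\E[|\alpha^1-\alpha^2|]\leq \E[|\alpha^1-\beta|]+\E[|\beta-\alpha^2|].
\]
The second term is exactly $\E[|\partial_p H^1(X,Y,\Phi^2(\nu))-\partial_p H^2(X,Y,\Phi^2(\nu))|]$, which is the right-hand side of the claim.

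For the first term we work only with $L^1$. By the Legendre relation,
\[
\partial_a L^1(X,\alpha^1,\mathcal{L}(X,\alpha^1))=-Y=\partial_a L^1(X,\beta,\rho^2),\qquad \rho^2=\mathcal{L}(X,\alpha^2).
\]
Then Assumption \ref{convexity assumption}(i) (strong convexity of $L^1$ in $a$, at fixed measure $\mathcal{L}(X,\alpha^1)$) gives, pointwise,
\[
c_0|\alpha^1-\beta|^2\leq \big\langle \partial_a L^1(X,\alpha^1,\mathcal{L}(X,\alpha^1))-\partial_a L^1(X,\beta,\mathcal{L}(X,\alpha^1)),\,\alpha^1-\beta\big\rangle .
\]
Replace the first argument using the identity above, so the bracket becomes $\partial_a L^1(X,\beta,\mathcal{L}(X,\alpha^2))-\partial_a L^1(X,\beta,\mathcal{L}(X,\alpha^1))$. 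By Assumption \ref{convexity assumption}(ii), with the same $X$ on both sides, this bracket is at most $c_1\E[|\alpha^1-\alpha^2|]$ in norm. Dividing by $|\alpha^1-\beta|$ and taking expectations gives
\[
c_0\E[|\alpha^1-\beta|]\leq c_1\E[|\alpha^1-\alpha^2|]\leq c_1\E[|\alpha^1-\beta|]+c_1\E[|\beta-\alpha^2|].
\]
Since $c_1<c_0$, we absorb and obtain $\E[|\alpha^1-\beta|]\leq \frac{c_1}{c_0-c_1}\E[|\beta-\alpha^2|]$. Combining the two terms yields the claim with $C=\frac{c_0}{c_0-c_1}$, up to the $\mathrm{W}_1$ coupling bound.

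The main subtlety is the absorption step. It works because the measure-dependence bound in Assumption \ref{convexity assumption}(ii) carries the sharp constant $c_1<c_0$, and the $X$-contribution vanishes since both fixed points share the same $X$. We also need the manipulations to be legitimate for general $\nu\in\mathcal{P}_2(\R^{2d})$. If the fixed-point identity $\rho^i=\mathcal{L}(X,\partial_p H^i(X,Y,\rho^i))$ is only known directly on $\mathcal{P}_\infty$, we would first prove the estimate there. We would then pass to the limit along bounded approximations using the $\mathrm{W}_2$-continuity of $\Phi^i$ and the Lipschitz continuity of $\partial_p H^i$. However, the final sentence of the proof of Proposition \ref{existence fixed-point} already asserts that the fixed-point property holds on all of $\mathcal{P}_2$, so the direct argument should suffice.
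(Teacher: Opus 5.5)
Your proposal is correct and follows essentially the same route as the paper. Both introduce the intermediate control $\partial_p H^1(X,Y,\Phi^2(\nu))$, use the Legendre relation for $L^1$ at both $\rho^1$ and $\rho^2$, apply strong convexity and Assumption \ref{convexity assumption}(ii) with the common $X$ to obtain the factor $c_1/c_0$, and absorb using $c_1<c_0$. The only difference is bookkeeping: you absorb in $\E[|\alpha^1-\beta|]$ rather than $\E[|\alpha^1-\alpha^2|]$, and both give the constant $C=c_0/(c_0-c_1)$.
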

\begin{proof}
    Denote $\rho^i:=\Phi^i(\nu),\,i=1,2$. Note that the duality gives
    \begin{equation}
        \partial_a L^1\big(X,\partial_p H^1(X,Y,\rho^i),\rho^i\big)=-Y,\,i=1,2.
    \end{equation}
    Hence the strong convexity of $L^1$ implies that
    \begin{equation}
    \begin{aligned}
        &c_0|\partial_pH^1(X,P,\rho^1)-\partial_p H^1(X,P,\rho^2)|\\
        \leq&|\partial_a L^1\big(X,\partial_pH^1(X,P,\rho^1),\rho^1\big)-\partial_a L^1\big(X,\partial_p H^1(X,P,\rho^2),\rho^1\big)|\\
        =&|\partial_a L^1\big(X,\partial_pH^1(X,P,\rho^2),\rho^2\big)-\partial_a L^1\big(X,\partial_p H^1(X,P,\rho^2),\rho^1\big)|.\\
    \end{aligned}
    \end{equation}
    Since $\rho^i=\mathcal{L}\big(X,\partial_p H^i(X,Y,\rho^i)\big)$, by Assumption \ref{convexity assumption}(ii), we have
    \begin{equation}
        \E[|\partial_pH^1(X,P,\rho^1)-\partial_p H^1(X,P,\rho^2)|]\leq \frac{c_1}{c_0}\E[|\partial_p H^1(X,Y,\rho^1)-\partial_p H^2(X,Y,\rho^2)|].
    \end{equation}
    With this estimate, we have
    \begin{equation}
    \begin{aligned}
        &\E[|\partial_p H^1(X,Y,\rho^1)-\partial_p H^2(X,Y,\rho^2)|]\\
        \leq&\E[|\partial_p H^1(X,Y,\rho^1)-\partial_p H^1(X,Y,\rho^2)|+|\partial_p H^1(X,Y,\rho^2)-\partial_p H^2(X,Y,\rho^2)|]\\
        \leq&\frac{c_1}{c_0}\E[|\partial_p H^1(X,Y,\rho^1)-\partial_p H^2(X,Y,\rho^2)|+|\partial_p H^1(X,Y,\rho^2)-\partial_p H^2(X,Y,\rho^2)|].
    \end{aligned}
    \end{equation}
    Since $0<c_1<c_0$, we conclude that
    \begin{equation}
    \begin{aligned}
        \mathrm{W}_1(\rho^1,\rho^2)\leq&\E[|\partial_p H^1(X,Y,\rho^1)-\partial_p H^2(X,Y,\rho^2)|]\\
        \leq& C\E[|\partial_p H^1(X,Y,\rho^2)-\partial_p H^2(X,Y,\rho^2)|],
    \end{aligned}
    \end{equation}
    which completes the proof.
\end{proof}

\subsection{The second-order parabolic subdifferential and Hilbert approach}\label{Second-order parabolic subdifferential and Hilbert approach}
In this context, to define the monotone solution, we introduce the following function spaces on the Hilbert space and the notion of the second-order parabolic subdifferential. For further details, please refer to  \cite{cardaliaguet2022monotone,lions1989viscosity}.

Let $\mathbb{H}$ be a separable Hilbert space. For any $x, y \in \mathbb{H}$, let $\langle x, y\rangle$ denote the inner product and $\|x\|_2$ the norm. Let $L'(\mathbb{H})$ represent the space of all symmetric bounded bilinear forms on $\mathbb{H}$. Let $\mathbb{H}_N$ be an increasing sequence of finite-dimensional subspaces of $\mathbb{H}$ such that $\bigcup_N\mathbb{H}_N$ is dense in $\mathbb{H}$.
Denote by \(P_N\) and \(Q_N\) the orthogonal projections onto
\(\mathbb H_N\) and \(\mathbb H_N^\perp\), respectively.
In this paper, we take $$\mathbb{H}=\mathbb{L}^2(\mathcal{F}_0^1;\mathbb{R}^{2d}), \ \langle x,y\rangle_{\mathbb{H}}:=\mathbb{E}_1[x\cdot y],\ \|x\|_2:=\big(\mathbb{E}_1[\vert x\vert^2]\big)^{\frac{1}{2}}, \ \text{for any} \ x,y \in\mathbb{H}.$$
\par We say that $f\in \mathbb{C}^{1,2}([0,T]\times \mathbb{H})$ if $\partial_t f$, $D_xf$ and $D_{xx}f$ exist and are continuous on $[0,T]\times \mathbb{H}$. Here, $D_x f$ and $D_{xx} f$ denote the first-order and second-order Fréchet derivatives of $f$ with respect to $x$, respectively. 

Consider a lower semicontinuous map $f: [0,T]\times \mathbb{H}\rightarrow \mathbb{R}$. Given $t_0 \in [0,T]$ and $x_0 \in \mathbb{H}$, the second-order parabolic subdifferential of $f$ at the point $(t_0, x_0)$ is defined as follows:
\begin{equation*}
\begin{aligned}
    &D^{1,2,-}f(t_0,x_0)\\
    :=&\Big\{(q,p,\chi)\in \mathbb{R}\times \mathbb{H}\times L'(\mathbb{H})\Big\vert\liminf\limits_{|t-t_0|+\|x-x_0\|_2^2\rightarrow 0}\frac{1}{\vert t-t_0\vert+\|x-x_0\|^2_2}\Big[f(t,x)-f(t_0,x_0)\\
    &-q(t-t_0)-\langle p,x-x_0\rangle_{\mathbb{H}}-\frac{1}{2}\langle\chi(x-x_0),x-x_0\rangle_{\mathbb{H}}\Big]\geq0\Big\}.
\end{aligned}
\end{equation*}
The following lemma provides an equivalent characterization of the second-order parabolic subdifferential in terms of smooth test functions. We postpone its proof to the Appendix.
\begin{lemma}\label{relationship}
Let $(t_0,x_0)\in[0,T]\times\mathbb{H}$ be given. Then $(q,p,\chi)\in D^{1,2,-}f(t_0,x_0)$ if and only if there exists a function $\varphi\in \mathbb{C}^{1,2}([0,T]\times \mathbb{H})$ such that $f-\varphi$ attains a local minimum at $(t_0,x_0)$, and the following holds:
\begin{equation*}
    \begin{aligned}
        \big(\partial_t\varphi(t_0,x_0),D_{x}\varphi(t_0,x_0),
        D_{xx}\varphi(t_0,x_0)\big)=(q,p,\chi).
    \end{aligned}
\end{equation*}
\end{lemma}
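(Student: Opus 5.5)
The plan is to prove the two implications separately. The "if" direction will be elementary: it is a Taylor expansion of $\varphi$. The "only if" direction will follow the classical construction of a smooth test function from a subjet, as in the finite-dimensional theory of Crandall--Ishii--Lions and its Hilbert-space version in \cite{lions1989viscosity}.

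\emph{"If" direction.} Suppose $\varphi\in\mathbb{C}^{1,2}([0,T]\times\mathbb{H})$ and $f-\varphi$ has a local minimum at $(t_0,x_0)$. Then $f(t,x)-f(t_0,x_0)\ge \varphi(t,x)-\varphi(t_0,x_0)$ near $(t_0,x_0)$. Since $\partial_t\varphi$ and $D_{xx}\varphi$ are continuous, Taylor's formula gives
\[
\varphi(t,x)-\varphi(t_0,x_0)=\partial_t\varphi(t_0,x_0)(t-t_0)+\langle D_x\varphi(t_0,x_0),x-x_0\rangle_{\mathbb{H}}+\tfrac12\langle D_{xx}\varphi(t_0,x_0)(x-x_0),x-x_0\rangle_{\mathbb{H}}+o(|t-t_0|+\|x-x_0\|_2^2).
\]
To justify this, expand in $x$ at time $t$ to second order and use continuity of $D_x\varphi$ and $D_{xx}\varphi$ in $t$. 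The cross term $\langle D_x\varphi(t,x_0)-D_x\varphi(t_0,x_0),x-x_0\rangle$ needs care. It can be bounded by $o(1)\|x-x_0\|_2\le o(1)(|t-t_0|+\|x-x_0\|_2^2)^{1/2}$, which is not automatically $o(|t-t_0|+\|x-x_0\|_2^2)$. To handle it, I would either use the mean value theorem in $t$ (since $\partial_t\varphi$ is continuous, $D_x\varphi(t,x_0)-D_x\varphi(t_0,x_0)$ is controlled), or modify $\varphi$ beforehand so that it is affine in $t$ near $t_0$. Dividing by $|t-t_0|+\|x-x_0\|_2^2$ and taking the liminf then gives $(q,p,\chi)\in D^{1,2,-}f(t_0,x_0)$.

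\emph{"Only if" direction.} Let $(q,p,\chi)\in D^{1,2,-}f(t_0,x_0)$. Set
\[
g(t,x):=f(t,x)-f(t_0,x_0)-q(t-t_0)-\langle p,x-x_0\rangle-\tfrac12\langle\chi(x-x_0),x-x_0\rangle,
\qquad r:=|t-t_0|+\|x-x_0\|_2^2,
\]
and define the modulus $\omega(s):=\sup\{\max(-g(t,x),0)/r:\ 0<r\le s\}$. By the liminf condition $\omega(s)\to0$ as $s\to0$, and $\omega$ is nondecreasing. I would work locally, restricting to $r\le 1$ where lower semicontinuity makes $f$ locally bounded below, so that $\omega$ is finite. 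Next, regularize $\omega$ in the standard way: $\tilde\omega(s):=\frac1s\int_s^{2s}\omega(\tau)\,d\tau$, possibly iterated, to get a $C^2$ function on $(0,\infty)$ with $\tilde\omega\ge\omega$ and $s\tilde\omega(s)$ of class $C^1$ with vanishing derivative at $0$. Then define $\Psi(s):=\int_0^{2s}\tilde\omega(\tau)\,d\tau$, which satisfies $\Psi(s)\ge s\,\omega(s)$, $\Psi(0)=\Psi'(0)=0$, and $\Psi\in C^2$ with $\Psi''(s)=o(1/s)$. The test function is
\[
\varphi(t,x):=f(t_0,x_0)+q(t-t_0)+\langle p,x-x_0\rangle+\tfrac12\langle\chi(x-x_0),x-x_0\rangle-\Psi\big(\theta(t-t_0)+\|x-x_0\|_2^2\big),
\]
with $\theta(\cdot)$ a $C^1$ smoothing of $|\cdot|$ that dominates it, such as $\sqrt{(t-t_0)^2+\epsilon^2}-\epsilon$ after adjusting constants. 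An alternative that avoids the nondifferentiability of $|t|$ at $t_0$ is to handle $t\ge t_0$ and $t\le t_0$ with separate comparisons. By construction $f-\varphi\ge g+\Psi(r)\ge0=(f-\varphi)(t_0,x_0)$ locally.

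\emph{Checking derivatives and the main obstacle.} The key point is that $\|\cdot\|_2^2$ is smooth on $\mathbb{H}$, so the $x$-derivatives of $\Psi(\|x-x_0\|^2)$ at $x_0$ are $0$ and $2\Psi'(0)I=0$. Hence $D_x\varphi=p$ and $D_{xx}\varphi=\chi$ there, and continuity of $D_{xx}\varphi$ away from $x_0$ follows from $\Psi\in C^2$. The hardest step is the time variable. $|t-t_0|$ is not differentiable, yet we need $\partial_t\varphi(t_0,x_0)=q$ with $\partial_t\varphi$ continuous. Since $\Psi'(0)=0$, the composite $\Psi(|t-t_0|+\|x-x_0\|^2)$ is $C^1$ in $t$, with derivative $\Psi'(\cdot)\,\mathrm{sgn}(t-t_0)$. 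This derivative is continuous because $\Psi'$ vanishes exactly where the sign jumps. So I would keep $|t-t_0|$ itself and verify continuity of the mixed derivatives directly, since the definition of $\mathbb{C}^{1,2}$ only requires $\partial_t$, $D_x$ and $D_{xx}$. The remaining verification is that $D_{xx}\varphi$ is continuous in $(t,x)$ jointly. This holds because $D_{xx}\Psi(r)=2\Psi'(r)I+4\Psi''(r)(x-x_0)\otimes(x-x_0)$ and $\Psi''(r)\|x-x_0\|^2\le r\Psi''(r)\to0$.
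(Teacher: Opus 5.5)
Your overall architecture for the ``only if'' direction matches the paper's Appendix A: a modulus $\omega$ for the negative part of the remainder, a $C^2$ majorant $\Psi$ with $\Psi(s)\geq s\,\omega(s)$, $\Psi'(0)=0$ and $s\Psi''(s)\to0$, and $\varphi$ equal to the quadratic jet minus $\Psi(\cdot)$. But the step you single out as the crux is false. The map $(t,x)\mapsto\Psi(|t-t_0|+\|x-x_0\|_2^2)$ is not differentiable in $t$. At a point $(t_0,x)$ with $x\neq x_0$, its one-sided $t$-derivatives are $\pm\Psi'(\|x-x_0\|_2^2)$. The sign of $t-t_0$ jumps along the whole slice $\{t_0\}\times\mathbb{H}$, while $\Psi'$ vanishes only where its argument vanishes, i.e.\ only at $x=x_0$. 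And $\Psi'$ is typically positive, since $\Psi'(s)=2\tilde\omega(2s)\geq2\omega(2s)$. For instance, $f(t,x)=-|t-t_0|^{3/2}$ with the subjet $(0,0,0)$ gives $\omega(s)=s^{1/2}$. So your $\varphi$ has a kink across $\{t=t_0,\ x\neq x_0\}$ and is not in $\mathbb{C}^{1,2}$.

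The smoothing $\theta$ does not rescue this. A $C^1$ function $\theta\geq|\cdot|$ must have $\theta(0)>0$, since otherwise its one-sided derivatives at $0$ would be $\geq1$ and $\leq-1$. Then $D_{xx}\varphi(t_0,x_0)=\chi-2\Psi'(\theta(0))I\neq\chi$ in general. Moreover, $\sqrt{s^2+\epsilon^2}-\epsilon$ actually lies below $|s|$, so it does not dominate.

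What is missing is a gauge equivalent to $|t-t_0|+\|x-x_0\|_2^2$ that is smooth away from $(t_0,x_0)$. The paper uses $d(s,y)=(s^2+\|y\|_2^4)^{1/2}$ with $s=t-t_0$, $y=x-x_0$. Since $d\leq|s|+\|y\|_2^2\leq\sqrt2\,d$, the liminf condition transfers to $d$, and the modulus $w$ is defined relative to $d$. Then $\varphi$ is the jet minus $g(d(s,y))$, where $g\geq rw(r)$, $g(0)=0$, $g'(r)\to0$ and $rg''(r)\to0$, exactly as for your $\Psi$. The derivative bounds are $|\partial_sd|\leq1$, $\|D_yd\|_2\leq2\|y\|_2$ (using $d\geq\|y\|_2^2$), and $D_{yy}d$ bounded. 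These give $|\partial_sg(d)|\leq|g'(d)|$ and $\|D_{yy}g(d)\|\leq C|g'(d)|+4d|g''(d)|$, both tending to $0$. Hence $\varphi\in\mathbb{C}^{1,2}$ with the prescribed derivatives at $(t_0,x_0)$.

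Within your own setup, an equally valid repair is to separate the variables. Since $\Psi$ is nonnegative and nondecreasing, $\Psi(2|t-t_0|)+\Psi(2\|x-x_0\|_2^2)\geq\Psi(|t-t_0|+\|x-x_0\|_2^2)$. The time term now depends on $t$ alone, so its sign jump at $t=t_0$ really is multiplied by $\Psi'(0)=0$.

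Finally, in the ``if'' direction, the cross term is an artifact of expanding in $x$ at time $t$. Note also that continuity of $\partial_t\varphi$ gives no control of $D_x\varphi(t,x_0)-D_x\varphi(t_0,x_0)$. Instead, split $\varphi(t,x)-\varphi(t_0,x_0)=[\varphi(t,x)-\varphi(t_0,x)]+[\varphi(t_0,x)-\varphi(t_0,x_0)]$. The first bracket is handled by the mean value theorem and the joint continuity of $\partial_t\varphi$. The second is handled by Taylor's formula in $x$ at time $t_0$.
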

Moreover, we write 
\begin{equation*}
\begin{aligned}
\overline{D}^{1,2,-} f(t,x):=&\Big\{(q, p,\chi) \in \mathbb{R}\times \mathbb{H}\times L^{\prime}(\mathbb{H})\Big\vert\ \text { there exist }\ \left(t_n, x_n, q_n, p_n, \chi_n\right) \in\\
&[0,T]\times \mathbb{H} \times \mathbb{R}\times \mathbb{H} \times L^{\prime}(\mathbb{H})  \text { such that } \left(q_n, p_n, \chi_n\right) \in D^{1,2,-} f\left(t_n, x_n\right)\\
&\text { and }\left(t_n, x_n, f(t_n, x_n),q_n, p_n, \chi_n\right) \xrightarrow {n \rightarrow \infty}(t, x, f(t,x), q, p, \chi)\Big\} .
\end{aligned}
\end{equation*}
\par Next we introduce the Hilbert space approach. Consider a fully nonlinear second-order degenerate parabolic equation of the following form
\begin{equation}\label{eq:fully_nonlinear}
    -\partial_t u(t, x) +f(x,u,D_xu,D_{xx}u)=0,\ u(T,x)=g(x),
\end{equation}
where 
$u:[0,T]\times\mathbb{H}\rightarrow\mathbb{R}$ is the unknown function. We assume that $f$ is degenerate elliptic with respect to its last argument, namely,
$f(x,u,p,X)\geq f(x,u,p,Y)$, for $X\leq Y, X,Y\in L'(\mathbb{H})$, where
\[
X\leq Y
\ \text{if and only if} \
(Y-X)(h,h)\geq0,
\ \text{for any}\ h\in\mathbb{H}.
\]
\par To handle the potential lack of smoothness in $u$, we adopt the framework of viscosity supersolutions, adapted to the Hilbert space setting.
\begin{definition}\label{def viscosity supersolution}
A lower semicontinuous function $u: [0, T] \times \mathbb{H} \to \mathbb{R}$ with $u(T,\cdot)=g$ is called a viscosity supersolution to \eqref{eq:fully_nonlinear} if, for every test function $\varphi \in \mathbb{C}^{1,2}([0, T] \times \mathbb{H})$ such that $u - \varphi$ attains a local minimum at $(t^*, x^*) \in [0, T) \times \mathbb{H}$, the following inequality holds:
\begin{equation*}
   0\leq -\partial_t\varphi(t^*,x^*)+f(x^*,u(t^*,x^*),D_x\varphi(t^*,x^*),D_{xx}\varphi(t^*,x^*)).
\end{equation*}
\end{definition}
We next establish a parabolic version of \cite[Lemma 4]{lions1989viscosity}, which may be regarded as an infinite-dimensional analogue of the Jensen-Ishii lemma from \cite{ishii1990viscosity}.
Before this, for $\alpha, \varepsilon>0$, we introduce the function $\Psi: ([0,T])^2 \times (\mathbb{H})^2 \to \mathbb{R}$ defined by
\begin{equation*}
\Psi(t,s,x,y)=u(t,x)+v(s,y)+\frac{1}{2\alpha}\| x-y\|^2_2+\frac{1}{2\alpha}\vert t-s \vert^2+\varepsilon\phi(t,x)+\varepsilon\phi(s,y),
\end{equation*} 
where $\phi$ goes to $+\infty$ at infinity.
\begin{lemma}\label{lem1}
For any $\delta > 0$, assume there exist $p_1, p_2 \in \mathbb{H}$ and $p_3, p_4 \in \mathbb{R}$ satisfying $\| p_1 \|_2 + \| p_2 \|_2 + |p_3| + |p_4| \leq \delta$, such that the perturbed function
\begin{equation*}
\Psi(t,s,x,y) - \langle p_1, x \rangle - \langle p_2, y \rangle - p_3 t - p_4 s
\end{equation*}
attains a strict minimum at $(\bar{t}, \bar{s}, \bar{x}, \bar{y})\in(0,T)^2\times(\mathbb{H})^2$. Then, for each $N \geq 1$, there exist operators $\mathcal{X}_N, \mathcal{Y}_N$ satisfying $\mathcal{X}_N = P_N \mathcal{X}_N P_N$ and $\mathcal{Y}_N = P_N \mathcal{Y}_N P_N$, such that
   \begin{equation}\label{matrixinequality}
    -\frac{1}{\alpha}\begin{pmatrix}
        I &-I\\
        -I &I
    \end{pmatrix}\leq \begin{pmatrix}
        \mathcal{X}_N &0\\
        0   &\mathcal{Y}_N
    \end{pmatrix} \leq \frac{2}{\alpha}\begin{pmatrix}
        I &0\\
        0 &I
    \end{pmatrix}.
   \end{equation}
   Moreover, the following conditions hold:
   \begin{equation}\label{t23}
     \left(-\frac{\bar{t}-\bar{s}}{\alpha},-\frac{\bar{x}-\bar{y}}{\alpha},\mathcal{X}_N-\frac{2}{\alpha}Q_N\right)\in \overline{D}^{1,2,-}u'(\bar{t},\bar{x}),  
   \end{equation}
   \begin{equation}\label{t24}
     \left(\frac{\bar{t}-\bar{s}}{\alpha},\frac{\bar{x}-\bar{y}}{\alpha},\mathcal{Y}_N-\frac{2}{\alpha}Q_N\right)\in \overline{D}^{1,2,-}v'(\bar{s},\bar{y}),  
   \end{equation}
   where 
   \begin{equation*}
   \begin{aligned}
     u'(t,x)=&u(t,x)+\varepsilon\phi(t,x)-\langle p_1,x\rangle-p_3t,\\
     v'(s,y)=&v(s,y)+\varepsilon\phi(s,y)-\langle p_2,y\rangle-p_4s. 
     \end{aligned}
   \end{equation*} 
\end{lemma}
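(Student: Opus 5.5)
The plan is to reduce to the finite-dimensional parabolic Crandall--Ishii lemma, following Lions' original argument for \cite[Lemma 4]{lions1989viscosity}, with an extra pair of time variables. Write $\mathbb{H}=\mathbb{H}_N\oplus\mathbb{H}_N^\perp$ and decompose $x=(x_N,x_N^\perp)$ with $x_N=P_Nx$, $x_N^\perp=Q_Nx$, and similarly for $y$. The penalization splits accordingly:
\[
\tfrac{1}{2\alpha}\|x-y\|_2^2=\tfrac{1}{2\alpha}\|x_N-y_N\|_2^2+\tfrac{1}{2\alpha}\|x_N^\perp-y_N^\perp\|_2^2 .
\]
The orthogonal part is then bounded using
\[
\|a-b\|^2\ge 2\langle \bar a-\bar b,a-b\rangle-\|\bar a-\bar b\|^2
\]
with $\bar a=Q_N\bar x$ and $\bar b=Q_N\bar y$, which is a lower bound that is affine in $(x_N^\perp,y_N^\perp)$. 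In addition, $\|x_N^\perp-y_N^\perp\|^2\le 2\|x_N^\perp-\bar x_N^\perp\|^2+2\|y_N^\perp-\bar y_N^\perp\|^2+(\text{affine terms})$ gives an upper bound; this is where the $-\tfrac{2}{\alpha}Q_N$ correction comes from.

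Define the finite-dimensional functions
\[
\tilde u(t,x_N):=\inf_{z\in\mathbb{H}_N^\perp}\Big\{u'(t,x_N+z)+\tfrac1\alpha\langle Q_N(\bar x-\bar y),z\rangle-\tfrac1\alpha\|z-Q_N\bar x\|_2^2\cdot(\ldots)\Big\},
\]
and similarly $\tilde v(s,y_N)$. Concretely, $\tilde u$ is obtained by minimizing $u'$ plus the affine term plus $\tfrac1\alpha\|z-Q_N\bar x\|^2$ over the orthogonal complement, and $\tilde v$ is built symmetrically. These are lower semicontinuous on $[0,T]\times\mathbb{H}_N$; the growth of $\phi$ and the strictness of the minimum keep the infimum locally bounded. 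By construction,
\[
\tilde u(t,x_N)+\tilde v(s,y_N)+\tfrac{1}{2\alpha}\|x_N-y_N\|^2+\tfrac{1}{2\alpha}|t-s|^2
\]
attains a minimum at $(\bar t,\bar s,P_N\bar x,P_N\bar y)$, because the splitting inequality for the orthogonal part becomes an equality at $(\bar x,\bar y)$.

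Now apply the finite-dimensional parabolic Jensen--Ishii lemma \cite{ishii1990viscosity} (the Crandall--Ishii--Lions theorem of sums, parabolic version, for lower semicontinuous functions and a minimum) on $\mathbb{H}_N\cong\mathbb{R}^{k}$. This produces matrices $X_N,Y_N$ on $\mathbb{H}_N$ with the two-sided bound of (\ref{matrixinequality}), where the left side $-\tfrac1\alpha\begin{pmatrix}I&-I\\-I&I\end{pmatrix}$ follows from the Hessian of the quadratic. The lemma also gives time derivatives $\mp(\bar t-\bar s)/\alpha$ and closure-subjets of $\tilde u$ and $\tilde v$. The interior condition $(\bar t,\bar s)\in(0,T)^2$ is needed to apply the parabolic version. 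Set $\mathcal{X}_N=P_NX_NP_N$ and $\mathcal{Y}_N=P_NY_NP_N$.

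Finally, lift the subjets back to $\mathbb{H}$. If $(q,p_N,X_N)$ is a subjet of $\tilde u$ at a point $(t_n,x_{N,n})$ where the infimum is attained at some $z_n$, then $(q,\,p_N+Q_N\text{-gradient},\,X_N\oplus(-\tfrac{2}{\alpha})I_{\mathbb{H}_N^\perp})$ is a subjet of $u'$ at $(t_n,x_{N,n}+z_n)$. Via Lemma \ref{relationship}, test functions pass through the inf-convolution. The gradient of the orthogonal quadratic at the minimizer identifies the full gradient as $-(\bar x-\bar y)/\alpha$. Taking limits, one needs $x_{N,n}+z_n\to\bar x$ and $u'(t_n,x_n)\to u'(\bar t,\bar x)$; this is where the strict minimum is used, since it forces the minimizing points to converge. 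This gives (\ref{t23}) in the closure $\overline D^{1,2,-}$, and (\ref{t24}) follows symmetrically.

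I expect the main obstacle to be this last step. The infimum over the infinite-dimensional $\mathbb{H}_N^\perp$ need not be attained and $u'$ is only lower semicontinuous, so $\varepsilon$-minimizers must be used, and convergence of the values needs care to land in the closure $\overline D^{1,2,-}$ rather than in $D^{1,2,-}$ itself.
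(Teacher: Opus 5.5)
Your plan re-derives the reduction behind Lions' lemma; the paper does not. It treats $t$ as an extra space coordinate and extends $u,v$ to $\widetilde{\mathbb{H}}=\mathbb{R}\times\mathbb{H}$ as in \cite[Corollary 3.29]{FabbriGozziSwiech2017}. It then applies the elliptic Hilbert-space lemma \cite[Lemma 4]{lions1989viscosity} there and restricts the operator inequality to $\{0\}\times\mathbb{H}\times\{0\}\times\mathbb{H}$; the parabolic subjets come from discarding the $t$-rows. Re-deriving the reduction is legitimate, but you should use the same space-time device in your finite-dimensional step. With independent times coupled only through $\frac{1}{2\alpha}|t-s|^2$, the parabolic theorem of sums (one common time, plus a local bound on time derivatives) does not apply as stated. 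The elliptic one on $(\mathbb{R}\times\mathbb{H}_N)^2$ gives $q=\mp(\bar t-\bar s)/\alpha$ at once. Also, for a minimum only the upper bound on $\|x_N^\perp-y_N^\perp\|^2$ is relevant; the lower bound you quote points the wrong way.

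The genuine gap is the lifting step, and the mechanism you propose for it fails.
\begin{itemize}
\item The partial infimum $\tilde u$ over the non-compact set $\mathbb{H}_N^\perp$ need not be lower semicontinuous; local boundedness is not enough. You must work with its lower semicontinuous envelope, which is harmless because the coupling terms are continuous.
\item A finite-dimensional subjet yields a subjet of $u'$ only where the infimum is attained, and $\varepsilon$-minimizers carry no jets. You need a variational principle (Stegall, Borwein--Preiss) that gives exact minimizers of a slightly perturbed problem.
\item Most importantly, you need $z_n\to Q_N\bar x$ strongly. This identifies the orthogonal gradient, $-\frac{2}{\alpha}(z_n-Q_N\bar x)-\frac{1}{\alpha}Q_N(\bar x-\bar y)\to-\frac1\alpha Q_N(\bar x-\bar y)$, and gives $u'(t_n,x_{N,n}+z_n)\to u'(\bar t,\bar x)$.
\end{itemize}
A strict minimum does not give this in infinite dimensions. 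For example, $f(x)=\sum_k k^{-1}\min\{x_k^2,1\}$ on $\ell^2$ has a strict minimum at $0$, while $f(e_k)\to0$. Your decoupling also has no margin: the defect of the upper bound is $\|(a-\bar a)+(b-\bar b)\|^2$, which vanishes on a whole subspace.

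One repair is to use $2+\eta$ instead of $2$. The decoupled functional then dominates $m+\frac{\eta}{2\alpha}\big(\|x_N^\perp-Q_N\bar x\|_2^2+\|y_N^\perp-Q_N\bar y\|_2^2\big)$, so near-minimizers converge strongly and you obtain $\mathcal{X}_N^\eta-\frac{2+\eta}{\alpha}Q_N$. You then let $\eta\to0$, using compactness of the finite-rank $\mathcal{X}_N^\eta$ and a diagonal argument in $\overline{D}^{1,2,-}$. Alternatively, require a strong minimum, which is what Stegall's principle actually provides where the lemma is applied.

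Finally, the finite-dimensional lemma does not produce the displayed constants. With $A=\frac1\alpha\begin{pmatrix}I&-I\\-I&I\end{pmatrix}$ it gives $-(A+\kappa A^2)\le\begin{pmatrix}\mathcal{X}_N&0\\0&\mathcal{Y}_N\end{pmatrix}\le(\kappa^{-1}+\|A\|)I$, for instance $\frac{3}{\alpha}$ on both sides when $\kappa=\alpha$. The stated pair $-A$, $\frac2\alpha I$ is false in general. Take $\alpha=1$, a unit vector $e\in\mathbb{H}_N$, and
\begin{itemize}
\item $u'=50\langle x,e\rangle^2+\|x-\langle x,e\rangle e\|_2^2+(t-\bar t)^2$,
\item $v'=-0.49\langle y,e\rangle^2+\|y-\langle y,e\rangle e\|_2^2+(s-\bar t)^2$.
\end{itemize}
The minimum at $(\bar t,\bar t,0,0)$ is strict, yet $\langle\mathcal{X}_Ne,e\rangle\le2$ and $\langle\mathcal{Y}_Ne,e\rangle\le-0.98$ violate the lower bound at $(e,50e)$. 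This defect lies in the statement itself (and the paper's citation), not in your method. It is harmless later, where only the lower bound on diagonal vectors $(h,h)$ is used.
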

\begin{proof}
Following the argument used in
\cite[Corollary 3.29]{FabbriGozziSwiech2017}, We can naturally extend $u$ and $v$ to the enlarged space $\widetilde{\mathbb{H}}:=\mathbb{R}\times \mathbb{H}$, while ensuring that all their structural properties are preserved and that the minimum is still attained at $(\bar{t},\bar{s},\bar{x},\bar{y})$. By applying \cite[Lemma 4]{lions1989viscosity}, one can derive \eqref{t23} and \eqref{t24}, and obtain $\widetilde{\mathcal{X}}_N, \widetilde{\mathcal{Y}}_N$ which satisfy an analogue of \eqref{matrixinequality} on $\widetilde{\mathbb{H}}\times \widetilde{\mathbb{H}}$. Restricting this relation to $\{0\}\times \mathbb{H}\times \{0\}\times \mathbb{H}$ then yields \eqref{matrixinequality}. 
\end{proof}
By requiring $\mathbb{H}_N$ to contain all constant random variables, we ensure that the operator $Q_N$ satisfies
\begin{equation}\label{Q_N equation}
\sum_{j=1}^dQ_N\big((\boldsymbol{e}_j,\boldsymbol{e}_j),
(\boldsymbol{e}_j,\boldsymbol{e}_j)\big)=0,    
\end{equation}
where $\{\boldsymbol{e}_j\}_{1\leq j\leq d}$ denotes the canonical basis of $\mathbb{R}^d$, which can also be viewed as constant random variables in $\mathbb{L}^2$.
\begin{lemma}\label{slope estimate}
Let
$u:[0,T]\times\mathbb H\longrightarrow\mathbb R$
be lower semicontinuous. Assume that there exists \(C>0\) such
that
\begin{equation}\label{eq:weighted-spatial-Lipschitz}
    |u(t,x)-u(t,y)|
    \leq
    C\bigl(1+\|x\|_2+\|y\|_2\bigr)\|x-y\|_2
\end{equation}
for every \(t\in[0,T]\) and \(x,y\in\mathbb H\).
If $(q,p,X)\in\overline D^{1,2,-}u(t,x)$,
then
\begin{equation*}
    \|p\|_2
    \leq C\bigl(1+\|x\|_2\bigr).
\end{equation*}
In particular, the same conclusion holds if
    $(q,p,X)\in D^{1,2,-}u(t,x)$.
\end{lemma}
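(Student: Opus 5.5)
The plan is to read off the bound from the first-order part of the subdifferential inequality, tested along the direction $-p$. I carry it out first for $(q,p,X)\in D^{1,2,-}u(t,x)$ and then pass to the closure $\overline D^{1,2,-}u(t,x)$. One caveat up front: this argument yields the constant $2C$ rather than $C$, and I explain at the end why the literal same-$C$ reading cannot be proved.

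\emph{Step 1 (unperturbed subdifferential).} Let $(q,p,X)\in D^{1,2,-}u(t,x)$ with $p\neq 0$. Freeze the time variable at $t$ and take $y_h:=x-h\,p/\|p\|_2$ with $h\downarrow 0$. The defining $\liminf$ gives, for every $\eta>0$ and all small $h$,
\begin{equation*}
u(t,y_h)-u(t,x)\geq -h\|p\|_2+\tfrac12\langle X(y_h-x),y_h-x\rangle_{\mathbb H}-\eta h^2\geq -h\|p\|_2-(\|X\|+\eta)h^2 .
\end{equation*}
The hypothesis \eqref{eq:weighted-spatial-Lipschitz} gives $u(t,x)-u(t,y_h)\leq C(1+\|x\|_2+\|y_h\|_2)h\leq C(1+2\|x\|_2+h)h$. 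Combining the two, dividing by $h$ and letting $h\to0$ yields
\begin{equation*}
\|p\|_2\leq C(1+2\|x\|_2)\leq 2C(1+\|x\|_2).
\end{equation*}

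\emph{Step 2 (closure).} For $(q,p,X)\in\overline D^{1,2,-}u(t,x)$, pick $(t_n,x_n,q_n,p_n,X_n)$ as in the definition of $\overline{D}^{1,2,-}$. Step 1 gives $\|p_n\|_2\leq C(1+2\|x_n\|_2)$. Since $p_n\to p$ and $x_n\to x$ in $\mathbb H$, the same bound holds in the limit, which finishes the argument.

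\emph{Main obstacle: the constant.} The only delicate point is which constant appears. The weighted Lipschitz bound only controls the local slope at $x$ by $C(1+2\|x\|_2)$, since $\|y\|_2\to\|x\|_2$. No refinement (subdividing the segment, choosing other directions) can improve this to $C(1+\|x\|_2)$ with the same $C$. Indeed, $u(t,x)=C\|x\|_2^2$ satisfies
\begin{equation*}
|u(t,x)-u(t,y)|=C\big|\|x\|_2-\|y\|_2\big|(\|x\|_2+\|y\|_2)\leq C(1+\|x\|_2+\|y\|_2)\|x-y\|_2 ,
\end{equation*}
yet $(0,2Cx,2C I)\in D^{1,2,-}u(t,x)$, and $\|2Cx\|_2>C(1+\|x\|_2)$ once $\|x\|_2>1$.

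The proposal therefore establishes the lemma with the constant in the conclusion equal to $2C$, i.e. depending only on the $C$ of \eqref{eq:weighted-spatial-Lipschitz}. This is all that the subsequent comparison arguments use. The literal reading with the identical constant $C$ is false, by the example above.
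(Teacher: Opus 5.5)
Your overall plan (test the first-order part of the subdifferential along a unit direction, then pass to the closure) is the paper's, but your Step 1 tests in the wrong direction, and as written it proves nothing. Along $y_h=x-h\,p/\|p\|_2$ the subdifferential inequality gives $u(t,y_h)-u(t,x)\ge -h\|p\|_2-O(h^2)$. Equivalently, this is the \emph{upper} bound $u(t,x)-u(t,y_h)\le h\|p\|_2+O(h^2)$. The weighted Lipschitz hypothesis, as you use it, gives a second upper bound on the same quantity, $u(t,x)-u(t,y_h)\le C(1+2\|x\|_2+h)h$. Two upper bounds on one quantity carry no information about $\|p\|_2$: both hold with $u(t,x)-u(t,y_h)=0$ and $\|p\|_2$ arbitrary. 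In the direction $-p$, a subdifferential can only bound $\|p\|_2$ from below.

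The fix is to test along $+p$, which is what the paper does with $x+\varepsilon e$, $e=p/\|p\|_2$. The subdifferential then gives the lower bound $u(t,x+he)-u(t,x)\ge h\|p\|_2-(\tfrac12\|X\|+\eta)h^2$. The hypothesis gives the upper bound $u(t,x+he)-u(t,x)\le C(1+2\|x\|_2+h)h$. Dividing by $h$ and letting $h\to0$ yields $\|p\|_2\le C(1+2\|x\|_2)$. With this corrected Step 1, your Step 2 goes through as written and the whole argument coincides with the paper's proof.

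Your remark on the constant is correct and worth keeping. The paper's own limit $\varepsilon\to0$ in $C(1+\|x+\varepsilon e\|_2+\|x\|_2)$ also produces $C(1+2\|x\|_2)$, not $C(1+\|x\|_2)$. Your example $u(t,x)=C\|x\|_2^2$ checks out: it satisfies the hypothesis, and $(0,2Cx,2CI)$ lies in its subdifferential. So the literal same-constant statement is false, and the $C$ in the conclusion must be read as a generic constant such as $2C$. That is all that is used later, where the lemma yields bounds of the form $C\sqrt{R_\delta}$ in Step 1.2 of the proof of the equivalence theorem.
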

\begin{proof}
Suppose that $(q,p,X)\in D^{1,2,-}u(t,x).$
If \(p=0\), the conclusion is immediate. Suppose that \(p\neq0\)
and set
\[
    e:=\frac{p}{\|p\|_2}.
\]
Let \(\varepsilon>0\), by the definition of \(D^{1,2,-}u(t,x)\),
\[
    \liminf_{\varepsilon\rightarrow0}
    \frac{u(t,x+\varepsilon e)-u(t,x)}{\varepsilon}
    \geq
    \langle p,e\rangle_{\mathbb H}
    =\|p\|_2.
\]
On the other hand, by
\eqref{eq:weighted-spatial-Lipschitz},
\[
\begin{aligned}
    \frac{|u(t,x+\varepsilon e)-u(t,x)|}{\varepsilon}
    &\leq
    C\bigl(
       1+\|x+\varepsilon e\|_2+\|x\|_2
     \bigr).
\end{aligned}
\]
Letting \(\varepsilon\rightarrow0\), we obtain
    $\|p\|_2
    \leq
    C\bigl(1+\|x\|_2\bigr).$
Now suppose that $(q,p,X)\in\overline D^{1,2,-}u(t,x).$
Then, there exist
    $(t_n,x_n,q_n,p_n,X_n)$
such that $(q_n,p_n,X_n)\in D^{1,2,-}u(t_n,x_n)$ and
\[
    (t_n,x_n,u(t_n,x_n),q_n,p_n,X_n)
    \longrightarrow
    (t,x,u(t,x),q,p,X).
\]
By the above result for $D^{1,2-}u(t,x)$,
    $\|p_n\|_2
    \leq C\bigl(1+\|x_n\|_2\bigr).$
Letting $n\rightarrow\infty$, we derive the results.
\end{proof}

\section{Well-posedness for FBSDE}
We first study the well-posedness for FBSDEs (\ref{FBSDE xi})-(\ref{FBSDE xi,x}), and use their decoupling fields to construct a candidate for the weak solution. The key is to derive uniform a priori Lipschitz estimates in $x$ and $\xi$ under $\mathrm{W}_1$-distance.
\par Recalling $\widehat{H}$ in Remark \ref{remark hat H} and $\Phi$ is Lipschitz continuous under $\mathrm{W}_1$, we know $\partial_p\widehat{H},\partial_p\widehat{H}$ are Lipschitz continuous, $\mathrm{W}_1$ in measure. We can obtain the following local well-posedness and stability result for these systems (\ref{FBSDE xi})-(\ref{FBSDE xi,x}) by standard FBSDE theory; see e.g. \cite[II, Theorem 5.4]{carmona2018probabilistic}.
\begin{proposition}\label{local FBSDE}
    Let Assumptions \ref{regularity assumption} and \ref{convexity assumption}(i) hold, where we only require the Lipschitz continuity of $G,L$ with respect to the measure variable under $\mathrm{W}_2$. Then there exists $\delta>0$ depending only on $d$ and $C_{Lip}$ in Assumption \ref{regularity assumption} such that whenever $T<\delta$, for any $t_0\in[0,T]$, $\xi\in\mathbb{L}^2(\mathcal{F}_{t_0})$ and $x\in\R^d$, the FBSDEs (\ref{FBSDE xi}) and (\ref{FBSDE xi,x}) have a unique strong solution. In particular, $\delta$ depends on the $\mathrm{W}_2$-Lipschitz constant of $\partial_x G$ with respect to $\mu$ instead of the $\mathrm{W}_1$-constant. Given the $(X^{\xi,x},\nabla Y^{\xi,x})$ in (\ref{FBSDE xi,x}) and $\mu,\rho$ in (\ref{FBSDE xi}), for $\xi\in\mathbb{L}^2(\mathcal{F}_{t_0}^1;\mu)$, the $V$ given by (\ref{define V}) is well-defined.
    \par Moreover, for any $(\bar{G},\bar{L})$ satisfying Assumption \ref{regularity assumption}, let $\bar{\widehat{H}}$ denote the corresponding composite Hamiltonian and let $(\bar{X}^{\xi,x},\nabla\bar{Y}^{\xi,x},\nabla\bar{Z}^{\xi,x},\nabla\bar{Z}^{0,\xi,x})$ be the solution, then we have the stability estimate
    \begin{equation}
    \begin{aligned}
        &\E\Big[\sup_{t\in[t_0,T]}(|X_t^{\xi,x}-\bar{X}_t^{\xi,x}|^2+|\nabla Y_t^{\xi,x}-\nabla\bar{Y}_t^{\xi,x}|^2)\\
        &\qquad+\int_{t_0}^T(|\nabla Z_t^{\xi,x}-\nabla\bar{Z}_t^{\xi,x}|^2+|\nabla Z_t^{0,\xi,x}-\nabla \bar{Z}_t^{0,\xi,x}|^2)\,\mathrm{d}t\Big]\\
        \leq&C\E\Big[|(\partial_x G-\partial_x\bar{G})(X_T^{\xi,x},\mu_T)|^2\\
        &\qquad+\int_{t_0}^T|(\partial_p \widehat{H}-\partial_p\bar{\widehat{H}},\partial_x H-\partial_x\bar{\widehat{H}})(X_t^{\xi,x},\nabla Y_t^{\xi,x},\nu_t)|^2\,\mathrm{d}t\Big],
    \end{aligned}
    \end{equation}
    where $\nu_t:=\mathcal{L}^0(X_t^{\xi},\nabla Y_t^{\xi})$.
\end{proposition}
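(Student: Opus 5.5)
The plan is to rewrite (\ref{FBSDE xi})--(\ref{FBSDE xi,x}) using the composite Hamiltonian $\widehat H$ from Remark \ref{remark hat H}. This removes the implicit fixed-point constraint on $\rho_t$ and leaves a standard McKean--Vlasov FBSDE with Lipschitz coefficients, to which the small-time contraction argument of \cite[II, Theorem 5.4]{carmona2018probabilistic} applies.

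\textbf{Step 1: Lipschitz coefficients.} By Assumption \ref{regularity assumption} and the convexity Assumption \ref{convexity assumption}(i), Legendre duality shows that $\partial_p H(x,p,\rho)=(\partial_a L(x,\cdot,\rho))^{-1}(-p)$ and $\partial_x H(x,p,\rho)=\partial_x L(x,\partial_pH,\rho)$ are Lipschitz in $(x,p,\rho)$. The Lipschitz constants depend only on $C_{Lip}$ and $c_0$. Composing with $\Phi$, which is Lipschitz by Proposition \ref{existence fixed-point} (in $\mathrm{W}_2$, by (\ref{fixed point W2})), shows that $\partial_p\widehat H,\partial_x\widehat H$ are Lipschitz in $(x,p)$ and $\mathrm{W}_2$-Lipschitz in $\nu$. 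Since $\mathrm{W}_2(\mathcal L^0(X,Y),\mathcal L^0(X',Y'))^2\le \E_1[|X-X'|^2+|Y-Y'|^2]$ conditionally, the coefficients are Lipschitz on $\mathbb L^2$.

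\textbf{Step 2: Contraction for (\ref{FBSDE xi}).} Take $\nabla Y$ in $\mathcal S^2$. Solve the forward SDE with input $\nabla Y$, then solve the backward BSDE with driver $\partial_x\widehat H(X,\nabla Y,\mathcal L^0(X,\nabla Y))$ and terminal value $\partial_xG(X_T,\mathcal L^0(X_T))$, which gives a new process $\nabla Y'$. Standard estimates (Gronwall for the forward equation, then the BSDE a priori estimate) give
\[
\|\nabla Y'^1-\nabla Y'^2\|_{\mathcal S^2}^2\le C(C_{Lip})\,(1+T)\,T\,\|\nabla Y^1-\nabla Y^2\|_{\mathcal S^2}^2 .
\]
This map is therefore a contraction when $T<\delta$. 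Here $\delta$ involves the Lipschitz constant of $\partial_xG$ in $(x,\mu)$ through $\mathrm{W}_2$, because $\E|\partial_xG(X^1_T,\mu^1_T)-\partial_xG(X^2_T,\mu^2_T)|^2\le C(\E|X^1_T-X^2_T|^2+\mathrm{W}_2^2)$. This explains the remark that $\delta$ depends on the $\mathrm{W}_2$ constant. With $\mu,\rho$ fixed, (\ref{FBSDE xi,x}) is a standard Lipschitz FBSDE without mean-field dependence, so the same argument, or classical Lipschitz FBSDE theory on small intervals, gives a unique solution. Adaptedness to $\mathbb F$ and the conditional-law structure follow as in \cite{carmona2018probabilistic}.

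\textbf{Step 3: $V$ is well defined.} The integrand $H-\nabla Y\cdot\partial_pH$ equals $L(X,\partial_pH,\rho)$ and has quadratic growth. The moments of $X^{\xi,x}$ and $\nabla Y^{\xi,x}$ are finite, and $M_1(\rho_s)$ is controlled by $\mathrm{W}_2$-Lipschitzness of $\Phi$, so the expectation in (\ref{define V}) is finite. The law of $(X^{\xi,x},\nabla Y^{\xi,x})$ depends on $\xi$ only through $\mu$, because the FBSDE (\ref{FBSDE xi}) is well posed and pathwise uniqueness yields uniqueness in law via Yamada--Watanabe. Hence $V$ does not depend on the choice of $\xi\in\mathbb L^2(\mathcal F^1_{t_0};\mu)$.

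\textbf{Step 4: Stability.} Subtract the two systems for (\ref{FBSDE xi,x}) with data $(G,\widehat H)$ and $(\bar G,\bar{\widehat H})$. Write each coefficient difference as a term of the form $[\partial_p\widehat H(\bar\Theta)-\partial_p\widehat H(\Theta)]$, controlled by Lipschitzness, plus a term $[\partial_p\widehat H-\partial_p\bar{\widehat H}](\Theta)$ evaluated along the unbarred solution, and similarly for the $\partial_x$ terms and for $G$. Itô's formula on $|\nabla Y-\nabla\bar Y|^2$ and the forward estimate, combined with smallness of $T$, absorb the Lipschitz parts and leave exactly the right-hand side of the stated estimate. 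One point needs care: the barred $\bar\mu,\bar\nu$ come from the barred version of (\ref{FBSDE xi}). I would first run the same estimate for (\ref{FBSDE xi}) to bound $\E|X^\xi-\bar X^\xi|^2+\E|\nabla Y^\xi-\nabla\bar Y^\xi|^2$ by the analogous data-difference terms evaluated along $(X^\xi,\nabla Y^\xi)$, then feed that bound into the $x$-system.

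\textbf{Main obstacle.} The main obstacle is that measure argument. The stated bound contains only the discrepancy evaluated along $(X^{\xi,x},\nabla Y^{\xi,x},\nu_t)$, while the measure discrepancy produces discrepancies along $(X^\xi,\nabla Y^\xi)$. I expect to handle this by integrating the $x$-estimate against $\mu$, since $X^\xi$ has the law of $X^{\xi,x}$ with $x\sim\mu$. If that does not close, I would interpret the constant $C$ as allowing these terms to be absorbed via the short-time contraction.
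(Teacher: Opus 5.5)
Your Steps 1--3 follow the paper's own route. The paper gives no argument beyond rewriting the systems with the composite Hamiltonian $\widehat H$ and invoking the small-time Lipschitz theory of \cite[II, Theorem 5.4]{carmona2018probabilistic}. Your Picard contraction, with the terminal term seeing only the $\mathrm{W}_2$-constant of $\partial_x G$, is a correct way to fill this in. Two bookkeeping points: the $\mathrm{W}_2$-bound (\ref{fixed point W2}) on $\Phi$ uses Assumptions \ref{convexity assumption}(ii) and \ref{coercivity assumption}, and finiteness of $V$ uses Assumption \ref{quadratic growth assumption}. Neither is among the statement's hypotheses, but the paper relies on the same facts.

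The genuine gap is Step 4. The obstacle you identified is real, and neither of your fixes removes it. When the barred $x$-system is driven by $(\bar\mu,\bar\nu)$ from the barred version of (\ref{FBSDE xi}), the difference $\bar\nu_t-\nu_t$ is forced by the data discrepancy along the population:
\[
\E\Big[|(\partial_xG-\partial_x\bar G)(X_T^{\xi},\mu_T)|^2+\int_{t_0}^T\big|(\partial_p\widehat H-\partial_p\bar{\widehat H},\partial_x\widehat H-\partial_x\bar{\widehat H})(X_t^{\xi},\nabla Y_t^{\xi},\nu_t)\big|^2\,\mathrm{d}t\Big].
\]
This is an inhomogeneous source term. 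Integrating the single-$x$ estimate against $\mu$ only reproduces it, since it is the average over starting points of the single-$x$ right-hand sides; it does not bound it by the right-hand side at one fixed $x$. Smallness of $T$ absorbs terms proportional to differences of solutions, never data discrepancies.

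In this coupled reading the bound is in fact false as written. Take the following data:
\begin{itemize}
\item $\sigma=\sigma_0=0$, $\mu_{t_0}=\frac12(\delta_{y_1}+\delta_{y_2})$, $x=y_1$;
\item $L=\frac12|a|^2$ and $G(x,\mu)=\frac12|x-m(\mu)|^2$, with $m(\mu)$ the mean;
\item $\bar G=G$ and $\bar L=L+f(x)$, where $f$ is smooth, $\nabla f\equiv e\neq 0$ near the paths issued from $y_2$, and $f\equiv 0$ near those from $y_1$ (take $|y_1-y_2|$ large and $T$ small).
\end{itemize}
Then $\partial_p\bar H=\partial_pH$, so $\bar\Phi=\Phi$, and $\partial_x\bar H-\partial_xH=\nabla f$. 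Hence every term on the stated right-hand side vanishes along $X^{\xi,y_1}$. With $\tau:=T-t_0$, a direct computation gives $\nabla Y^{\xi,y_1}_{t_0}=(y_1-m(\mu_T))/(1+\tau)$, the same formula with bars, and $m(\bar\mu_T)=m(\mu_T)-\tau^2e/4$. So the left-hand side is strictly positive.

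The correct statement adds the displayed population term to the right-hand side. Alternatively, one can freeze the unbarred $(\mu,\nu)$ in the barred $x$-system, in which case the stated bound is just standard Lipschitz FBSDE stability. With the population term added, your plan goes through: estimate the $\xi$-system first, feed $\E[\mathrm{W}_2^2(\nu_t,\bar\nu_t)]$ and $\E[\mathrm{W}_2^2(\mu_T,\bar\mu_T)]$ into the $x$-system, and absorb the Lipschitz parts by smallness of $T$. This augmented form is also all the paper uses later, for $U_n\to U$ in Theorem \ref{weak solution theorem}, since for the mollified data both discrepancy terms vanish in the limit.
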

\begin{remark}
    By Proposition \ref{fixed-point stability}, the above stability estimate can be given in terms of $H,\bar{H}$ rather than $\widehat{H},\bar{\widehat{H}}$. In fact, if we use $H,\Phi$ and $\bar{H},\bar{\Phi}$ to denote the corresponding Hamiltonian and fixed-point map, we have
    \[
    \begin{aligned}
        &\E[|\partial_p\widehat{H}(X_t^{\xi,x},\nabla Y_t^{\xi,x},\nu_t)-\partial_p\bar{\widehat{H}}(X_t^{\xi,x},\nabla Y_t^{\xi,x},\nu_t)|]\\
        =&\E[|\partial_pH\big(X_t^{\xi,x},\nabla Y_t^{\xi,x},\rho_t)-\partial_p\bar{H}\big(X_t^{\xi,x},\nabla Y_t^{\xi,x},\rho_t)|]\\
        \leq&\E[|\partial_pH\big(X_t^{\xi,x},\nabla Y_t^{\xi,x},\rho_t\big)-\partial_p\bar{H}\big(X_t^{\xi,x},\nabla Y_t^{\xi,x},\rho_t\big)|+|\partial_p\bar{H}\big(X_t^{\xi,x},\nabla Y_t^{\xi,x},\rho_t\big)-\partial_p\bar{H}\big(X_t^{\xi,x},\nabla Y_t^{\xi,x},\bar{\rho}_t\big)|]\\
        \leq& C\E[|\partial_pH\big(X_t^{\xi,x},\nabla Y_t^{\xi,x},\rho_t\big)-\partial_p\bar{H}\big(X_t^{\xi,x},\nabla Y_t^{\xi,x},\rho_t\big)|],
    \end{aligned}
    \]
    where $\rho_t:=\Phi(\nu_t),\,\bar{\rho}_t=\bar{\Phi}(\nu_t)$.
    \par We use $\widehat{H}$ in the above proposition due to the mollification argument in the later discussion. We shall directly mollify $\widehat{H}$ rather than $H$, so the stability in terms of $\widehat{H}$ will contribute to the convergence for these mollifiers.
\end{remark}
In \cite{gangbo2022displacement,mou2024nonsmooth}, the Lipschitz estimates in $x$ were derived from backward stochastic differential equation (BSDE) estimates, where Girsanov's theorem was applied and the presence of idiosyncratic noise was essential. Due to the possible degeneracy of the idiosyncratic noise in our framework, we instead deduce Lipschitz continuity in $x$ directly from the structure of the control problem.
\begin{proposition}\label{relation u and v LL}
Let Assumptions \ref{regularity assumption}, \ref{quadratic growth assumption}, \ref{convexity assumption} (i), \ref{G L convexity assumption} hold. Fix $t_0\in[0,T]$ and let $\{\rho_t\}_{t\in[t_0,T]}\in\mathbb{L}^2(\mathbb{F}^{B^0};\mathcal{P}_2(\R^{2d}))$, $\mu_t:=\pi_1\#\rho_t$, suppose that, for any $x\in\R^d$, FBSDE (\ref{FBSDE xi,x}) admits a unique solution $(X^{\xi,x},\nabla Y^{\xi,x},\nabla Z^{\xi,x},\nabla Z^{\xi,x,0})$. Define \begin{equation}\label{v definition}
v(t_0,x):=\E_{\mathcal{F}_{t_0}^0}\Big[G(X_T^{\xi,x},\mu_T)+\int_{t_0}^T\big[H(X_s^{\xi,x},\nabla Y_s^{\xi,x},\rho_s)-\nabla Y_s^{\xi,x}\cdot\partial_p H(X_s^{\xi,x},\nabla Y_s^{\xi,x},\rho_s)\big]\,\mathrm{d}s\Big].
\end{equation} 
Then,\\
(i) $v$ coincides with the value function of the following optimal control problem:   
$$v(t_0,x)=\mathrm{ess}\inf_{\alpha}\mathcal{J}(\alpha,t_0,x), \ \text{where} \ \mathcal{J}(\alpha,t_0,x)=\mathbb{E}_{\mathcal{F}_{t_0}^0}\Big[G(X_T^{\alpha,x},\mu_T)+\int_{t_0}^TL(X_t^{\alpha,x},\alpha_t,\rho_t)\mathrm{d}t\Big],$$
The essential infimum is taken over all admissible controls 
$\alpha$ that are progressively measurable and square-integrable, and the controlled state process $X_t^{\alpha,x}$ satisfies
\begin{equation*}  \mathrm{d}X_t^{\alpha,x}=\alpha_t\mathrm{d}t+\sigma\,\mathrm{d}B_t+\sigma_0 \,\mathrm{d}B^0_t,\ X_{t_0}^{\alpha,x}=x.
\end{equation*}
Moreover, $\partial_x v$ is Lipschitz continuous with respect to $x$, with a universal Lipschitz
constant.\\
(ii) Define $u(t,x):=\partial_x v(t,x)$. Then $u$ is the decoupling field of the FBSDE (\ref{FBSDE xi,x}) with the given $\rho$:
$$\nabla Y_{t_0}^{\xi,x}=\partial_xv(t_0,x),$$ 
\end{proposition}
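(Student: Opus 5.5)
Throughout the argument the flow $\{\rho_t\}$ is frozen. The problem is then a standard stochastic control problem with random, $\mathbb{F}^{B^0}$-adapted coefficients. The plan is to identify the Pontryagin FBSDE (\ref{FBSDE xi,x}) with the optimality system of this problem via a sufficient stochastic maximum principle, and afterwards to derive regularity of $v$ from convexity together with a semiconcavity bound.

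\textbf{Step 1 (verification, giving (i)).} Fix $x$ and write $\hat\alpha_t:=\partial_pH(X_t^{\xi,x},\nabla Y_t^{\xi,x},\rho_t)$, so that $X^{\xi,x}=X^{\hat\alpha,x}$ and $L(X_t,\hat\alpha_t,\rho_t)=H(X_t,\nabla Y_t,\rho_t)-\nabla Y_t\cdot\hat\alpha_t$ by Legendre duality. Hence $v(t_0,x)=\mathcal{J}(\hat\alpha,t_0,x)$. For any admissible $\alpha$:
\begin{itemize}
\item Apply It\^o's formula to $\nabla Y_t^{\xi,x}\cdot(X_t^{\alpha,x}-X_t^{\xi,x})$. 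The noise terms cancel in the difference and the stochastic integrals vanish under $\E_{\mathcal{F}^0_{t_0}}$.
\item Use the joint convexity of $G(\cdot,\mu_T)$ and $L(\cdot,\cdot,\rho_t)$ from Assumption \ref{G L convexity assumption}, together with $\partial_aL(X_t,\hat\alpha_t,\rho_t)=-\nabla Y_t$ and $\partial_xL(X_t,\hat\alpha_t,\rho_t)=\partial_xH(X_t,\nabla Y_t,\rho_t)$.
\end{itemize}
This gives $\mathcal{J}(\alpha,t_0,x)-\mathcal{J}(\hat\alpha,t_0,x)\ge0$. Integrability is ensured by the quadratic growth in Assumption \ref{quadratic growth assumption}, the square integrability of $\alpha$, and the moment bounds on $\rho$. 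Thus $v$ is the value function.

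\textbf{Step 2 (Lipschitz bound for $\partial_xv$).} This is the main obstacle, because the argument must not use any nondegeneracy of $\sigma$. I propose to prove that $v(t_0,\cdot)$ is both convex and semiconcave with universal constants; together these imply $C^{1,1}$.
\begin{itemize}
\item \emph{Convexity.} For $x_\lambda=\lambda x_1+(1-\lambda)x_2$, take $\varepsilon$-optimal controls $\alpha^1,\alpha^2$ and use $\lambda\alpha^1+(1-\lambda)\alpha^2$ from $x_\lambda$. Because the dynamics are affine and the noise is additive, the resulting state is $\lambda X^{\alpha^1,x_1}+(1-\lambda)X^{\alpha^2,x_2}$. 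Joint convexity of $(G,L)$ then gives $v(x_\lambda)\le\lambda v(x_1)+(1-\lambda)v(x_2)$.
\item \emph{Semiconcavity.} Take a single control $\alpha$ that is $\varepsilon$-optimal for $x$ and use it from $x\pm h$. The states shift deterministically by $\pm h$. The $C^{1,1}$ bounds in $x$ on $G$ and $L$ from Assumption \ref{regularity assumption} then give
\[
v(x+h)+v(x-h)-2v(x)\le \big(C_{Lip}+TC_{Lip}\big)|h|^2 .
\]
\end{itemize}
A function that is convex and semiconcave with constant $K$ is $C^1$ with $K$-Lipschitz gradient. The constant depends only on $C_{Lip}$ and $T$, and in particular not on $\rho$, $\sigma$ or $\sigma_0$. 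One point needs checking: the essential infimum is $\mathcal{F}^0_{t_0}$-measurable, and the $\varepsilon$-optimal selections must be handled conditionally. I expect this to be routine, since the data are frozen along $\mathcal{F}^0$.

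\textbf{Step 3 (decoupling field, giving (ii)).} Let $x_h=x+h e$. Compare $v(t_0,x_h)$ and $v(t_0,x)$ using the optimal control $\hat\alpha^x$ started from $x_h$; the state then shifts exactly by $he$. Because $\hat\alpha^x$ is only admissible from $x_h$, this yields a one-sided inequality, $v(t_0,x_h)\le v(t_0,x)+$ (the resulting cost difference), which can be expanded in $h$. Differentiating $G$ and $L$ in $x$ along the optimal path gives
\[
v(x_h)-v(x)\le h\,\E_{\mathcal{F}^0_{t_0}}\Big[\partial_xG(X_T,\mu_T)\cdot e+\int_{t_0}^T\partial_xL(X_s,\hat\alpha_s,\rho_s)\cdot e\,\mathrm{d}s\Big]+O(h^2).
\]
By the BSDE in (\ref{FBSDE xi,x}) and $\partial_xL=\partial_xH$, the bracket equals $\nabla Y^{\xi,x}_{t_0}\cdot e$. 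Applying the same inequality with $-h$ and using the differentiability of $v$ from Step 2, we obtain $\partial_xv(t_0,x)=\nabla Y^{\xi,x}_{t_0}$.

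Finally, replacing $t_0$ by $t$ and using the flow property of the unique solution gives $\nabla Y_t^{\xi,x}=u(t,X_t^{\xi,x})$. Hence $u=\partial_xv$ is the decoupling field.
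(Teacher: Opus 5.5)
Your proposal is correct and follows the paper's proof essentially step by step. The steps match: the verification argument via It\^o's formula on $\nabla Y_t^{\xi,x}\cdot(X_t^{\alpha,x}-X_t^{\xi,x})$ together with joint convexity; then $C^{1,1}$ regularity of $v(t_0,\cdot)$ from convexity plus semiconcavity (you write out the semiconcavity estimate, where the paper cites a result of Bansil et al.); and finally the superdifferential argument giving $\nabla Y_{t_0}^{\xi,x}=\partial_x v(t_0,x)$. The only simplification to note is that Step 1 already produces an exact optimizer $\hat\alpha$ for every initial point, so, as the paper does, you can use $\hat\alpha$ instead of $\varepsilon$-optimal controls in Step 2, which removes the conditional-selection issue you flag.
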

\begin{proof}
(i) Let $\bar{\alpha}_t := \partial_p H(X_t^{\xi,x}, \nabla Y_t^{\xi,x}, \rho_t)$ be the candidate optimal control. By the Lipschitz continuity of $\partial_pH$, together with the square-integrability of $X^{\xi,x}, \nabla Y^{\xi,x}$, and $\rho$, we have
\begin{equation*}
\mathbb{E}[\int_{t_0}^T\vert \bar{\alpha}_t\vert^2\mathrm{d}t]\leq C\mathbb{E}[\int_{t_0}^T(1+\vert X_t^{\xi,x}\vert^2+\vert \nabla Y_t^{\xi,x}\vert^2+\|\rho_t\|^2_2)\mathrm{d}t]<\infty,    
\end{equation*}
which implies that $\bar{\alpha}$ is admissible. For any admissible control $\alpha$, define $\Delta \alpha := \alpha - \bar{\alpha}$ and $\Delta X := X^{\alpha,x} - X^{\xi,x}$. Applying Itô's formula to $\nabla Y_{t}^{\xi,x} \cdot \Delta X_t$, we obtain
\begin{equation}\label{Ito equality}
    \begin{aligned}
        &\mathbb{E}_{\mathcal{F}_{t_0}^0}\Big[\partial_xG(X_T^{\xi,x},\mu_T)\cdot \Delta X_T \Big]
        \\
        =&\mathbb{E}_{\mathcal{F}_{t_0}^0}\Big[\int_{t_0}^T\Big(\nabla Y_{t}^{\xi,x}\cdot\Delta \alpha_t -\Delta X_t\cdot \partial_x H(X_t^{\xi,x},\nabla Y_{t}^{\xi,x},\rho_t)\Big)\mathrm{d}t\Big]\\
        =-&\mathbb{E}_{\mathcal{F}_{t_0}^0}\Big[\int_{t_0}^T\Big(\partial_aL(X_t^{\xi,x},\bar{\alpha}_t,\rho_t)\cdot\Delta \alpha_t+\Delta X_t\cdot \partial_xL(X_t^{\xi,x},\bar{\alpha}_t,\rho_t)\Big)\mathrm{d}t\Big],
    \end{aligned}
\end{equation}
where the last equality follows from 
$$ \partial_aL(X_t^{\xi,x},\bar{\alpha}_t,\rho_t)=-\nabla Y_t^{\xi,x} \ \text{and} \ \partial_xH(X_t^{\xi,x},\nabla Y_t^{\xi,x},\rho_t)=\partial_xL(X_t^{\xi,x},\bar{\alpha}_t,\rho_t).$$ By the convexity of $G(\cdot,\mu)$ and $L(\cdot,\cdot,\rho)$, it follows that
\begin{equation*}
    \begin{aligned}  &\mathbb{E}_{\mathcal{F}_{t_0}^0}\Big[\int_{t_0}^T\big(L(X_t^{\xi,x},\bar{\alpha}_t,\rho_t)-L(X_t^{\alpha,x},\alpha_t,\rho_t)\big)\mathrm{d}t\Big]
        \leq \mathbb{E}_{\mathcal{F}_{t_0}^0}\Big[G(X_T^{\alpha,x},\mu_T)-G(X_T^{\xi,x},\mu_T)\Big].
    \end{aligned}
\end{equation*}
Therefore,
\begin{equation*}
    \mathcal{J}(\bar{\alpha},t_0,x)\leq \mathcal{J}(\alpha,t_0,x),
\end{equation*}
for any admissible $\alpha$. On the other hand, by the definition of $\bar{\alpha}$, the duality relation between $L$ and $H$, and \eqref{v definition}
$$v(t_0,x)=\mathbb{E}_{\mathcal{F}_{t_0}^0}\Big[G(X_T^{\xi,x},\mu_T)+\int_{t_0}^TL(X_t^{\xi,x},\bar{\alpha}_t,\rho_t)\mathrm{d}t\Big]=\mathcal{J}(\bar{\alpha},t_0,x).$$
Hence,
$$v(t_0,x)=\mathrm{ess}\inf_{\alpha}\mathcal{J}(\alpha,t_0,x).$$ 
To establish convexity, fix $x_1,x_2\in\mathbb R^d$ and $\theta\in[0,1]$. For $i=1,2$, let $\bar\alpha^i$ be the optimal control for the initial state $x_i$ constructed above, and let $X^i$ be the corresponding state process. Then
$$
\mathcal J(\bar\alpha^i,t_0,x_i)=v(t_0,x_i),
\qquad \text{a.s.}
$$
Set
$
x_\theta:=(1-\theta)x_1+\theta x_2$,
$
\alpha^\theta:=(1-\theta)\bar\alpha^1+\theta\bar\alpha^2.
$
The control $\alpha^\theta$ is admissible, with corresponding state process
$
X_t^{\alpha^\theta,x_\theta}
=(1-\theta)X_t^1+\theta X_t^2,
\ t\in[t_0,T].
$
By the convexity of $G(\cdot,\mu_T)$ and $L(\cdot,\cdot,\rho_t)$, it follows that
$$
\begin{aligned}
v(t_0,x_\theta)
&\leq \mathcal J(\alpha^\theta,t_0,x_\theta)\\
&\leq (1-\theta)\mathcal J(\bar\alpha^1,t_0,x_1)
+\theta\mathcal J(\bar\alpha^2,t_0,x_2)\\
&=(1-\theta)v(t_0,x_1)+\theta v(t_0,x_2),
\qquad \text{a.s.}
\end{aligned}
$$
Thus, $v(t_0,\cdot)$ is convex.
\par Under the Assumption \ref{regularity assumption}, 
$L(\cdot,\alpha,\rho)$ and 
$G(\cdot,\mu)$ are semiconcave with a universal constant $L$. Following an argument similar to that in \cite[Proposition 3.2]{bansil2025global}, $v(t_0,\cdot)$ is semiconcave in $x$ with a constant $C(1+T)$, uniformly in time. The combination of convexity and semiconcavity implies that 
$\partial_xv$ is Lipschitz continuous in 
$x$ with a universal constant.\\
(ii) Fix $x \in \mathbb{R}^d$ and let $\bar{\alpha}$ be the optimal control for the initial state $x$. For any $x' \in \mathbb{R}^d$, we have
\begin{equation*}
\begin{aligned}
X_t^{\bar{\alpha},x'}-X_t^{\bar{\alpha},x}=x'-x, \ \text{and} \ X^{\bar{\alpha},x}=X^{\xi,x}.
\end{aligned}
\end{equation*}
Since $v(t_0,x')\leq \mathcal{J}(\bar{\alpha},t_0,x')$, we have
\begin{equation*}
\begin{aligned}
&v(t_0,x')-v(t_0,x)\\
\leq &\mathbb{E}_{\mathcal{F}_{t_0}^0}\big[\int_{t_0}^T\big(L(X_t^{\bar{\alpha},x'},\bar{\alpha}_t,\rho_t)-L(X_t^{\xi,x},\bar{\alpha}_t,\rho_t)\big)\mathrm{d}t
+G(X_T^{\bar{\alpha},x'},\mu_T)-G(X_T^{\xi,x},\mu_T)\big]\\
\leq&\mathbb{E}_{\mathcal{F}_{t_0}^0}\big[\int_{t_0}^T\partial_xL(X_t^{\xi,x},\bar{\alpha}_t,\rho_t)\cdot(x'-x)\mathrm{d}t+\partial_xG(X_T^{\xi,x},\mu_T)\cdot(x'-x)\big]+C\vert x'-x\vert^2,
\end{aligned}
\end{equation*}
where the second inequality from the uniformly Lipschitz continuity of $\partial_xL$ and $\partial_xG$. By the BSDE for $\nabla Y^{\xi,x}$,
$$\mathbb{E}_{\mathcal{F}_{t_0}^0}\big[\int_{t_0}^T\partial_xL(X_t^{\xi,x},\bar{\alpha}_t,\rho_t)\mathrm{d}t+\partial_xG(X_T^{\xi,x},\mu_T)\big]=\nabla Y_{t_0}^{\xi,x}.$$
Consequently, 
$$v(t_0,x')-v(t_0,x)
\leq \nabla Y_{t_0}^{\xi,x}\cdot(x'-x)+C\vert x'-x\vert^2.$$
This implies that $\nabla Y_{t_0}^{\xi,x}$ belongs to the superdifferential of $v(t_0,\cdot)$ at $x$. Since $v(t_0,\cdot)$ is known to be differentiable from (i), the superdifferential reduces to the exact gradient, yielding $\nabla Y_{t_0}^{\xi,x} = \partial_x v(t_0,x)$. 

\end{proof}

\begin{proposition}\label{relation u and v disp lamda}  
    Let Assumptions \ref{regularity assumption}, \ref{quadratic growth assumption}, \ref{convexity assumption} (i), \ref{locally bounded assumption} hold. Assume further $G$ and $L$ satisfy the displacement $\lambda$-monotonicity conditions (\ref{G displacement}) (\ref{L displacement}). Then the results in Proposition \ref{relation u and v LL} still hold.
\end{proposition}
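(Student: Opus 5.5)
The plan is to reduce to Proposition \ref{relation u and v LL} through the canonical transformation mentioned in the introduction, which turns displacement $\lambda$-monotonicity into plain convexity for a fixed measure flow. There are three steps.

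\textbf{Step 1: pointwise semiconvexity from displacement $\lambda$-monotonicity.} Fix $\rho$, which we may take atomless by density and the $\mathrm{W}_1$-continuity of $\partial_x L,\partial_a L$. Choose $(X^1,\alpha^1)$ with $\mathcal{L}(X^1,\alpha^1)=\rho$, an event $A$ with $\mathbb{P}(A)=\varepsilon$, and set $(X^2,\alpha^2)=(X^1,\alpha^1)+(h,k)\mathbf{1}_A$. Then $\mathrm{W}_1(\mathcal{L}(X^1,\alpha^1),\mathcal{L}(X^2,\alpha^2))\le \varepsilon(|h|+|k|)$. By Assumption \ref{regularity assumption}, the measure perturbation contributes only $O(\varepsilon^2(|h|+|k|)^2)$ to the left side of (\ref{L displacement}). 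Dividing by $\varepsilon$, letting $\varepsilon\to0$, and localizing $A$ around a point $(x,a)$ gives, for all $(x_i,a_i)$,
\[
\langle \partial_x L(x_2,a_2,\rho)-\partial_x L(x_1,a_1,\rho),x_2-x_1\rangle+\langle \partial_a L(x_2,a_2,\rho)-\partial_a L(x_1,a_1,\rho),a_2-a_1\rangle\ge 2\lambda\langle x_2-x_1,a_2-a_1\rangle .
\]
Hence $\widetilde L(x,a,\rho):=L(x,a,\rho)-\lambda x\cdot a$ is convex in $(x,a)$. The same argument applied to (\ref{G displacement}) shows that $\widetilde G(x,\mu):=G(x,\mu)+\frac{\lambda}{2}|x|^2$ is convex in $x$. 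The transformed data keep the strong convexity in $a$ (Assumption \ref{convexity assumption}(i)), the Lipschitz continuity of $\partial_x,\partial_a$ locally in $x$, and quadratic growth. So $\widetilde G,\widetilde L$ satisfy Assumption \ref{G L convexity assumption} together with the other hypotheses of Proposition \ref{relation u and v LL}.

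\textbf{Step 2: the transformed control problem and FBSDE.} By It\^o's formula,
\[
\mathrm{d}\tfrac12|X_t^{\alpha,x}|^2 = X_t^{\alpha,x}\cdot\alpha_t\,\mathrm{d}t+\tfrac{d\widehat\sigma^2}{2}\,\mathrm{d}t+\text{martingale}.
\]
Therefore
\[
\mathcal J(\alpha,t_0,x)=\widetilde{\mathcal J}(\alpha,t_0,x)-\tfrac{\lambda}{2}|x|^2-\tfrac{\lambda d\widehat\sigma^2}{2}(T-t_0),
\]
where $\widetilde{\mathcal J}$ is the cost built from $(\widetilde G,\widetilde L)$. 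The two problems thus share optimal controls, and their values differ by an explicit smooth function of $x$.

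On the Hamiltonian side, $\widetilde H(x,p,\rho)=H(x,p+\lambda x,\rho)$. One checks by It\^o's formula that, if $(X^{\xi,x},\nabla Y^{\xi,x},\dots)$ solves (\ref{FBSDE xi,x}), then $(X^{\xi,x},\nabla Y^{\xi,x}+\lambda X^{\xi,x},\dots)$ solves the transformed Pontryagin FBSDE, with $\partial_p\widetilde H=\partial_pH$ evaluated at the original arguments. This correspondence is bijective, so uniqueness transfers. Likewise, the $v$ defined in (\ref{v definition}) equals $\widetilde v-\frac{\lambda}{2}|x|^2-\frac{\lambda d\widehat\sigma^2}{2}(T-t_0)$.

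\textbf{Step 3: conclusion.} Apply Proposition \ref{relation u and v LL} to the transformed data. This gives: $\widetilde v$ is the value function of the transformed problem, $\partial_x\widetilde v$ is Lipschitz in $x$, and $\nabla Y^{\xi,x}_{t_0}+\lambda x=\partial_x\widetilde v(t_0,x)$. Undoing the transformation yields (i), since subtracting $\frac\lambda2|x|^2$ keeps $\partial_x v$ Lipschitz with constant increased by $\lambda$, and yields (ii), $\nabla Y_{t_0}^{\xi,x}=\partial_x v(t_0,x)$.

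\textbf{Main obstacle.} The delicate point is checking that $\widetilde L$ still satisfies the hypotheses used inside the proof of Proposition \ref{relation u and v LL}. Two things need care there: the semiconcavity argument in the spirit of \cite{bansil2025global}, and the admissibility of the candidate optimal control, which requires controlling $\partial_p\widetilde H(x,p,\rho)=[\partial_a L(x,\cdot,\rho)]^{-1}(-p-\lambda x)$ at $x=0$ uniformly in $\rho$. This is where I would invoke Assumption \ref{locally bounded assumption}: combined with the Lipschitz bounds, it gives linear growth of $\partial_p\widetilde H$ in $(x,p)$ uniformly in $\rho$, and hence square-integrability of $\bar\alpha$. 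I would also double-check the limiting step in Step 1 for measures $\rho$ with atoms, handling it by approximation and the $\mathrm{W}_1$-continuity of $\partial_x L,\partial_a L$.
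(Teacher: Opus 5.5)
Your proposal is correct and follows the paper's proof essentially step for step: the same transformation $G+\frac{\lambda}{2}|x|^2$, $L-\lambda x\cdot a$, the same It\^o identity relating $\mathcal J$ to the transformed cost, the same shift $\nabla Y^{\xi,x}+\lambda X^{\xi,x}$, and an application of Proposition \ref{relation u and v LL}; the only difference is that you prove the pointwise convexity of the transformed data by a localization argument, whereas the paper cites \cite[Lemmas 2.3 and 2.5]{meszaros2024mean}. Two small corrections. First, the Legendre transform gives $\widetilde H(x,p,\rho)=H(x,p-\lambda x,\rho)$, not $H(x,p+\lambda x,\rho)$, and correspondingly $\partial_p\widetilde H(x,p,\rho)=[\partial_aL(x,\cdot,\rho)]^{-1}(-p+\lambda x)$; with this sign your shift $\nabla Y^{\xi,x}+\lambda X^{\xi,x}$ reproduces $\partial_pH$ at the original arguments. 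Second, in Step~1 the obstruction to localizing $A$ near $(x,a)$ is $(x,a)\notin\mathrm{supp}\,\rho$, not atoms, so you should approximate $\rho$ in $\mathrm{W}_1$ by full-support measures.
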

\begin{proof}
Following the canonical transformation in \cite{arnold1989mechanics,bansil2025hidden,bansil2025classical}, we define
$$G_{\lambda}(x,\mu)=G(x,\mu)+\frac{\lambda}{2}\vert x\vert^2,\ L_{\lambda}(x,a,\rho)=L(x,a,\rho)-\lambda x\cdot a,\ H_{\lambda}(x,p,\rho)=H(x,p-\lambda x,\rho).$$ By the displacement $\lambda$-monotonicity conditions (\ref{G displacement})(\ref{L displacement}), $G_{\lambda}$ and $L_{\lambda}$ are displacement monotone. Hence, by \cite[Lemmas 2.3 and 2.5]{meszaros2024mean}, $G_{\lambda}(\cdot,\mu)$ is convex in $x$ and $L_{\lambda}(\cdot,\cdot,\rho)$ is jointly convex in $(x,a)$. 
Thus, $G_{\lambda}$ and $L_{\lambda}$ satisfy the convexity assumptions of Proposition \ref{relation u and v LL}. Moreover, \(G_\lambda\) and \(L_\lambda\) inherit the regularity and quadratic growth properties required in Proposition \ref{relation u and v LL}.
For an admissible control $\alpha$, define
$$\mathcal{J}_{\lambda}(\alpha,t_0,x)=\mathbb{E}_{\mathcal{F}_{t_0}^0}\Big[G_{\lambda}(X_T^{\alpha,x},\mu_T)+\int_{t_0}^TL_{\lambda}(X_t^{\alpha,x},\alpha_t,\rho_t)\mathrm{d}t\Big].$$
Applying It\^o's formula to $|X^{\alpha,x}|^2$ and taking the
conditional expectation with respect to $\mathcal F_{t_0}^0$, we obtain
\begin{equation}\label{relationship J and Jlambda}
\mathcal{J}_{\lambda}(\alpha,t_0,x)=\mathcal{J}(\alpha,t_0,x)  +\frac{\lambda}{2}\vert x\vert^2+ \frac{\lambda d\widehat{\sigma}^2}{2}(T-t_0).
\end{equation}
In particular, the difference between $\mathcal{J}_{\lambda}$ and $\mathcal{J}$ is independent of the control $\alpha$. Hence the two control problems have the same optimal controls. 
Set $$\nabla Y_t^{\lambda,\xi,x}=\nabla Y_t^{\xi,x}+\lambda X_t^{\xi,x},\  \nabla Z_t^{\lambda,\xi,x}=\nabla Z_t^{\xi,x}+\lambda\sigma I_d,\ \nabla Z_t^{0,\lambda,\xi,x}=\nabla Z_t^{0,\xi,x}+\lambda\sigma_0 I_d.$$
Then
\begin{equation*}
    \begin{aligned}
        \nabla Y_t^{\lambda,\xi,x}=\partial_x G_{\lambda}(X_T^{\xi,x},\mu_T)+\int_t^T\partial_x H_{\lambda}(X_s^{\xi,x},\nabla Y_s^{\lambda,\xi,x},\rho_s)\,\mathrm{d}s
    -\int_t^T\nabla Z_s^{\lambda,\xi,x} \,\mathrm{d}B_s-\int_t^T\nabla Z_s^{0,\lambda,\xi,x}\,\mathrm{d}B_s^0.
    \end{aligned}
\end{equation*}
Define
$$\begin{aligned}v_{\lambda}(t_0,x):=\E_{\mathcal{F}_{t_0}^0}\Big[G_{\lambda}(X_T^{\xi,x},\mu_T)+\int_{t_0}^TL_{\lambda}(X_s^{\xi,x},\bar{\alpha}_s,\rho_s)\,\mathrm{d}s\Big],
\end{aligned}$$
where $\bar{\alpha}_t := \partial_p H(X_t^{\xi,x}, \nabla Y_t^{\xi,x}, \rho_t)$.
Therefore, Proposition \ref{relation u and v LL} applies to the transformed problem, then $$v_{\lambda}(t_0,x)=\mathrm{ess}\inf_{\alpha}\mathcal{J}_{\lambda}(\alpha,t_0,x),\ \nabla Y_{t_0}^{\lambda,\xi,x}=\partial_xv_{\lambda}(t_0,x).$$ 
By the definitions of \(v\) and \(v_\lambda\), together with \eqref{relationship J and Jlambda} applied to \(\bar\alpha\), we have $v_{\lambda}(t_0,x)=v(t_0,x)+\frac{\lambda}{2}\vert x\vert^2+ \frac{\lambda d\widehat{\sigma}^2}{2}(T-t_0)$. Since $v_{\lambda}(t_0,\cdot)$ is differentiable by Proposition \ref{relation u and v LL}, $\partial_xv_{\lambda}(t_0,x)=\partial_xv(t_0,x)+\lambda x$. Since 
$\nabla Y_{t_0}^{\lambda,\xi,x}=\nabla Y_{t_0}^{\xi,x}+\lambda x$, and $\nabla Y_{t_0}^{\lambda,\xi,x}=\partial_xv_{\lambda}(t_0,x)$, it follows that
$$\nabla Y_{t_0}^{\xi,x}=\partial_xv(t_0,x).
$$
Finally, by \eqref{relationship J and Jlambda},
$$v(t_0,x)=v_{\lambda}(t_0,x)-\frac{\lambda}{2}\vert x\vert^2- \frac{\lambda d \widehat{\sigma}^2}{2}(T-t_0)=\mathrm{ess}\inf_{\alpha}\mathcal{J}(\alpha,t_0,x).$$
Since $\partial_xv_{\lambda}(t_0,x)$ is Lipschitz continuous by Proposition \ref{relation u and v LL},
$$\partial_xv(t_0,x)=\partial_xv_{\lambda}(t_0,x)-\lambda x$$
is also Lipschitz continuous, with a constant independent of $t_0$. This proves the result.
\end{proof}
\par The following two theorem and their corollary establish the $\mathrm{W}_1$-Lipschitz estimates in $\xi$, which rely heavily on the Lasry-Lions monotone or displacement $\lambda$-monotone structure.
\begin{theorem}\label{LL Lipschitz}
    Let Assumptions \ref{regularity assumption}, \ref{convexity assumption}, \ref{G L convexity assumption} hold. Suppose G and $L$ satisfy the Lasry-Lions monotonicity conditions (\ref{G Lasry-Lions})(\ref{L Lasry-Lions}), and for any initial conditions $(x_i,\xi_i)\in\R^d\times\mathbb{L}^2(\mathcal{F}_{t_0}),\,i=1,2$, the FBSDEs (\ref{FBSDE xi})-(\ref{FBSDE xi,x}) have strong solutions $(\Psi^{\xi_i},\Psi^{\xi_i,x_i}),\,\Psi=X,\nabla Y,\nabla Z,\nabla Z^0$. Then we have the following stability estimate:
    \begin{equation}
        |\nabla Y_{t_0}^{\xi_1,x_1}-\nabla Y_{t_0}^{\xi_2,x_2}|\leq C\big(|x_1-x_2|+\E_{\mathcal{F}_{t_0}^0}[|\xi_1-\xi_2|]\big),
    \end{equation}
    where $C$ depends only on $T,d$ and the parameters in the assumptions.
\end{theorem}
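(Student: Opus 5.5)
The plan is to split the estimate into two parts. The first is a $\mathrm{W}_1$-stability estimate for the measure flows $\rho^i,\mu^i$ generated by $\xi_i$. The second is a stability estimate for the single-agent FBSDE (\ref{FBSDE xi,x}) with respect to the initial point $x$ and the frozen environment $(\mu^i,\rho^i)$.

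For the second part we use Propositions \ref{relation u and v LL} and \ref{relation u and v disp lamda}. Given $\{\rho^i_t\}$, they identify $\nabla Y^{\xi_i,x}_{t_0}=\partial_x v^i(t_0,x)$, where $\partial_x v^i$ is Lipschitz in $x$ with a universal constant. This gives the $|x_1-x_2|$ part directly. It then suffices to bound $|\nabla Y^{\xi_1,x}_{t_0}-\nabla Y^{\xi_2,x}_{t_0}|$ for a common $x$. For this, compare the two Pontryagin systems with the same initial point and different environments. Apply Itô's formula to $\langle\nabla Y^{1}_t-\nabla Y^2_t,X^1_t-X^2_t\rangle$, using the strong convexity of $L$ in $a$ (Assumption \ref{convexity assumption}(i)) and the joint convexity of Assumption \ref{G L convexity assumption}. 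Together with the Lipschitz dependence of $\partial_xG,\partial_xL,\partial_aL$ on the measure under $\mathrm{W}_1$, this yields
\[
\E_{\mathcal{F}^0_{t_0}}\Big[\sup_t|X^1_t-X^2_t|+\sup_t|\nabla Y^1_t-\nabla Y^2_t|\Big]\le C\,\E_{\mathcal{F}^0_{t_0}}\Big[\mathrm{W}_1(\mu^1_T,\mu^2_T)+\int_{t_0}^T\mathrm{W}_1(\rho^1_t,\rho^2_t)\,\mathrm{d}t\Big].
\]
We expect this in $L^1$ form, via a Gronwall argument on the linear-type BSDE for the difference. The $x$-Lipschitz bound on the decoupling field gives $|\nabla Y^1_t-\nabla Y^2_t|\le C|X^1_t-X^2_t|+|\partial_xv^1-\partial_xv^2|(t,X^2_t)$, which closes the estimate.

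The main step is the first part: show $\E_{\mathcal{F}^0_{t_0}}[\sup_t\mathrm{W}_1(\rho^1_t,\rho^2_t)]\le C\E_{\mathcal{F}^0_{t_0}}[|\xi_1-\xi_2|]$. We do this in three steps.

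First, apply the Lasry--Lions duality. Consider the value functions $v^i$ from (\ref{v definition}) for the environments $\rho^i$, and compute $\E[v^1(t_0,\xi_1)-v^2(t_0,\xi_1)-v^1(t_0,\xi_2)+v^2(t_0,\xi_2)]$ in two ways. Using optimality of $\bar\alpha^i$ for the problem with environment $\rho^i$ and suboptimality of $\bar\alpha^j$ there, the monotonicity conditions (\ref{G Lasry-Lions}) and (\ref{L Lasry-Lions}), together with strong convexity $c_0$, give
\[
c_0\E\int_{t_0}^T|\alpha^1_t-\tilde\alpha^{1}_t|^2\,\mathrm{d}t\le\E\big[(\partial_xv^1-\partial_xv^2)(t_0,\cdot)\text{-type terms}\cdot(\xi_1-\xi_2)\big].
\]
Here $\tilde\alpha^1$ denotes the control optimal against $\rho^2$ but started from $\xi_1$. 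This is the standard Lasry--Lions computation. Because of the control interaction, it must be done with $L$ evaluated at $\rho^i=\mathcal{L}^0(X^i,\alpha^i)$; Assumption \ref{convexity assumption}(ii) with $c_1<c_0$ absorbs the cross term in the control marginal, as in Proposition \ref{existence fixed-point}.

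Second, combine the bound on the control differences with the forward dynamics. This bounds $\E\sup_t|X^{1}_t-\tilde X^1_t|$, hence $\mathrm{W}_1(\rho^1_t,\rho^2_t)$, by $C\E|\xi_1-\xi_2|$ plus $\varepsilon$ times the unknown. Here we use the $L^1$ bound in the second part and Proposition \ref{existence fixed-point}'s $\mathrm{W}_1$-Lipschitz $\Phi$.

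Third, close the estimate. Note that the Lasry--Lions inequality naturally gives $L^2$ quantities while we want $L^1$ in $\xi$. I would first try to work with the weighted quantity $\E[|\xi_1-\xi_2|\cdot|\nabla Y^1-\nabla Y^2|]$. Using the $x$-Lipschitz bound on $\partial_xv^i$ reduces it to an $L^1$ bound. If that fails, I would prove the estimate for a coupling where $\xi_1-\xi_2$ is small and extend by a chaining/interpolation along $\xi_\theta=(1-\theta)\xi_1+\theta\xi_2$, i.e. prove a local Lipschitz bound and integrate in $\theta$.

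Finally, combine the two parts with the triangle inequality to obtain the stated bound with $C=C(T,d,\text{assumptions})$.
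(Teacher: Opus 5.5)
Your architecture is close to the paper's. You use value functions $v^i$ for the frozen environments $\rho^i$ (Proposition \ref{relation u and v LL}), the Lasry--Lions double difference $\E_{\mathcal{F}^0_{t_0}}[v^1(t_0,\xi_1)-v^2(t_0,\xi_1)+v^2(t_0,\xi_2)-v^1(t_0,\xi_2)]$ to bound an $L^2$ norm of control differences, and the $x$-Lipschitz decoupling fields. But the step you yourself flag as the crux is left open: converting that $L^2$ output into a bound linear in $\E_{\mathcal{F}^0_{t_0}}[|\xi_1-\xi_2|]$. Neither remedy you propose works.

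\begin{itemize}
\item Suppose $\nabla Y^i$ in your weighted quantity $\E[|\xi_1-\xi_2|\,|\nabla Y^1-\nabla Y^2|]$ means $\nabla Y^{\xi_i}_{t_0}=\partial_x v^i(t_0,\xi_i)$. Then it compares the two fields at different points, and the $x$-Lipschitz bound produces a term $C\E[|\xi_1-\xi_2|^2]$. This is the same mechanism that only yields a $\mathrm{W}_2$ bound in Theorem \ref{disp Lipschitz}.
\item Chaining along $\xi_\theta$ does not help either. A local bound of the form $C(\E[|\xi_\theta-\xi_{\theta'}|^2])^{1/2}$ is $1$-homogeneous in $\theta-\theta'$, so summing over a partition of $[0,1]$ returns the same $L^2$ bound. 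Chaining only removes superlinear error terms, and there are none here.
\end{itemize}

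The missing idea is to close in the sup norm $D:=\|\partial_x v^1(t_0,\cdot)-\partial_x v^2(t_0,\cdot)\|_{L^\infty}$. The Lasry--Lions right-hand side equals $\E_{\mathcal{F}^0_{t_0}}\int_0^1(\partial_xv^1-\partial_xv^2)(t_0,\xi_\theta)\cdot(\xi_1-\xi_2)\,\mathrm{d}\theta$. This is a \emph{same-point} difference, so it is at most $D\,\E_{\mathcal{F}^0_{t_0}}[|\xi_1-\xi_2|]$, with no $x$-Lipschitz bound involved. The $L^2$ norm of the control differences then enters only through its square root, via Cauchy--Schwarz in time in the forward estimate.

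In your notation the argument closes as follows.
\begin{itemize}
\item Your frozen-environment estimate (second part) gives $D\le C\,\E_{\mathcal{F}^0_{t_0}}[\mathrm{W}_1(\mu^1_T,\mu^2_T)+\int_{t_0}^T\mathrm{W}_1(\rho^1_t,\rho^2_t)\,\mathrm{d}t]$.
\item Step 2 bounds that right-hand side by $C\E_{\mathcal{F}^0_{t_0}}[|\xi_1-\xi_2|]+C\sqrt{D\,\E_{\mathcal{F}^0_{t_0}}[|\xi_1-\xi_2|]}$.
\item Young's inequality then yields $D\le C\E_{\mathcal{F}^0_{t_0}}[|\xi_1-\xi_2|]$, and the $|x_1-x_2|$ part follows as you say.
\end{itemize}
The paper does exactly this in squared form. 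It sets $x_1=x_2=x$ in the final BSDE estimate and takes the supremum over $x$ to get
\[
D^2\le C\Big(\big(\E_{\mathcal{F}^0_{t_0}}[|\xi_1-\xi_2|]\big)^2+D\,\E_{\mathcal{F}^0_{t_0}}[|\xi_1-\xi_2|]\Big),
\]
and then applies Young. So your ``$\varepsilon$ times the unknown'' in Step 2 is correct only once the Lasry--Lions term is bounded this way; your Step 3 shows that this link is not in place.

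Two smaller points.
\begin{itemize}
\item Your $\tilde\alpha^1$ lives on a different path, namely the $\rho^2$-optimal path from $\xi_1$. Step 1 therefore needs a coercivity bound $\mathcal{J}^2(\alpha^1)-\mathcal{J}^2(\tilde\alpha^1)\ge c_T\E\int|\alpha^1-\tilde\alpha^1|^2$, where $\mathcal{J}^2$ is the cost against $\rho^2$ started from $\xi_1$. Strong convexity in $a$ at fixed $x$ does not give this with constant $c_0$; you need the joint convexity of Assumption \ref{G L convexity assumption} plus a Gronwall argument on $X^1-\tilde X^1$.
\item The paper avoids this by using $\beta^i_t=\partial_pH(X^{\xi_i}_t,u^j(t,X^{\xi_i}_t),\rho^j_t)$, the other feedback evaluated on the \emph{same} path. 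Then only Assumption \ref{convexity assumption}(i) at fixed $x$ is needed, and $|\beta^1_t-\alpha^2_t|\le C|X^{\xi_1}_t-X^{\xi_2}_t|$ follows from the $x$-Lipschitz continuity of $u^2$.
\item Assumption \ref{convexity assumption}(ii) plays no role in the Lasry--Lions inequality itself: the environments are frozen, so there is no cross term to absorb. It enters only through the $\mathrm{W}_1$-Lipschitz continuity of $\Phi$.
\end{itemize}
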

\begin{proof}
    For $i=1,2$, given $\rho_t^i:=\Phi\big(\mathcal{L}^0(X_t^{\xi_i},\nabla Y_t^{\xi_i})\big)$. Then we can define the corresponding $u^i$ in Proposition \ref{relation u and v LL} and define $\nabla\mathcal{Y}_t^i:=u^j(t,X_t^{\xi_i}),\,\nabla\mathcal{Y}_t^{i,x}:=u^j(t,X_t^{\xi_i,x})$ for $j=3-i$. Then we define for $j=3-i$,
    \begin{equation}
        v^j(t_0,x)=\E_{\mathcal{F}_{t_0}^0}\Big[G(X_T^{\xi_i,x},\mu_T^j)+\int_{t_0}^T\big[H(X_s^{\xi_i,x},\nabla\mathcal{Y}_s^{i,x},\rho_s^j)-\nabla\mathcal{Y}_s^{i,x}\cdot\partial_p H(X_s^{\xi_i,x},\nabla Y_s^{\xi_i,x},\rho_s^i)\big]\,\mathrm{d}s\Big]
    \end{equation}
    Then we have 
    \begin{equation}
    \begin{aligned}
        v^j(t,X_t^{\xi_i})=\E_{\mathcal{F}_t}\Big[G(X_T^{\xi_i},\mu_T^j)+\int_t^T\big[H(X_s^{\xi_i},\nabla\mathcal{Y}_s^i,\rho_s^j)-\nabla\mathcal{Y}_s^i\cdot\partial_p H(X_s^{\xi_i},\nabla Y_s^{\xi_i},\rho_s^i)\big]\,\mathrm{d}s\Big].
    \end{aligned}
    \end{equation}
    By Proposition \ref{relation u and v LL}, we have $\partial_x v^i=u^i,\,i=1,2$. We also note that $Y_t^{\xi_i}=v^i(t,X_t^{\xi_i}),\,\nabla Y_t^{\xi_i}=u^i(t,X_t^{\xi_i}),\,i=1,2$. For simplicity, we denote $\mathcal{Y}_t^i:=v^j(t,X_t^{\xi_i})$.
    \par Denote $\alpha_t^i:=\partial_p H(X_t^{\xi_i},\nabla Y_t^{\xi_i},\rho_t^i)$ and $\beta_t^i:=\partial_p H(X_t^{\xi_i},\nabla\mathcal{Y}_t^i,\rho_t^j),\, j=3-i$. Using the relation between the Hamiltonian $H$ and the Lagrangian $L$, in particular $\partial_a L(X_t^{\xi_i},\alpha_t^i,\rho_t^i)=-\nabla Y_t^{\xi_i}$, $\partial_a L(X_t^{\xi_i},\beta_t^i,\rho_t^j)=-\nabla\mathcal{Y}_t^i$, we can compute that
    \[
    \begin{aligned}
        &\E_{\mathcal{F}_{t}^0}[Y_t^{\xi_1}-\mathcal{Y}_t^1+Y_t^{\xi_2}-\mathcal{Y}_t^2]\\
        =&\E_{\mathcal{F}_t^0}[G(X_T^{\xi_1},\mu_T^1)-G(X_T^{\xi_1},\mu_T^2)+G(X_T^{\xi_2},\mu_T^2)-G(X_T^{\xi_2},\mu_T^1)]\\
        &+\E_{\mathcal{F}_t^0}\int_t^T\Big[H(X_s^{\xi_1},\nabla Y_s^{\xi_1},\rho_s^1)-H(X_s^{\xi_1},\nabla\mathcal{Y}_s^1,\rho_s^2)\\
        &\qquad\qquad+H(X_s^{\xi_2},\nabla Y_s^{\xi_2},\rho_s^2)-H(X_s^{\xi_2},\nabla\mathcal{Y}_s^2,\rho_s^1)\\
        &\qquad\qquad-\nabla Y_s^{\xi_1}\cdot\partial_p H(X_s^{\xi_1},\nabla Y_s^{\xi_1},\rho_s^1)+\nabla\mathcal{Y}_s^1\cdot\partial_p H(X_s^{\xi_1},\nabla Y_s^{\xi_1},\rho_s^1)\\
        &\qquad\qquad-\nabla Y_s^{\xi_2}\cdot\partial_p H(X_s^{\xi_2},\nabla Y_s^{\xi_2},\rho_s^2)+\nabla\mathcal{Y}_s^2\cdot\partial_p H(X_s^{\xi_2},\nabla Y_s^{\xi_2},\rho_s^2)\Big]\,\mathrm{d}s\\
        =&\E_{\mathcal{F}_t^0}[G(X_T^{\xi_1},\mu_T^1)-G(X_T^{\xi_1},\mu_T^2)+G(X_T^{\xi_2},\mu_T^2)-G(X_T^{\xi_2},\mu_T^1)]\\
        &+\E_{\mathcal{F}_t^0}\int_t^T\Big[L(X_s^{\xi_1},\alpha_s^1,\rho_s^1)-L(X_s^{\xi_1},\alpha_s^1,\rho_s^2)+L(X_s^{\xi_2},\alpha_s^2,\rho_s^2)-L(X_s^{\xi_2},\alpha_s^2,\rho_s^1)\Big]\,\mathrm{d}s\\
        &+\E_{\mathcal{F}_t^0}\int_t^T\Big[L(X_s^{\xi_1},\alpha_s^1,\rho_s^2)-L(X_s^{\xi_1},\beta_s^1,\rho_s^2)-(\alpha_s^1-\beta_s^1)\cdot\partial_a L(X_s^{\xi_1},\beta_s^1,\rho_s^2)\\
        &\qquad\qquad+L(X_s^{\xi_2},\alpha_s^2,\rho_s^1)-L(X_s^{\xi_2},\beta_s^2,\rho_s^1)-(\alpha_s^2-\beta_s^2)\cdot\partial_a L(X_s^{\xi_2},\beta_s^2,\rho_s^1)\Big]\,\mathrm{d}s.
    \end{aligned}
    \]
    Since $G$ and $L$ satisfy the Lasry-Lions monotonicity conditions (\ref{G Lasry-Lions})(\ref{L Lasry-Lions}), and $L(x,a,\rho)$ is convex in $a$, we conclude that for any $t\in[t_0,T]$,
    \begin{equation}
        \E_{\mathcal{F}_t^0}[Y_t^{\xi_1}-\mathcal{Y}_t^1+Y_t^{\xi_2}-\mathcal{Y}_t^2]\geq 0.
    \end{equation}
    Using the monotonicity and the convexity for $L$, we have
    \[
    \begin{aligned}
        0\leq&\E_{\mathcal{F}_{t_0}^0}[Y_t^{\xi_1}-\mathcal{Y}_t^1+Y_t^{\xi_2}-\mathcal{Y}_t^2]\\
        =&\E_{\mathcal{F}_{t_0}^0}[Y_{t_0}^{\xi_1}-\mathcal{Y}_{t_0}^1+Y_{t_0}^{\xi_2}-\mathcal{Y}_{t_0}^2]\\
        &-\E_{\mathcal{F}_{t_0}^0}\int_{t_0}^t\Big[L(X_s^{\xi_1},\alpha_s^1,\rho_s^1)-L(X_s^{\xi_1},\alpha_s^1,\rho_s^2)+L(X_s^{\xi_2},\alpha_s^2,\rho_s^2)-L(X_s^{\xi_2},\alpha_s^2,\rho_s^1)\,\mathrm{d}s\Big]\\
        &-\E_{\mathcal{F}_{t_0}^0}\int_{t_0}^t\Big[L(X_s^{\xi_1},\alpha_s^1,\rho_s^2)-L(X_s^{\xi_1},\beta_s^1,\rho_s^2)-(\alpha_s^1-\beta_s^1)\cdot\partial_a L(X_s^{\xi_1},\beta_s^1,\rho_s^2)\\
        &\qquad\qquad+L(X_s^{\xi_2},\alpha_s^2,\rho_s^1)-L(X_s^{\xi_2},\beta_s^2,\rho_s^1)-(\alpha_s^2-\beta_s^2)\cdot\partial_a L(X_s^{\xi_2},\beta_s^2,\rho_s^1)\Big]\,\mathrm{d}s\\
        \leq&\E_{\mathcal{F}_{t_0}^0}[Y_{t_0}^{\xi_1}-\mathcal{Y}_{t_0}^1+Y_{t_0}^{\xi_2}-\mathcal{Y}_{t_0}^2]\\
        &-\frac{c_0}{2}\E_{\mathcal{F}_{t_0}^0}\int_{t_0}^t\Big[|\alpha_s^1-\beta_s^1|^2+|\alpha_s^2-\beta_s^2|^2\Big]\,\mathrm{d}s.
    \end{aligned}
    \]
    This implies
    \begin{equation}\label{LL Lipschitz 1}
    \begin{aligned}
        &\E_{\mathcal{F}_{t_0}^0}\int_{t_0}^t\Big[|\alpha_s^1-\beta_s^1|^2+|\alpha_s^2-\beta_s^2|^2\Big]\,\mathrm{d}s\\
        \leq&C\E_{\mathcal{F}_{t_0}^0}[Y_{t_0}^{\xi_1}-\mathcal{Y}_{t_0}^1+Y_{t_0}^{\xi_2}-\mathcal{Y}_{t_0}^2]\\
        =&C\E_{\mathcal{F}_{t_0}^0}[v^1(t_0,\xi_1)-v^2(t_0,\xi_1)+v^2(t_0,\xi_2)-v^1(t_0,\xi_2)]\\
        =&C\E_{\mathcal{F}_{t_0}^0}\int_0^1\big(\partial_x v^1(t_0,\xi_{\theta})-\partial_x v^2(t_0,\xi_{\theta})\big)\cdot(\xi_1-\xi_2)\,\mathrm{d}\theta,
    \end{aligned}
    \end{equation}
    where $\xi_{\theta}:=\theta\xi_1+(1-\theta)\xi_2$.
    \par The uniform Lipschitz continuity of $\partial_x v^i(t,\cdot)$ implies $|\nabla Y_t^{\xi_i}-\nabla\mathcal{Y}_t^{j}|\leq C|X_t^{\xi_1}-X_t^{\xi_2}|$, so we have
    \begin{equation}\label{LL Lipschitz 2}
        |\alpha_t^1-\alpha_t^2|\leq|\alpha_t^1-\beta_t^1|+|\beta_t^1-\alpha_t^2|\leq|\alpha_t^1-\beta_t^1|+C|X_t^{\xi_1}-X_t^{\xi_2}|.
    \end{equation}
    From the SDE for $X^{\xi_i}$ (\ref{FBSDE xi}), using the estimates (\ref{LL Lipschitz 1})(\ref{LL Lipschitz 2}), we have
    \[
    \begin{aligned}
        &\E_{\mathcal{F}_{t_0}^0}[|X_t^{\xi_1}-X_t^{\xi_2}|]^2\\
        \leq&C\Big(\E_{\mathcal{F}_{t_0}^0}[|\xi_1-\xi_2|]^2+\int_{t_0}^t\E_{\mathcal{F}_{t_0}^0}[|\alpha_s^1-\alpha_s^2|]^2\,\mathrm{d}s\Big)\\
        \leq&C\Big(\E_{\mathcal{F}_{t_0}^0}[|\xi_1-\xi_2|]^2+\int_{t_0}^t\E_{\mathcal{F}_{t_0}^0}[|X_s^{\xi_1}-X_s^{\xi_2}|]^2\,\mathrm{d}s\\
        &\qquad+\E_{\mathcal{F}_{t_0}^0}\int_0^1\big(\partial_x v^1(t_0,\xi_{\theta})-\partial_x v^2(t_0,\xi_{\theta})\big)\cdot(\xi_1-\xi_2)\,\mathrm{d}\theta\Big).
    \end{aligned}
    \]
    Applying Gronwall's inequality, we conclude that
    \begin{equation}\label{LL Lipschitz 3}
    \begin{aligned}
        &\E_{\mathcal{F}_{t_0}^0}[|X_t^{\xi_1}-X_t^{\xi_2}|]^2\\
        \leq&C\Big(\E_{\mathcal{F}_{t_0}^0}[|\xi_1-\xi_2|]^2+\E_{\mathcal{F}_{t_0}^0}\int_0^1\big(\partial_x v^1(t_0,\xi_{\theta})-\partial_x v^2(t_0,\xi_{\theta})\big)\cdot(\xi_1-\xi_2)\,\mathrm{d}\theta\Big).
    \end{aligned}
    \end{equation}
    We shall repeatedly use the following estimates,
    \begin{equation}
        \mathrm{W}_1(\mu_t^1,\mu_t^2)\leq\E_{\mathcal{F}_t^0}[|X_t^{\xi_1}-X_t^{\xi_2}|],\quad\mathrm{W}_1(\rho_t^1,\rho_t^2)\leq\E_{\mathcal{F}_t^0}[|X_t^{\xi_1}-X_t^{\xi_2}|+|\alpha_t^1-\alpha_t^2|]
    \end{equation}
    combined with the tower property.
    \par Similarly, from the SDE for $X^{\xi_i,x_i}$ (\ref{FBSDE xi,x}), using (\ref{LL Lipschitz 1})(\ref{LL Lipschitz 2})(\ref{LL Lipschitz 3}), we can obtain the estimate
    \begin{equation}\label{LL Lipschitz 4}
    \begin{aligned}
        &\E_{\mathcal{F}_t^0}[|X_t^{\xi_1,x_1}-X_t^{\xi_2,x_2}|]^2\\
        \leq&C\Big(|x_1-x_2|^2+\E_{\mathcal{F}_{t_0}^0}[|\xi_1-\xi_2|]^2+\int_{t_0}^t\E_{\mathcal{F}_{t_0}^0}[|\nabla Y_s^{\xi_1,x_1}-\nabla Y_s^{\xi_2,x_2}|]^2\,\mathrm{d}s\\
        &\quad+\E_{\mathcal{F}_{t_0}^0}\int_0^1\big(\partial_x v^1(t_0,\xi_{\theta})-\partial_x v^2(t_0,\xi_{\theta})\big)\cdot(\xi_1-\xi_2)\,\mathrm{d}\theta\Big).
    \end{aligned}
    \end{equation}
    Next, from the BSDE for $\nabla Y^{\xi_i,x_i}$, combining with (\ref{LL Lipschitz 1})(\ref{LL Lipschitz 2})(\ref{LL Lipschitz 3})(\ref{LL Lipschitz 4}), we conclude that
    \[
    \begin{aligned}
        &|\nabla Y_{t_0}^{\xi_1,x_1}-\nabla Y_{t_0}^{\xi_2,x_2}|^2\\
        \leq&C\Big(\E_{\mathcal{F}_{t_0}^0}[|X_T^{\xi_1,x_1}-X_T^{\xi_2,x_2}|]^2+\E_{\mathcal{F}_{t_0}^0}[|X_T^{\xi_1}-X_T^{\xi_2}|]^2+\int_{t_0}^T\Big[\E_{\mathcal{F}_{t_0}^0}[|X_t^{\xi_1,x_1}-X_t^{\xi_2,x_2}|]^2\\
        &\qquad+\E_{\mathcal{F}_{t_0}^0}[|\nabla Y_t^{\xi_1,x_1}-\nabla Y_t^{\xi_2,x_2}|]^2+\E_{\mathcal{F}_{t_0}^0}[|X_t^{\xi_1}-X_t^{\xi_2}|]^2+\E_{\mathcal{F}_{t_0}^0}[|\alpha_t^1-\alpha_t^2|]^2\Big]\,\mathrm{d}t\Big)\\
        \leq&C\Big(|x_1-x_2|^2+\E_{\mathcal{F}_{t_0}^0}[|\xi_1-\xi_2|]^2+\int_{t_0}^T\E_{\mathcal{F}_{t_0}^0}[|\nabla Y_t^{\xi_1,x_1}-\nabla Y_t^{\xi_2,x_2}|]^2\,\mathrm{d}t\\
        &\qquad+\E_{\mathcal{F}_{t_0}^0}\int_0^1\big(\partial_x v^1(t_0,\xi_{\theta})-\partial_x v^2(t_0,\xi_{\theta})\big)\cdot(\xi_1-\xi_2)\,\mathrm{d}\theta\Big),
    \end{aligned}
    \]
    which implies
    \begin{equation}\label{LL Lipschitz 5}
    \begin{aligned}
        &[|\nabla Y_{t_0}^{\xi_1,x_1}-\nabla Y_{t_0}^{\xi_2,x_2}|]^2\\
        \leq&C\Big(|x_1-x_2|^2+\E_{\mathcal{F}_{t_0}^0}[|\xi_1-\xi_2|]^2+\E_{\mathcal{F}_{t_0}^0}\int_0^1\big(\partial_x v^1(t_0,\xi_{\theta})-\partial_x v^2(t_0,\xi_{\theta})\big)\cdot(\xi_1-\xi_2)\,\mathrm{d}\theta\Big).
    \end{aligned}
    \end{equation}  
    Setting $x_1=x_2=x$ and taking supremum over $x\in\R^d$, we obtain
    \begin{equation}
    \begin{aligned}
        &\|\partial_x v^1(t_0,\cdot)-\partial_x v^2(t_0,\cdot)\|_{L^{\infty}}^2\\
        \leq& C\Big(\E_{\mathcal{F}_{t_0}^0}[|\xi_1-\xi_2|]^2+\E_{\mathcal{F}_{t_0}^0}\int_0^1\big(\partial_x v^1(t_0,\xi_{\theta})-\partial_x v^2(t_0,\xi_{\theta})\big)\cdot(\xi_1-\xi_2)\,\mathrm{d}\theta\Big)\\
        \leq&C\Big(\E_{\mathcal{F}_{t_0}^0}[|\xi_1-\xi_2|]^2+\|\partial_x v^1(t_0,\cdot)-\partial_x v^2(t_0,\cdot)\|_{L^{\infty}}\E_{\mathcal{F}_{t_0}^0}[|\xi_1-\xi_2|]\Big).
    \end{aligned}
    \end{equation}
    Young's inequality gives
    \begin{equation}
        \|\partial_x v^1(t_0,\cdot)-\partial_x v^2(t_0,\cdot)\|_{L^{\infty}}\leq C\E_{\mathcal{F}_{t_0}^0}[|\xi_1-\xi_2|].
    \end{equation}
    Finally substituting back into (\ref{LL Lipschitz 5}), we conclude that
    \begin{equation}
        |\nabla Y_{t_0}^{\xi_1,x_1}-\nabla Y_{t_0}^{\xi_2,x_2}|^2\leq C\big(|x_1-x_2|^2+\E_{\mathcal{F}_{t_0}^0}[|\xi_1-\xi_2|]^2\big),
    \end{equation}
    which completes the proof.
\end{proof}

\begin{theorem}\label{disp Lipschitz}
    Let Assumptions \ref{regularity assumption}, \ref{convexity assumption}, \ref{locally bounded assumption} hold. Suppose G and $L$ satisfies the displacement $\lambda$-monotonicity conditions (\ref{G displacement})(\ref{L displacement}), and for any initial conditions $(x_i,\xi_i)\in\R^d\times\mathbb{L}^2(\mathcal{F}_{t_0}),\,i=1,2$ the FBSDEs (\ref{FBSDE xi})-(\ref{FBSDE xi,x}) have a strong solution $(\Psi^{\xi_i},\Psi^{\xi_i,x_i}),\,\Psi=X,\nabla Y,\nabla Z,\nabla Z^0$. Then we have the following stability estimate:
    \begin{equation}
        |\nabla Y_{t_0}^{\xi_1,x_1}-\nabla Y_{t_0}^{\xi_2,x_2}|\leq C\big(|x_1-x_2|+(\E_{\mathcal{F}_{t_0}^0}[|\xi_1-\xi_2|^2])^{\frac12}\big),
    \end{equation}
    where $C$ depends only on $T,d$ and the parameters in the assumptions.
\end{theorem}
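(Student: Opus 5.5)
We plan to proceed in two stages, mirroring the proof of Theorem \ref{LL Lipschitz}. In the first stage we estimate the mean-field flows, and in the second we pass to the individual FBSDE (\ref{FBSDE xi,x}) using the Lipschitz decoupling fields of Proposition \ref{relation u and v disp lamda}.

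\textbf{Stage 1: the mean-field flows.} We work with the canonical transformation from the proof of Proposition \ref{relation u and v disp lamda}. Set $\nabla Y^{\lambda,\xi_i}_t:=\nabla Y^{\xi_i}_t+\lambda X^{\xi_i}_t$ and write the flows in terms of $G_\lambda,L_\lambda,H_\lambda$. Put $\Delta X_t:=X_t^{\xi_1}-X_t^{\xi_2}$, $\Delta P_t:=\nabla Y_t^{\lambda,\xi_1}-\nabla Y_t^{\lambda,\xi_2}$, and $\alpha^i_t:=\partial_pH(X^{\xi_i}_t,\nabla Y^{\xi_i}_t,\rho^i_t)$. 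We apply It\^o's formula to $\langle \Delta P_t,\Delta X_t\rangle$ on $[t_0,T]$ and take $\E_{\mathcal F^0_{t_0}}$.

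The drift is $\langle \Delta P_t,\alpha^1_t-\alpha^2_t\rangle-\langle \partial_xH_\lambda(\cdot)^1-\partial_xH_\lambda(\cdot)^2,\Delta X_t\rangle$. Rewrite it through $L_\lambda$, using $\partial_a L(X^{\xi_i},\alpha^i,\rho^i)=-\nabla Y^{\xi_i}$ and $\partial_xH=\partial_xL$. This turns it into minus the left-hand side of (\ref{L displacement}) for $L_\lambda$, where $\rho^i=\mathcal L^0(X^{\xi_i},\alpha^i)$ by the fixed-point relation. The terminal term is $\langle\partial_xG_\lambda(X^{\xi_1}_T,\mu^1_T)-\partial_xG_\lambda(X^{\xi_2}_T,\mu^2_T),\Delta X_T\rangle\ge0$.

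Displacement monotonicity alone gives only a non-strict sign, so the strong convexity of Assumption \ref{convexity assumption}(i) has to be injected. We split $\partial_aL(X^1,\alpha^1,\rho^1)-\partial_aL(X^2,\alpha^2,\rho^2)$ as in the proof of Proposition \ref{existence fixed-point}. The aim is
\[
\E_{\mathcal F^0_{t_0}}\int_{t_0}^T|\alpha^1_t-\alpha^2_t|^2\,\mathrm dt\le C\,\E_{\mathcal F^0_{t_0}}\big[\langle \Delta P_{t_0},\xi_1-\xi_2\rangle\big]+C\,\E_{\mathcal F^0_{t_0}}\int_{t_0}^T|\Delta X_t|^2\,\mathrm dt .
\]
If this $c_0$-gain cannot be extracted directly, we will instead use the representation formula of \cite{liu2026global}, as the introduction indicates.

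Next we use the uniform Lipschitz bound on $u^i=\partial_x v^i$ from Proposition \ref{relation u and v disp lamda}. This gives $|\Delta P_{t_0}|\le |u^1(t_0,\xi_1)-u^2(t_0,\xi_1)|+C|\xi_1-\xi_2|$. We then run Gronwall on the forward equation, where $\E|\Delta X_t|^2\le C(\E|\xi_1-\xi_2|^2+\int\E|\alpha^1-\alpha^2|^2)$.

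\textbf{Stage 2: the individual FBSDE.} We bound $|\nabla Y^{\xi_1,x}_{t_0}-\nabla Y^{\xi_2,x}_{t_0}|$ by standard BSDE/SDE estimates for (\ref{FBSDE xi,x}). These use the Lipschitz continuity of $\partial_x\widehat H,\partial_p\widehat H$ in the $\mathrm W_1$ measure argument together with $\mathrm W_1(\rho^1_t,\rho^2_t)\le \E_{\mathcal F^0_t}[|\Delta X_t|+|\alpha^1_t-\alpha^2_t|]$. As in (\ref{LL Lipschitz 2}), the Lipschitz decoupling field lets us replace $\nabla Y$ differences by $X$ differences inside Gronwall.

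Since we take the supremum over $x$, we can close with Young's inequality on $\|u^1(t_0,\cdot)-u^2(t_0,\cdot)\|_{L^\infty}$, exactly as at the end of the proof of Theorem \ref{LL Lipschitz}. The result is $\|u^1-u^2\|_{L^\infty}\le C(\E_{\mathcal F^0_{t_0}}|\xi_1-\xi_2|^2)^{1/2}$. Lipschitz continuity in $x$ then yields the stated estimate. The estimate is in $L^2$ rather than $L^1$ because the monotonicity identity is quadratic in $\Delta X$.

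\textbf{Main obstacle.} We expect the main obstacle to be the coercivity step in Stage 1. The displacement condition (\ref{L displacement}) is tested on the joint laws, while the strong convexity constant $c_0$ must dominate the cross-measure term $c_1$ of Assumption \ref{convexity assumption}(ii) after the $\lambda$-shift. We would first try to bound the drift below by $(c_0-c_1)\E|\alpha^1-\alpha^2|^2-C\E|\Delta X|^2$. The lower-order term is what forces the Gronwall argument.
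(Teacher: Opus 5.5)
Your proposal has a genuine gap in Stage 1: as written, it does not close for an arbitrary horizon. The inequality you aim for integrates over all of $[t_0,T]$ on both sides, while the forward bound $\E|\Delta X_t|^2\le C\big(\E|\xi_1-\xi_2|^2+\int_{t_0}^t\E|\Delta\alpha_s|^2\,\mathrm ds\big)$ is of Volterra type. Substituting the first into the second only gives
\[
\sup_t\E|\Delta X_t|^2\le A+C(T-t_0)\sup_t\E|\Delta X_t|^2 ,
\]
which closes only when $C(T-t_0)<1$. The constants multiplying $|\Delta X|^2$ come from $C_{Lip}$, $\lambda$ and $c_0-c_1$, so they cannot be made small.

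There is a telltale sign of the problem. Your aim estimate can be derived without (\ref{L displacement}) at all: the terminal sign from (\ref{G displacement}), the strong-convexity split and Lipschitz bounds suffice. An argument that never uses the monotonicity of $L$ can only be a short-time argument.

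The missing idea is to split the time interval.
\begin{itemize}
\item[(a)] For every intermediate $t$, apply It\^o's formula to $\langle\Delta P_s,\Delta X_s\rangle$ on $[t,T]$. Use (\ref{G displacement}) for the terminal term and (\ref{L displacement}) for the drift, the latter conditionally on $\mathcal F^0_s$, since $\rho^i_s=\mathcal L^0(X^{\xi_i}_s,\alpha^i_s)$. This gives the propagated sign $\E_{\mathcal F^0_t}[\Delta P_t\cdot\Delta X_t]\ge 0$, which is (\ref{v dis lambda}).
\item[(b)] Expand the same quantity on $[t_0,t]$ and apply your strong-convexity split only there. This yields
\[
\E_{\mathcal F^0_{t_0}}\int_{t_0}^t|\Delta\alpha_s|^2\,\mathrm ds\le C\,\E_{\mathcal F^0_{t_0}}\big[\Delta P_{t_0}\cdot(\xi_1-\xi_2)\big]+C\int_{t_0}^t\E_{\mathcal F^0_{t_0}}|\Delta X_s|^2\,\mathrm ds\quad\text{for every }t\in[t_0,T],
\]
which is (\ref{disp Lipschitz 1}). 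Together with the forward equation, this closes by Gronwall for any $T$.
\end{itemize}
Equivalently, you can keep your It\^o identity on $[t_0,T]$ but use that the conditional drift is nonnegative at each time, by (\ref{L displacement}), to discard $\int_t^T$.

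With this correction, the rest of your plan matches the paper's argument: Cauchy--Schwarz on $\E[\Delta P_{t_0}\cdot(\xi_1-\xi_2)]$, the SDE/BSDE estimates for (\ref{FBSDE xi,x}), and Young's inequality on $\|u^1(t_0,\cdot)-u^2(t_0,\cdot)\|_{L^\infty}$. Your fallback to the representation formula of \cite{liu2026global} is not needed here. The paper uses it only in Corollary \ref{disp W1 Lipschitz}, to upgrade this $\mathrm W_2$-type estimate to a $\mathrm W_1$ one.
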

\begin{proof}
    By It\^{o}'s formula, we can compute that
    \[
    \begin{aligned}
        &\E_{\mathcal{F}_t^0}[(\nabla Y_t^{\xi_1}-\nabla Y_t^{\xi_2})\cdot(X_t^{\xi_1}-X_t^{\xi_2})+\lambda|X_t^{\xi_1}-X_t^{\xi_2}|^2]\\
        =&\E_{\mathcal{F}_t^0}[\big(\partial_x G(X_T^{\xi_1},\mu_T^1)-\partial_x G(X_T^{\xi_2},\mu_T^2)\big)\cdot(X_T^{\xi_1}-X_T^{\xi_2})+\lambda|X_T^{\xi_1}-X_T^{\xi_2}|^2]\\
        &+\int_t^T\E_{\mathcal{F}_t^0}\Big[\big(\partial_x H(X_s^{\xi_1},\nabla Y_s^{\xi_1},\rho_s^1)-\partial_x H(X_s^{\xi_2},\nabla Y_s^{\xi_2},\rho_s^2)\big)\cdot(X_s^{\xi_1}-X_s^{\xi_2})\\
        &\qquad\qquad-(\nabla Y_s^{\xi_1}-\nabla Y_s^{\xi_2})\cdot\big(\partial_p H(X_s^{\xi_1},\nabla Y_s^{\xi_1},\rho_s^1)-\partial_p H(X_s^{\xi_2},\nabla Y_s^{\xi_2},\rho_s^2)\big)\\
        &\qquad\qquad-2\lambda(X_s^{\xi_1}-X_s^{\xi_2})\cdot\big(\partial_p H(X_s^{\xi_1},\nabla Y_s^{\xi_1},\rho_s^1)-\partial_p H(X_s^{\xi_2},\nabla Y_s^{\xi_2},\rho_s^2)\big)\Big]\,\mathrm{d}s\\
        =&\E_{\mathcal{F}_t^0}[\big(\partial_x G(X_T^{\xi_1},\mu_T^1)-\partial_x G(X_T^{\xi_2},\mu_T^2)\big)\cdot(X_T^{\xi_1}-X_T^{\xi_2})+\lambda|X_T^{\xi_1}-X_T^{\xi_2}|^2]\\
        &+\int_t^T\E_{\mathcal{F}_t^0}\Big[\big(\partial_x L(X_s^{\xi_1},\alpha_s^1,\rho_s^1)-\partial_x L(X_s^{\xi_2},\alpha_s^2,\rho_s^2)\big)\cdot(X_s^{\xi_1}-X_s^{\xi_2})\\
        &\qquad\qquad+\big(\partial_a L(X_s^{\xi_1},\alpha_s^1,\rho_s^1)-\partial_a L(X_s^{\xi_2},\alpha_s^2,\rho_s^2)\big)\cdot(\alpha_s^1-\alpha_s^2)\\
        &\qquad\qquad-2\lambda(X_s^{\xi_1}-X_s^{\xi_2})\cdot(\alpha_s^1-\alpha_s^2)\Big]\,\mathrm{d}s,
    \end{aligned}
    \]
    where we use the relation between the Hamiltonian $H$ and Lagrangian $L$. Since $G$ and $L$ satisfy the displacement $\lambda$-monotonicity (\ref{G displacement})(\ref{L displacement}), we conclude that for any $t\in[t_0,T]$,
    \begin{equation}\label{v dis lambda}
        \E_{\mathcal{F}_t^0}[(\nabla Y_t^{\xi_1}-\nabla Y_t^{\xi_2})\cdot(X_t^{\xi_1}-X_t^{\xi_2})+\lambda|X_t^{\xi_1}-X_t^{\xi_2}|^2]\geq 0.
    \end{equation}
    Now integrating from $t_0$ to $t$, using Assumptions \ref{regularity assumption}, \ref{convexity assumption}, we have
    \[
    \begin{aligned}
        0\leq&\E_{\mathcal{F}_{t_0}^0}[(\nabla Y_t^{\xi_1}-\nabla Y_t^{\xi_2})\cdot(X_t^{\xi_1}-X_t^{\xi_2})+\lambda|X_t^{\xi_1}-X_t^{\xi_2}|^2]\\
        =&\E_{\mathcal{F}_{t_0}^0}[(\nabla Y_{t_0}^{\xi_1}-\nabla Y_{t_0}^{\xi_2})\cdot(\xi_1-\xi_2)+\lambda|\xi_1-\xi_2|^2]\\
        &+\E_{\mathcal{F}_{t_0}^0}\int_{t_0}^t\Big[-\big(\partial_x L(X_s^{\xi_1},\alpha_s^1,\rho_s^1)-\partial_x L(X_s^{\xi_2},\alpha_s^2,\rho_s^2)\big)\cdot(X_s^{\xi_1}-X_s^{\xi_2})\\
        &\qquad\qquad-\big(\partial_a L(X_s^{\xi_1},\alpha_s^1,\rho_s^1)-\partial_a L(X_s^{\xi_2},\alpha_s^2,\rho_s^2)\big)\cdot(\alpha_s^1-\alpha_s^2)\\
        &\qquad\qquad+2\lambda(X_s^{\xi_1}-X_s^{\xi_2})\cdot(\alpha_s^1-\alpha_s^2)\Big]\,\mathrm{d}s\\
        \leq&\E_{\mathcal{F}_{t_0}^0}[(\nabla Y_{t_0}^{\xi_1}-\nabla Y_{t_0}^{\xi_2})\cdot(\xi_1-\xi_2)+\lambda|\xi_1-\xi_2|^2]\\
        &+\int_{t_0}^t\Big[C\E_{\mathcal{F}_{t_0}^0}[|X_s^{\xi_1}-X_s^{\xi_2}|^2]+C\E_{\mathcal{F}_{t_0}^0}\big[\E_{\mathcal{F}_s^0}[|X_s^{\xi_1}-X_s^{\xi_2}|]^2\big]\\
        &\qquad\quad+C\E_{\mathcal{F}_{t_0}^0}\big[\E_{\mathcal{F}_s^0}[|X_s^{\xi_1}-X_s^{\xi_2}|]\E_{\mathcal{F}_s^0}[|\alpha_s^1-\alpha_s^2|]\big]+C\E_{\mathcal{F}_{t_0}^0}[|X_s^{\xi_1}-X_s^{\xi_2}|\cdot|\alpha_s^1-\alpha_s^2|]\\
        &\qquad\quad-c_0\E_{\mathcal{F}_{t_0}^0}[|\alpha_s^1-\alpha_s^2|^2]+c_1\E_{\mathcal{F}_{t_0}^0}\big[\E_{\mathcal{F}_s^0}[|\alpha_s^1-\alpha_s^2|]^2\big]\Big]\,\mathrm{d}s
    \end{aligned}
    \]
    Note that we have $c_1<c_0$ from Assumption \ref{convexity assumption}(ii). By Jensen's inequality and Young's inequality, we obtain
    \begin{equation}\label{disp Lipschitz 1}
    \begin{aligned}
        &\E_{\mathcal{F}_{t_0}^0}\int_{t_0}^t|\alpha_s^1-\alpha_s^2|^2\,\mathrm{d}s\\
        \leq&C\E_{\mathcal{F}_{t_0}^0}[(\nabla Y_{t_0}^{\xi_1}-\nabla Y_{t_0}^{\xi_2})\cdot(\xi_1-\xi_2)+\lambda|\xi_1-\xi_2|^2]+C\int_{t_0}^t\E_{\mathcal{F}_{t_0}^0}[|X_s^{\xi_1}-X_s^{\xi_2}|^2\,\mathrm{d}s\\
        \leq&C\Big(\E_{\mathcal{F}_{t_0}^0}[|\xi_1-\xi_2|^2]+\E_{\mathcal{F}_{t_0}^0}[|\nabla Y_{t_0}^{\xi_1}-\nabla Y_{t_0}^{\xi_2}|^2]^{\frac12}\E_{\mathcal{F}_{t_0}^0}[|\xi_1-\xi_2|^2]^{\frac12}+\int_{t_0}^t\E_{\mathcal{F}_{t_0}^0}[|X_s^{\xi_1}-X_s^{\xi_2}|^2]\,\mathrm{d}s\Big).
    \end{aligned}
    \end{equation}
    From the SDE for $X^{\xi_i}$ (\ref{FBSDE xi}), applying (\ref{disp Lipschitz 1}) and Gronwall's inequality, we have
    \begin{equation}\label{disp Lipschitz 2}
        \E_{\mathcal{F}_{t_0}^0}[|X_t^{\xi_1}-X_t^{\xi_2}|^2]
        \leq C\Big(\E_{\mathcal{F}_{t_0}^0}[\xi_1-\xi_2|^2]+\E_{\mathcal{F}_{t_0}^0}[|\nabla Y_{t_0}^{\xi_1}-\nabla Y_{t_0}^{\xi_2}|^2]^{\frac12}\E_{\mathcal{F}_{t_0}^0}[|\xi_1-\xi_2|^2]^{\frac12}\Big).
    \end{equation}
    Similarly, from the SDE for (\ref{FBSDE xi,x}), using (\ref{disp Lipschitz 1})(\ref{disp Lipschitz 2}), we have
    \begin{equation}\label{disp Lipschitz 3}
    \begin{aligned}
        &\E_{\mathcal{F}_{t_0}^0}[|X_t^{\xi_1,x_1}-X_t^{\xi_2,x_2}|^2]\\
        \leq&C\Big(|x_1-x_2|^2+\E_{\mathcal{F}_{t_0}^0}[|\xi_1-\xi_2|^2]+C\E_{\mathcal{F}_{t_0}^0}[|\nabla Y_{t_0}^{\xi_1}-\nabla Y_{t_0}^{\xi_2}|^2]^{\frac12}\E_{\mathcal{F}_{t_0}^0}[|\xi_1-\xi_2|^2]^{\frac12}\\
        &\quad+\int_{t_0}^t\E_{\mathcal{F}_{t_0}^0}[|\nabla Y_s^{\xi_1,x_1}-\nabla Y_s^{\xi_2,x_2}|^2]\,\mathrm{d}s\Big).
    \end{aligned}
    \end{equation}
    Combining with (\ref{disp Lipschitz 1})(\ref{disp Lipschitz 2})(\ref{disp Lipschitz 3}), we estimate the BSDE for $\nabla Y^{\xi_i,x_i}$ (\ref{FBSDE xi,x})
    \begin{equation}
    \begin{aligned}
        &|\nabla Y_{t_0}^{\xi_1,x_1}-\nabla Y_{t_0}^{\xi_2,x_2}|^2\\
        \leq&C\Big(|x_1-x_2|^2+\E_{\mathcal{F}_{t_0}^0}[|\xi_1-\xi_2|^2]+\E_{\mathcal{F}_{t_0}^0}[|\nabla Y_{t_0}^{\xi_1}-\nabla Y_{t_0}^{\xi_2}|^2]^{\frac12}\E_{\mathcal{F}_{t_0}^0}[|\xi_1-\xi_2|^2]^{\frac12}\\
        &\quad+\int_{t_0}^T\E_{\mathcal{F}_{t_0}^0}[|\nabla Y_t^{\xi_1,x_1}-\nabla Y_t^{\xi_2,x_2}|^2]\,\mathrm{d}t\Big),
    \end{aligned}
    \end{equation}
    which implies by Gronwall's inequality
    \begin{equation}\label{disp lipschitz 4}
    \begin{aligned}
        &|\nabla Y_{t_0}^{\xi_1,x_1}-\nabla Y_{t_0}^{\xi_2,x_2}|^2\\
        \leq&C\Big(|x_1-x_2|^2+\E_{\mathcal{F}_{t_0}^0}[|\xi_1-\xi_2|^2]+\E_{\mathcal{F}_{t_0}^0}[|\nabla Y_{t_0}^{\xi_1}-\nabla Y_{t_0}^{\xi_2}|^2]^{\frac12}\E_{\mathcal{F}_{t_0}^0}[|\xi_1-\xi_2|^2]^{\frac12}\Big).
    \end{aligned}
    \end{equation}
    Finally, we need to estimate $\E_{\mathcal{F}_{t_0}^0}[|\nabla Y_{t_0}^{\xi_1}-\nabla Y_{t_0}^{\xi_2}|^2]$. Recall that $\nabla Y_{t_0}^{\xi_i}=\partial_x v^i(t_0,\xi_i)$ and $\nabla Y_{t_0}^{\xi_i,x}=\partial_x v^i(t_0,x)$ in the proof of Theorem \ref{LL Lipschitz}. Setting $x_1=x_2=x$ in (\ref{disp lipschitz 4}), we have
    \begin{equation}
    \begin{aligned}
        \|\partial_x v^1(t_0,\cdot)-\partial_x v^2(t_0,\cdot)\|^2_{L^{\infty}}\leq C\Big(\E_{\mathcal{F}_{t_0}^0}[|\xi_1-\xi_2|^2]+\E_{\mathcal{F}_{t_0}^0}[|\nabla Y_{t_0}^{\xi_1}-\nabla Y_{t_0}^{\xi_2}|^2]^{\frac12}\E_{\mathcal{F}_{t_0}^0}[|\xi_1-\xi_2|^2]^{\frac12}\Big).
    \end{aligned}
    \end{equation}
    Using the above estimate, by the Lipschitz continuity of $\partial_x v^i(t_0,\cdot)$, we have
    \begin{equation}
    \begin{aligned}
        &\E_{\mathcal{F}_{t_0}^0}[|\nabla Y_{t_0}^{\xi_1}-\nabla Y_{t_0}^{\xi_2}|^2]\\
        =&\E_{\mathcal{F}_{t_0}^0}[|\partial_x v^1(t_0,\xi_1)-\partial_x v^2(t_0,\xi_2)|^2]\\
        \leq&C\Big(\E_{\mathcal{F}_{t_0}^0}[|\partial_x v^1(t_0,\xi_1)-\partial_x v^1(t_0,\xi_2)|^2]+\E_{\mathcal{F}_{t_0}^0}[|\partial_x v^1(t_0,\xi_2)-\partial_x v^2(t_0,\xi_2)|^2]\Big)\\
        \leq&C\Big(\E_{\mathcal{F}_{t_0}^0}[|\xi_1-\xi_2|^2]+\|\partial_x v^1(t_0,\cdot)-\partial_x v^2(t_0,\cdot)\|_{L^{\infty}}^2\Big)\\
        \leq&C\Big(\E_{\mathcal{F}_{t_0}^0}[|\xi_1-\xi_2|^2]+\E_{\mathcal{F}_{t_0}^0}[|\nabla Y_{t_0}^{\xi_1}-\nabla Y_{t_0}^{\xi_2}|^2]^{\frac12}\E_{\mathcal{F}_{t_0}^0}[|\xi_1-\xi_2|^2]^{\frac12}\Big),
    \end{aligned}
    \end{equation}
    so Young's inequality gives
    \begin{equation}
        \E_{\mathcal{F}_{t_0}^0}[|\nabla Y_{t_0}^{\xi_1}-\nabla Y_{t_0}^{\xi_2}|^2]\leq C\E_{\mathcal{F}_{t_0}^0}[|\xi_1-\xi_2|^2].
    \end{equation}
    Substituting this into (\ref{disp lipschitz 4}), we conclude by Young's inequality that
    \begin{equation}
        |\nabla Y_{t_0}^{\xi_1,x_1}-\nabla Y_{t_0}^{\xi_2,x_2}|^2\leq C\Big(|x_1-x_2|^2+\E_{\mathcal{F}_{t_0}^0}[|\xi_1-\xi_2|^2]\Big),
    \end{equation}
    which completes the proof.
\end{proof}
Note that in Theorem \ref{LL Lipschitz}, we obtain the $\mathrm{W}_1$-Lipschitz estimate with respect to the measure variable under Lasry-Lions monotonicity condition. In fact, applying the method of representation formula for classical solutions, we can show that the $\mathrm{W}_2$-Lipschitz continuity in Theorem \ref{disp Lipschitz} can imply the $\mathrm{W}_1$-Lipschitz continuity under displacement $\lambda$-monotonicity.
\begin{corollary}\label{disp W1 Lipschitz}
    Let Assumptions \ref{regularity assumption}, \ref{convexity assumption}, \ref{locally bounded assumption} hold. Suppose $G$ and $L$ satisfies the displacement $\lambda$-monotonicities (\ref{G displacement})(\ref{L displacement}). Suppose $T<\delta$, where $\delta$ is defined in Proposition \ref{local FBSDE}. Then for any initial conditions $(x_i,\xi_i)\in\R^d\times\mathbb{L}^2(\mathcal{F}_{t_0}),\,i=1,2$, the local solutions $\nabla Y^{\xi_i,x_i}$ of (\ref{FBSDE xi})-(\ref{FBSDE xi,x}) satisfy the following $\mathrm{W}_1$-Lipschitz estimate:
    \begin{equation}
        |\nabla Y_{t_0}^{\xi_1,x_1}-\nabla Y_{t_0}^{\xi_2,x_2}|\leq C\big(|x_1-x_2|+\E_{\mathcal{F}_{t_0}^0}[|\xi_1-\xi_2|]\big),
    \end{equation}
    where $C$ depends only on $d$ and the parameters in the assumptions.
\end{corollary}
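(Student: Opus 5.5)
We prove the corollary by upgrading the $\mathrm{W}_2$-type estimate of Theorem \ref{disp Lipschitz} to a $\mathrm{W}_1$-type one. The tool is a representation formula for the derivative of the decoupling field in the measure direction, in the spirit of \cite{liu2026global}. Since $T<\delta$, Proposition \ref{local FBSDE} gives unique solutions of (\ref{FBSDE xi})-(\ref{FBSDE xi,x}) for all initial data. Write $U(t_0,x,\mu):=\nabla Y_{t_0}^{\xi,x}$. This is well defined, and by Propositions \ref{relation u and v disp lamda} and \ref{local FBSDE} it is Lipschitz in $x$ with a universal constant. Theorem \ref{disp Lipschitz} shows it is Lipschitz in $\mu$ under $\mathrm{W}_2$ with a constant $C$ that does not depend on $T$ or on the short-time parameter. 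We first treat smooth data and pass to the limit at the end.

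\textbf{Step 1 (smooth data).} First mollify $G$ and $\widehat H$ in the measure variable, using the mollifiers of \cite{cosso2023smooth,mou2024nonsmooth}. This keeps the $\mathrm{W}_1$-Lipschitz constants, the displacement $\lambda$-monotonicity up to vanishing errors, and Assumptions \ref{convexity assumption}, \ref{locally bounded assumption}. For the mollified data, the short-time theory makes $U$ differentiable in $\mu$, so $\partial_\mu U(t_0,x,\mu,\tilde x)$ exists. Theorem \ref{disp Lipschitz}, applied to the mollified data, gives the uniform bound
\[
\E\big[|\partial_\mu U(t_0,x,\mu,\xi)|^2\big]^{1/2}\le C .
\]
This bound is not yet enough: a $\mathrm{W}_1$ estimate needs the pointwise bound $\sup_{\tilde x}|\partial_\mu U(t_0,x,\mu,\tilde x)|\le C$, because
\[
|U(t_0,x,\mu_1)-U(t_0,x,\mu_2)|\le \sup|\partial_\mu U|\,\E[|\xi_1-\xi_2|]
\]
along the interpolation $\xi_\theta=\theta\xi_1+(1-\theta)\xi_2$.

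\textbf{Step 2 (representation formula).} Differentiating (\ref{FBSDE xi})-(\ref{FBSDE xi,x}) with respect to $\mu$ in the direction $\tilde x$ gives a linear FBSDE with two parts.
\begin{itemize}
\item The \emph{mean-field part} is the linearized McKean--Vlasov system along $X^{\xi}$. It is driven by the derivative flow $\partial_{\mu}X^{\xi}$ and by the perturbation of a single particle started at $\tilde x$, namely $(\nabla_x X^{\xi,\tilde x},\nabla_x\nabla Y^{\xi,\tilde x})$.
\item The \emph{$x$-part} is a linear BSDE for $\partial_\mu\nabla Y^{\xi,x}$. Its coefficients are bounded, and its source terms are $\partial_\mu$ of the data paired with $\partial_\mu X^{\xi},\partial_\mu\alpha$.
\end{itemize}
The derivative with respect to the measure of the data ($\partial_\mu\partial_xG$, $\partial_\nu\partial_p\widehat H$) is bounded pointwise by the $\mathrm{W}_1$-Lipschitz constants. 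The $x$-part is therefore a linear BSDE with bounded coefficients and a source of the form $C(|\nabla_xX^{\xi,\tilde x}|+|\nabla_x\nabla Y^{\xi,\tilde x}|)$ plus $\E$-averaged terms. The pointwise term is bounded uniformly in $\tilde x$ by the Lipschitz-in-$x$ bound. The averaged terms are controlled in $L^2$ by the linearized version of Theorem \ref{disp Lipschitz}: apply its monotonicity argument (\ref{v dis lambda}) to the linearized system with initial perturbation localized at $\tilde x$. Combining the two, we will obtain
\[
\sup_{x,\tilde x}|\partial_\mu U(t_0,x,\mu,\tilde x)|\le C,
\]
with $C$ depending only on $d$ and the structural constants.

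\textbf{Step 3 (passage to the limit).} Integrate this bound along $\xi_\theta$ to get the $\mathrm{W}_1$ estimate for the mollified problem. Then let the mollification vanish, using the stability estimate in Proposition \ref{local FBSDE} and Proposition \ref{fixed-point stability}. This yields the claimed inequality. The $x$-dependence combines with this through the triangle inequality and the Lipschitz bound in $x$.

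\textbf{Main obstacle.} The hard part is Step 2: controlling the averaged terms in $\partial_\mu X^{\xi}$ and $\partial_\mu\alpha$ with a constant independent of $T$ and uniform in $\tilde x$. Two features of the linearized system cause trouble, the fixed-point map $\Phi$ and the degenerate noise. To handle them I would first try to verify the linearized displacement monotonicity inequality (\ref{H displacement}), using Assumption \ref{convexity assumption}(ii) ($c_1<c_0$) to absorb the $\partial_\nu\Phi$ contributions, exactly as in the proof of Proposition \ref{existence fixed-point}.
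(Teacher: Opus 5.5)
Your plan follows the paper's proof: mollify $(G,\widehat H)$ as in \cite{cosso2023smooth,mou2024nonsmooth}, get a bound on $\partial_\mu U_n$ that is uniform in $n$ and pointwise in $\tilde x$, integrate along $\xi_\theta$, and pass to the limit by FBSDE stability. The only difference is that the paper does not re-derive that pointwise bound through a representation formula as your Step 2 sketches; it quotes it directly from \cite[Theorem 6.1]{liu2026global}, with the case $\sigma=0$ handled by Propositions \ref{relation u and v LL} and \ref{relation u and v disp lamda} as the remark after the corollary explains, and since $\widehat H$ itself is mollified, the $\Phi$-contributions you worry about are already absorbed into the $\mathrm{W}_1$-Lipschitz constant of $\widehat H$.
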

\begin{proof}
    The local well-posedness of FBSDEs (\ref{FBSDE xi})-(\ref{FBSDE xi,x}) is guaranteed by Proposition \ref{local FBSDE}. Recalling $\widehat{H}$ in Remark \ref{remark hat H}, we mollify $(G,\widehat{H})$ to $(G_n,\widehat{H}_n)$ by the method in \cite{mou2024nonsmooth,cosso2023smooth}. Let $\nabla Y^{n,\xi_i,x_i}$ denote the solution to the corresponding FBSDE, and define $U_n(t_0,x_i,\mu_{t_0}^i):=\nabla Y_{t_0}^{n,\xi_i,x_i}$. By \cite[Theorem 6.1]{liu2026global}, $U_n$ is a classical solution to the vectorial master equation (\ref{vectorial master equation}) and $\partial_{\mu}U_n$ is uniformly bounded. By the convergence of mollifiers, this implies the desired $\mathrm{W}_1$-Lipschitz estimate.
\end{proof}
\begin{remark}
    In \cite{liu2026global}, the classical solution theory for the master equation was studied under non-degenerate idiosyncratic noise ($\sigma>0$), while we only assume $\sigma\geq 0$ in the present paper. However, this non-degeneracy only contributes to the estimates in the space variable there, which is covered by our Propositions \ref{relation u and v LL} and \ref{relation u and v disp lamda}.
\end{remark}
With the a priori $\mathrm{W}_1$-Lipschitz estimate, we can show the global well-posedness for the FBSDEs (\ref{FBSDE xi})-(\ref{FBSDE xi,x}), and then we can define the candidate solution to the master equation using the decoupling field.
\begin{theorem}\label{FBSDE well-posed}
    Let Assumptions \ref{regularity assumption}, \ref{quadratic growth assumption}, \ref{convexity assumption} hold. Assume further either of the following monotonicity conditions hold:\\
    (i) $G$ and $L$ satisfy the Lasry-Lions monotonicity conditions (\ref{G Lasry-Lions})(\ref{L Lasry-Lions}) and Assumption \ref{G L convexity assumption};\\
    (ii) $G$ and $L$ satisfy the displacement $\lambda$-monotonicity conditions (\ref{G displacement})(\ref{L displacement}) and Assumption \ref{locally bounded assumption}.\\
    Then, we have\\
    (i) The FBSDEs (\ref{FBSDE xi})-(\ref{FBSDE xi,x}) are well-posed. Consequently, for any $(t_0,x,\mu)\in[0,T]\times\R^d\times\mathcal{P}_2(\R^d)$ and $\xi\in\mathbb{L}^2(\mathcal{F}_{t_0}^1;\mu)$, we can define $U(t_0,x,\mu):=\nabla Y_{t_0}^{\xi,x}$;\\
    (ii) Given the $(X^{\xi,x},\nabla Y^{\xi,x})$ in (\ref{FBSDE xi,x}) and $\mu,\rho$ in (\ref{FBSDE xi}). For any $(t_0,x,\mu)\in[0,T]\times\R^d\times\mathcal{P}_2(\R^d)$ and $\xi\in\mathbb{L}^2(\mathcal{F}_{t_0}^1;\mu)$, we can define function $V$ by (\ref{define V}). Then we have $\partial_x V=U$;\\
    (iii) $V\in C_{Lip}^{0,1}(\Theta)$. In particular, $\partial_x V=U$ is $\frac12$-H\"{o}lder continuous in $t$, uniformly with respect to \((x,\mu)\) up to a linear growth factor. More precisely, for any $0\leq t_0\leq t\leq T$, $x\in\R^d$, and $\mu\in\mathcal{P}_2(\R^d)$,
   \begin{equation}\label{U holder continuous}
    \begin{aligned}
        |U(t_0,x,\mu)-U(t,x,\mu)|
        \leq
        C\big(1+|x|+M_1(\mu)\big)\sqrt{t-t_0}.
        \end{aligned}
    \end{equation}
\end{theorem}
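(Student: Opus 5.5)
The proof of (i) is the standard extension of the local solution of Proposition \ref{local FBSDE} to a global one, driven by an a priori Lipschitz bound on the decoupling field. The first step is to introduce the decoupling field on a short interval. Fix $\delta$ as in Proposition \ref{local FBSDE}. The crucial feature is that $\delta$ depends only on $d$ and on the Lipschitz constants of the terminal datum, in $x$ and in $\mu$ under $\mathrm{W}_2$.

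I proceed by backward induction on a partition $T=T_0>T_1>\cdots>T_N=0$ with mesh smaller than a $\delta'$ still to be fixed. On $[T_1,T]$ Proposition \ref{local FBSDE} gives well-posedness, and I set $U(t_0,x,\mu):=\nabla Y^{\xi,x}_{t_0}$. This is independent of the choice of $\xi\in\mathbb{L}^2(\mathcal{F}^1_{t_0};\mu)$, since $\mathcal{F}^1_{t_0}$-measurable initial data make the conditional laws depend only on $\mu$. By Theorem \ref{LL Lipschitz} in case (i), or by Theorem \ref{disp Lipschitz} together with Corollary \ref{disp W1 Lipschitz} in case (ii), we get
\[
|U(t_0,x_1,\mu_1)-U(t_0,x_2,\mu_2)|\le C\big(|x_1-x_2|+\mathrm{W}_1(\mu_1,\mu_2)\big),
\]
with $C$ depending only on $T$ and the data, and not on the subinterval. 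The $\mathrm{W}_1$ bound dominates a $\mathrm{W}_2$ bound, so $U(T_1,\cdot,\cdot)$ is an admissible terminal datum on $[T_2,T_1]$ with Lipschitz constant at most $C$. I then choose $\delta'=\delta(C)$ once and for all and iterate; the number of steps is finite because $C$ does not grow. Gluing the solutions through the flow property gives existence on $[0,T]$. Uniqueness also follows by induction, from local uniqueness on each piece: any global solution must satisfy $\nabla Y_{T_k}=U(T_k,X_{T_k},\mu_{T_k})$, and this uses Proposition \ref{relation u and v LL}(ii) or Proposition \ref{relation u and v disp lamda} to identify $\nabla Y$ with the decoupling field.

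\textbf{Main obstacle.} The Lipschitz theorems are stated only for strong solutions that are assumed to exist. In case (ii) the $\mathrm{W}_1$ estimate of Corollary \ref{disp W1 Lipschitz} is proved only for $T<\delta$. So on each subinterval I would apply the corollary to the local problem, whose terminal datum is $U(T_k,\cdot,\cdot)$. That datum is only Lipschitz in $\mu$, and it is displacement $\lambda'$-monotone rather than satisfying the original structure. I therefore expect to check that the intermediate datum preserves the monotonicity structure, which is the content of \eqref{v dis lambda} at $t=T_k$. Alternatively, I would rewrite the subinterval problem as the full problem on $[t_0,T]$ and apply Theorem \ref{disp Lipschitz} directly to the glued solution, obtaining the $\mathrm{W}_2$ bound with a global constant. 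That bound alone suffices for the extension, since $\delta$ depends on the $\mathrm{W}_2$ constant. After global existence is established, the $\mathrm{W}_1$ estimate is upgraded via the mollification argument of Corollary \ref{disp W1 Lipschitz}.

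For (ii), fix $t_0$ and $\mu$ and set $\rho_t,\mu_t$ from (\ref{FBSDE xi}). Then $V(t_0,\cdot,\mu)$ coincides with the $v(t_0,\cdot)$ of (\ref{v definition}). Taking the full expectation is harmless, because $\mathcal{F}^0_{t_0}$ is trivial at the deterministic starting time. Propositions \ref{relation u and v LL} and \ref{relation u and v disp lamda} then give $\partial_xV(t_0,x,\mu)=\nabla Y^{\xi,x}_{t_0}=U(t_0,x,\mu)$.

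For (iii), Lipschitz continuity of $\partial_xV=U$ in $(x,\mu)$ is the estimate from (i). Continuity of $V$ and the quadratic growth bound follow from Assumption \ref{quadratic growth assumption} and the stability in Proposition \ref{local FBSDE}, using $\mathrm{W}_1$-continuity of $G$ and $L$. For the Hölder bound, I write $U(t_0,x,\mu)=\E[\nabla Y^{\xi,x}_t]+\E\int_{t_0}^t\partial_xH\,ds$ and use $\nabla Y^{\xi,x}_t=U(t,X^{\xi,x}_t,\mu_t)$. The Lipschitz property then gives
\[
|U(t,X_t,\mu_t)-U(t,x,\mu)|\le C\big(|X_t-x|+\mathrm{W}_1(\mu_t,\mu)\big).
\]
Finally, $\E|X_t-x|+\E|X^\xi_t-\xi|\le C(1+|x|+M_1(\mu))\sqrt{t-t_0}$, by the linear growth of $\partial_pH$ and the a priori bound $|U|\le C(1+|x|+M_1(\mu))$. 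The drift integral is $O(t-t_0)$ with the same growth factor.
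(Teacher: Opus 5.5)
Your treatment of (i), (ii) and the H\"older estimate in (iii) follows the paper's proof. In (i) this is backward induction from Proposition~\ref{local FBSDE}, driven by a uniform Lipschitz bound on the decoupling field: Theorem~\ref{LL Lipschitz} in the Lasry--Lions case, and in the displacement case the uniform $\mathrm{W}_2$ bound of Theorem~\ref{disp Lipschitz} to fix the step size, with the $\mathrm{W}_1$ bound recovered through the mollification of Corollary~\ref{disp W1 Lipschitz}. That is exactly your alternative route. For (ii) you identify $\partial_x V=U$ via Propositions~\ref{relation u and v LL} and~\ref{relation u and v disp lamda}, and for the H\"older bound you use the representation $U(t_0,x,\mu)=\E\big[U(t,X^{\xi,x}_t,\mu_t)+\int_{t_0}^t\partial_xH\,\mathrm{d}s\big]$ with first-moment increment bounds, as the paper does. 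Deriving uniqueness from local uniqueness on each subinterval, instead of from the Lipschitz theorems, is a harmless variant.

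The step that does not go through as written is the $\mathrm{W}_1$-continuity of $V$. This is part of the claim $V\in C^{0,1}_{Lip}(\Theta)$, since $C^0(\Theta)$ requires continuity in $\mu$ under $\mathrm{W}_1$.

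\emph{Wrong tool.} You attribute this continuity to the stability estimate of Proposition~\ref{local FBSDE}. That estimate concerns perturbations of the data $(G,\widehat{H})$, is local in time, and is an $L^2$ estimate. Its analogue for perturbations of the initial law would need $\E[|\xi_n-\xi|^2]\to0$, i.e.\ $\mathrm{W}_2$-convergence, which you do not have when only $\mathrm{W}_1(\mu_n,\mu)\to0$.

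\emph{Missing uniform integrability.} $V$ is built from $G$ and $L$, which are only continuous with quadratic growth. Convergence of the integrands therefore does not let you pass to the limit under $\E$; you need uniform integrability, and this is where the paper does real work. It proves $\E\big[\sup_{t}\big(|X^{\xi_n,x}_t|^4+(M_1(\rho^n_t))^4\big)\big]\le C\big(1+|x|^4+(M_1(\mu_n))^4\big)$. This stays bounded precisely because only $M_1(\mu_n)$ appears, and the paper concludes by Vitali's theorem. Remark~\ref{remark: W1 continuous} explains why this would fail if the growth were in $M_2$.

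\emph{Repair.} Use the $\mathrm{W}_1$-Lipschitz continuity of $U$ and $\Phi$ from (i), via a Gronwall argument on $\E_{\mathcal{F}^0_s}[|X^{\xi_n}_s-X^{\xi}_s|]$. This bounds $\mathrm{W}_1(\mu^n_s,\mu_s)$, $\mathrm{W}_1(\rho^n_s,\rho_s)$ and $|X^{\xi_n,x}_s-X^{\xi,x}_s|$ by $C\E[|\xi_n-\xi|]$. Then pass to the limit using the $M_1$-type growth of Assumption~\ref{quadratic growth assumption} together with uniform integrability (fourth moments as in the paper, or domination). The same reasoning gives joint continuity in $(t,x,\mu)$.
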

\begin{proof}
    (i) We first gather the $\mathrm{W}_1$-Lipschitz estimates in measure. The $\mathrm{W}_1$-Lipschitz estimate of $\nabla Y^{\xi,x}$ under Lasry-Lions monotonicity condition is proved in Theorem \ref{LL Lipschitz}. Under displacement $\lambda$-monotonicity condition, we note that $\delta$ in Proposition \ref{local FBSDE} depends only on the $\mathrm{W}_2$-Lipschitz constant of data, which is shown to be uniform in Theorem \ref{disp Lipschitz}. Hence we can divide $[0,T]$ into finite time intervals and Corollary \ref{disp W1 Lipschitz} provides a uniform $\mathrm{W}_1$-Lipschitz estimate on $[0,T]$. 
    \par The construction of a global solution with local solutions follows from a backward induction, where the uniform $\mathrm{W}_1$-Lipschitz estimate for $\nabla Y^{\xi,x}$ is crucial. We refer to \cite[Theorem 4.4]{mou2024nonsmooth} for details. The uniqueness follows directly from Theorem \ref{LL Lipschitz} and \ref{disp Lipschitz}.\\
    (ii) By the quadratic growth condition for $G$ and $L$ in Assumption \ref{quadratic growth assumption}, and the $L^2$-estimates for FBSDEs (\ref{FBSDE xi})-(\ref{FBSDE xi,x}), we know
    \begin{equation}
        \E\Big[\Big|G(X_T^{\xi,x},\mu_T)+\int_t^T\big[H(X_s^{\xi,x},\nabla Y_s^{\xi,x},\rho_s)-\nabla Y_s^{\xi,x}\cdot\partial_p H(X_s^{\xi,x},\nabla Y_s^{\xi,x},\rho_s)\big]\,\mathrm{d}s\Big|\Big]<\infty,
    \end{equation}
    so the $V$ in (\ref{define V}) is well-defined. Propositions \ref{relation u and v LL} and \ref{relation u and v disp lamda} yield $\partial_x V = U$.\\
    (iii) We first show $U=\partial_x V\in C_{Lip}^0(\Theta)$. The Lipschitz continuity of $U:=\partial_x V$ in $(x,\mu)$ follows from Theorem \ref{LL Lipschitz} and Corollary \ref{disp W1 Lipschitz}, so next we investigate the continuity of $U$ in $t$.
    \par By the Lipschitz continuity of $U$, 
    \begin{equation*}
        \mathbb{E}[|\nabla Y_t^\xi|]=\mathbb{E}[|U(t,X_t^\xi,\mu_t)|]
        \leq
        C\mathbb{E}\big[1+|X_t^\xi|+W_1(\mu_t,\delta_0)\big]\leq C\mathbb{E}\big[1+|X_t^\xi|\big].
    \end{equation*}
    By  Assumption~\ref{regularity assumption} and the above inequality,
    \begin{equation}
    \label{eq:H-population}
    \begin{aligned}
        &\mathbb{E}\big[|\partial_xH(X_t^\xi,\nabla Y_t^\xi,\rho_t)|+|\partial_pH(X_t^\xi,\nabla Y_t^\xi,\rho_t)|\big]\\
        \leq&
        C\mathbb{E}\big[1+|X_t^\xi|+|\nabla Y_t^\xi|+\mathrm{W}_1(\rho_t,\delta_{(0,0)})\big]\\
        \leq &C\mathbb{E}\big[1+|X_t^\xi|\big].
    \end{aligned}
    \end{equation}
    Indeed, $$\mathbb{E}\big[\mathrm{W}_1(\rho_t,\delta_{(0,0)})\big]=\mathbb{E}\Big[\mathrm{W}_1\Big(\Phi\big(\mathcal{L}^0(X_t^\xi,\nabla Y_t^\xi)\big),\delta_{(0,0)}\Big)\Big]\leq C\big(1+\mathbb{E}[|X_t^\xi|+|\nabla Y_t^\xi|]\big).$$
    Combining \eqref{eq:H-population} with the forward equation in \eqref{FBSDE xi}, we obtain
    \[
        \sup_{t_0\leq t\leq T}\mathbb{E}[|X_t^\xi|]
        \leq
        M_1(\mu)+C\sqrt{T-t_0}
        +C\int_{t_0}^T\big(1+\mathbb{E}[|X_t^\xi|]\big)\,\mathrm{d}t.
    \]
    Gronwall's inequality yields
    \begin{equation*}
    \label{eq:population-moment}
        \sup_{t_0\leq t\leq T}
        \mathbb{E}\big[
        |X_t^\xi|+|\nabla Y_t^\xi|
        +M_1(\rho_t)
        \big]
        \leq
        C\big(1+M_1(\mu)\big).
    \end{equation*}
    Similarly, 
    \begin{equation*}
    \label{eq:decoupled-moment}
        \sup_{t_0\leq t\leq T}
        \mathbb{E}\big[
        |X_t^{\xi,x}|+|\nabla Y_t^{\xi,x}|
        \big]
        \leq
        C\big(1+|x|+M_1(\mu)\big),
    \end{equation*}
    which proves
    \begin{equation}
    \label{eq:global-moment}
    \begin{aligned}
        \sup_{t_0\leq t\leq T}\mathbb{E}\Big[
        &|X_t^\xi|+|\nabla Y_t^\xi|
        +|X_t^{\xi,x}|+|\nabla Y_t^{\xi,x}|
        +M_1(\rho_t)\Big]
        \leq C\big(1+|x|+M_1(\mu)\big),
    \end{aligned}
    \end{equation}
    Let $t_0\leq t\leq
    T$. By the backward equation \eqref{FBSDE xi,x},
    \begin{equation*}
    \begin{aligned}
        U(t_0,x,\mu)
        =
        \mathbb{E}\Big[
        U(t,X_t^{\xi,x},\mu_t)
        +\int_{t_0}^t
        \partial_xH(X_s^{\xi,x},\nabla Y_s^{\xi,x},\rho_s)
        \,\mathrm{d}s
        \Big].
    \end{aligned}
    \end{equation*}
    One can easily see that
    \begin{equation}\label{eq:decoupled-increment}
    \begin{aligned}
        \sup_{t_0\leq s\leq t}\mathbb{E}\big[|X_s^{\xi,x}-x|\big]
        &\leq
        C\big(1+|x|+M_1(\mu)\big)
        \sqrt{t-t_0},
        \\
        \sup_{t_0\leq s\leq t}\mathbb{E}\big[|X_s^{\xi}-\xi|\big]
        &\leq
        C\big(1+M_1(\mu)\big)
        \sqrt{t-t_0}.
    \end{aligned}
    \end{equation}
    Moreover, 
    \eqref{eq:global-moment} implies
    \begin{equation}
    \label{eq:driver-time-estimate}
    \begin{aligned}
        \mathbb{E}\big[\int_{t_0}^t
        |\partial_xH(X_s^{\xi,x},\nabla Y_s^{\xi,x},\rho_s)|
        \,\mathrm{d}s\big]
        \leq
        C\big(1+|x|+M_1(\mu)\big)(t-t_0).
    \end{aligned}
    \end{equation}
    Using \eqref{eq:decoupled-increment}-\eqref{eq:driver-time-estimate}, we conclude that
    \[
    \begin{aligned}
        |U(t_0,x,\mu)-U(t,x,\mu)|
        &\leq
        \mathbb{E}\big[|U(t,X_t^{\xi,x},\mu_t)-U(t,x,\mu) |+\int_{t_0}^t\vert \partial_xH(X_r^{\xi,x},\nabla Y_r^{\xi,x},\rho_r)\vert\big]\\
        \leq& C\sup_{t_0\leq s\leq t}\mathbb{E}[|X_s^{\xi,x}-x|+|X_s^{\xi}-\xi|]+
        C\big(1+|x|+M_1(\mu)\big)
        (t-t_0) \\
        &\leq
        C\big(1+|x|+M_1(\mu)\big)\sqrt{t-t_0}.
        \end{aligned}
    \]
    
    Finally, we need to show $V\in C^0(\Theta)$. Fix $t_0\in[0,T],x\in\R^d$, and let $\mu_n=\mathcal{L}(\xi_n)\subset\mathcal{P}_2(\R^d)$ such that $\E[|\xi_n-\xi|]\rightarrow 0,\,n\rightarrow\infty$. To pass the limit into the expectation, we need the uniform integrability of the quadratic cost $G$ and $L$, which requires more than second-order moment estimate. For this aim, we shall prove a fourth-order moment estimate. Denote $\rho_t^n:=\Phi\big(\mathcal{L}^0(X_t^{\xi_n,x},\nabla Y_t^{\xi_n,x})\big)$. With the growth condition, we can show by a similar argument that
    \begin{equation}\label{fourth moment estimate}
        \E\Big[\sup_{t_0\leq t\leq T}\big(|X_t^{\xi_n,x}|^4+(M_1(\rho_t^n))^4\big)\Big]\leq C\big(1+|x|^4+(M_1(\mu_n))^4\big).
    \end{equation}
    Since $\mathrm{W}_1(\mu_n,\mu)\rightarrow 0$, it is clear that the right hand side is uniformly bounded. This implies $|X_t^{\xi_n,x}|^2,(M_1(\rho_t))^2$ are uniformly integrable. Combining with the quadratic growth condition for $G$ and $L$, we see that the cost in $V(t_0,x,\mu_n)$ in definition (\ref{define V}) is uniformly integrable, so we can pass the limit by Vitali's theorem, and thus $V(t_0,x,\mu_n)\rightarrow V(t_0,x,\mu)$ as $n\rightarrow\infty$. A similar uniformly integrable argument shows that $V$ is continuous in $(t,x,\mu)$. Therefore, we conclude $V\in C_{Lip}^{0,1}(\Theta)$.
\end{proof}
\begin{remark}\label{remark: W1 continuous}
    The above arguments show why we require the quadratic growth to depend on the first moment of measure in Assumption \ref{quadratic growth assumption}. Otherwise, if the data are assumed to have quadratic growth in $M_2(\mu)$ and $M_2(\rho)$, the right hand side of (\ref{fourth moment estimate}) will change into $(M_2(\mu_n))^4$. However, since $\mathrm{W}_1(\mu_n,\mu)\rightarrow 0$ does not imply the uniform boundedness of $M_2(\mu_n)$, the uniform integrability will fail.
\end{remark}
\begin{remark}\label{remark compare with alpar}
    With further regularity assumptions in the measure variables, the arguments in this section can be also used to study the classical solutions. In \cite{jackson2026unconditional}, the authors also studied the classical solutions to the MFGC master equation under a similar displacement $\lambda$-monotonicity condition,
    \begin{equation}\label{similar displacement}
    \begin{aligned}
        &\E[\langle\partial_a L(X^1,\alpha^1,\mathcal{L}(X^1,\alpha^1))-\partial_a L(X^2,\alpha^2,\mathcal{L}(X^2,\alpha^2)),\alpha^1-\alpha^2\rangle\\
        &+\langle\partial_x L(X^1,\alpha^1,\mathcal{L}(X^1,\alpha^1))-\partial_x L(X^2,\alpha^2,\mathcal{L}(X^2,\alpha^2)),X^1-X^2\rangle]\\
        \geq& c_{\alpha}\E[|\alpha^1-\alpha^2|^2]-c_x\E[|X^1-X^2|^2],
    \end{aligned}
    \end{equation}
    where $c_{\alpha}>0$ and $c_x\geq 0$, together with an assumption on the monotonicity constants,
    \begin{equation}\label{similar coefficient}
        c_{\alpha}>\lambda T+\frac12 c_x T^2.
    \end{equation}
    \par Under Assumptions \ref{regularity assumption} and \ref{convexity assumption}, the strict inequality $c_1<c_0$ implies (\ref{similar displacement}) for some $c_{\alpha}>0$ and $c_x\geq 0$. However, these constants need not satisfy (\ref{similar coefficient}). For fixed constants, (\ref{similar coefficient}) restricts the time horizon unless $\lambda=c_x=0$, so the well-posedness result in \cite{jackson2026unconditional} is only available for small time duration $T$ with displacement semimonotone data ($\lambda>0$). Our additional displacement $\lambda$-monotonicity condition (\ref{L displacement}) retains the mixed term $2\lambda\E[\langle X^1-X^2,\alpha^1-\alpha^2\rangle]$, which ensures the propagation of monotonicity and allows our result to hold for arbitrary $T$.
    \par One could also use the canonical transformation in \cite{arnold1989mechanics,bansil2025hidden,bansil2025classical} and consider displacement monotone data $G_{\lambda}$ and $L_{\lambda}$. Nevertheless, ordinary displacement monotonicity of $L_{\lambda}$ does not imply (\ref{similar displacement}) with $c_x=0$ and $c_{\alpha}>0$. Although Assumption \ref{convexity assumption} gives strict control monotonicity when the state is fixed, it only provides this lower bounded for some $c_x\geq 0$ and $c_{\alpha}>0$ when both the state and control vary. Therefore, we still cover cases which do not satisfy the conditions in \cite{jackson2026unconditional} even in the regime $\lambda=0$.
    \par We also provide a quick example with $\lambda=0$. Let $G=0$ and
    \begin{equation}
        L(x,a,\mathcal{L}(X,\alpha)):=\frac12(x^2+a^2)-\frac12(x+a)\sin(\E[X]+\E[\alpha]).
    \end{equation}
    It can be verified that $G$ and $L$ satisfy Assumptions \ref{regularity assumption}, \ref{convexity assumption}, \ref{locally bounded assumption} and displacement monotonicity (\ref{G displacement})(\ref{L displacement}) with $\lambda=0$. However, $L$ cannot satisfy (\ref{similar displacement}) with $c_x=0$. Indeed, for deterministic pair $(X^1,\alpha^1)=(\epsilon,\epsilon)$ and $(X^2,\alpha^2)=(0,0)$, (\ref{similar displacement}) gives $2\epsilon^2-\epsilon\sin(2\epsilon)\geq c_{\alpha}\epsilon^2$, which cannot hold for $c_{\alpha}>0$ as $\epsilon\rightarrow 0$.
\end{remark}

\section{Weak solution}
\par The notion of weak solution follows from the idea in \cite{mou2024nonsmooth}. However, we shall begin with the vectorial MFGC system (\ref{vectorial MFGC system}) rather than the MFGC system (\ref{MFGC system}). This allows us to discuss data with quadratic growth. Given a function $V$, we shall use $U:=\partial_x V$ to decouple the vectorial MFGC system and consider the weak solutions to the SPDE and BSPDE separately. We first consider the SPDE
\begin{equation}\label{vectorial SPDE}
    \mathrm{d}\mu_t(x)=\Big[\frac{\widehat{\sigma}^2}{2}\mathrm{tr}(\partial_{xx}\mu_t(x))-\divergence\big(\mu_t(x)\partial_p H(x,U(t,x,\mu_t),\rho_t)\big)\Big]\,\mathrm{d}t-\sigma_0\partial_x\mu_t(x)\cdot \mathrm{d}B_t^0,
\end{equation}
and next we consider the weak solution to the BSPDE in (\ref{vectorial MFGC system}) for given $\mu$. We say $u=(u^1,\cdots,u^d)$ is a weak solution to the BSPDE if $u(T,x)=\partial_x G(x,\mu_T)$, and each $i=1,\cdots,d$ and any $\varphi\in C_c^{1,2}([0,T]\times\R^d)$,
\begin{equation}
\begin{aligned}
    &\mathrm{d}\int_{\R^d}u^i(t,x)\varphi(t,x)\,\mathrm{d}x\\
    =&\int_{\R^d}r^i(t,x)\varphi(t,x)\,\mathrm{d}x\, \mathrm{d}B_t^0+\int_{R^d}\Big[u^i(t,x)\partial_t\varphi(t,x)-\frac{\widehat{\sigma}^2}{2}u^i(t,x)\tr\big(\partial_{xx}\varphi(t,x)\big)\\
    &+\sigma_0 r^i(t,x)\cdot\partial_x\varphi(t,x)+H(x,u(t,x),\rho_t)\partial_{x_i}\varphi(t,x)\Big]\,\mathrm{d}x\,\mathrm{d}t.
\end{aligned}
\end{equation}
\begin{definition}
    (i) We say $U\in C_{Lip}^0(\Theta;\R^d)$ is a vectorial weak solution to the master equation (\ref{master equation}), if $U$ is curl-free in $x$, and for any $t_0\in[0,T]$ and $\mu_{t_0}\in\mathcal{P}_2(\R^d)$, the SPDE (\ref{vectorial SPDE}) has a weak solution $\mu$; moreover, for any such weak solution $\mu$, $u(t,x):=U(t,x,\mu_t)$ is a weak solution to the BSPDE in (\ref{vectorial MFGC system}).\\
    (ii) We say $V\in C_{Lip}^{0,1}(\Theta)$ is a weak solution to the master equation (\ref{master equation}), if $U:=\partial_x V$ is a vectorial weak solution; moreover, for any $t_0\in[0,T]$ and $x\in\R^d,\mu_{t_0}\in\mathcal{P}_2(\R^d)$,
    \begin{equation}\label{weak solution V}
    \begin{aligned}
        V(t_0,x,\mu_{t_0})=&\E_{\mathcal{F}_{t_0}^0}\Big[G(X_T^U,\mu_T)+\int_t^T\big[H(X_s^U,U(s,X_s^U,\mu_s),\rho_s)\\
        &\quad\qquad-U(s,X_s^U,\mu_s)\cdot\partial_p H(X_s^U,U(s,X_s^U,\mu_s),\rho_s)\big]\,\mathrm{d}s\Big],
    \end{aligned}
    \end{equation}
    where $\mu$ is the weak solution to the SPDE (\ref{vectorial SPDE}) with the given $t_0,\mu_{t_0}$, and $X^U$ is the unique solution to the SDE
    \begin{equation}\left\{
    \begin{aligned}
        &X_t^U=x+\int_{t_0}^t\partial_p H(X_s^U,U(s,X_s^U,\mu_s),\rho_s)\,\mathrm{d}s+\sigma B_t^{t_0}+\sigma_0 B_t^{0,t_0}\\
        &\rho_t=\big(id,\partial_pH(\cdot,U(t,\cdot,\mu_t),\rho_t)\big)\#\mu_t.
    \end{aligned}\right.
    \end{equation}
\end{definition}
\begin{theorem}\label{weak solution theorem}
    Let the assumptions in Theorem \ref{FBSDE well-posed} hold. Then the function $U$ defined in Theorem \ref{FBSDE well-posed} is a vectorial weak solution to the master equation (\ref{master equation}). Moreover, the function $V$ defined in Theorem \ref{FBSDE well-posed} is the unique weak solution to the master equation (\ref{master equation}).
\end{theorem}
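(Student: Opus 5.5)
The proof has two parts. For existence we show that $U$ from Theorem \ref{FBSDE well-posed} is a vectorial weak solution and that $V$ satisfies (\ref{weak solution V}). For uniqueness we show that any vectorial weak solution coincides with the decoupling field $U$, and then recover $V$ through (\ref{weak solution V}).

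\textbf{Existence.} Theorem \ref{FBSDE well-posed} already gives $U=\partial_xV\in C^0_{Lip}(\Theta;\R^d)$, so $U$ is curl-free in $x$ a.e. To verify the weak formulations we approximate.

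\begin{itemize}
\item We mollify $(G,\widehat H)$ to smooth data $(G_n,\widehat H_n)$ with the Wasserstein mollifiers of \cite{cosso2023smooth,mou2024nonsmooth}. These preserve the Lipschitz constants, the convexity/monotonicity structure, and the quadratic growth.
\item By \cite[Theorem 6.1]{liu2026global}, whose space estimates are replaced by Propositions \ref{relation u and v LL}--\ref{relation u and v disp lamda}, the mollified problem has a classical solution $V_n$, and $U_n=\partial_xV_n$ solves (\ref{vectorial master equation}).
\item For classical solutions, It\^o--Wentzell applied to $U_n(t,x,\mu^n_t)$, with $\mu^n$ the conditional law of $X^{\xi}$, shows directly that $\mu^n$ solves (\ref{vectorial SPDE}) with $U_n$ and that $u_n(t,x)=U_n(t,x,\mu^n_t)$ solves the BSPDE tested against $\varphi\in C^{1,2}_c$. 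The $H\,\partial_{x_i}\varphi$ term comes from integrating by parts $\partial_{x_i}H+\partial_pH\cdot\partial_{x_i}u$, using that $\partial_x u$ is symmetric.
\item We pass $n\to\infty$. The stability estimate of Proposition \ref{local FBSDE}, propagated globally with the uniform Lipschitz bounds of Theorems \ref{LL Lipschitz}, \ref{disp Lipschitz} and Corollary \ref{disp W1 Lipschitz}, gives $U_n\to U$ locally uniformly and $\mu^n\to\mu$ in $\mathrm{W}_1$ a.s. The $r$-terms converge weakly in $L^2(\Omega\times[0,T]\times\R^d)$ after integration against $\varphi$. Proposition \ref{fixed-point stability} gives $\rho^n_t\to\rho_t$.
\end{itemize}

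It remains to check that every weak solution $\mu$ of (\ref{vectorial SPDE}) with this $U$ is the conditional law of $X^\xi$. Since $U$ is Lipschitz, the McKean--Vlasov drift $\partial_p\widehat H(x,U(t,x,\mu),\cdot)$ is Lipschitz in $(x,\mu)$ under $\mathrm{W}_1$. Uniqueness for the linearized Fokker--Planck equation with frozen coefficients, followed by a Gronwall argument in $\mathrm{W}_1$, gives this identification. Formula (\ref{weak solution V}) then holds by (\ref{define V}).

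\textbf{Uniqueness.} Let $\tilde U$ be any vectorial weak solution and $\tilde\mu$ the corresponding measure flow. By the identification just described, $\tilde\mu_t=\mathcal L^0(\tilde X_t)$, where $\tilde X$ solves the forward SDE with drift $\partial_pH(\cdot,\tilde U,\tilde\rho)$. Set $\tilde u(t,x)=\tilde U(t,x,\tilde\mu_t)$.

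\begin{itemize}
\item We test the BSPDE with $\varphi=\eta_\varepsilon(\cdot-y)$, a spatial mollifier. Then $\tilde u_\varepsilon=\tilde u*\eta_\varepsilon$ is smooth in $x$ and satisfies a BSPDE pointwise, with an error term $R_\varepsilon$ coming from commuting the convolution with the nonlinearity.
\item We apply It\^o--Wentzell to $\tilde u_\varepsilon(t,X^{\xi,x}_t)$, where $X^{\xi,x}$ now has drift $\partial_pH(X,\tilde u(t,X),\tilde\rho_t)$. The second-order terms cancel. We obtain an approximate BSDE whose driver is $\partial_xH+\partial_pH\cdot\partial_x\tilde u_\varepsilon-(\partial_pH\cdot\partial_x\tilde u)_\varepsilon$, plus the commutator.
\end{itemize}

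The main obstacle is showing that these error terms vanish in $L^1$ as $\varepsilon\to0$. Here $\partial_x\tilde u$ is only bounded and a.e.\ defined, and with $\sigma=0$ the law of $X_t$ may be singular, so a.e.\ convergence of $\partial_x\tilde u_\varepsilon$ need not transfer along trajectories. The plan is to handle this in two cases.

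\begin{itemize}
\item When $\sigma>0$, Krylov-type density estimates allow the a.e.\ limit to pass along trajectories.
\item When $\sigma=0$ (degenerate case), we first perturb by adding $\delta\,\mathrm dW$ with an independent Brownian motion $W$, and let $\delta\to0$ last using the Lipschitz stability of the FBSDE. The curl-free property rewrites $\partial_pH\cdot\partial_{x_i}\tilde u$ as $\partial_{x_i}$ of a function; this makes the commutator a divergence-type term, controlled by the uniform Lipschitz bound and $\|\tilde u_\varepsilon-\tilde u\|_\infty\le C\varepsilon$.
\end{itemize}

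In the limit, $(\tilde X,\tilde u(t,\tilde X_t))$ solves the FBSDEs (\ref{FBSDE xi})--(\ref{FBSDE xi,x}). Uniqueness in Theorem \ref{FBSDE well-posed}(i) then gives $\tilde U(t_0,x,\mu)=\nabla Y^{\xi,x}_{t_0}=U(t_0,x,\mu)$. Consequently any weak solution $\tilde V$ has $\partial_x\tilde V=U$, and its representation (\ref{weak solution V}) has the same right-hand side as (\ref{define V}), so $\tilde V=V$.
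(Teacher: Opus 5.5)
\textbf{Existence.} Your existence part is sound and differs only mildly from the paper's. You run $U_n$ along its own flow $\mu^n$, so $u_n$ solves the mollified BSPDE exactly and you then need $\mu^n\to\mu$. The paper instead evaluates $U_n$ along the fixed limit flow $\mu$ and removes the resulting error $I^i_n$ using the uniform bound on $\partial_\mu U_n$. Both work.

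\textbf{Uniqueness: the missing idea.} The gap is in uniqueness. The obstacle you single out does not exist, and the device you propose for $\sigma=0$ would not work.

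After testing with $\eta_\varepsilon(\cdot-y)$ and applying It\^o--Wentzell along the forward process with drift $\partial_pH(X,\tilde u(t,X),\tilde\rho_t)$, the second-order and $r$-terms cancel. The drift of $\tilde u^i_\varepsilon(t,X_t)$ is then $-\tilde\Phi^i_\varepsilon(t,X_t)$, where
\[
\tilde\Phi^i_\varepsilon=\partial_{x_i}\big(H(\cdot,\tilde u,\tilde\rho_t)*\eta_\varepsilon\big)-\partial_x\tilde u^i_\varepsilon\cdot\partial_pH(\cdot,\tilde u,\tilde\rho_t).
\]
The Sobolev chain rule and curl-freeness, $\partial_{x_j}\tilde u^i_\varepsilon=(\partial_{x_i}\tilde u^j)*\eta_\varepsilon$, give
\begin{multline*}
\tilde\Phi^i_\varepsilon(t,x)=\big(\partial_{x_i}H(\cdot,\tilde u(t,\cdot),\tilde\rho_t)*\eta_\varepsilon\big)(x)\\
+\int_{\R^d}\big[\partial_pH(y,\tilde u(t,y),\tilde\rho_t)-\partial_pH(x,\tilde u(t,x),\tilde\rho_t)\big]\cdot D_{x_i}\tilde u(t,y)\,\eta_\varepsilon(x-y)\,\mathrm{d}y.
\end{multline*}
The nonsmooth factor $D\tilde u$ enters only through $\|D\tilde u\|_{L^\infty}$. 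Meanwhile $y\mapsto\partial_pH(y,\tilde u(t,y),\tilde\rho_t)$ and $y\mapsto\partial_{x_i}H(y,\tilde u(t,y),\tilde\rho_t)$ are Lipschitz uniformly in $(t,\omega)$. Hence
\[
|\tilde\Phi^i_\varepsilon(t,x)-\partial_{x_i}H(x,\tilde u(t,x),\tilde\rho_t)|\le C\varepsilon \quad\text{at \emph{every} } (t,x,\omega),
\]
and $\|\tilde u_\varepsilon-\tilde u\|_\infty\le C\varepsilon$ as well. So the limit passes along any trajectory, and no density of $X_t$ is ever needed. This pointwise commutator bound is the missing idea. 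It makes Krylov estimates and the case split unnecessary, and it is exactly why the paper's argument covers $\sigma=0$ with no extra work.

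\textbf{Why the perturbation fails.} The $\delta\,\mathrm{d}W$ perturbation breaks the argument. Since $\tilde u$ solves the BSPDE with coefficient $\widehat\sigma^2$, It\^o--Wentzell along $X^\delta$ leaves the uncancelled term $\frac{\delta^2}{2}\tr(\partial_{xx}\tilde u^i_\varepsilon)(t,X^\delta_t)$, whose sup norm is of order $\delta^2/\varepsilon$. For fixed $\delta$ this has no function-valued limit as $\varepsilon\to0$, because $\partial_{xx}\tilde u$ is only a distribution. Moreover, $\tilde u(t,X^\delta_t)$ is not the backward component of any FBSDE of type (\ref{FBSDE xi,x}), so there is no stability estimate to invoke when you send $\delta\to0$ last.

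Drop the perturbation and use the uniform bound above. The rest of your uniqueness argument then matches the paper: the limit BSDE identifies $(\tilde X,\tilde u(t,\tilde X_t))$ with the solution of (\ref{FBSDE xi})--(\ref{FBSDE xi,x}), and $V$ is recovered from (\ref{weak solution V}).
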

\begin{proof}
    \textit{Existence.} We first show that $U$ is a vectorial weak solution when $T\leq\delta$ for the $\delta$ in Theorem \ref{local FBSDE}.
    \par For any $\xi$ satisfying $\mathcal{L}(\xi)=\mu_{t_0}$, the FBSDE (\ref{FBSDE xi}) is locally well-posed. Then we can see that $\mu_t:=\mathcal{L}^0(X_t^{\xi})$ is a weak solution to the SPDE (\ref{vectorial SPDE}).
    \par Recall the notation $\widehat{H}(\cdot,\cdot,\rho):=H(\cdot,\cdot,\Phi(\rho))$ in Remark \ref{remark hat H}. Let $G_n,\widehat{H}_n$ be the smooth mollifiers of $G,\widehat{H}$ constructed in \cite{mou2024nonsmooth,cosso2023smooth}. If we replace the data with $G_n,\widehat{H}_n$, the vectorial master equation (\ref{vectorial master equation}) admits a unique classical solution $U_n$ with bounded $\partial_{\mu}U_n$ by \cite{liu2026global}. By the stability of the FBSDE (\ref{FBSDE xi,x}), we have $\lim_{n\rightarrow\infty}(U_n-U)(t,x,\mu)=0$ for any $(t,x,\mu)$.
   
    \par For any weak solution $\mu$ to the SPDE (\ref{vectorial SPDE}), we have $\mu_t=\mathcal{L}^0(X_t)$, where $X$ is determined by
    \begin{equation}\label{SDE with law mu}
        X_t=\xi+\int_{t_0}^t\partial_p \widehat{H}(X_s,U(s,X_s,\mu_s),\nu_s)\,\mathrm{d}s+\sigma B_t^{t_0}+\sigma_0 B_t^{0,t_0},
    \end{equation}
    where $\mathcal{L}(\xi)=\mu_{t_0}$ and $\nu_t:=\mathcal{L}^0(X_t,U(t,X_t,\mu_t))$. Define
    \[
    u(t,x):=U(t,x,\mu_t),\, u_n(t,x):=U_n(t,x,\mu_t),\, r_n^i(t,x):=\sigma_0\tilde{\E}_{\mathcal{F}_t^0}[\partial_{\mu}U_n^i(t,x,\mu_t,\tilde{X}_t)].
    \]
    Applying It\^{o}'s formula on $u_n^i(t,x)=U_n^i(t,x,\mathcal{L}^0(X_t))$, and using the fact that $U_n^i$ satisfies the vectorial master equation (\ref{vectorial master equation}), we have
    \begin{equation}\label{weak solution Ito}
    \begin{aligned}
        \mathrm{d}u_n^i(t,x)=&r_n^i(t,x)\, \mathrm{d}B_t^0-\Big[\tr(\frac{\widehat{\sigma}^2}{2}\partial_{xx}u_n^i+\sigma_0\partial_x r_n^i)+\partial_{x_i}\widehat{H}_n(x,u_n,\nu_t^n)\\
        &+\partial_p \widehat{H}_n(x,u_n,\nu_t^n)\cdot\partial_{x_i}u_n(t,x)\Big]\,\mathrm{d}t+I_n^i(t,x)\,\mathrm{d}t,
    \end{aligned}
    \end{equation}
    where
    \[
    I_n^i(t,x):=\tilde{\E}_{\mathcal{F}_t^0}\big[\partial_{\mu}U_n^i(t,x,\mu_t,\tilde{X}_t)\cdot\big(\partial_p \widehat{H}(\tilde{X}_t,u(t,\tilde{X}_t),\nu_t)-\partial_p \widehat{H}_n(\tilde{X}_t,u_n(t,\tilde{X}_t),\nu_t^n)\big)\big],
    \]
    and $\nu_t:=\mathcal{L}^0(X_t,u(t,X_t)),\,\nu_t^n:=\mathcal{L}^0(X_t,u_n(t,X_t))$.
    For any test function $\varphi\in C_c^{\infty}([0,T]\times\R^d)$, we obtain from (\ref{weak solution Ito})
    \[
    \begin{aligned}
        &\int_{\R^d}u_n^i(t,x)\varphi(t,x)\,\mathrm{d}x\\
        =&\int_{\R^d}\partial_{x_i}G_n(x,\mu_T)\varphi(T,x)\,\mathrm{d}s+\int_t^T\int_{\R^d}\Big[\partial_t u_n^i+\tr(\frac{\widehat{\sigma}^2}{2}\partial_{xx}u_n^i+\sigma_0\partial_x r_n^i)\\
        &+\partial_{x_i}\widehat{H}_n(x,u_n,\nu_t^n)+\partial_p \widehat{H}_n(x,u_n,\nu_t^n)\cdot\partial_{x_i}u_n(t,x)-I_n^i(s,x)\Big]\varphi(s,x)\,\mathrm{d}x\,\mathrm{d}s\\
        &-\int_t^T\int_{\R^d}r_n^i\varphi(s,x)\,\mathrm{d}s\,\mathrm{d}B_s^0.
    \end{aligned}
    \]
    Denote $\nabla Y_t^{n,i}:=\int_{\R^d}u_n^i(t,x)\varphi(t,x)\,\mathrm{d}x,\,\nabla Z_t^{n,0,i}:=\int_{\R^d}r_n^i(t,x)\varphi(t,x)\,\mathrm{d}x$, and
    \[
    \begin{aligned}
    \Phi_t^{n,i}:=&\int_{\R^d}\Big[u_n^i\partial_t\varphi(s,x)-\frac{\widehat{\sigma}^2}{2}u_n^i\tr(\partial_{xx}\varphi(s,x))+\sigma_0 r_n^i\partial_x\varphi(s,x)\\
    &\qquad+\widehat{H}_n(x,u_n,\nu_s^n)\cdot\partial_{x_i}\varphi(s,x)\Big]\,\mathrm{d}x.
    \end{aligned}
    \]
    Moving the derivatives to the test function $\varphi$, we conclude from the above equality
    \begin{equation}\label{weak solution BSDE}
    \begin{aligned}
        \nabla Y_t^{n,i}=&\int_{\R^d}\partial_{x_i}G_n(x,\mu_T)\varphi(T,x)\,\mathrm{d}x-\int_t^T\Phi_s^{n,i}\,\mathrm{d}s\\
        &-\int_t^T\int_{\R^d}I_n^i(s,x)\varphi(s,x)\,\mathrm{d}x\,\mathrm{d}s-\int_t^T \nabla Z_s^{n,0,i}\,\mathrm{d}B_s^0.
    \end{aligned}
    \end{equation}
    \par Since $r_n^i$ is uniformly bounded, we can extract a weak convergent subsequence $r_{n_k}^i\rightharpoonup r^i$ in $\mathbb{L}^2(\Omega\times[t_0,T]\times B_R)$, where $B_R:=\{x\in\R^d:|x|\leq R\}$. This implies the weak convergence
    \begin{equation}
        \int_{\R^d}r_{n_k}^i(s,x)\partial_x\varphi(s,x)\,\mathrm{d}x\rightharpoonup\int_{\R^d}r^i(s,x)\partial_x\varphi(s,x)\,\mathrm{d}x\quad\text{in}\ \mathbb{L}^2(\Omega\times[t_0,T]).
    \end{equation}
    Combing with the strong convergence of other terms in $\Phi_t^{n_k,i}$, we obtain
    \begin{equation}
    \begin{aligned}
        \Phi_t^{n_k,i}\rightharpoonup\Phi_t^i:=&\int_{\R^d}\Big[u^i\partial_t\varphi(s,x)-\frac{\widehat{\sigma}^2}{2}u^i\tr(\partial_{xx}\varphi(s,x))+\sigma_0 r^i\partial_x\varphi(s,x)\\
        &\qquad+\widehat{H}(x,u,\nu_s)\cdot\partial_{x_i}\varphi(s,x)\Big]\,\mathrm{d}x\quad\text{in}\ \mathbb{L}^2(\Omega\times[t_0,T]).
    \end{aligned}
    \end{equation}
    It follows similarly that
    \begin{equation}
        \int_{\R^d}r_{n_k}^i(s,x)\varphi(s,x)\,\mathrm{d}x\rightharpoonup\int_{\R^d}r^i(s,x)\varphi(s,x)\,\mathrm{d}x\quad\text{in}\ \mathbb{L}^2(\Omega\times[t_0,T]),
    \end{equation}
    which implies by It\^{o}'s isometry that
    \begin{equation}
        \int_t^T \nabla Z_s^{0,i}\,\mathrm{d}B_s^0\rightharpoonup\int_t^T \nabla Z_s^{n,0,i}\,\mathrm{d}B_s^0\quad\text{in}\ \mathbb{L}^2(\Omega).
    \end{equation}
    \par On the other hand, by the boundedness of $\partial_{\mu}U_n^i$ and convergence of $\partial_p H_n, u_n,\nu^n$, we have
    \begin{equation}
    \begin{aligned}
        &\E\int_{t_0}^T\Big|\int_{\R^d}I_n^i(t,x)\varphi(t,x)\,\mathrm{d}x\Big|^2\,\mathrm{d}t\\
        \leq&C\E\int_{t_0}^T\E_{\mathcal{F}_t^0}\big[\big|\partial_p \widehat{H}(\tilde{X}_t,u(t,\tilde{X}_t),\nu_t)-\partial_p \widehat{H}_n(\tilde{X}_t,u_n(t,\tilde{X}_t),\nu_t^n)\big|\big]^2\,\mathrm{d}t\\
        \leq& C\E\int_{t_0}^T\big|\partial_p \widehat{H}(\tilde{X}_t,u(t,\tilde{X}_t),\nu_t)-\partial_p \widehat{H}_n(\tilde{X}_t,u_n(t,\tilde{X}_t),\nu_t^n)\big|^2\,\mathrm{d}t\rightarrow 0,
    \end{aligned}
    \end{equation}
    so we conclude
    \begin{equation}
        \int_t^T\int_{\R^d}I_n^i(s,x)\varphi(s,x)\,\mathrm{d}x\,\mathrm{d}s\rightarrow 0\quad\text{in}\ \mathbb{L}^2(\Omega).
    \end{equation}
    Therefore, taking $k\rightarrow\infty$ in the BSDE (\ref{weak solution BSDE}) for $\nabla Y^{n_k,i}$, we obtain almost surely
    \begin{equation}\label{vectorial weak solution}
    \begin{aligned}
        &\int_{\R^d}u^i(t,x)\varphi(t,x)\,\mathrm{d}x\\
        =&\int_{\R^d}\partial_{x_i}G(x,\mu_T)\varphi(T,x)\,\mathrm{d}x-\int_t^T\int_{\R^d}\Big[u^i\partial_t\varphi(s,x)-\frac{\widehat{\sigma}^2}{2}u^i\tr(\partial_{xx}\varphi(s,x))\\
        &\qquad+\sigma_0 r^i\partial_x\varphi(s,x)+\widehat{H}(x,u,\nu_s)\cdot\partial_{x_i}\varphi(s,x)\Big]\,\mathrm{d}x\,\mathrm{d}s-\int_t^T\int_{\R^d}r^i\varphi(s,x)\,\mathrm{d}x\,\mathrm{d}B_s^0,
    \end{aligned}
    \end{equation}
    which shows $u^i$ is a weak solution for the BSPDE. Since $i$ is arbitrary, $U$ is a vectorial weak solution to the master equation. Finally, we conclude that the function $V$ constructed in Theorem \ref{FBSDE well-posed} is a weak solution to the master equation.
    \par When $T$ is arbitrary, we divide $[t_0,T]$ into $t_0<t_1<\cdots<t_n=T$ such that $t_i-t_{i-1}<\delta,\,i=1,\cdots,n$. Then we can repeat the previous construction forward from $[t_0,t_1]$ to obtain the weak solution $\mu$ for the SPDE. Define $u(t,x):=U(t,x,\mu_t)$. Similarly, we can repeat the argument backward from $[t_{n-1},t_n]$ to conclude $u$ is a weak solution to the BSPDE. Therefore, $U$ is a vectorial weak solution, and thus $V$ is a weak solution to the master equation on $[t_0,T]$.
    \par\textit{Uniqueness.} We still first consider the case $T\leq\delta$. Suppose $V$ is a weak solution to the master equation and define $U:=\partial_x V$. Let $\mu,X$ be given as (\ref{SDE with law mu}), then there exists a weak solution $(u,r)$ to the BSPDE in (\ref{vectorial MFGC system}) with $u(t,x):=U(t,x,\mu_t)$. Hence (\ref{vectorial weak solution}) holds for any $\varphi\in C_c^{\infty}(\R^d)$. 
    \par Let $\psi\in C_c^{\infty}(\R^d)$ be the smooth mollifier, that is $\psi\geq 0$ and $\int_{\R^d}\psi(x)\,\mathrm{d}x=1$. For fixed $x\in\R^d$, setting the test functions to be $\varphi_n^x(y)=n^d\psi\big(n(x-y)\big)$ in (\ref{vectorial weak solution}), we have
    \begin{equation}
        \mathrm{d} u_n^i(t,x)=\Phi_n^i(t,x)\,\mathrm{d}t+r_n^i(t,x)\,\mathrm{d}B_t^0,
    \end{equation}
    where
    \[
    \begin{gathered}
        u_n^i(t,x):=\int_{\R^d}u^i(t,y)n^d\psi(n(x-y))\,\mathrm{d}y,\quad r_n^i(t,x):=\int_{\R^d}r^i(t,y)n^d\psi(n(x-y))\,\mathrm{d}y,\\
        \Phi_n^i(t,x):=\int_{\R^d}\Big[-\frac{\widehat{\sigma}^2}{2}u^i(t,y)n^{d+2}\tr(\partial_{xx}\psi(n(x-y))\\
        -\sigma_0 r^i(t,y)\cdot n^{d+1}\partial_x\psi(n(x-y))-H(y,u(t,y),\rho_t)n^{d+1}\partial_{x_i}\psi(n(x-y))\Big]\,\mathrm{d}y.
    \end{gathered}
    \]
    Define $\nabla Y_t^{n,i}:=u_n^i(t,X_t)$. Then by It\^{o}-Wentzell's formula, we can compute that
    \[
    \begin{aligned}
        \mathrm{d}\nabla Y_t^{n,i}=&\Big[\Phi_n^i+\frac{\widehat{\sigma}^2}{2}\tr(\partial_{xx}u_n^i)+\sigma_0\tr(\partial_x r_n^i)+\partial_x u_n^i\cdot\partial_p H(X_t,u(t,X_t),\rho_t)\Big]\,\mathrm{d}t\\
        &+\sigma\partial_x u_n^i\, \mathrm{d}B_t+(\sigma_0\partial_x u_n^i+r_n^i)\,\mathrm{d}B_t^0\\
        =&-\Big[\int_{\R^d}H(y,u(t,y),\rho_t)n^{d+1}\partial_{x_i}\psi(n(X_t-y))\,\mathrm{d}y\\
        &\qquad-\partial_x u_n^i\cdot\partial_p H(X_t,u(t,X_t),\rho_t)\Big]\,\mathrm{d}t+\sigma\partial_x u_n^i\, \mathrm{d}B_t+(\sigma_0\partial_x u_n^i+r_n^i)\,\mathrm{d}B_t^0\\
        :=&-\tilde{\Phi}_n^i(t,X_t)\,\mathrm{d}t+\nabla Z_t^{n,i}\,\mathrm{d}B_t+\nabla Z_t^{n,0,i}\,\mathrm{d}B_t^0.
    \end{aligned}
    \]
    Next we compute the limit of $\tilde{\Phi}_n^i(t,x)$. Since $u(t,\cdot)$ is uniformly Lipschitz continuous, it has weak derivative $D_x u(t,\cdot)\in L^{\infty}(\mathbb{\R}^d;\R^{d\times d})$. Now we can apply Sobolev chain rule to $H(\cdot,u(t,\cdot),\rho_t)\in W^{1,\infty}_{\text{loc}}(\R^d)$ to compute that
    \begin{equation}
        \frac{\partial}{\partial x_i}H(x,u(t,x),\rho_t)=\partial_{x_i}H(x,u(t,x),\rho_t)+\partial_p H(x,u(t,x),\rho_t)\cdot D_{x_i}u.
    \end{equation}
    Computing the convolution with $\varphi_n^x$, we have
    \begin{equation}\label{Phi n1}
    \begin{aligned}
        \partial_{x_i}(H\ast\varphi_n^x)(x)=(\partial_{x_i}H\ast\varphi_n^x)(x)+(\partial_p H\cdot D_{x_i}u)\ast\varphi_n^x(x).
    \end{aligned}
    \end{equation}
    On the other hand, $u(t,x)=U(t,x,\mu_t)$ is curl-free in $x$, so
    \begin{equation}\label{Phi n2}
        \partial_x u_n^i(t,x)=(D_{x_i}u\ast\varphi_n^x)(x).
    \end{equation}
    Combing (\ref{Phi n1}) and (\ref{Phi n2}), we have
    \begin{equation}\label{Phi n formula}
        \tilde{\Phi}_n^i(t,x)=(\partial_{x_i}H\ast\varphi_n^x)(x)+\Big[\big((\partial_p H\cdot D_{x_i}u)\ast\varphi_n^x\big)(x)-(D_{x_i}u\ast\varphi_n^x)(x)\cdot\partial_p H(x,u(t,x),\rho_t)\Big].
    \end{equation}
    For the second term on the right hand side, we have
    \begin{equation}
    \begin{aligned}
        &\Big|\big((\partial_p H\cdot D_{x_i}u)\ast\varphi_n^x\big)(x)-(D_{x_i}u\ast\varphi_n^x)(x)\cdot\partial_p H(x,u(t,x),\rho_t)\Big|\\
        =&\Big|\int_{\R^d}\big(\partial_p H(y,u(t,y),\rho_t)-\partial_p H(x,u(t,x),\rho_t)\big)\cdot D_{x_i}u(t,y)n^d\psi(n(x-y))\,\mathrm{d}y\Big|\\
        \leq&\|D_{x_i}u\|_{L^{\infty}}\int_{\R^d}\big|\partial_p H(y,u(t,y),\rho_t)-\partial_p H(x,u(t,x),\rho_t)\big|n^d\psi(n(x-y))\,\mathrm{d}y,
    \end{aligned}
    \end{equation}
    which tends to $0$ as $n\rightarrow\infty$. Then (\ref{Phi n formula}) implies $\tilde{\Phi}_n^i(t,x)\rightarrow\partial_{x_i}H(x,u(t,x),\rho_t)$ as $n\rightarrow\infty$.
    \par Therefore, by standard BSDE arguments we can see that $\nabla Y_t:=u(t,X_t)$ serves as the unique solution to the BSDE
    \begin{equation}\label{uniqueness of the weak solution: BSDE}
        \nabla Y_t=\partial_x G(X_T,\mu_T)+\int_t^T\partial_x H(X_s,\nabla Y_s,\rho_s)\,\mathrm{d}s-\int_t^T\nabla Z_s \,\mathrm{d}B_s-\int_t^T\nabla Z_s^0 \,\mathrm{d}B_s^0,
    \end{equation}
    which implies the uniqueness of $\nabla Y_{t_0}:=u(t_0,X_{t_0})=\partial_x V(t_0,\xi,\mu_{t_0})$ and thus the uniqueness of $\partial_x V(t_0,\cdot,\cdot)$. Similarly, $\partial_x V(t,\cdot,\cdot)$ is also unique for $t\in[t_0,T]$. Finally, from the definition of weak solution, we see that if $U=\partial_x V$ is unique, then $V(t_0,x,\mu_{t_0})$ is unique. For arbitrary $T$, we similarly divide the time interval and repeat the above arguments backward, then we can conclude the uniqueness of weak solution.
\end{proof}
From the above argument, we see that a weak solution to the master equation is related to the decoupling field of a particular FBSDE. Using this fact, we can obtain the stability result for the vectorial weak solution. Indeed, due to the lack of Lipschitz continuity of $G,L$, we cannot directly study the stability of the weak solution.
\begin{corollary}\label{weak solution stability}
    For $i=1,2$, let $(G^i,L^i)$ satisfy Assumption \ref{regularity assumption}, and let $H^i$ be the corresponding Hamiltonian. Suppose $V^i\in C_{Lip}^{0,1}(\Theta)$ be a weak solution to the master equation (\ref{master equation}) with data $(G^i,L^i)$, corresponding Hamiltonian $H^i$ and the same initial condition $\mu_{t_0}\in\mathcal{P}_2(\R^d)$. Then the weak solution to the corresponding SPDE is given by $\mu_t^i:=\mathcal{L}^0(X_t^i)$, where $X^i$ solves the SDE  \begin{equation}\label{SDE stability}
        X_t^i=\xi+\int_{t_0}^t\partial_p H^i(X_s^i,\partial_x V^i(s,X_s^i,\mu_s^i),\rho_s^i)\,\mathrm{d}s+\sigma B_t^{t_0}+\sigma_0 B_t^{0,t_0},
    \end{equation}
    and $\rho_t^i=\Phi^i(X_t^i,\partial_x V^i(t,X_t^i,\mu_t^i))$, $\Phi^i$ is the fixed-point map associated to $H^i$.
    \par We have the following stability result: for any $\xi\in\mathbb{L}^2(\mathcal{F}_{t_0})$ with $\mathcal{L}(\xi)=\mu_{t_0}$,
    \begin{equation}
    \begin{aligned}
        &|\partial_x V^1(t_0,\xi,\mu_{t_0})-\partial_x V^2(t_0,\xi,\mu_{t_0})|\\
        \leq &C\mathbb{E}_{\mathcal{F}_{t_0}^0}\Big[\sup_{x\in\R^d}|(\partial_x G^1-\partial_x G^2)(x,\mu_T^2)|^2\\
        &\qquad+\int_{t_0}^T\sup_{(x,p)\in\R^d\times\R^d}|(\partial_x H^1-\partial_x H^2,\partial_p H^1-\partial_p H^2)(x,p,\rho_t^2)|^2\,\mathrm{d}t\Big]^{\frac12}.
    \end{aligned}
    \end{equation}
\end{corollary}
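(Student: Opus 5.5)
The strategy is to identify each weak solution with the decoupling field of its Pontryagin FBSDE, via the uniqueness part of Theorem \ref{weak solution theorem}, and then compare the two FBSDEs with a standard BSDE stability estimate.

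\textbf{Step 1: representation.} Fix $\xi$ with $\mathcal{L}(\xi)=\mu_{t_0}$ and, for $i=1,2$, let $X^i$ solve (\ref{SDE stability}). The uniqueness argument in the proof of Theorem \ref{weak solution theorem} (mollified test functions plus It\^{o}--Wentzell) shows that $\nabla Y^i_t:=\partial_x V^i(t,X^i_t,\mu^i_t)$ solves
\[
\nabla Y^i_t=\partial_x G^i(X^i_T,\mu^i_T)+\int_t^T\partial_x H^i(X^i_s,\nabla Y^i_s,\rho^i_s)\,\mathrm{d}s-\int_t^T\nabla Z^i_s\,\mathrm{d}B_s-\int_t^T\nabla Z^{0,i}_s\,\mathrm{d}B^0_s .
\]
Hence $(X^i,\nabla Y^i)$ solves the McKean--Vlasov FBSDE (\ref{FBSDE xi}) with data $(G^i,H^i)$. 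The quantity to bound is therefore $|\nabla Y^1_{t_0}-\nabla Y^2_{t_0}|$.

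\textbf{Step 2: freeze the second population.} Consider the FBSDE for data $(G^1,H^1)$ started at $X^2$, written with the decoupling field $U^1=\partial_xV^1$. Set $\Delta X:=X^1-X^2$ and $\Delta Y:=\nabla Y^1-\nabla Y^2$. Each difference splits into two parts:
\begin{itemize}
\item a part from the same data evaluated at different arguments, controlled by the Lipschitz constants of $\partial_xG^1,\partial_xH^1,\partial_pH^1$ in $(x,p)$ and $\mathrm{W}_1$;
\item a residual from the data difference evaluated along the second solution, e.g. $(\partial_pH^1-\partial_pH^2)(X^2_s,\nabla Y^2_s,\rho^2_s)$.
\end{itemize}
The residual is bounded pointwise by the suprema over $(x,p)$ at measure $\rho^2_s$, or at $\mu^2_T$ for the terminal term, which is exactly what appears in the statement. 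For the $\rho$ argument, write $\rho^1_s=\Phi^1(\mathcal{L}^0(X^1_s,\nabla Y^1_s))$ and $\rho^2_s=\Phi^2(\mathcal{L}^0(X^2_s,\nabla Y^2_s))$, and insert $\Phi^1(\mathcal{L}^0(X^2_s,\nabla Y^2_s))$ between them. Proposition \ref{existence fixed-point} bounds the first gap by $C\,\E_{\mathcal{F}^0_s}[|\Delta X_s|+|\Delta Y_s|]$. Proposition \ref{fixed-point stability} bounds the second by $C\sup_{x,p}|(\partial_pH^1-\partial_pH^2)(x,p,\rho^2_s)|$.

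\textbf{Step 3: close the estimate.} Use the Lipschitz regularity of $U^1$ in $(x,\mu)$ from Theorem \ref{FBSDE well-posed}(iii): $|\nabla Y^1_t-U^1(t,X^2_t,\mu^2_t)|\le C(|\Delta X_t|+\E_{\mathcal{F}^0_t}|\Delta X_t|)$. This decouples the forward difference, giving by Gronwall
\[
\E_{\mathcal{F}^0_{t_0}}|\Delta X_t|^2\le C\,\E_{\mathcal{F}^0_{t_0}}\!\int_{t_0}^t\Big(|U^1-U^2|^2(s,X^2_s,\mu^2_s)+R_s^2\Big)\mathrm{d}s,
\]
where $R_s$ collects the residual terms. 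Next, apply the standard $L^2$ BSDE estimate to the backward difference, conditioned on $\mathcal{F}^0_{t_0}$, together with the forward bound. The resulting term $|U^1-U^2|$ along $X^2$ is not itself small.

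\textbf{Main obstacle.} The main difficulty is this circularity: the bound for $\Delta X$ involves $|U^1-U^2|$ along $X^2$, and the estimate is not obviously closed over a long horizon. The first approach is to work on short intervals of length $\delta$ from Proposition \ref{local FBSDE}, where the FBSDE stability estimate of that proposition applies directly with $\widehat H^i$. It gives the required bound with the residuals evaluated along the second solution, and Proposition \ref{fixed-point stability}, as in the remark following Proposition \ref{local FBSDE}, converts $\widehat H$-differences into $H$-differences at $\rho^2$. The estimate is then propagated backward interval by interval, using the uniform Lipschitz bound on $U^1$ to carry the terminal error at each $t_k$ as a sup-norm error in $x$ via the $\xi,x$ system. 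Iterating over finitely many intervals gives the stated bound, since the $\mathrm{W}_1$ Lipschitz constants are uniform. A last step uses Jensen to pass from $L^1$ to $L^2$ conditional expectations.
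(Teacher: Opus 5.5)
Your proposal is correct and follows the paper's route: you identify $\partial_xV^i(t,X^i_t,\mu^i_t)$ with the FBSDE solution through the uniqueness argument of Theorem \ref{weak solution theorem}, control $\mathrm{W}_1(\rho^1_t,\rho^2_t)$ through Propositions \ref{existence fixed-point} and \ref{fixed-point stability} so that the residual is evaluated at $\rho^2_t$, and combine a forward Gronwall bound with the BSDE estimate. The one difference is the closing step, which the paper compresses into ``standard BSDE theory''; your short-interval backward induction with sup-in-$x$ terminal errors makes explicit how the estimate closes on an arbitrary horizon. That step is legitimate because the uniform Lipschitz bound on $U^i=\partial_xV^i$ comes directly from the hypothesis $V^i\in C^{0,1}_{Lip}(\Theta)$, not from Theorem \ref{FBSDE well-posed}(iii), which requires the monotonicity assumptions.
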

\begin{proof}
    From the uniform Lipschitz continuity of $\partial_x V^i$, it follows that the SDE (\ref{SDE stability}) is well-posed, and by Proposition \ref{fixed-point stability},
    \begin{equation}\label{stability for rho}
    \begin{aligned}
        \mathrm{W}_1(\rho_t^1,\rho_t^2)\leq& C\E_{\mathcal{F}_t^0}\big[|X_t^1-X_t^2|+|\partial_x V^1(t,X_t^1,\mu_t^1)-\partial_x V^2(t,X_t^2,\mu_t^2)|\\
        &+|(\partial_p H^1-\partial_p H^2)(X_t^2,\partial_x V^2(t,X_t^2,\mu_t^2),\rho_t^2)|\big],
    \end{aligned}
    \end{equation}
    which implies
    \begin{equation}\label{stability for X}
    \begin{aligned}
        \E_{\mathcal{F}_t^0}[|X_t^1-X_t^2|]\leq& C\int_{t_0}^t\E_{\mathcal{F}_s^0}\big[|(\partial_p H^1-\partial_p H^2)(X_s^2,\partial_x V^2(s,X_s^2,\mu_s^2),\rho_s^2)|\\
        &\qquad+|\partial_x V^1(s,X_s^1,\mu_s^1)-\partial_x V^2(s,X_s^2,\mu_s^2)|\big]\,\mathrm{d}s.
    \end{aligned}
    \end{equation}
    By the uniqueness argument in Theorem \ref{weak solution theorem}, we have $\partial_x V^i(t,X_t^i,\mu_t)=\nabla Y_t^i$, where $\nabla Y^i$ is the unique solution to the BSDE
    \begin{equation}
        \nabla Y_t^i=\partial_x G^i(X_T^i,\mu_T^i)+\int_t^T\partial_x H^i(X_s^i,\nabla Y_s^i,\rho_s^i)\,\mathrm{d}s-\int_t^T\nabla Z_s^i\,\mathrm{d}B_s-\int_t^T\nabla Z_s^{0,i}\,\mathrm{d}B_s^0.
    \end{equation}
    From standard BSDE theory, combining with (\ref{stability for rho}) and (\ref{stability for X}) we have the stability estimate
    \begin{equation}
    \begin{aligned}
        |\nabla Y_{t_0}^1-\nabla Y_{t_0}^2|^2\leq& C\mathbb{E}_{\mathcal{F}_{t_0}^0}\Big[\sup_{x\in\R^d}|(\partial_x G^1-\partial_x G^2)(x,\mu_T^2)|^2\\
        &+\int_{t_0}^T\sup_{(x,p)\in\R^d\times\R^d}|(\partial_x H^1-\partial_x H^2,\partial_p H^1-\partial_p H^2)(x,p,\rho_t^2)|^2\,\mathrm{d}t\Big].
    \end{aligned}
    \end{equation}
\end{proof}

\section{Comparison with other non-smooth solutions}
\subsection{Relations with other weak solutions}
In \cite{mou2024nonsmooth}, the authors provided three notions of non-smooth solution to master equation for classical MFG, denoted as weak solution, good solution and weak viscosity solution. In this subsection, we shall compare our weak solution for MFGC with these three notions respectively.
\par We first examine the relations between our weak solution and the weak solution introduced in \cite[Section 6]{mou2024nonsmooth}. Note that these two notions of weak solution are both defined in the spirit of integration by parts. However, in Theorem \ref{weak solution theorem} we only require the Lipschitz continuity of $\partial_x G,\partial_x L,\partial_a L$ and the Lipschitz continuity of $G,L$ themselves required in \cite{mou2024nonsmooth} is absent. In fact, if we complete these assumptions, \cite[Definition 6.1]{mou2024nonsmooth} can be adapted to the MFGC formulation and the function $V$ in Theorem \ref{FBSDE well-posed} is exactly the unique weak solution under either definition.
\begin{proposition}\label{relation weak solution}
    Let the assumptions in Theorem \ref{FBSDE well-posed} hold. Assume further $G$ is uniformly Lipschitz continuous in $(x,\mu)$, $L$ is uniformly Lipschitz continuous in $(x,\rho)$, and $\sup_{x,\rho}|\partial_a L(x,0,\rho)|<\infty$. Then the function $V$ given in Theorem \ref{FBSDE well-posed} can be used to decouple the MFGC system (\ref{MFGC system}) in the following sense: For any $0\leq t_0\leq T$ and $ \mu_{t_0}\in\mathcal{P}_2(\R^d)$, the SPDE
    \begin{equation}\label{relation SPDE}
        \mathrm{d}\mu_t(x)=\Big[\frac{\widehat{\sigma}^2}{2}\mathrm{tr}(\partial_{xx}\mu_t(x))-\divergence\big(\mu_t(x)\partial_p H(x,\partial_x V(t,x,\mu_t),\rho_t)\big)\Big]\,\mathrm{d}t-\sigma_0\partial_x\mu_t(x)\cdot \mathrm{d}B_t^0
    \end{equation}
    has a weak solution $\mu$. For the given $\mu$, the BSPDE in (\ref{MFGC system}) has a weak solution $(v,w)$ satisfying $V(t,x,\mu_t)=v(t,x)$.
\end{proposition}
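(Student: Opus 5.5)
The plan is to reuse the construction and the mollification argument from the existence part of Theorem \ref{weak solution theorem}. This time we work at the level of $V$ (the scalar MFGC system (\ref{MFGC system})) rather than at the level of $U=\partial_x V$.

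\textbf{The forward equation.} For the SPDE (\ref{relation SPDE}), take $\xi$ with $\mathcal{L}(\xi)=\mu_{t_0}$ and let $X^{\xi}$ solve (\ref{FBSDE xi}). By Theorem \ref{FBSDE well-posed}(ii), $\partial_xV=U$, and $\nabla Y_t^{\xi}=U(t,X^{\xi}_t,\mu_t)$. Hence $\mu_t:=\mathcal{L}^0(X_t^{\xi})$ is a weak solution of (\ref{relation SPDE}), by It\^{o}'s formula applied to test functions of $X^{\xi}_t$. Exactly as before, any weak solution $\mu$ has the representation $\mu_t=\mathcal{L}^0(X_t)$ with $X$ solving (\ref{SDE with law mu}). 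So we fix such a $\mu$ and set $v(t,x):=V(t,x,\mu_t)$.

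\textbf{The backward equation.} First assume $T\le\delta$. Mollify $(G,\widehat H)$ to $(G_n,\widehat H_n)$ as in \cite{mou2024nonsmooth,cosso2023smooth}. The extra hypotheses (Lipschitz continuity of $G$ and $L$, and boundedness of $\partial_aL(x,0,\rho)$) guarantee that $H$ is Lipschitz in $(x,\rho)$ locally uniformly in $p$. This puts us in the setting of \cite{liu2026global}, so the mollified master equation (\ref{master equation}) has a classical solution $V_n$ with $\partial_\mu V_n$ bounded uniformly in $n$. The bound on $\partial_\mu V_n$ should come from the uniform $\mathrm{W}_1$-Lipschitz continuity of $V_n$ in $\mu$, which follows from the Lipschitz data together with the estimates of Theorems \ref{LL Lipschitz} and \ref{disp Lipschitz}. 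Set $v_n(t,x):=V_n(t,x,\mu_t)$ and $w_n:=\sigma_0\tilde{\E}_{\mathcal{F}^0_t}[\partial_\mu V_n(t,x,\mu_t,\tilde X_t)]$. It\^{o}'s formula along $\mu_t$ gives the analogue of (\ref{weak solution Ito}), with the additional error term
\[
I_n(t,x)=\tilde{\E}_{\mathcal{F}^0_t}\big[\partial_\mu V_n(t,x,\mu_t,\tilde X_t)\cdot\big(\partial_p\widehat H(\tilde X_t,u(t,\tilde X_t),\nu_t)-\partial_p\widehat H_n(\tilde X_t,\partial_xv_n(t,\tilde X_t),\nu^n_t)\big)\big].
\]
We then test against $\varphi\in C_c^\infty$ and integrate by parts the terms $\tr(\partial_{xx}v_n)$ and $\tr(\partial_x w_n)$. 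The Hamiltonian term $H_n(x,\partial_xv_n,\nu^n_t)\varphi$ stays undifferentiated; this is the difference from the vectorial case.

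\textbf{Passing to the limit.} The convergences are as follows.
\begin{itemize}
\item $v_n\to v$ pointwise, with local uniform integrability from the quadratic growth and the fourth-moment bound (\ref{fourth moment estimate}), via the stability Proposition \ref{local FBSDE} and Vitali.
\item $\partial_xv_n\to\partial_xV(t,\cdot,\mu_t)$ pointwise with uniform local Lipschitz bounds, so that $H_n(x,\partial_xv_n,\nu_t^n)\to H(x,\partial_xv,\rho_t)$ in $\mathbb{L}^2(\Omega\times[t_0,T]\times B_R)$.
\item $w_n$ is uniformly bounded, so a subsequence converges weakly to some $w$, and the stochastic integrals converge weakly by It\^{o}'s isometry.
\item $I_n\to0$ in $\mathbb{L}^2$, by the bound on $\partial_\mu V_n$ and the convergence of $\partial_p\widehat H_n$ and $\partial_xv_n$.
\end{itemize}
This yields the weak formulation of the BSPDE in (\ref{MFGC system}) for $(v,w)$, with terminal value $v(T,x)=G(x,\mu_T)$. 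For general $T$, we subdivide $[t_0,T]$ into intervals of length less than $\delta$, propagate $\mu$ forward, and verify the BSPDE backward on each interval, as in Theorem \ref{weak solution theorem}.

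\textbf{Main obstacle.} The hardest step is the uniform bound on $\partial_\mu V_n$ itself, as opposed to $\partial_\mu U_n$. Since $V$ is only defined through (\ref{define V}), its $\mathrm{W}_1$-Lipschitz continuity in $\mu$ has to be derived from the representation, using the Lipschitz assumptions on $G$ and $L$ and the $\mathrm{W}_1$-Lipschitz estimates for $X^{\xi,x}$, $\nabla Y^{\xi,x}$ and $\rho$. It also has to hold uniformly for the mollified data. The quadratic growth in $x$ is handled by the uniform bound $C(1+|x|)$ on $|\partial_xV_n|$.
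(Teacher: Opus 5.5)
Your proposal is correct and follows essentially the same route as the paper, which omits the proof and states only that the existence argument of Theorem \ref{weak solution theorem} carries over to the scalar BSPDE once $G$ and $L$ are Lipschitz in the measure variable. That argument is: mollify $(G,\widehat H)$, apply It\^{o}'s formula to the classical solution along the limiting flow $\mu_t$, absorb the mismatch into an error term $I_n$, and pass to the limit in the tested equation. You have also correctly located where the extra hypotheses enter, namely in making $V_n$ uniformly $\mathrm{W}_1$-Lipschitz in $\mu$, so that $\partial_\mu V_n$ and $w_n$ are bounded uniformly in $n$ (they in fact give $|\partial_x V_n|\le C$ and, through $\sup_{x,\rho}|\partial_a L(x,0,\rho)|<\infty$, $|\partial_p H(x,p,\rho)|\le C(1+|p|)$, so the controls are bounded and the $a$-increments of $L$ in your representation-based estimate are under control).
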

The additional assumptions allow us to show $v$ is a weak solution to the BSPDE by the similar arguments in Theorem \ref{weak solution theorem}, so we omit the proof. We emphasize the difference is that our $v$ is defined by the feedback $U$ so we do not consider a direct scalar FBSDE formulation in \cite{mou2024nonsmooth} ($\sigma=1$) like
\begin{equation}
    \left\{\begin{aligned}
        &X_t^{\xi}=\xi+\int_{t_0}^t\partial_p H(X_s^{\xi},Z_s^{\xi},\rho_s)\,\mathrm{d}s+B_t^{t_0}+\sigma_0 B_t^{0,t_0},\\
        &Y_t^{\xi}=G(X_T^{\xi},\mu_T)+\int_t^T\Big[H(X_s^{\xi},Z_s^{\xi},\rho_s)-Z_s^{\xi}\cdot\partial_p H(X_s^{\xi},Z_s^{\xi},\rho_s)\Big]\,\mathrm{d}s\\
        &\qquad\quad-\int_t^T Z_s^{\xi}\cdot \mathrm{d}B_s-\int_t^T Z_s^0\cdot \mathrm{d}B_s^0,\\
        &\rho_t=\mathcal{L}^0\big(X_t^{\xi},\partial_p H(X_t^{\xi},Z_t^{\xi},\rho_t)\big),\quad\mu_t=\mathcal{L}^0(X_t^{\xi}).
    \end{aligned}\right.
    \end{equation}
In fact, the above system is not available when $\sigma=0$. Therefore, our method does not require non-degeneracy of the idiosyncratic noise, which is not covered by \cite{mou2024nonsmooth}.
\par Next, we shall compare our weak solution to the good solution and the weak viscosity solution defined in \cite[Section 5,7]{mou2024nonsmooth}. The notion of good solution is based on the stability argument. To be detailed, we mollify the data $(G,\widehat{H})$ in the master equation to $(G_n,\widehat{H}_n)$ and determine a solution $V_n$ to the master equation with the mollified data $(G_n,\widehat{H}_n)$. By the stability results, $V_n$ converges to some function $V$, which we define to be the good solution.
\par The way to define the weak viscosity solution is similar to that for the weak solution. Since there is no comparison principle for the master equation, standard viscosity solution theory fails here. However, it is feasible to consider the viscosity solution for the BSPDE in the MFGC system (\ref{MFGC system}). Similar to Proposition \ref{relation weak solution}, if $V$ can be used to decouple the MFGC system such that the SPDE (\ref{relation SPDE}) has a weak solution $\mu$, and $v(t,x):=V(t,x,\mu_t)$ serves as a viscosity solution to the BSPDE, then we call $V$ a weak viscosity solution to the master equation.
\par To give the strict definitions for these notions of non-smooth solution, a series of preparations are necessary. However, the definitions under MFG setting and MFGC setting are essentially the same, so we omit here and refer to \cite{mou2024nonsmooth}. Furthermore, once we include the additional assumptions for $G$ and $L$ as in Theorem \ref{relation weak solution}, we can similarly prove the equivalence among the notions of weak solution, good solution and weak viscosity solution as in \cite{mou2024nonsmooth}. Therefore, the function $V$ in Theorem \ref{weak solution theorem} is also the unique good solution and the unique weak viscosity solution to the MFGC master equation.

\subsection{Relations with Lipschitz solutions}
In \cite{bertucci2024lipschitz}, the authors introduced an alternative notion of non-smooth solution to the master equation, termed the Lipschitz solution. The idea is to fix the nonlinear term in the master equation, and then solve the linear transport equation explicitly. If a Lipschitz function serves as a fixed point in this procedure, it is called a Lipschitz solution. We show in this subsection that this definition is consistent with the weak solution. \par We consider the following linear transport equation, which is associated to the vectorial master equation (\ref{vectorial master equation}).
\begin{equation}\label{linear transport}
\begin{gathered}
    \partial_t U(t,x,\mu)+\frac{\widehat{\sigma}^2}{2}\tr(\partial_{xx}U)+A(t,x,\mu)+B(t,x,\mu)\cdot\partial_x U\\
    +\tr\Big(\bar{\tilde{\E}}\big[\frac{\widehat{\sigma}^2}{2}\partial_{\tilde{x}\mu}U(t,x,\mu,\tilde{\xi})+\sigma_0^2\partial_{x\mu}U(t,x,\mu,\tilde{\xi})+\frac{\sigma_0^2}{2}\partial_{\mu\mu}U(t,x,\mu,\bar{\xi},\tilde{\xi})\\
    +\partial_{\mu}U(t,x,\mu,\tilde{\xi})\cdot B(t,\tilde{\xi},\mu)\big]\Big)=0,\qquad U(T,x,\mu)=U_T(x,\mu),
\end{gathered}
\end{equation}
where $A, B, U_T$ are given functions. When these given data are Lipschitz continuous, the solution to the above linear transport equation can be given by the following Feynman-Kac representation formula,
\begin{equation}
    U(t,x,\mu)=\E\Big[U_T(X_T,\mu_T)+\int_t^T A(s,X_s,\mu_s)\,\mathrm{d}s\Big],
\end{equation}
where $X_s$ is the unique solution to the following SDE on $[t,T]$,
\begin{equation}
    \mathrm{d}X_s=B(s,X_s,\mu_s)\,\mathrm{d}s+\sigma \,\mathrm{d}B_s+\sigma_0\,\mathrm{d}B_s^0,\quad X_t=x,
\end{equation}
and $\mu_s$ is the unique solution to the following SPDE in the sense of distribution on $[t,T]$,
\begin{equation}
    \mathrm{d}\mu_s(x)=\Big[\frac{\widehat{\sigma}^2}{2}\mathrm{tr}(\partial_{xx}\mu_s(x))-\divergence\big(\mu_s(x)B(s,x,\mu_s)\big)\Big]\,\mathrm{d}s-\sigma_0\partial_x\mu_s(x)\cdot \mathrm{d}B_s^0,\quad\mu_{t}=\mu.
\end{equation}
Using the above representation formula, we can denote the solution to the linear transport equation (\ref{linear transport}) by $U=\Psi(T,A,B,U_T)$. Since we study the vectorial master equation, it is natural to require $U=\partial_x V$ to be curl free in $x$. Then the definition of Lipschitz solution is given as follows.
\begin{definition}
    Given a terminal condition $U_T$, a Lipschitz solution to (\ref{vectorial master equation}) on $[t_0,T]$ is a function $U\in C_{Lip}^0(\Theta)$ which is curl-free in $x$, satisfying
    \begin{equation}
        U=\Psi\big(T,\partial_x H(\cdot,U,\rho),\partial_p H(\cdot,U,\rho),\partial_x G\big),\quad\rho:=\Phi\big((id,U(t,\cdot,\mu))\#\mu\big).
    \end{equation}
\end{definition}
\begin{proposition}
    Let Assumptions \ref{regularity assumption},\ref{convexity assumption}(i) hold. Then $U$ is a vectorial weak solution to the master equation (\ref{master equation}) if and only if it is a Lipschitz solution to the vectorial master equation (\ref{vectorial master equation}).
\end{proposition}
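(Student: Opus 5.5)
Both directions rest on the same intermediate fact: for any $U\in C_{Lip}^0(\Theta;\R^d)$ that is curl-free in $x$, fix $(t_0,\mu_{t_0})$, let $\mu$ solve the SPDE (\ref{vectorial SPDE}), and set $\rho_t:=\Phi\big((id,U(t,\cdot,\mu_t))\#\mu_t\big)$. Let $X^x$ be the SDE driven by the drift $\partial_pH(\cdot,U,\rho)$ starting from $x$. Then we show that $u(t,x):=U(t,x,\mu_t)$ is a weak solution of the BSPDE in (\ref{vectorial MFGC system}) if and only if
\[
u(t_0,x)=\E_{\mathcal{F}_{t_0}^0}\Big[\partial_xG(X^x_T,\mu_T)+\int_{t_0}^T\partial_xH(X^x_s,u(s,X^x_s),\rho_s)\,\mathrm{d}s\Big]
\]
holds for all $x$ and all initial times. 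Since $U$ is Lipschitz and $\Phi$ is $\mathrm{W}_1$-Lipschitz, the SPDE (\ref{vectorial SPDE}) is well posed, with $\mu_t=\mathcal{L}^0(X_t)$ for the McKean--Vlasov SDE (\ref{SDE with law mu}). Moreover, the $\mu$ used in the Feynman--Kac representation of $\Psi$ with $B=\partial_pH(\cdot,U,\rho)$ is exactly this one. The right-hand side above is therefore $\Psi\big(T,\partial_xH(\cdot,U,\rho),\partial_pH(\cdot,U,\rho),\partial_xG\big)(t_0,x,\mu_{t_0})$, and the equivalence reduces to this representation identity.

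\emph{Weak solution $\Rightarrow$ Lipschitz solution.} This direction is the uniqueness part of Theorem \ref{weak solution theorem}. Mollifying with $\varphi_n^x$ and applying It\^o--Wentzell along $X^x$, the curl-free property together with the Sobolev chain rule identity (\ref{Phi n formula}) shows that $\nabla Y_t:=u(t,X_t^x)$ solves the linear BSDE (\ref{uniqueness of the weak solution: BSDE}). That argument only uses Lipschitz continuity of $U$ and of $\partial_xH,\partial_pH$, which Assumptions \ref{regularity assumption} and \ref{convexity assumption}(i) provide. Taking $\E_{\mathcal{F}_{t_0}^0}$ at $t=t_0$, where $\nabla Y_{t_0}=u(t_0,x)$ is $\mathcal{F}_{t_0}^0$-measurable, gives the representation identity. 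The argument is carried out on subintervals of length $<\delta$ and then concatenated; conditioning on $\mathcal{F}_t^0$ at intermediate times handles the common noise.

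\emph{Lipschitz solution $\Rightarrow$ weak solution.} Set $A:=\partial_xH(\cdot,U,\rho)$ and $B:=\partial_pH(\cdot,U,\rho)$, regarded as functions of $(t,x,\mu)$. By the fixed-point relation these are frozen Lipschitz data, so $U$ solves the linear transport equation (\ref{linear transport}) in the Feynman--Kac sense. Next I mollify $A$, $B$ and $U_T=\partial_xG$ in $(x,\mu)$ using the Wasserstein mollifiers of \cite{mou2024nonsmooth,cosso2023smooth}. For smooth data, $U_n:=\Psi(T,A_n,B_n,U_{T,n})$ is a classical solution of the linear master equation with $\partial_\mu U_n$ bounded uniformly in $n$; the uniform bound follows from the Lipschitz constants via the representation formula. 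Standard SDE stability gives $U_n\to U$ pointwise. I then repeat the existence argument of Theorem \ref{weak solution theorem} verbatim: apply It\^o to $U_n(t,x,\mu_t)$ along the true flow $\mu$, integrate against a test function $\varphi$, and pass to the limit. In the limit, $r_n$ converges weakly thanks to the uniform bound on $\partial_\mu U_n$, and the error term $I_n$, which involves $B-B_n$, tends to $0$. This yields the weak BSPDE equation with the term $A+B\cdot\partial_{x_i}u$. Finally, I rewrite $\partial_{x_i}H+\partial_pH\cdot\partial_{x_i}u$ in divergence form as $\partial_{x_i}[H(x,u,\rho_t)]$, using the curl-free property and the Sobolev chain rule. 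This is exactly the flux $H\,\partial_{x_i}\varphi$ in the paper's weak formulation.

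The main obstacle is justifying the uniform bound on $\partial_\mu U_n$ and the weak convergence of $r_n$ for merely Lipschitz frozen data. I expect to obtain this from the $\mathrm{W}_1$-Lipschitz stability of the linear Feynman--Kac formula in $\mu$. That stability holds because $B$ is Lipschitz in $(x,\mu)$ and $\Phi$ is $\mathrm{W}_1$-Lipschitz, which gives Gronwall estimates on $\E_{\mathcal{F}^0}|X^1-X^2|$ exactly as in the proof of Theorem \ref{LL Lipschitz}, without requiring any monotonicity.
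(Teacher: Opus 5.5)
Your argument is correct. The direction from vectorial weak solution to Lipschitz solution is the paper's own: you rerun the uniqueness part of Theorem \ref{weak solution theorem} to identify $U(s,X_s^{\xi,x},\mu_s)$ with the solution of the BSDE (\ref{uniqueness of the weak solution: BSDE}) and then take expectations.

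For the converse you take a genuinely different route. The paper uses the flow property to show that $(X^{\xi,x},U(\cdot,X^{\xi,x}_\cdot,\mu_\cdot))$ solves (\ref{FBSDE xi,x}). It extends this backward using the Lipschitz bound on $U$, so that $U$ is the decoupling field. It then invokes the existence argument of Theorem \ref{weak solution theorem}, which mollifies the nonlinear data $(G,\widehat H)$ and relies on the classical solutions of \cite{liu2026global}. You instead freeze $A=\partial_xH(\cdot,U,\rho)$ and $B=\partial_pH(\cdot,U,\rho)$, which are $\mathrm{W}_1$-Lipschitz in $(x,\mu)$ because $U$ and $\Phi$ are, and mollify only the resulting linear transport problem.

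The paper's route is short, and it shows in addition that a Lipschitz solution is the FBSDE decoupling field. As written, however, it leans on an existence argument carried out under the hypotheses of Theorem \ref{FBSDE well-posed}, which go beyond Assumptions \ref{regularity assumption} and \ref{convexity assumption}(i). Your route needs only linear theory. Because the forward flow of the linear problem does not depend on $U_n$, the uniform bound $|\partial_\mu U_n|\le C$ follows from a Gronwall estimate on $\E_{\mathcal{F}^0}|X^{\xi_1}-X^{\xi_2}|$ with no monotonicity. You also never need to identify $U$ with a constructed solution.

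The price is one extra limit. Mollification destroys the gradient structure, so $U_n$ satisfies the transport form with $B_n\cdot\partial_x U_n^i$, not $B_n\cdot\partial_{x_i}U_n$. Your intermediate term $B\cdot\partial_{x_i}u$ should therefore read $B\cdot\partial_x u^i$. You must pass to the limit in $\int B_n\cdot\partial_x u_n^i\,\varphi\,\mathrm{d}x$ using weak-$*$ convergence of $\partial_x u_n^i$ (uniform Lipschitz bound plus pointwise convergence) against locally uniform convergence of $B_n$. Only after that step does the curl-free property of the limit $u$, together with the chain rule, produce the flux $H\,\partial_{x_i}\varphi$, as you indicate.
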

\begin{proof}
    We first assume $U$ is a vectorial weak solution to the master equation. Then for any $t\in[t_0,T]$ and $\mu\in\mathcal{P}_2(\R^d)$, there exists a weak solution $\mu_s$ to the SPDE (\ref{vectorial SPDE}) on $[t,T]$ with initial condition $\mu_t=\mu$. Define $\rho_s:=\Phi\big((id,U(s,\cdot,\mu_s)\#\mu_s)\big)$. For this $(\mu_s,\rho_s)$, from the uniqueness argument in Theorem \ref{weak solution theorem}, we have $U(s,X_s^{\xi,x},\mu_s)=\nabla Y_s^{\xi,x}$, where $(X^{\xi,x},\nabla Y^{\xi,x})$ is given by the decoupled FBSDE,
    \begin{equation}\left\{
    \begin{aligned}
        &X_s^{\xi,x}=x+\int_t^s\partial_p H(X_r^{\xi,x},U(r,X_r^{\xi,x},\mu_r),\rho_r)\,\mathrm{d}r+\sigma\,\mathrm{d}B_s^t+\sigma_0\,\mathrm{d}B_s^{0,t},\\
        &\nabla Y_s^{\xi,x}=\partial_x G(X_T^{\xi,x},\mu_T)+\int_s^T\partial_x H(X_r^{\xi,x},\nabla Y_r^{\xi,x},\rho_r)\,\mathrm{d}r\\
        &\qquad\qquad-\int_s^T\nabla Z_s^{\xi,x}\,\mathrm{d}B_s-\int_s^T\nabla Z_s^{0,\xi,x}\,\mathrm{d}B_s^0.
    \end{aligned}\right.
    \end{equation}
    Taking the expectation, we have
    \begin{equation}
        U(t,x,\mu)=\nabla Y_t^{\xi,x}=\E\Big[\partial_x G(X_T^{\xi,x},\mu_T)+\int_t^T\partial_x H(X_s^{\xi,x},U(s,X_s^{\xi,x},\mu_s),\rho_s)\,\mathrm{d}s\Big].
    \end{equation}
    Therefore, $U$ is a Lipschitz solution to the vectorial master equation.
    \par Conversely, suppose $U$ is a Lipschitz solution to the master equation. Note that in the definition of Lipschitz solution, $\mu_t$ is exactly the weak solution to the SPDE (\ref{vectorial SPDE}). By the flow property, we have
    \begin{equation}\label{Lipschitz solution BSDE}
        U(t,X_t^{\xi,x},\mu_t)=\E_{\mathcal{F}_t}\Big[\partial_x G(X_T^{\xi,x},\mu_T)+\int_t^T\partial_x H(s,U(s,X_s^{\xi,x},\mu_s),\rho_s)\,\mathrm{d}s\Big],
    \end{equation}
    where $X^{\xi,x}$ is determined by the SDE
    \begin{equation}\label{Lipschitz solution SDE}
        X_t^{\xi,x}=x+\int_{t_0}^t\partial_p H(X_s^{\xi,x},U(s,X_s^{\xi,x},\mu_s),\rho_s)\,\mathrm{d}s+\sigma B_t^{t_0}+\sigma_0 B_t^{0,t_0}.
    \end{equation}
    Let $\delta$ be the constant in Proposition \ref{local FBSDE}, where the Lipschitz constant of $\partial_x G$ is replaced by that of $U$, then the FBSDE (\ref{FBSDE xi,x}) is well-posed on $[T-\delta,T]$. We can directly check that $X_t^{\xi,x}$ in (\ref{Lipschitz solution SDE}) and $\nabla Y_t^{\xi,x}:=U(t,X_t^{\xi,x},\mu_t)$ in (\ref{Lipschitz solution BSDE}) is the unique solution to the FBSDE (\ref{FBSDE xi,x}) on $[T-\delta,T]$. Since $U$ is uniformly Lipschitz on $[t_0,T]$, the local solution can be extended backward. Hence $U$ is the decoupling field of the FBSDE (\ref{FBSDE xi,x}), then from the existence argument in Theorem \ref{weak solution theorem}, we see that $u(t,x):=U(t,x,\mu_t)$ is a weak solution to the BSPDE in (\ref{vectorial MFGC system}). Therefore, $U$ is a vectorial weak solution to the master equation.
\end{proof}

\subsection{Relations with monotone solutions}
As we said in the introduction, we select the Hilbert-space approach of \cite{cardaliaguet2022monotone} to establish the equivalence between our notion of weak solution and monotone solutions when only common noise is present. In the displacement $\lambda$-monotonicity setting, we further extend the notion of a monotone solution to that of a displacement $\lambda$-monotone solution. 

\par To facilitate the presentation of the definitions, we introduce the following auxiliary functions: for any $t\in [0,T], (\xi_1,\xi_2) \in \mathbb{H}$, 
    \begin{equation}
    \begin{aligned}    
W(t,\xi_1,\xi_2):=&\mathbb{E}\big[U(t,\xi_1,\mu_1)\cdot(\xi_1-\xi_2)+\frac{\lambda}{2}\vert\xi_1-\xi_2\vert^2\big],\\
Q(t,\xi_1,\xi_2):=&\mathbb{E}\big[V(t,\xi_1,\mu_1)-V(t,\xi_2,\mu_1)\big],
\end{aligned}
\end{equation}
where $\mu_1=\mathcal{L}(\xi_1)$.
We now provide the precise definitions of these two notions.
\begin{definition}\label{displacement monotone solution definition}
A function 
$U\in C^0_{Lip}(\Theta)$ with $U(T,\cdot)=\partial_xG$ is called a displacement $\lambda$-monotone solution to the vectorial master equation \eqref{vectorial master equation} if $U$ is curl-free in $x$, and there exists a modulus of continuity $\omega:[0,\infty)\to[0,\infty)$ such that
\begin{equation}\label{U terminal uniform continuity}
    |U(t,x,\mu)-\partial_xG(x,\mu)|\leq \big(1+|x|+M_1(\mu)\big)\omega(T-t).
\end{equation}
We further require the function $W$ to be a viscosity supersolution, in the sense of Definition~\ref{def viscosity supersolution}, of \begin{equation}\label{master equation W}
\begin{aligned}
&\partial_tW(t,\xi_1,\xi_2)+\frac{\sigma_0^2}{2}\sum_{j=1}^{d}D_{(\xi_1,\xi_2)(\xi_1,\xi_2)}W(t,\xi_1,\xi_2)\left((\boldsymbol{e}_j,\boldsymbol{e}_j),(\boldsymbol{e}_j,\boldsymbol{e}_j)\right)\\
&+\mathbb{E}\big[\big(D_{\xi_1}W(t,\xi_1,\xi_2)+
D_{\xi_2}W(t,\xi_1,\xi_2)\big)\cdot\partial_pH\left(\xi_1,-D_{\xi_2}W(t,\xi_1,\xi_2)-\lambda(\xi_1-\xi_2),\rho\right)\big]\\
&+\mathbb{E}\big[\partial_xH\left(\xi_1,-D_{\xi_2}W(t,\xi_1,\xi_2)-\lambda(\xi_1-\xi_2),\rho\right)\cdot(\xi_1-\xi_2)\big]=0,
\end{aligned}
\end{equation}
where $\rho=\Phi\Big(\mathcal{L}\big(\xi_1,-D_{\xi_2}W(t,\xi_1,\xi_2)-\lambda(\xi_1-\xi_2)\big)\Big),\,(t,\xi_1,\xi_2)\in[0,T]\times \mathbb{H}$.
\end{definition}
\begin{definition}\label{Larsy-Lion monotone solution definition}
A function $V \in C_{Lip}^{0,1}(\Theta)$ with $V(T,\cdot)=G$ is called a Lasry-Lions monotone solution to the master equation \eqref{master equation} if there exists a modulus of continuity $\omega:[0,\infty)\to[0,\infty)$ such that
\begin{equation}\label{V terminal uniform continuity}
    |\partial_xV(t,x,\mu)-\partial_xG(x,\mu)|\leq \big(1+|x|+M_1(\mu)\big)\omega(T-t).
\end{equation}
We further require the associated function \(Q\) to be a viscosity supersolution, in the sense of Definition~\ref{def viscosity supersolution}, of
\begin{equation}
\begin{aligned}
&\partial_tQ(t,\xi_1,\xi_2)+\frac{\sigma_0^2}{2}\sum_{j=1}^dD_{(\xi_1,\xi_2)(\xi_1,\xi_2)}Q(t,\xi_1,\xi_2)\left((\boldsymbol{e}_j,\boldsymbol{e}_j),(\boldsymbol{e}_j,\boldsymbol{e}_j)\right)\\
&+\mathbb{E}\big[\big(
D_{\xi_1}Q(t,\xi_1,\xi_2)-\partial_{x}V(t,\xi_1,\mu_1)\big)\cdot\partial_pH\left(\xi_1,\partial_xV(t,\xi_1,\mu_1),\rho\right)\big]\\
&+\mathbb{E}\big[H(\xi_1,\partial_xV(t,\xi_1,\mu_1),\rho)-H(\xi_2,-D_{\xi_2}Q(t,\xi_1,\xi_2),\rho)\big]
=0,
\end{aligned}
\end{equation}
where $\rho=\Phi\Big(\mathcal{L}\big(\xi_1,\partial_xV(t,\xi_1,\mu_1)\big)\Big),\,(t,\xi_1,\xi_2)\in[0,T]\times \mathbb{H}$.
\end{definition}
\begin{remark}
The above definitions differ from the formulation in \cite[Section 2.2.2]{meynard2025monotone} in the treatment of the second-order terms due to common noise. There, an auxiliary variable is introduced so that these terms involve second-order derivatives only with respect to this finite-dimensional variable. This allows the second-order comparison argument to be carried out in finite dimensions.
\end{remark}
\begin{remark}
A further difference from \cite{meynard2025monotone} concerns the test functionals. Our formulation uses $\mathbb C^{1,2}$ test functionals on $[0,T]\times\mathbb L^2$, whereas those in \cite{meynard2025monotone} contain a fourth-moment penalty and are finite only for random variables in $\mathbb L^4$. This restriction serves the stability argument: uniform fourth-moment bounds at minimum points yield relative compactness of their laws in $(\mathcal P_2,\mathrm W_2)$, allowing stability to be proved for coefficients that are only continuous in the measure variable. This stability result is then used to construct monotone solutions by approximation; see \cite[Remark 2.8 and Theorem 2.12]{meynard2025monotone}. For uniqueness, \cite{meynard2025monotone} also uses the above compactness to exclude minimum points at the time boundary. 
\end{remark}
The following lemma shows that the monotonicity condition implies uniqueness, thereby laying the foundation for the subsequent proof.
\begin{lemma}\label{lemma:uniqueness_monotone}
The following statements hold.

\noindent
(i) Let $U_i\in C^0_{Lip}(\Theta)$, $i=1,2$. If, for any
$(t,\xi_1,\xi_2)\in[0,T]\times\mathbb H$,
\begin{equation*}\label{cond:disp}
\mathbb{E}\big[
\big(U_1(t,\xi_1,\mathcal{L}(\xi_1))
     - U_2(t,\xi_2,\mathcal{L}(\xi_2))\big)
     \cdot (\xi_1-\xi_2)
+ \lambda \lvert \xi_1-\xi_2\rvert^2
\big] \ge 0,
\end{equation*}
then $U_1(t,x,\mu)=U_2(t,x,\mu),$ $(t,x,\mu)\in\Theta.$
\medskip

\noindent
(ii) Let $V_i\in C^{0,1}_{Lip}(\Theta)$, $i=1,2$. If, for any
$(t,\xi_1,\xi_2)\in[0,T]\times\mathbb H$,
\begin{equation*}\label{cond:LL}
\mathbb{E}\big[
V_1(t,\xi_1,\mathcal{L}(\xi_1))
- V_1(t,\xi_2,\mathcal{L}(\xi_1))
- V_2(t,\xi_1,\mathcal{L}(\xi_2))
+ V_2(t,\xi_2,\mathcal{L}(\xi_2))
\big] \ge 0.
\end{equation*}
then
$\partial_xV_1(t,x,\mu)=\partial_xV_2(t,x,\mu)$, for any $(t,x,\mu)\in \Theta$.
\end{lemma}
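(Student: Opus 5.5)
The plan is the classical ``monotone operator'' perturbation trick: choose $\xi_2=\xi_1-\varepsilon\eta$ (or the analogous perturbation in the first slot), divide by $\varepsilon$, and let $\varepsilon\to0$. Continuity of $U_i$ and $\partial_xV_i$ in $(x,\mu)$ identifies the limit, and the sign condition then forces equality. Throughout, fix $t\in[0,T]$ and work on the atomless space $\mathbb{H}$. Every $\mu\in\mathcal P_2(\R^d)$ is the law of some $\xi\in\mathbb L^2(\mathcal F_0^1)$, and we may take $\eta$ to be any bounded $\mathbb L^2$ random variable, possibly correlated with $\xi$.

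\emph{Part (i).} Take $\xi_1=\xi$ and $\xi_2=\xi+\varepsilon\eta$ with $\varepsilon>0$. The hypothesis becomes
\[
\E\big[(U_1(t,\xi,\mu)-U_2(t,\xi+\varepsilon\eta,\mathcal L(\xi+\varepsilon\eta)))\cdot(-\varepsilon\eta)+\lambda\varepsilon^2|\eta|^2\big]\ge0 .
\]
Divide by $\varepsilon$. Since $U_2$ is Lipschitz in $(x,\mu)$ with $\mathrm W_1(\mathcal L(\xi+\varepsilon\eta),\mu)\le\varepsilon\E|\eta|$, we get $U_2(t,\xi+\varepsilon\eta,\cdot)\to U_2(t,\xi,\mu)$ in $\mathbb L^2$. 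Letting $\varepsilon\to0$ gives $\E[(U_1-U_2)(t,\xi,\mu)\cdot\eta]\le0$. Replacing $\eta$ by $-\eta$ gives equality. Now choose $\eta=(U_1-U_2)(t,\xi,\mu)$, which is $\mathbb L^2$ by linear growth, or a bounded truncation of it. This yields $U_1(t,\cdot,\mu)=U_2(t,\cdot,\mu)$ $\mu$-a.e., and by continuity in $x$ on $\mathrm{supp}\,\mu$.

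To reach arbitrary $x$, replace $\mu$ by $\mu_\delta=(1-\delta)\mu+\delta\,\mathcal N(0,I)$. This measure has full support, so $U_1(t,\cdot,\mu_\delta)=U_2(t,\cdot,\mu_\delta)$ everywhere. Let $\delta\to0$, using the $\mathrm W_1$-Lipschitz property: $\mathrm W_1(\mu_\delta,\mu)\le C\delta(1+M_1(\mu))$. This density step is the only delicate point in (i).

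\emph{Part (ii).} Again take $\xi_1=\xi$ and $\xi_2=\xi+\varepsilon\eta$, and write $\mu^\varepsilon=\mathcal L(\xi+\varepsilon\eta)$. The hypothesis reads
\[
\E\big[V_1(t,\xi,\mu)-V_1(t,\xi+\varepsilon\eta,\mu)\big]-\E\big[V_2(t,\xi,\mu^\varepsilon)-V_2(t,\xi+\varepsilon\eta,\mu^\varepsilon)\big]\ge0 .
\]
By the mean value theorem in $x$, the first bracket equals $-\varepsilon\E\int_0^1\partial_xV_1(t,\xi+\theta\varepsilon\eta,\mu)\cdot\eta\,\mathrm d\theta$, and similarly for the second with $\mu^\varepsilon$. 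Divide by $\varepsilon$ and let $\varepsilon\to0$. Since $\partial_xV_i$ is Lipschitz in $(x,\mu)$ under $\mathrm W_1$ and has linear growth, dominated convergence gives $\E[(\partial_xV_2-\partial_xV_1)(t,\xi,\mu)\cdot\eta]\ge0$ for all $\eta$. Flipping the sign of $\eta$ gives equality, and the same choice $\eta=(\partial_xV_2-\partial_xV_1)(t,\xi,\mu)$ followed by the full-support density argument finishes.

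\textbf{Main obstacle.} The only step needing care is justifying the passage to the limit. The $\mathrm W_1$-Lipschitz continuity and linear growth assumed in $C^0_{Lip}$ and $C^{0,1}_{Lip}$ give uniform $\mathbb L^2$ domination, so this should be routine. The extension from $\mu$-a.e.\ to every $x$ is handled by the mixture $\mu_\delta$ as above.
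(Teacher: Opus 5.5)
Your proof is correct. Part (i) is the paper's argument. The paper perturbs the first slot ($\xi_1=\xi_0+hY_0$, $\xi_2=\xi_0$) while you perturb the second, which is symmetric. Your full-support mixture $\mu_\delta$ makes explicit the step the paper compresses into ``since $\mu$ is arbitrary and $U_1,U_2$ are continuous''; the mixture $(1-s)\mu+s\,\delta_x$ would serve equally well.

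The genuine difference is in (ii). The paper does not argue directly: it invokes Lemma 1.1 of \cite{cardaliaguet2022monotone} to get $\partial_xV_1=\partial_xV_2$ almost everywhere, then appeals to continuity. You instead run the same first-order perturbation as in (i):
\begin{itemize}
\item you write both $V_i$-differences through the fundamental theorem of calculus in $x$;
\item you remove the dependence on $\mu^\varepsilon$ using $\mathrm{W}_1(\mathcal L(\xi+\varepsilon\eta),\mathcal L(\xi))\le\varepsilon\E|\eta|$ and the $\mathrm{W}_1$-Lipschitz continuity of $\partial_xV_2$;
\item you obtain $\E[(\partial_xV_2-\partial_xV_1)(t,\xi,\mu)\cdot\eta]=0$ for every $\eta$, and finish exactly as in (i).
\end{itemize}

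Your route gives a self-contained proof in which (i) and (ii) follow from one and the same mechanism. The paper's route is shorter, but it leaves the reader to check that the cited lemma transfers to the present framework.
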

\begin{proof}
The proof of (i) follows the same idea as \cite[Lemma 2.6]{meynard2025monotone}, and we provide the details below. 
\par Let $(t, \mu) \in [0, T] \times \mathcal{P}_2(\mathbb{R}^d)$ be fixed, and let $\xi_0\in \mathbb{L}^2(\mathcal{F}_0^1;\mu)$. For any $Y_0 \in \mathbb{L}^2(\mathcal{F}_0^1;\mathbb{R}^d)$ and $h > 0$, we take
$\xi_1 = \xi_0 + h Y_0, \ \xi_2 = \xi_0.$
Substituting these into (i), we obtain
$$\mathbb{E} \Big[ \big( U_1(t, \xi_0 + h Y_0, \mathcal{L}(\xi_0 + h Y_0)) - U_2(t, \xi_0, \mu) \big) \cdot (h Y_0) + \lambda h^2 |Y_0|^2 \Big] \geq 0.$$
By the Lipschitz continuity of $U_1$ and $U_2$, dividing by $h$ and passing to the limit $h \to 0^+$ gives:
\begin{equation*}\mathbb{E} \Big[ \big( U_1(t, \xi_0, \mu) - U_2(t, \xi_0, \mu) \big) \cdot Y_0 \Big] \geq 0. 
\end{equation*}
Since $Y_0$ is arbitrary, we have $U_1(t, \xi_0, \mu) = U_2(t, \xi_0, \mu)$, $\mathbb{P}$-a.s.
Consequently, $U_1(t, x, \mu) = U_2(t, x, \mu)$ for any $x \in \text{supp}(\mu)$. Since $\mu$ is arbitrary and $U_1, U_2$ are continuous, it follows that $U_1(t, x, \mu) = U_2(t, x, \mu)$, for any $(t, x, \mu) \in \Theta.$ 
\par For (ii), by \cite[Lemma 1.1]{cardaliaguet2022monotone} yields $\partial_xV_1=\partial_xV_2$, for a.e. $x$, Since $\partial_xV_1$ and $\partial_xV_2$ are continuous in $x$, it follows that $\partial_xV_1=\partial_xV_2$, for any $(t,x,\mu)$.
\end{proof}

\begin{proposition}\label{Prop1}
Assume that Assumption \ref{regularity assumption}, \ref{convexity assumption} (i) holds. Let $\rho_t:[t_0,T]\times\Omega\rightarrow\mathcal{P}_2(\mathbb{R}^{2d})$ be $\mathbb{F}^{B_0}$-progressively measurable with $\sup_{t_0\leq t\leq T}\mathbb{E}\big[M^2_2(\rho_t)\big]<\infty$, $\mu_t=\pi_1\# \rho_t$. Set
$\mathcal G_t:=\mathcal F_t^0\vee\mathcal F_0^1$, $\mathbb G:=\{\mathcal G_t\}_{0\le t\le T}.$
Let $X$ be a $\mathbb G$-progressively measurable process satisfying $\sup_{t_0\leq t\leq T}\mathbb{E}[\vert X_t\vert^2]< \infty$. The pair $(\rho,X)$ is not necessarily part of a solution to \eqref{FBSDE xi}-\eqref{FBSDE xi,x}. Then, the following BSDE on $[t_0,T]$ has a unique solution,
\begin{equation}\label{BSDE x}
\begin{aligned}
\nabla Y_t=&\partial_xG(X_T,\mu_T)+\int_t^T\partial_xH(X_s,\nabla Y_s,\rho_s
)\mathrm{d}s-\int_t^T\nabla Z_s^{0}\,\mathrm{d}B_s^0.
\end{aligned}
\end{equation}
Moreover, 
\begin{equation*}
\sup_{t_0\leq t\leq T}\mathbb {E}[\vert\nabla Y_t\vert^2]
+\mathbb{E}[
\int_{t_0}^T
\vert \nabla Z_t^{0}\vert ^2\mathrm {d}t]
\leq C\big(\vert\partial_xG(0,\delta_0)\vert^2+\vert\partial_xH(0,0,\delta_{(0,0)}) \vert^2+\sup_{t_0\leq t \leq T}\mathbb{E}[\vert X_t\vert^2+M_2^2(\rho_t)]\big), 
\end{equation*}
where $C$ depends only on $d,T,c_0$, and $C_{Lip}$. 
\end{proposition}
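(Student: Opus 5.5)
This is a standard Lipschitz BSDE on the filtration $\mathbb G$, driven only by $B^0$ since there is no $\mathrm dB$ term. The plan is to treat $(X,\rho)$ as given exogenous inputs and apply the classical Pardoux--Peng theory with respect to $\mathbb G$. The filtration $\mathbb G=\mathbb F^0\vee\mathcal F_0^1$ is generated by $B^0$ together with the initial enlargement $\mathcal F_0^1$, which is independent of $B^0$. Hence $B^0$ remains a $\mathbb G$-Brownian motion and $\mathbb G$-martingales have the predictable representation property with respect to $B^0$: conditionally on $\mathcal F_0^1$, this is the Brownian martingale representation. That property is what allows the BSDE to be posed with only the $\nabla Z^0\,\mathrm dB^0$ martingale term.

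\textbf{Step 1: admissibility of the data.} The terminal value $\xi_T:=\partial_xG(X_T,\mu_T)$ is $\mathcal G_T$-measurable. By Assumption \ref{regularity assumption} and $\mathrm W_1(\mu_T,\delta_0)\le M_1(\mu_T)\le M_2(\mu_T)\le M_2(\rho_T)$,
\[
|\xi_T|\le|\partial_xG(0,\delta_0)|+C_{Lip}\big(|X_T|+M_2(\rho_T)\big),
\]
so $\xi_T\in\mathbb L^2$.

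\textbf{Step 2: the generator.} The driver $f(s,y):=\partial_xH(X_s,y,\rho_s)$ is $\mathbb G$-progressively measurable in $(s,\omega)$. Via the Legendre duality (the Remark after the monotonicity definitions), Assumption \ref{regularity assumption} together with Assumption \ref{convexity assumption}(i) makes $\partial_xH$ Lipschitz in $(x,p,\rho)$ with a constant depending only on $c_0$ and $C_{Lip}$. Concretely, $\partial_xH(x,p,\rho)=\partial_xL(x,\partial_pH,\rho)$ and $\partial_pH$ is Lipschitz with constant $C(c_0,C_{Lip})$. Hence $f$ is uniformly Lipschitz in $y$, and
\[
|f(s,0)|\le|\partial_xH(0,0,\delta_{(0,0)})|+C\big(|X_s|+M_1(\rho_s)\big),
\]
so $\mathbb E\int_{t_0}^T|f(s,0)|^2\,\mathrm ds\le C\big(|\partial_xH(0,0,\delta_{(0,0)})|^2+\sup_s\mathbb E[|X_s|^2+M_2^2(\rho_s)]\big)<\infty$. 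Existence and uniqueness of $(\nabla Y,\nabla Z^0)$ then follow from the standard contraction argument in the weighted space $\mathcal S^2\times\mathcal H^2$, using the representation property of Step 1 to produce $\nabla Z^0$ at each Picard step.

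\textbf{Step 3: the a priori estimate.} Apply Itô's formula to $e^{\beta t}|\nabla Y_t|^2$ and use Young's inequality on the cross term $2\nabla Y_s\cdot f(s,\nabla Y_s)$, bounding it by $(2L+1)|\nabla Y_s|^2+|f(s,0)|^2$. Choosing $\beta=2L+1$, where $L$ is the Lipschitz constant of $f$ in $y$, and taking expectations gives
\[
\sup_t\mathbb E|\nabla Y_t|^2+\mathbb E\int_{t_0}^T|\nabla Z^0_t|^2\,\mathrm dt\le C\Big(\mathbb E|\xi_T|^2+\mathbb E\int_{t_0}^T|f(s,0)|^2\,\mathrm ds\Big).
\]
Inserting the bounds from Steps 1 and 2 yields the stated inequality, with $C$ depending only on $d,T,c_0,C_{Lip}$. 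The stochastic integral is a true martingale because $\nabla Y\in\mathcal S^2$ (by BDG) and $\nabla Z^0\in\mathcal H^2$.

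The only point needing care is the martingale representation for the enlarged filtration $\mathbb G$ with $\mathcal F_0^1$ in it. I would handle it by conditioning on $\mathcal F_0^1$, using the product structure $\Omega=\Omega_0\times\Omega_1$ with $\mathcal F_0^1$ independent of $B^0$. Alternatively, I would invoke that an initial enlargement by an independent $\sigma$-algebra preserves the Brownian representation property.
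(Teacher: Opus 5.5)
Your proposal is correct and follows essentially the same route as the paper. Both use Legendre duality to get that $\partial_xH$ is Lipschitz in $p$, bound $\mathbb{E}[|\partial_xG(X_T,\mu_T)|^2+\int_{t_0}^T|\partial_xH(X_t,0,\rho_t)|^2\,\mathrm{d}t]$ via $\mathrm{W}_1(\mu_t,\delta_0)=M_1(\mu_t)\le M_1(\rho_t)\le M_2(\rho_t)$, and then invoke standard Lipschitz BSDE well-posedness and a priori estimates. Your extra details, namely the predictable representation property for $\mathbb{G}$ and the explicit It\^o computation on $e^{\beta t}|\nabla Y_t|^2$, fill in what the paper simply cites as standard.
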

\begin{proof} Under Assumption \ref{regularity assumption}, 
$\partial_xH$ is Lipschitz continuous in 
$p$. Moreover, by the Lipschitz continuity of $\partial_xH$ and $\partial_xG$,
\begin{equation*}
\begin{aligned}
&\mathbb{E}[\vert \partial_xG(X_T,\mu_T)\vert^2+\int_{t_0}^T\vert\partial_xH(X_t,0,\rho_t)\vert^2\mathrm{d}t]\\
        \leq &C\Big(\vert\partial_xG(0,\delta_0)\vert^2+\mathbb{E}[\vert X_T \vert^2+W^2_1(\mu_T,\delta_0)]+\vert\partial_xH(0,0,\delta_{(0,0)})\vert^2+\int_{t_0}^T\mathbb{E}[\vert X_t \vert^2+W^2_1(\rho_t,\delta_{(0,0)})]\mathrm{d}t\Big).   \end{aligned} 
\end{equation*}
Since 
\begin{equation*}
\mathrm{W}_1(\mu_t,\delta_0)=\sup_{ Lip(\phi)\leq 1}\int_{\mathbb{R}^{d}}\big(\phi(x)-\phi(0)\big)\mu_t(\mathrm{d}x)=M_1(\mu_t)\leq M_1(\rho_t)=\mathrm{W}_1(\rho_t,\delta_{(0,0)})\leq M_2(\rho_t),
\end{equation*}
we have $ \mathbb{E}[\vert \partial_xG(X_T,\mu_T)\vert^2+\int_{t_0}^T\vert\partial_xH(X_t,0,\rho_t)\vert^2\mathrm{d}t]<\infty$. Therefore, by the standard theory of BSDE, the above BSDE \eqref{BSDE x} admits a unique solution 
$(\nabla Y,\nabla Z^{0})$. Moreover, the standard a priori estimate for BSDEs yields
\begin{equation*}
    \begin{aligned}
        &\sup_{t_0\leq t\leq T}\mathbb {E}[\vert\nabla Y_t\vert^2]
+\mathbb{E}[
\int_{t_0}^T
\vert \nabla Z_t^{0}\vert ^2\mathrm {d}t]\\
        \leq
        & C\mathbb{E}[\vert \partial_xG(X_T,\mu_T)\vert^2+\int_{t_0}^T\vert\partial_xH(X_t,0,\rho_t)\vert^2\mathrm{d}t]\\
        \leq &C\big(\vert\partial_xG(0,\delta_0)\vert^2+\vert\partial_xH(0,0,\delta_{(0,0)}) \vert^2+\sup_{t_0\leq t \leq T}\mathbb{E}[\vert X_t\vert^2+M^2_2(\rho_t)]\big).
    \end{aligned}
\end{equation*}
\end{proof}
\par We now present the relation between the weak solution and the monotone solution considered in this paper. 
Since monotone solutions are defined without individual noise, we restrict ourselves to the case $\sigma=0$ and omit this assumption below.
\begin{theorem}\label{equivalence}
Assume that Assumptions \ref{regularity assumption}, \ref{quadratic growth assumption},  
\ref{convexity assumption} hold. Then the following statements hold.

(i) Suppose that $G$ and $L$ satisfy the displacement
$\lambda$-monotonicity conditions
\eqref{G displacement}\eqref{L displacement}, and that Assumption
\ref{locally bounded assumption} holds. Let
$U\in C_{Lip}^{0}(\Theta)$ satisfy
$U(T,\cdot,\cdot)=\partial_xG.$
Then $U$ is a vectorial weak solution to the master equation
\eqref{master equation} if and only if it is a displacement
$\lambda$-monotone solution to the vectorial master equation
\eqref{vectorial master equation}.

(ii) Suppose that $G$ and $L$ satisfy the Lasry-Lions
monotonicity conditions
\eqref{G Lasry-Lions}\eqref{L Lasry-Lions}, and that Assumptions
\ref{G L convexity assumption} hold.
Let $V\in C_{Lip}^{0,1}(\Theta)$ satisfy
$
V(T,\cdot,\cdot)=G.
$
If $V$ is a weak solution to the master equation
\eqref{master equation}, then it is a Lasry-Lions monotone solution
to \eqref{master equation}. Conversely, if $V$ is a Lasry-Lions
monotone solution to \eqref{master equation}, then its spatial
derivative $\partial_xV$ is a vectorial weak solution to the master
equation \eqref{master equation}.
\end{theorem}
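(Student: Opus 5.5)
Both parts follow the same scheme. For the direction ``weak $\Rightarrow$ monotone'' we identify $U$ (resp.\ $\partial_x V$) with the decoupling field of the Pontryagin FBSDE. We then show that $W$ (resp.\ $Q$) is a viscosity supersolution by a dynamic-programming (sub-optimality) argument along two coupled flows. For the converse we compare a monotone solution with the weak solution $U^*=\partial_xV^*$ built in Theorem \ref{FBSDE well-posed} and Theorem \ref{weak solution theorem}. A doubling-of-variables argument in $\mathbb H$ gives the inequality of Lemma \ref{lemma:uniqueness_monotone}, so the two coincide. Throughout $\sigma=0$, so $\mathcal F_t=\mathcal F^0_t\vee\mathcal F^1_0$ and the BSDEs reduce to the form \eqref{BSDE x} of Proposition \ref{Prop1}.

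\emph{Weak $\Rightarrow$ monotone.} By the uniqueness part of Theorem \ref{weak solution theorem}, a vectorial weak solution $U$ equals the decoupling field $\nabla Y^{\xi,x}_t=U(t,X^{\xi,x}_t,\mu_t)$. The terminal modulus \eqref{U terminal uniform continuity} is then exactly the H\"older estimate \eqref{U holder continuous}, and curl-freeness holds by definition. For supersolution of $W$, fix $(t_0,\xi_1,\xi_2)$ and a test $\varphi$ touching $W$ from below. Run the optimal flow $X^1$ from $\xi_1$, and run $X^2$ from $\xi_2$ with the \emph{same} feedback law evaluated at $X^1$'s population. Equivalently, choose the competitor so that the Itô computation in the proof of Theorem \ref{disp Lipschitz} yields
\begin{equation*}
W(t,X^1_t,X^2_t)\le W(t_0,\xi_1,\xi_2)+\int_{t_0}^t(\text{drift})\,\mathrm ds.
\end{equation*}
The drift is the operator in \eqref{master equation W} evaluated with $D_{\xi_2}W=-U(\xi_1)-\lambda(\xi_1-\xi_2)$. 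The displacement $\lambda$-monotonicity of $L$ in the form \eqref{H displacement} gives the sign. Comparing with $\varphi$ via Itô's formula in $\mathbb H$ (only common noise, hence the $\sum_j D^2(\boldsymbol e_j,\boldsymbol e_j)$ term), dividing by $t-t_0$ and letting $t\downarrow t_0$ gives the supersolution inequality. For $Q$ in part (ii) the argument is the same. Here $X^2$ follows the control $\partial_pH(\cdot,-D_{\xi_2}Q,\rho)$, and the suboptimality is that of the value function $v$ from Proposition \ref{relation u and v LL}(i); Lasry--Lions monotonicity is used as in the computation of Theorem \ref{LL Lipschitz}.

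\emph{Monotone $\Rightarrow$ weak.} Let $U$ be a displacement $\lambda$-monotone solution and $U^*$ the weak solution. Define
\begin{equation*}
\widetilde W(t,\xi_1,\xi_2):=\mathbb E\big[(U(t,\xi_1,\mathcal L(\xi_1))-U^*(t,\xi_2,\mathcal L(\xi_2)))\cdot(\xi_1-\xi_2)+\lambda|\xi_1-\xi_2|^2\big].
\end{equation*}
We aim to show $\widetilde W\ge0$, after which Lemma \ref{lemma:uniqueness_monotone}(i) gives $U=U^*$. Since $U^*$ is smooth enough along its characteristics (it is the FBSDE decoupling field, and its mollifications $U_n$ are classical solutions), $\widetilde W$ is a viscosity supersolution of a linear-in-$U^*$ version of \eqref{master equation W}. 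At $t=T$, $\widetilde W\ge0$ by \eqref{G displacement}, and the terminal moduli give continuity up to $T$. Suppose $\min\widetilde W<0$. We penalize with $\varepsilon\phi$, where $\phi=\|\xi\|_2^2$ plus a time term $-\eta t$. Because $\mathbb H$ is not locally compact, Stegall/Ekeland-type linear perturbations are needed to obtain a strict minimum, as in Lemma \ref{lem1}. Doubling variables in $(t,\xi)$, Lemma \ref{lem1} then yields elements of $\overline D^{1,2,-}$. Using \eqref{Q_N equation}, the $Q_N$ corrections drop out of the common-noise trace term, and the matrix inequality \eqref{matrixinequality} bounds the second-order terms by $O(\|\bar x-\bar y\|_2^2/\alpha)$. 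Lemma \ref{slope estimate} controls the gradients linearly. The Hamiltonian terms are compared using Lipschitz continuity of $\partial_xH,\partial_pH$ and $\Phi$ (Proposition \ref{existence fixed-point}), together with the monotone structure \eqref{H displacement}. This produces a contradiction with $\eta>0$ as $\alpha,\delta,\varepsilon\to0$.

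Part (ii) is handled identically with $Q$. Lemma \ref{lemma:uniqueness_monotone}(ii) then only identifies $\partial_xV$ with $\partial_xV^*$, which is why the converse claims only that $\partial_xV$ is a vectorial weak solution.

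The main obstacle is this comparison step. It requires excluding minima escaping to infinity or to $t=T$ (handled via the linear-growth moduli and the quadratic penalization), and it must handle the nonlocal fixed-point measure $\rho=\Phi(\cdot)$ evaluated at the test gradient $-D_{\xi_2}W$ rather than at $U$. I would first try to absorb the resulting $\rho$-mismatch by the $W_1$-Lipschitz bound of $\Phi$ and the $c_1<c_0$ margin of Assumption \ref{convexity assumption}(ii), exactly as in Theorem \ref{disp Lipschitz}.
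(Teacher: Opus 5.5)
Your forward direction (weak $\Rightarrow$ monotone) is in substance the paper's Step 1.1. Note, however, that no monotonicity enters there. Minimality in $\xi_2$ gives $U(t^*,\xi_1^*,\mathcal L(\xi_1^*))=-D_{\xi_2}\varphi-\lambda(\xi_1^*-\xi_2^*)$, and once this is inserted, the It\^o expansion of $W$ along the flows is an exact identity. Whatever drift you give $X^2$ then cancels; the paper simply freezes $X^2_t=\xi_2^*+\sigma_0B^{0,t^*}_t$. Likewise, your suboptimality argument for $Q$ is a valid dynamic-programming route and needs no Lasry--Lions monotonicity. You also need to localize $\varphi$ (the paper's $\tilde\varphi_{\epsilon_0}$), so that the minimum is global along the random flows.

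\textbf{The genuine gap is in the converse.} You assert that $\widetilde W(t,\xi_1,\xi_2)=W_U(t,\xi_1,\xi_2)+W_{U^*}(t,\xi_2,\xi_1)$ is itself a viscosity supersolution, where $W_U,W_{U^*}$ denote the functional $W$ built from $U$ and from $U^*$. Your justification is that $U^*$ is smooth along characteristics or approximable by the classical $U_n$. This does not follow.
\begin{itemize}
\item All you know about $U$ is the supersolution property of $W_U$ against $\mathbb C^{1,2}$ test functions.
\item $W_{U^*}$ cannot be moved into the test function. Since $U^*$ is only Lipschitz in $\mu$ and $\frac12$-H\"older in $t$, $W_{U^*}$ need not be $\mathbb C^{1,2}$ on $[0,T]\times\mathbb H$.
\item Passing through the $U_n$ would require a stability theory for viscosity supersolutions in $\mathbb H$ that you do not have.
\end{itemize}

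The missing idea is to apply your forward direction to $U^*$ itself. Since $U^*$ is a weak solution, $W_{U^*}$ is a viscosity supersolution of the same equation \eqref{master equation W}. One then doubles variables between the two supersolutions $W_U(t,\xi_1,\xi_2)$ and $W_{U^*}(t',\xi_2',\xi_1')$ (note the swapped arguments), extracts jets via Lemma \ref{lem1}, and adds the two inequalities. Each inequality carries $\rho=\Phi(\mathcal L(\text{position},\text{jet momentum}))$. Hence the sum contains exactly the combination that \eqref{H displacement} signs, with $(X^1,P^1)=(\xi_{1,\delta},\vartheta_\delta)$ and $(X^2,P^2)=(\xi'_{2,\delta},\vartheta'_\delta)$, plus errors that vanish as $\delta\to0$ and then $\alpha\to0$.

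Your closing worry about a $\rho$-mismatch is therefore unfounded. \eqref{H displacement} holds for arbitrary pairs $(X^i,P^i)$ with $\rho_i=\Phi(\mathcal L(X^i,P^i))$. Evaluating $\rho$ at the test momentum is exactly what makes it applicable, and no absorption via the $c_1<c_0$ margin is needed.

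\textbf{Second, your penalization cannot work.} A priori you only know $\widetilde W\ge -C(1+\|\xi_1\|_2^2+\|\xi_2\|_2^2)$. So $\widetilde W+\varepsilon(\|\xi_1\|_2^2+\|\xi_2\|_2^2)$ is guaranteed bounded below and coercive only for $\varepsilon>C$, which is incompatible with keeping the penalized infimum negative. The paper instead uses the superquadratic penalty $P_\varepsilon=\varepsilon e^{K(T-t)}R^2$, with $R=1+\|\xi_1\|_2^2+\|\xi_2\|_2^2$.
\begin{itemize}
\item Its gradient has size $\varepsilon R^{3/2}$. Paired with the linearly growing Hamiltonian terms, it produces errors of order $P_\varepsilon$.
\item These errors are absorbed by $\partial_tP_\varepsilon=-KP_\varepsilon$ with $K$ large.
\item The paper also adds $\gamma(\frac1t-\frac1T)$ to keep minima off $t=0$, since Lemma \ref{lem1} requires interior times.
\end{itemize}
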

\begin{proof}
(i) 
\textbf{Step 1.1.} We first show that any vectorial weak solution $U$ is a displacement $\lambda$-monotone solution. Let $(X,\mu)$ be given as in \eqref{SDE with law mu}. Following the same argument used for the uniqueness proof of vectorial weak solutions in Theorem \ref{weak solution theorem}, it follows that $\nabla Y_t := U(t, X_t,\mu_t)$ is the unique solution to the BSDE \eqref{uniqueness of the weak solution: BSDE}. This implies that $U$ serves as the decoupling field of the FBSDEs \eqref{FBSDE xi}-\eqref{FBSDE xi,x}. By Theorem~\ref{FBSDE well-posed}, we have
$U$ satisfies \eqref{U terminal uniform continuity}.
\par For any $\varphi\in \mathbb{C}^{1,2}([0,T]\times\mathbb{H})$, suppose $W(t,\xi_1,\xi_2)-\varphi(t,\xi_1,\xi_2)$ attains a minimum at $\left(t^*,\xi_1^*,\xi^*_2\right)\in[0,T)\times \mathbb{H}$.
We introduce the following McKean-Vlasov FBSDE on $[t^*,T]$:
\begin{equation}\label{t40}
    \left\{
    \begin{aligned}
    X_t^{\xi^*_1}=&\xi^*_1+\int_{t^*}^t \partial_p H(X_s^{\xi^*_1},  \nabla Y_s^{\xi^*_1},\rho^{\xi^*_1}_s) \mathrm{d} s+\sigma_0 B_t^{0, t^*},\\
\nabla Y_t^{\xi^*_1}=&\partial_x G(X_T^{\xi^*_1}, \mu_T^{\xi^*_1})+\int_t^T \partial_x H(X_s^{\xi^*_1},  \nabla  Y_s^{\xi^*_1},\rho^{\xi^*_1}_s) \mathrm{d} s-\int_t^T  \nabla  Z_s^{0, \xi^*_1} \mathrm{d} B_s^0,\\
\rho^{\xi^*_1}_t=&\mathcal{L}^0\big(X_t^{\xi^*_1},\partial_p H(X_t^{\xi^*_1},  \nabla Y_t^{\xi^*_1},\rho^{\xi^*_1}_t) \big),\ \mu_t^{\xi^*_1}=\mathcal{L}^0(X_t^{\xi^*_1}),
    \end{aligned}
    \right.
\end{equation}
where $\rho_{t}^{\xi_1^*}=\Phi\big(\mathcal{L}^0(X_t^{\xi_1^*},\nabla Y_t^{\xi_1^*})\big)$.
We also introduce the SDE:
\begin{equation}\label{t41}
    \begin{aligned}
X_t^{\xi^*_2}=&\xi^*_2+\sigma_0 B_t^{0, t^*},\  t\in [t^*,T].
    \end{aligned}
\end{equation}
For any $\epsilon_0>0$, 
define 
\begin{equation*}
    \begin{aligned}
        \varphi_{\epsilon_0}(t,\xi_1,\xi_2)=~&\varphi(t^*,\xi_1^*,\xi^*_2)+(\partial_t\varphi(t^*,\xi_1^*,\xi^*_2)-\epsilon_0)(t-t^*)+\left\langle (p_1^*,p_2^*),(\xi_1,\xi_2)-(\xi_1^*,\xi^*_2)\right\rangle_{\mathbb{H}}\\
        &+\frac{1}{2}\left(\chi-\epsilon_0I\right)\big((\xi_1,\xi_2)-(\xi_1^*,\xi^*_2),(\xi_1,\xi_2)-(\xi_1^*,\xi^*_2)\big),
    \end{aligned}
\end{equation*}
where
$$
\begin{aligned}
p^*_1=& 
D_{\xi_1}\varphi(t^*,\xi^*_1,\xi^*_2),\ p^*_2= D_{\xi_2}\varphi(t^*,\xi^*_1,\xi^*_2),\ 
\chi=D_{(\xi_1,\xi_2)(\xi_1,\xi_2)}\varphi(t^*,\xi^*_1,\xi^*_2).
\end{aligned}
$$
Since $\varphi\in \mathbb{C}^{1,2}([0,T]\times\mathbb{H})$, we have
\begin{equation*}
    \begin{aligned}
        \varphi(t,\xi_1,\xi_2)=~&\varphi(t^*,\xi_1^*,\xi^*_2)+\partial_t\varphi(t^*,\xi_1^*,\xi^*_2)(t-t^*)+\left\langle (p_1^*,p_2^*),(\xi_1,\xi_2)-(\xi_1^*,\xi^*_2)\right\rangle_{\mathbb{H}}\\
        &+\frac{1}{2}\chi\big(\xi_1,\xi_2)-(\xi_1^*,\xi^*_2),(\xi_1,\xi_2)-(\xi_1^*,\xi^*_2)\big)+r(t,\xi_1,\xi_2),
    \end{aligned}
\end{equation*}
where $r(t,\xi_1,\xi_2):=o\left(\vert t-t^*\vert+\|(\xi_1,\xi_2)-(\xi_1^*,\xi^*_2)\|^2_2\right)$. Hence, for $t\geq t^*$, there exists $R_{\epsilon_0}>0$ such that 
\begin{equation*}
\varphi(t,\xi_1,\xi_2)-\varphi_{\epsilon_0}(t,\xi_1,\xi_2)=\epsilon_0(t-t^*)+\frac{\epsilon_0}{2}\|(\xi_1,\xi_2)-(\xi_1^*,\xi^*_2)\|^2_2+r(t,\xi_1,\xi_2)\geq0,
\end{equation*}
whenever $d^2(t,\xi_1,\xi_2):=\vert t-t^*\vert^2+\|(\xi_1,\xi_2)-(\xi_1^*,\xi^*_2)\|^2_2\leq 4R^2_{\epsilon_0}$.
It follows that
\begin{equation*}
\begin{aligned}
&W(t^*,\xi^*_1,\xi^*_2)-\varphi_{\epsilon_0}(t^*,\xi^*_1,\xi^*_2)\leq W(t,\xi_1,\xi_2)-\varphi_{\epsilon_0}(t,\xi_1,\xi_2),\ 
\text{for} \ d^2(t,\xi_1,\xi_2)\leq 4R^2_{\epsilon_0},
    \end{aligned}
\end{equation*}
Define $$\tilde{\varphi}_{\epsilon_0}(t,\xi_1,\xi_2)=\varphi_{\epsilon_0}(t,\xi_1,\xi_2)-M_{\epsilon_0}\eta\left(\frac{d^2(t,\xi_1,\xi_2)}{R^2_{\epsilon_0}}\right)(1+\|\xi_1\|_2^2+\|\xi_2\|_2^2),$$
where $\eta\in C^{\infty}(\mathbb{R}_+;[0,1])$ satisfies $\eta(x)=0, \ x\leq 1$, $\eta(x)=1,\ x\geq 4$. 
Since $U$ is Lipschitz, $W$ and $\varphi_{\epsilon_0}$ have at most quadratic growth. Thus, there exists $C_{\epsilon_0}>0$ such that 
\begin{equation*}
W(t,\xi_1,\xi_2)-\varphi_{\epsilon_0}(t,\xi_1,\xi_2)-\big(W(t^*,\xi^*_1,\xi^*_2)-\varphi_{\epsilon_0}(t^*,\xi^*_1,\xi^*_2)\big)\geq-C_{\epsilon_0}(1+\|\xi_1\|_2^2+\|\xi_2\|_2^2),
\end{equation*}
Choose $M_{\epsilon_0}\geq C_{\epsilon_0}$. If $d^2(t,\xi_1,\xi_2)\geq 4R_{\epsilon_0}^2$, then
\begin{equation*}
W(t,\xi_1,\xi_2)-\tilde{\varphi}_{\epsilon_0}(t,\xi_1,\xi_2)-\big(W(t^*,\xi^*_1,\xi^*_2)-\tilde{\varphi}_{\epsilon_0}(t^*,\xi^*_1,\xi^*_2)\big)\geq(M_{\epsilon_0}-C_{\epsilon_0})(1+\|\xi_1\|_2^2+\|\xi_2\|_2^2)\geq0.
\end{equation*}
Consequently,
\begin{equation*}\label{minimum inequality}
W(t^*,\xi^*_1,\xi^*_2)-\tilde{\varphi}_{\epsilon_0}(t^*,\xi^*_1,\xi^*_2)\leq W(t,\xi_1,\xi_2)-\tilde{\varphi}_{\epsilon_0}(t,\xi_1,\xi_2),     
\end{equation*}
for any $(t,\xi_1,\xi_2)\in [t^*,T]\times \mathbb{H}$.
Moreover,
\begin{equation}\label{tildevarphi derivatives}
\begin{aligned}
    &\partial_t\tilde{\varphi}_{\epsilon_0}(t^*,\xi^*_1,\xi^*_2)=\partial_t\varphi(t^*,\xi^*_1,\xi^*_2)-\epsilon_0,\ D_{(\xi_1,\xi_2)}\tilde{\varphi}_{\epsilon_0}(t^*,\xi^*_1,\xi^*_2)=(p_1^*,p_2^*),\\ &D_{(\xi_1,\xi_2)(\xi_1,\xi_2)}\tilde{\varphi}_{\epsilon_0}(t^*,\xi^*_1,\xi^*_2)=\chi-\epsilon_0I.
    \end{aligned}
\end{equation}
Since $U$ is the decoupling field of the FBSDEs \eqref{FBSDE xi}-\eqref{FBSDE xi,x}, for any $h>0$ small and $t=t^*+h$, we obtain
\begin{equation}\label{eq1}
\begin{aligned}
&\mathbb{E}[\nabla  Y_t^{\xi^*_1}\cdot(X_t^{\xi^*_1}-X_t^{\xi^*_2})+\frac{\lambda}{2}\vert X_t^{\xi^*_1}-X_t^{\xi^*_2}\vert^2-\tilde{\varphi}_{\varepsilon_0}(t,X_t^{\xi^*_1},X_t^{\xi^*_2})]\\
\geq&\mathbb{E}[\nabla Y_{t^*}^{\xi^*_1}\cdot(\xi^*_1-\xi^*_2)+\frac{\lambda}{2}\vert \xi^*_1-\xi^*_2\vert^2-\tilde{\varphi}_{\varepsilon_0}(t^*,\xi^*_1,\xi^*_2)].
\end{aligned}
\end{equation}
By applying Itô's formula to $\nabla Y_t^{\xi^*_1}\cdot(X_t^{\xi^*_1}-X_t^{\xi^*_2})+\frac{\lambda}{2}\vert X_t^{\xi^*_1}-X_t^{\xi^*_2}\vert^2$, we have
\begin{equation}\label{eq2}
\begin{aligned}
&\mathbb{E}[\nabla  Y_t^{\xi^*_1}\cdot(X_t^{\xi^*_1}-X_t^{\xi^*_2})+\frac{\lambda}{2}\vert X_t^{\xi^*_1}-X_t^{\xi^*_2}\vert^2-\nabla Y_{t^*}^{\xi^*_1}\cdot(\xi^*_1-\xi^*_2)-\frac{\lambda}{2}\vert \xi^*_1-\xi^*_2\vert^2]\\
=&h\mathbb{E}[\nabla Y_{t^*}^{\xi^*_1}\cdot\partial_pH(\xi^*_1,\nabla Y_{t^*}^{\xi^*_1},\rho_{t^*}^{\xi^*_1})-(\xi^*_1-\xi^*_2)\cdot\partial_xH(\xi^*_1,\nabla Y_{t^*}^{\xi^*_1},\rho_{t^*}^{\xi^*_1})\\
&+\lambda(\xi^*_1-\xi^*_2)\cdot \partial_pH(\xi^*_1,\nabla Y_{t^*}^{\xi^*_1},\rho_{t^*}^{\xi^*_1})]+\mathcal{I}_1,
\end{aligned}
\end{equation}
where 
$$
\begin{aligned}
\mathcal{I}_1:=
&\mathbb{E}\Big\{\int_{t^*}^t\Big[\nabla Y_s^{\xi^*_1}\cdot
\partial_pH(X_s^{\xi^*_1},\nabla Y_s^{\xi^*_1},\rho_{s}^{\xi^*_1})-\nabla Y_{t^*}^{\xi^*_1}\cdot\partial_pH(\xi^*_1,\nabla Y_{t^*}^{\xi^*_1},\rho_{t^*}^{\xi^*_1})\\
&-(X_s^{\xi^*_1}-X_s^{\xi^*_2})\cdot\partial_xH(X_s^{\xi^*_1},\nabla Y_s^{\xi^*_1},\rho_{s}^{\xi^*_1})+(\xi^*_1-\xi^*_2)\cdot\partial_xH(\xi^*_1,\nabla Y_{t^*}^{\xi^*_1},\rho_{t^*}^{\xi^*_1})\\
&+\lambda(X_s^{\xi^*_1}-X_s^{\xi^*_2})\cdot \partial_pH(X_s^{\xi^*_1},\nabla Y_s^{\xi^*_1},\rho_s^{\xi^*_1})-\lambda(\xi^*_1-\xi^*_2)\cdot \partial_pH(\xi^*_1,\nabla Y_{t^*}^{\xi^*_1},\rho_{t^*}^{\xi^*_1})\Big]\mathrm{d}s\Big\}.
\end{aligned}
$$
Next, we show that $\mathcal{I}_1=o(h)$. 
Combining the Lipschitz continuity of $\partial_xH$, $\partial_pH$, and $\Phi$, we obtain
\begin{equation*}
\begin{aligned}
\vert \mathcal{I}_1 \vert
\leq & C\mathbb{E}\Big\{\int_{t^*}^t \big[\big(|X_s^{\xi_1^*}-X_s^{\xi_2^*}|+\vert\nabla Y_s^{\xi_1^*}\vert\big)\cdot\big(\vert X_s^{\xi^*_1}-\xi^*_1\vert+\vert \nabla  Y_{s}^{\xi^*_1}-\nabla Y_{t^*}^{\xi^*_1}\vert\\
&+\mathrm{W}_1\big(\mathcal{L}^0(X_s^{\xi_1^*},\nabla  Y_{s}^{\xi^*_1}),\mathcal{L}^0(\xi^*_1,\nabla Y_{t^*}^{\xi^*_1})\big)+\big(\vert X_s^{\xi^*_1}-\xi^*_1\vert+\vert X_s^{\xi^*_2}-\xi^*_2\vert+\vert \nabla  Y_{s}^{\xi^*_1}-\nabla Y_{t^*}^{\xi^*_1}\vert\big)\\
&\cdot\big(1+\vert \xi_1^*\vert+\vert\nabla Y_{t^*}^{\xi_1^*}\vert+\mathrm{W}_1(\mathcal{L}^0(\xi^*_1,\nabla Y_{t^*}^{\xi^*_1}),\delta_{(0,0)})\big)\big]\mathrm{d}s\Big\}\\
\leq& Ch\Big[1+\big(\sup_{t^*\le s\le t}\mathbb{E}[
|X_s^{\xi_1^*}-X_s^{\xi_2^*}|^2]\big)^{\frac{1}{2}}+\big(\sup_{t^*\leq s\leq t}\mathbb {E}[\vert\nabla Y_s^{\xi_1^*}\vert^2]\big)^{\frac{1}{2}}+(\mathbb{E}[\vert \xi_1^*\vert^2])^{\frac{1}{2}}\Big]\\
&\cdot\Big(\big(\sup_{t^*\leq s\leq t}\mathbb{E}[\big\vert \nabla  Y_{s}^{\xi^*_1}-\nabla Y_{t^*}^{\xi^*_1}\big\vert^2]\big)^{\frac{1}{2}}+\big(\sup_{t^*\leq s \leq t}\mathbb{E}[\vert X_s^{\xi^*_1}-\xi^*_1\vert^2+\vert X_s^{\xi^*_2}-\xi^*_2\vert^2]\big)^{\frac{1}{2}}\Big)\\
\leq & Ch \Big[\big(\sup_{t^*\leq s\leq t}\mathbb{E}[\big\vert \nabla  Y_{s}^{\xi^*_1}-\nabla Y_{t^*}^{\xi^*_1}\big\vert^2]\big)^{\frac{1}{2}}+\big(\sup_{t^*\leq s \leq t}\mathbb{E}[\vert X_s^{\xi^*_1}-\xi^*_1\vert^2+\vert X_s^{\xi^*_2}-\xi^*_2\vert^2]\big)^{\frac{1}{2}}\Big].
\end{aligned}
\end{equation*}
Applying It\^o's formula to $\vert \nabla  Y_{t}^{\xi^*_1}-\nabla Y_{t^*}^{\xi^*_1}\vert^2$, we have
\begin{equation*}
    \begin{aligned}
        \mathbb{E}[\big\vert \nabla  Y_{t}^{\xi^*_1}-\nabla Y_{t^*}^{\xi^*_1}\big\vert^2]=&\mathbb{E}\Big[\int_{t^*}^t\big(\vert\nabla Z_s^{0,\xi_1^*}\vert^2-2(\nabla  Y_{s}^{\xi^*_1}-\nabla Y_{t^*}^{\xi^*_1})\cdot\partial_xH(X_s^{\xi^*_1},\nabla Y_s^{\xi^*_1},\rho^{\xi^*_1}_s)\big)\mathrm{d}s\Big].
    \end{aligned}
\end{equation*}
By the well-posedness of the FBSDE \eqref{t40}, we have
\begin{equation*}
\sup_{t^*\leq t\leq T}\mathbb {E}[\vert X^{\xi_1^*}_t\vert^2+\vert\nabla Y^{\xi_1^*}_t\vert^2]
+\mathbb{E}[
\int_{t^*}^T
\vert \nabla Z_t^{0,\xi_1^*}\vert ^2\mathrm {d}t]< \infty.   
\end{equation*}
By Gronwall's inequality, 
\begin{equation*}
    \begin{aligned}
        \mathbb{E}[\big\vert \nabla  Y_{t}^{\xi^*_1}-\nabla Y_{t^*}^{\xi^*_1}\big\vert^2]\leq&C\Big(\mathbb{E}[\int_{t^*}^t\vert\nabla Z_s^{0,\xi_1^*}\vert^2\mathrm{d}s]+h\big(1+\sup_{t^*\leq t\leq T}\mathbb {E}[\vert X_t^{\xi_1^*}\vert^2+\vert\nabla Y_t^{\xi_1^*}\vert^2]\big)\Big).
    \end{aligned}
\end{equation*}
Hence, $\sup_{t^*\leq s\leq t}\mathbb{E}[\big\vert \nabla  Y_{s}^{\xi^*_1}-\nabla Y_{t^*}^{\xi^*_1}\big\vert^2]\rightarrow 0$ as $h\rightarrow 0$.
From \eqref{t40}-\eqref{t41}, we obtain
\begin{equation*}
    \begin{aligned}
\sup_{t^*\leq s \leq t}\mathbb{E}\big[\vert X_s^{\xi^*_1}-\xi^*_1\vert^2+\vert X_s^{\xi^*_2}-\xi^*_2\vert^2\big]
\leq &C\big(h\mathbb{E}\big[\int_{t^*}^t\vert\partial_pH(X_s^{\xi^*_1},\nabla Y_s^{\xi^*_1},\rho^{\xi^*_1}_s)\vert^2\mathrm{d}s\big]+h\big)\\
\leq&Ch(1+\sup_{t^*\leq t\leq T}\mathbb {E}[\vert X_t^{\xi_1^*}\vert^2+\vert\nabla Y_t^{\xi_1^*}\vert^2]).
 \end{aligned}
\end{equation*}
Consequently,
\begin{equation*}
\sup_{t^*\le s\le t}\mathbb{E}\big[
\big(|X_s^{\xi_1^*}-\xi_1^*|^2
+|X_s^{\xi_2^*}-\xi_2^*|^2\big)\big]
\rightarrow 0 ~\text{as}~h\rightarrow 0.
\end{equation*}
Combining the above estimates yields $\mathcal{I}_1=o(h)$. 
Since, for fixed $t^*$ and $\xi_1^*$, the map $$\xi_2\mapsto \mathbb{E}\big[U(t^*,\xi^*_1,\mathcal{L}(\xi^*_1))\cdot(\xi^*_1-\xi_2)+\frac{\lambda}{2}\vert\xi^*_1-\xi_2 \vert^2\big]-\varphi(t^*,\xi^*_1,\xi_2)$$ attains a minimum at $\xi^*_2$, we obtain
\begin{equation}\label{eq4}
\nabla Y_{t^*}^{\xi^*_1}=U(t^*,\xi_1^*,\mathcal{L}(\xi^*_1))=-p_2^*-\lambda(\xi^*_1-\xi^*_2 ).
\end{equation}
Since $\vert \partial_t\tilde{\varphi}_{\epsilon_0}\vert +\|D^2\tilde{\varphi}_{\epsilon_0}\|\leq C_{\epsilon_0}$, $\|D\tilde{\varphi}_{\epsilon_0}\|_2\leq C_{\epsilon_0}(1+\|\xi_1\|_2+\|\xi_2\|_2)$, the conditions of \cite[Theorem 1.163]{FabbriGozziSwiech2017} are satisfied. Applying It\^o's formula to $\tilde{\varphi}_{\epsilon_0}(s,X_s^{\xi_1^*},X_s^{\xi_2^*})$ on $[t^*,t]$, 
\begin{equation}\label{varphi Ito}
\begin{aligned}
&\tilde{\varphi}_{\epsilon_0}
(t,X_t^{\xi_1^*},X_t^{\xi_2^*})
-\tilde{\varphi}_{\epsilon_0}
\big(t^*,\xi_1^*,\xi_2^*\big) \\
=&
\int_{t^*}^t
\Big(\partial_t\tilde{\varphi}_{\epsilon_0}(s,X_s^{\xi_1^*},X_s^{\xi_2^*})+\big\langle D_{(\xi_1,\xi_2)}\tilde{\varphi}_{\epsilon_0}(s,X_s^{\xi_1^*},X_s^{\xi_2^*}),\big(\partial_pH(X_s^{\xi_1^*},\nabla Y_{s}^{\xi^*_1},\rho_{s}^{\xi^*_1}),0\big)\big\rangle_{\mathbb{H}}\\
&+\frac{\sigma_0^2}{2}\sum_{j=1}^d D_{(\xi_1,\xi_2)(\xi_1,\xi_2)}\tilde{\varphi}_{\epsilon_0}(s,X_s^{\xi_1^*},X_s^{\xi_2^*})\left((\boldsymbol{e}_j,\boldsymbol{e}_j),(\boldsymbol{e}_j,\boldsymbol{e}_j)\right)\Big)\mathrm ds\\
&+
\sigma_0\sum_{j=1}^d
\int_{t^*}^t
\langle D_{(\xi_1,\xi_2)}\tilde{\varphi}_{\epsilon_0}
(s,X_s^{\xi_1^*},X_s^{\xi_2^*}), (\boldsymbol{e}_j,\boldsymbol{e}_j)\rangle_{\mathbb H}
\mathrm dB_s^{0,j}.
\end{aligned}
\end{equation}
By the Lipschitz continuity of $\partial_pH$ and $\Phi$, we have 
\begin{equation*}
    \begin{aligned}
&\mathbb{E}\big[\int_{t^*}^t\big\vert D_{\xi_1}\tilde{\varphi}_{\epsilon_0}(s,X_s^{\xi_1^*},X_s^{\xi_2^*})\cdot\partial_pH(X_s^{\xi_1^*},\nabla Y_{s}^{\xi^*_1},\rho_{s}^{\xi^*_1})-D_{\xi_1}\tilde{\varphi}_{\epsilon_0}(t^*,\xi^*_1,\xi^*_2)\cdot\partial_pH(\xi^*_1,\nabla Y_{t^*}^{\xi^*_1},\rho_{t^*}^{\xi^*_1})\big\vert\mathrm{d}s\big]\\
\leq& h \|D_{\xi_1}\tilde{\varphi}_{\epsilon_0}(t^*,\xi^*_1,\xi^*_2)\|_2\big(\sup_{t^*\leq s\leq t}\mathbb{E}[\vert X_s^{\xi^*_1}-\xi^*_1\vert^2+\vert \nabla Y_s^{\xi^*_1}-\nabla Y_{t^*}^{\xi^*_1}\vert^2]\big)^{\frac{1}{2}}\\
&+\int_{t^*}^t\|D_{\xi_1}\tilde{\varphi}_{\epsilon_0}(s,X_s^{\xi_1^*},X_s^{\xi_2^*})-D_{\xi_1}\tilde{\varphi}_{\epsilon_0}(t^*,\xi^*_1,\xi^*_2)\|_2\big(\mathbb{E}[\vert \partial_pH(X_s^{\xi_1^*},\nabla Y_{s}^{\xi^*_1},\rho_{s}^{\xi^*_1})\vert^2]\big)^{\frac{1}{2}}\mathrm{d}s\\
=&o(h).
    \end{aligned}
\end{equation*}
where the last equality follows from the continuity of $D_{\xi_1}\tilde{\varphi}_{\epsilon_0}$ and the preceding estimates. Then taking expectations in \eqref{varphi Ito} and using $\vert \partial_t\tilde{\varphi}_{\epsilon_0}\vert +\|D^2\tilde{\varphi}_{\epsilon_0}\|\leq C_{\epsilon_0}$, we obtain
\begin{equation}\label{t34}
    \begin{aligned}
&\mathbb{E}\big[\tilde{\varphi}_{\epsilon_0}(t,X_t^{\xi^*_1},X_t^{\xi^*_2})\big]\\
=&\tilde{\varphi}_{\epsilon_0}(t^*,\xi^*_1,\xi^*_2)+h\mathbb{E}\big[    \partial_t\tilde{\varphi}_{\epsilon_0}(t^*,\xi^*_1,\xi^*_2)+D_{\xi_1}\tilde{\varphi}_{\epsilon_0}(t^*,\xi^*_1,\xi^*_2)\cdot\partial_pH(\xi^*_1,\nabla Y_{t^*}^{\xi^*_1},\rho_{t^*}^{\xi^*_1})\\
&+\frac{\sigma_0^2}{2}\sum_{j=1}^d D_{(\xi_1,\xi_2)(\xi_1,\xi_2)}\tilde{\varphi}_{\epsilon_0}(t^*,\xi^*_1,\xi^*_2)\left((\boldsymbol{e}_j,\boldsymbol{e}_j),(\boldsymbol{e}_j,\boldsymbol{e}_j)\right)\big] 
+o(h).
\end{aligned}
\end{equation}
Combining \eqref{tildevarphi derivatives}-\eqref{t34}, we derive
\begin{equation*}
    \begin{aligned}
    &-h\mathbb{E}\big[\partial_xH(\xi^*_1,-p^*_2-\lambda(\xi^*_1-\xi^*_2),\rho^*)\cdot(\xi^*_1-\xi^*_2)+p^*_2\cdot\partial_pH(\xi^*_1,-p^*_2-\lambda(\xi^*_1-\xi^*_2),\rho^*)\big] \\
\geq&h\mathbb{E}\big[\partial_t\varphi(t^*,\xi^*_1,\xi^*_2)-\epsilon_0+
p^*_{1}\cdot \partial_pH(\xi^*_1,-p^*_2-\lambda(\xi^*_1-\xi^*_2),\rho^*)\\
&\qquad+\frac{\sigma_0^2}{2}\sum_{j=1}^d(\chi-\epsilon_0I)\left((\boldsymbol{e}_j,\boldsymbol{e}_j),(\boldsymbol{e}_j,\boldsymbol{e}_j)\right)\big]
+o(h),
\end{aligned}
\end{equation*}
where
$\rho^*=\rho^{\xi_1^*}_{t^*}=\Phi\Big(\mathcal{L}\big(\xi^*_1,-p^*_2-\lambda(\xi^*_1-\xi^*_2)\big)\Big).$
Dividing both sides by $h$ and letting $h\to0$, and then letting $\epsilon_0\rightarrow 0$, we have
\begin{equation*}
    \begin{aligned}
    &-\mathbb{E}\big[\partial_xH(\xi^*_1,-p^*_2-\lambda(\xi^*_1-\xi^*_2),\rho^*)\cdot(\xi^*_1-\xi^*_2)+p^*_2\cdot\partial_pH(\xi^*_1,-p^*_2-\lambda(\xi^*_1-\xi^*_2),\rho^*)\big] \\
\geq&\mathbb{E}\big[\partial_t\varphi(t^*,\xi^*_1,\xi^*_2)+
p^*_{1}\cdot \partial_pH(\xi^*_1,-p^*_2-\lambda(\xi^*_1-\xi^*_2),\rho^*)+\frac{\sigma_0^2}{2}\sum_{j=1}^d\chi\left((\boldsymbol{e}_j,\boldsymbol{e}_j),(\boldsymbol{e}_j,\boldsymbol{e}_j)\right)\big].
\end{aligned}
\end{equation*}
Therefore, 
$U$ satisfies the definition of displacement
$\lambda$-monotone solution.

\noindent\textbf{Step 1.2.} Conversely, we show that any displacement $\lambda$-monotone solution $U$ of the vectorial master equation \eqref{vectorial master equation} is indeed a vectorial weak solution to the master equation \eqref{master equation}. For any $(t_0, x, \mu_{t_0})\in [0,T]\times \mathbb{R}^d\times\mathcal{P}_2(\mathbb{R}^d)$,  
define $\widetilde{U}(t_0, x, \mu_{t_0}) = \nabla Y_{t_0}^{\xi, x}$, $\mathcal L(\xi)=\mu_{t_0},$ where $\nabla Y^{\xi, x}$ is the solution to the FBSDEs \eqref{FBSDE xi}-\eqref{FBSDE xi,x}. By the existence result for vectorial weak solutions established in Theorem \ref{weak solution theorem}, $\widetilde U$ is a vectorial weak solution to the master equation \eqref{master equation}, with
$\mu_t=\mathcal L^0(X_t^\xi)$ being a weak solution to the SPDE \eqref{vectorial SPDE}. Hence, by Step 1.1, $\widetilde U$ is also a displacement $\lambda$-monotone solution to the vectorial master equation \eqref{vectorial master equation}. 
\par For  $(t,\xi_1,\xi_2)\in[0,T]\times\mathbb{H}$, define
\[
\begin{aligned}
\widetilde{W}(t,\xi_1,\xi_2)
:=&\mathbb{E}\big[\widetilde{U}(t,\xi_1,\mu_1)\cdot(\xi_1-\xi_2)
+\frac{\lambda}{2}|\xi_1-\xi_2|^2\big],\\
\end{aligned}
\]
where $\mu_1=\mathcal{L}(\xi_1)$. 
Our goal is to prove that $W(t,\xi_1,\xi_2)+\widetilde{W}(t,\xi_2,\xi_1)\ge0$, for any $(t,\xi_1,\xi_2)$. 
By Lemma~\ref{lemma:uniqueness_monotone}, this yields $U=\widetilde{U}$. Since 
$\widetilde{U}$ is a vectorial weak solution to the master equation \eqref{master equation}, it follows that $U$ is also a vectorial weak solution to \eqref{master equation}. 

We argue by contradiction. Suppose that there exists
$(\hat t,\hat \xi_1,\hat \xi_2)\in[0,T]\times \mathbb{H}$ such that
\[
W(\hat t,\hat \xi_1,\hat \xi_2)+\tilde W(\hat t,\hat \xi_2,\hat \xi_1)<0.
\]
At \(t=T\), the terminal displacement
\(\lambda\)-monotonicity \eqref{G displacement} gives
\begin{equation}\label{terminal displacement monotonicity}
\begin{aligned}
W(T,\xi_1,\xi_2)+\widetilde W(T,\xi_2,\xi_1)
=
 \mathbb{E}\left[
   \left(
     \partial_xG(\xi_1,\mathcal{L}(\xi_1))
     -\partial_xG(\xi_2,\mathcal{L}(\xi_2))
   \right)\cdot(\xi_1-\xi_2)
   +\lambda|\xi_1-\xi_2|^2
 \right]
 \geq0.
\end{aligned}
\end{equation}
Therefore, \(\hat t<T\). 
Set
\[
    F(t):=
    W(t,\widehat\xi_1,\widehat\xi_2)
    +\widetilde W(t,\widehat\xi_2,\widehat\xi_1).
\]
We have \(F(\hat t)<0\). If \(\hat t>0\), set
\(t_*= \hat t\). If \(\hat t=0\), then, by the
continuity of \(F\), there exists \(t_*\in(0,T)\), sufficiently
close to \(0\), such that
   $ F(t_*)<0.$
For $\gamma>0$, define the penalized functional
\[
W^\gamma(t,\xi_1,\xi_2)
:=W(t,\xi_1,\xi_2)+\gamma(T-t)+\gamma(\frac1t-\frac1T).
\]
Notice that $W^{\gamma}(T,\xi_1,\xi_2)=W(T,\xi_1,\xi_2)$, $W^{\gamma}\geq W$ and $\lim_{t\rightarrow 0^{+}}W^{\gamma}(t,\xi_1,\xi_2)=+\infty$. Since \(t_*\in(0,T)\) and \(F(t_*)<0\), we may choose
\(\gamma>0\) sufficiently small so that
\[
M^{\gamma}:=\inf_{(t,\xi_1,\xi_2)\in(0,T]\times\mathbb{H}} W^\gamma(t,\xi_1,\xi_2)+\widetilde W(t,\xi_2,\xi_1)\leq W^\gamma(t_*,\hat{\xi}_1,\hat{\xi}_2)+\tilde W(t_*,\hat{\xi}_2,\hat{\xi}_1)<0.
\]
A universal constant \(K_0>0\) will be specified below. Fix
\(K>K_0\), and, for \(\varepsilon>0\), define
\[
    P_\varepsilon(t,\xi_1,\xi_2)
    :=
    \varepsilon e^{K(T-t)}
    \left(1+\|\xi_1\|_2^2+\|\xi_2\|_2^2\right)^2.
\]
Writing $R:=1+\|\xi_1\|_2^2+\|\xi_2\|_2^2$,
we have
\[
    \partial_tP_\varepsilon(t,\xi_1,\xi_2)=-KP_\varepsilon(t,\xi_1,\xi_2),\ 
    D_{(\xi_1,\xi_2)}P_\varepsilon(t,\xi_1,\xi_2)
    =4\varepsilon e^{K(T-t)}R(\xi_1,\xi_2),
\]
and for any $h,k\in \mathbb{H}$,
\begin{equation}\label{P seceond derivative}
D_{(\xi_1,\xi_2)(\xi_1,\xi_2)}P_\varepsilon(t,\xi_1,\xi_2)(h,k)
    =
    4\varepsilon e^{K(T-t)}
    \bigl(R\langle h,k\rangle_{\mathbb{H}}+2\langle (\xi_1,\xi_2),h\rangle_{\mathbb{H}}\langle (\xi_1,\xi_2),k\rangle_{\mathbb{H}}\bigr).    
\end{equation}
Consequently,
\begin{equation}\label{eq:step12-P-derivatives}
\begin{aligned}
    \|D_{(\xi_1,\xi_2)}P_\varepsilon\|_2
        \leq C\varepsilon e^{K(T-t)}R^{3/2},\
    \|D_{(\xi_1,\xi_2)(\xi_1,\xi_2)}P_\varepsilon\|
        \leq C\varepsilon e^{K(T-t)}R.
\end{aligned}
\end{equation}
After \(K\) has been fixed, choose \(\varepsilon>0\) sufficiently
small so that
\[
\begin{aligned}
    &M^{\gamma}_{\varepsilon}:=\inf_{(t,\xi_1,\xi_2)} W^\gamma(t,\xi_1,\xi_2)+\widetilde W(t,\xi_2,\xi_1)+P_{\varepsilon}(t,\xi_1,\xi_2)+P_{\varepsilon}(t,\xi_2,\xi_1)
      <0.
\end{aligned}
\]
For $\alpha>0$, define
\[
\begin{aligned}
\Psi_{\alpha,\varepsilon}^{\gamma}(t,t',\xi_1,\xi_2,\xi'_1,\xi'_2)
:=&\, W^{\gamma}(t,\xi_1,\xi_2)+\widetilde{W}(t',\xi'_2,\xi'_1) +P_{\varepsilon}(t,\xi_1,\xi_2)+P_{\varepsilon}(t',\xi'_2,\xi_1') \\
&+\frac{1}{2\alpha}\mathbb{E}\big[|\xi_1-\xi'_1|^2+|\xi_2-\xi'_2|^2\big]
+\frac{1}{2\alpha}(t-t')^2 .
\end{aligned}
\]
Let $M_{\varepsilon,\alpha}^{\gamma} = \inf_{t,t',\xi_1,\xi_2,\xi'_1,\xi'_2} \Psi_{\alpha}^{\gamma}(t,t',\xi_1,\xi_2,\xi'_1,\xi'_2)$. The functions \(W^\gamma\) and \(\widetilde W\) have at most
quadratic growth, whereas \(P_\varepsilon\) has quartic growth.
Thus $\Psi_{\alpha}^{\gamma}$ is lower semicontinuous and coercive. By Stegall's Lemma (see, e.g., \cite{fabian2007stegall}), for any $\delta > 0$, there exist perturbation elements 
$(\zeta_1,\zeta_2),(\zeta'_1,\zeta'_2) \in \mathbb{H}$
and $s,s'\in\mathbb{R}$ such that
\begin{equation}\label{Stegall perturbation elements}
\|(\zeta_1,\zeta_2)\|_2+\|(\zeta'_1,\zeta'_2)\|_2+|s|+|s'|\le\delta, 
\end{equation}
and the map
$$(t, t', \xi_1, \xi_2, \xi'_1, \xi'_2) \mapsto \Psi_{\alpha}^{\gamma}- \mathbb{E}[\zeta_1 \cdot \xi_1 + \zeta_2 \cdot \xi_2 + \zeta'_1 \cdot \xi'_1 + \zeta'_2 \cdot \xi'_2] - t s - t' s'$$
attains its strict minimum $M^{\gamma}_{\varepsilon,\alpha,\delta}$ at
$\Xi_\delta:=(t_\delta,t'_\delta,\xi_{1,\delta},\xi_{2,\delta},\xi'_{1,\delta},\xi'_{2,\delta})$. 
Define
$$
\begin{aligned}  
&R_\delta := 1+\|\xi_{1,\delta}\|_2^2+\|\xi_{2,\delta}\|_2^2, \ R'_\delta := 1+\|\xi'_{1,\delta}\|_2^2+\|\xi'_{2,\delta}\|_2^2,\\
&P_{\varepsilon,\delta}:= P_{\varepsilon}(t_{\delta},\xi_{1,\delta},\xi_{2,\delta})=\varepsilon e^{K(T-t_{\delta})}R_{\delta}^2,\ P'_{\varepsilon,\delta}:=P_{\varepsilon}(t'_{\delta},\xi'_{2,\delta},\xi'_{1,\delta})=\varepsilon e^{K(T-t'_{\delta})}(R_{\delta}')^2.
\end{aligned}$$
Comparing the minimum with the fixed point $(t_*,t_*,
      \widehat\xi_1,\widehat\xi_2,
      \widehat\xi_1,\widehat\xi_2)$
and using the quadratic growth of \(W^\gamma\) and
\(\widetilde W\), we obtain
\begin{equation}\label{eq:step12-localization}
    P_{\varepsilon,\delta}+P'_{\varepsilon,\delta}
    \leq C(1+R_\delta+R'_\delta),
    \
    \varepsilon \left( R_\delta^2+(R'_\delta)^2 \right) \leq C(1+R_\delta+R'_\delta).  
\end{equation}
Here and below, \(C\) is independent of \(\alpha\) and
\(\delta\). Hence, for fixed \(K\) and \(\varepsilon\),
\begin{equation}
    R_{\delta}+R'_{\delta}\leq C_{\varepsilon(K)}.
    \label{eq:step12-boundedness}
\end{equation}
Note that, for fixed $\alpha>0$, $M^{\gamma}_{\varepsilon,\alpha,\delta} \to M^{\gamma}_{\varepsilon,\alpha}$ as $\delta \to 0$. Since $M^{\gamma}_{\varepsilon,\alpha}\rightarrow M^{\gamma}_{\varepsilon}$ as $\alpha \to 0$,
\begin{equation}\label{t14}
\begin{aligned}
&\lim_{\alpha\rightarrow0}\limsup_{\delta\rightarrow  0}\Big(\frac{1}{\alpha}\mathbb{E}\left[\vert\xi_{1,\delta}-\xi'_{1,\delta}\vert^2+\vert\xi_{2,\delta}-\xi'_{2,\delta}\vert^2\right]+\frac{1}{\alpha}\vert t_{\delta}-t_{\delta}'\vert^2\Big)=0.
\end{aligned}
\end{equation}
The singular term in $W^\gamma$ keeps $t_{\delta}$ away from \(0\), and \eqref{t14} then ensures the same for \(t'_{\delta}\). Moreover, the terminal displacement monotonicity \eqref{terminal displacement monotonicity}, \eqref{U terminal uniform continuity}, and \eqref{t14} imply that both \(t_{\delta}\) and \(t'_{\delta}\) stay away from \(T\).
Set
$$ 
\begin{aligned}  
&(r_{1,\delta},r_{2,\delta}):=D_{(\xi_1,\xi_2)}P_\varepsilon(t_\delta,\xi_{1,\delta},\xi_{2,\delta}), \
(r'_{2,\delta},r'_{1,\delta}):=D_{(\xi'_2,\xi'_1)}P_\varepsilon(t'_\delta,\xi'_{2,\delta},\xi'_{1,\delta}) ,\\
&S_{\varepsilon,\delta}:=D_{(\xi_1,\xi_2)(\xi_1,\xi_2)}P_\varepsilon(t_\delta,\xi_{1,\delta},\xi_{2,\delta}),\  S'_{\varepsilon,\delta}:=D_{(\xi'_2,\xi'_1)(\xi'_2,\xi'_1)}P_\varepsilon(t'_\delta,\xi'_{2,\delta},\xi'_{1,\delta}),
\end{aligned}
$$
and let
$$r_{\delta}:=\|r_{1,\delta}\|_2+\|r_{2,\delta}\|_2+\|r'_{1,\delta}\|_2+\|r'_{2,\delta}\|_2.$$
By \eqref{eq:step12-P-derivatives},
\begin{equation}\label{r estimate}
    \begin{aligned}
r_{\delta}\leq C\varepsilon\big(e^{K(T-t_{\delta})}R_{\delta}^{3/2}+e^{K(T-t'_{\delta})}(R'_{\delta})^{3/2}\big).
    \end{aligned}
\end{equation}
Moreover, by \eqref{P seceond derivative} and $R_{\delta},R_{\delta}\geq 1$,
\begin{equation}\label{S estimate}
    \begin{aligned}
\big(S_{\varepsilon,\delta}+S'_{\varepsilon,\delta}\big)\left((\boldsymbol{e}_j,\boldsymbol{e}_j),(\boldsymbol{e}_j,\boldsymbol{e}_j)\right)
\leq&24\varepsilon\big(e^{K(T-t_{\delta})}R_{\delta}+e^{K(T-t'_{\delta})}R'_{\delta}\big)\\
\leq&24\varepsilon\big(e^{K(T-t_{\delta})}R_{\delta}^{2}+e^{K(T-t'_{\delta})}(R'_{\delta})^{2}\big)\\
\leq& C(P_{\varepsilon,\delta}+P'_{\varepsilon,\delta}),
    \end{aligned}
\end{equation}
where in the first inequality, we used $$\big\vert\langle(\xi_{1,\delta},\xi_{2,\delta}),(\boldsymbol{e}_j,\boldsymbol{e}_j)\rangle_{\mathbb H}\big\vert^2\leq\|(\xi_{1,\delta},\xi_{2,\delta})\|^2_2\|(\boldsymbol{e}_j,\boldsymbol{e}_j)\|^2_2\leq 2(R_{\delta}-1)\leq 2 R_{\delta}.$$
By \eqref{t14}, after taking \(\alpha,\delta\)
sufficiently small, we may assume
\begin{equation}\label{max a bouned}
\max\left\{e^{K(t'_{\delta}-t_{\delta})},e^{K(t_{\delta}-t'_{\delta})}\right\}
    =e^{K|t_{\delta}-t'_{\delta}|}\leq2.    
\end{equation}
Using \eqref{r estimate}, $R_{\delta},R'_{\delta}\geq 1$, and Young's inequality,
\begin{equation}\label{rdelta Rdelta estimate}
\begin{aligned}
r_{\delta}\big(\sqrt{R_{\delta}}+\sqrt{R'_{\delta}}\big)\leq &C\varepsilon\big(e^{K(T-t_{\delta})}R_{\delta}^{3/2}+e^{K(T-t'_{\delta})}(R'_{\delta})^{3/2}\big)\left(\sqrt{R_{\delta}}+\sqrt{R'_{\delta}}\right)\\
    \leq& C(P_{\varepsilon,\delta}+P'_{\varepsilon,\delta}).
    \end{aligned}
\end{equation}
Indeed, the mixed terms are controlled by
\begin{equation*}
    \begin{aligned}
    e^{K(T-t_{\delta})}R^{3/2}_{\delta}(R'_{\delta})^{1/2}
       \leq C\big(e^{K(T-t_{\delta})}R^2_{\delta}+e^{K(T-t'_{\delta})}(R'_{\delta})^2\big),
    \\
    e^{K(T-t'_{\delta})}(R'_{\delta})^{3/2}R_{\delta}^{1/2}
       \leq C\big(e^{K(T-t'_{\delta})}(R'_{\delta})^2+e^{K(T-t_{\delta})}R_{\delta}^2\big).    
    \end{aligned}
\end{equation*}
We also have
\[
    r^2_{\delta}
    \leq
    C\varepsilon^2
    \left(e^{2K(T-t_{\delta})}R_{\delta}^{3}+e^{2K(T-t'_{\delta})}(R'_{\delta})^{3}\right).
\]
Let \(a_*:=\min\{e^{K(T-t_{\delta})},e^{K(T-t'_{\delta})}\}\). From
\eqref{eq:step12-localization},
\[
    \frac{\varepsilon a_*}{2}(R_{\delta}+R'_{\delta})^2
    \leq P_{\varepsilon,\delta}+P'_{\varepsilon,\delta}
    \leq C(1+R_{\delta}+R_{\delta}').
\]
Since \(R_{\delta}+R_{\delta}'\geq2\) then $1+R_{\delta}+R_{\delta}'\leq\frac{3}{2}(R_{\delta}+R_{\delta}')$, it follows
that
\[
    \varepsilon a_*(R_{\delta}+R'_{\delta})\leq C.
\]
By \eqref{max a bouned}, we have
\[
    \varepsilon e^{K(T-t_{\delta})}R_{\delta}+\varepsilon e^{K(T-t'_{\delta})}R'_{\delta}\leq C.
\]
Therefore,
\begin{equation}\label{rdelta2 estimate}
r^2_{\delta}
    \leq
    C\bigl[
       (\varepsilon e^{K(T-t_{\delta})}R_{\delta})P_{\varepsilon,\delta}
       +(\varepsilon e^{K(T-t'_{\delta})}R'_{\delta})P'_{\varepsilon,\delta}
     \bigr]
    \leq C(P_{\varepsilon,\delta}+P'_{\varepsilon,\delta}).
\end{equation}
Since we eventually let \(\delta\rightarrow0\), we may assume that
\(0<\delta\leq1\). By the estimate \eqref{r estimate}, and using
\(R_\delta,R'_\delta\geq1\), we obtain
\begin{equation}\label{delta rdelta estimate}
\begin{aligned}
\delta r_\delta
&\leq
C\varepsilon\delta
\left(
e^{K(T-t_\delta)}R_\delta^{3/2}
+
e^{K(T-t'_\delta)}(R'_\delta)^{3/2}
\right)\\
&\leq
C\varepsilon
\left(
e^{K(T-t_\delta)}R_\delta^2
+
e^{K(T-t'_\delta)}(R'_\delta)^2
\right)\\
&=
C\bigl(
P_{\varepsilon,\delta}
+
P'_{\varepsilon,\delta}
\bigr).
\end{aligned}
\end{equation}
Similarly, since \(0<\alpha\leq1\), we have
\begin{equation}\label{alpha rdelta}
\begin{aligned}
\alpha r_\delta
&\leq
C\bigl(
P_{\varepsilon,\delta}
+
P'_{\varepsilon,\delta}
\bigr).
\end{aligned}
\end{equation}
According to Lemma \ref{lem1}, for all $N \geq 1$, there exist operators $\mathcal{X}_N$ and $\mathcal{Y}_N$ such that $\mathcal{X}_N = P_N \mathcal{X}_N P_N$ and $\mathcal{Y}_N = P_N \mathcal{Y}_N P_N$. Furthermore, these operators satisfy the matrix inequality \eqref{matrixinequality} along with the following relations
\begin{equation*}\label{second1}
\begin{aligned}
\Big( s - \frac{1}{\alpha}(t_{\delta}-t'_{\delta})+KP_{\varepsilon,\delta}, \
& -\frac{1}{\alpha}\big( (\xi_{1,\delta},\xi_{2,\delta}) - (\xi'_{1,\delta},\xi'_{2,\delta}) \big) - (r_{1,\delta},r_{2,\delta}) + (\zeta_1,\zeta_2), \\
& \mathcal{X}_N - \frac{2}{\alpha}Q_N - S_{\varepsilon,\delta} \Big) \in \overline{D}^{1,2,-}W^{\gamma}(t_\delta,\xi_{1,\delta},\xi_{2,\delta}),
\end{aligned}
\end{equation*}
and
\begin{equation*}\label{second2}
\begin{aligned}
\Big( s' + \frac{1}{\alpha}(t_{\delta}-t'_{\delta})+KP'_{\varepsilon,\delta}, \
& \frac{1}{\alpha}\big( (\xi_{2,\delta},\xi_{1,\delta}) - (\xi'_{2,\delta},\xi'_{1,\delta}) \big)-(r'_{2,\delta},r'_{1,\delta})+ (\zeta'_2,\zeta'_1), \\
& \mathcal{Y}_N - \frac{2}{\alpha}Q_N - S'_{\varepsilon,\delta}\Big) \in \overline{D}^{1,2,-}\widetilde{W}(t'_{\delta},\xi'_{2,\delta},\xi'_{1,\delta}).
\end{aligned}
\end{equation*}
Moreover, there exist $(t_k,\xi_{1,k},\xi_{2,k},q_k,p_k,\chi_k)$ and $(t'_k,\xi'_{1,k},\xi'_{2,k},q'_k,p'_k,\chi'_k)$  satisfying
\[
(q_k,p_k,\chi_k)\in D^{1,2,-}W^{\gamma}
    (t_k,\xi_{1,k},\xi_{2,k}),
\qquad
(q'_k,p'_k,\chi'_k)\in D^{1,2,-}\widetilde{W}
    (t'_k,\xi'_{2,k},\xi'_{1,k}),
\]
respectively, such that, as $k\to\infty$,
\begin{equation}\label{subdifferential limit}
\begin{aligned}
&(t_k,t'_k,\xi_{1,k},\xi_{2,k},\xi'_{1,k},\xi'_{2,k})\rightarrow \Xi_{\delta},\\ 
&W^{\gamma}(t_k,\xi_{1,k},\xi_{2,k})\to W^{\gamma}(t_{\delta},\xi_{1,\delta},\xi_{2,\delta}),\
\widetilde{W}(t'_k,\xi'_{2,k},\xi'_{1,k})\to \tilde{W}(t'_{\delta},\xi'_{2,\delta},\xi'_{1,\delta}),\\
&q_k\to  s-\frac{1}{\alpha}(t_{\delta}-t'_{\delta})+KP_{\varepsilon,\delta},\
q'_k\to s'+ \frac{1}{\alpha}(t_{\delta}-t'_{\delta})+KP'_{\varepsilon,\delta},\\
& p_k:=(p_{1,k},p_{2,k})\to-\frac{\left(\xi_{1,\delta},\xi_{2,\delta}\right)-(\xi'_{1,\delta},\xi'_{2,\delta})}{\alpha}-(r_{1,\delta},r_{2,\delta})+(\zeta_1,\zeta_2),\\
&p'_k:=(p'_{2,k},p'_{1,k})\to \frac{\left(\xi_{2,\delta},\xi_{1,\delta}\right)-(\xi'_{2,\delta},\xi'_{1,\delta})}{\alpha}-(r'_{2,\delta},r'_{1,\delta})+(\zeta_2^{\prime},\zeta_1^{\prime}),\\
&\chi_k\to \mathcal{X}_N- \frac{2}{\alpha}Q_N - S_{\varepsilon,\delta},\ \chi'_k\to \mathcal{Y}_N- \frac{2}{\alpha}Q_N - S'_{\varepsilon,\delta}. 
\end{aligned} 
\end{equation}
By Lemma \ref{relationship} and the definition of the displacement $\lambda$-monotone solution, we obtain
\begin{equation}\label{t11}
\begin{aligned} 
    \gamma\leq\gamma+\frac{\gamma}{t_k^2} \leq &-q_k-\frac{\sigma_0^2}{2}\sum_{j=1}^{d}\chi_{k}\left((\boldsymbol{e}_j,\boldsymbol{e}_j),(\boldsymbol{e}_j,\boldsymbol{e}_j)\right)\\
&-\mathbb{E}\Big[\partial_xH\left(\xi_{1,k},-p_{2,k}-\lambda(\xi_{1,k}-\xi_{2,k}),\rho_{k}\right)\cdot(\xi_{1,k}-\xi_{2,k})\Big]\\
& -\mathbb{E}\Big[\big(p_{1,k}+p_{2,k}\big)\cdot\partial_pH\left(\xi_{1,k},-p_{2,k}-\lambda(\xi_{1,k}-\xi_{2,k}),\rho_{k}\right)\Big],
\end{aligned}
\end{equation}
and
\begin{equation}\label{t12}
\begin{aligned} 
    0 \leq &-q'_k
    -\frac{\sigma_0^2}{2}\sum_{j=1}^{d}\chi'_{k}\left((\boldsymbol{e}_j,\boldsymbol{e}_j),(\boldsymbol{e}_j,\boldsymbol{e}_j)\right)\\
&-\mathbb{E}\Big[\partial_xH\left(\xi'_{2,k},-p'_{1,k}-\lambda(\xi'_{2,k}-\xi'_{1,k}),\rho_k'\right)\cdot(\xi'_{2,k}-\xi'_{1,k})\Big]\\
& -\mathbb{E}\Big[\big(p'_{1,k}+p'_{2,k}\big)\cdot\partial_pH\left(\xi'_{2,k},-p'_{1,k}-\lambda(\xi'_{2,k}-\xi'_{1,k}),\rho_k'\right)\Big],
\end{aligned}
\end{equation}
where 
$\rho_{k}=\Phi\Big(\mathcal{L}\big(\xi_{1,k},-p_{2,k}-\lambda(\xi_{1,k}-\xi_{2,k})\big)\Big),\ 
\rho'_{k}=\Phi\Big(\mathcal{L}\big(\xi'_{2,k},-p'_{1,k}-\lambda(\xi'_{2,k}-\xi'_{1,k})\big)\Big)$.
Adding \eqref{t11} and \eqref{t12}, then letting $k\to\infty$, we obtain, by the Lipschitz continuity of $\partial_x H$, $\partial_p H$ and $\Phi$, together with the convergence in
\eqref{subdifferential limit},
\begin{equation*}
\begin{aligned}
\gamma \leq &
-s-s'-K(P_{\varepsilon,\delta}+P'_{\varepsilon,\delta})\\
&-\frac{\sigma_0^2}{2}\sum_{j=1}^{d}\big(\mathcal{X}_N+\mathcal{Y}_N-\frac{4}{\alpha}Q_N-S_{\varepsilon,\delta}-S'_{\varepsilon,\delta}\big)\left((\boldsymbol{e}_j,\boldsymbol{e}_j),(\boldsymbol{e}_j,\boldsymbol{e}_j)\right) \\
&-\mathbb{E}\big[
    \partial_x H(\xi_{1,\delta},\vartheta_\delta,\rho_\delta)
    \cdot(\xi_{1,\delta}-\xi_{2,\delta})
\big] -\mathbb{E}\big[
    \partial_x H(\xi'_{2,\delta},\vartheta'_\delta,\rho'_\delta)
    \cdot(\xi'_{2,\delta}-\xi'_{1,\delta})
\big] \\
&-\mathbb{E}\Bigg[
    \left(
        -\frac{1}{\alpha}(\xi_{1,\delta}-\xi'_{1,\delta})
        -\frac{1}{\alpha}(\xi_{2,\delta}-\xi'_{2,\delta})
        -r_{1,\delta}-r_{2,\delta}
        +\zeta_1+\zeta_2
    \right)
    \cdot
    \partial_p H(\xi_{1,\delta},\vartheta_\delta,\rho_\delta)
\Bigg] \\
&-\mathbb{E}\Bigg[
    \bigg(
        \frac{1}{\alpha}(\xi_{1,\delta}-\xi'_{1,\delta})
        +\frac{1}{\alpha}(\xi_{2,\delta}-\xi'_{2,\delta})
        -r'_{2,\delta}-r'_{1,\delta}
        +\zeta'_1+\zeta'_2
    \bigg)
    \cdot
    \partial_p H(\xi'_{2,\delta},\vartheta'_\delta,\rho'_\delta)
\Bigg],
\end{aligned}
\end{equation*}
where
\begin{equation*}
\begin{aligned}
\vartheta_\delta
&:=
\frac{1}{\alpha}(\xi_{2,\delta}-\xi'_{2,\delta})+r_{2,\delta}
-\zeta_2
-\lambda(\xi_{1,\delta}-\xi_{2,\delta}), \ 
\vartheta'_\delta
:=
-\frac{1}{\alpha}(\xi_{1,\delta}-\xi'_{1,\delta})+r'_{1,\delta}
-\zeta'_1
-\lambda(\xi'_{2,\delta}-\xi'_{1,\delta}),
\end{aligned}
\end{equation*}
and
\begin{equation*}
\begin{aligned}
\rho_\delta
:=
\Phi\big(
    \mathcal{L}(\xi_{1,\delta},\vartheta_\delta)
\big), \ 
\rho'_\delta
:=
\Phi\big(
    \mathcal{L}(\xi'_{2,\delta},\vartheta'_\delta)
\big).
\end{aligned}
\end{equation*}
By \eqref{matrixinequality}, we have
\begin{equation*}
\sum_{j=1}^{d}\big(\mathcal{X}_N+\mathcal{Y}_N\big)\left((\boldsymbol{e}_j,\boldsymbol{e}_j),(\boldsymbol{e}_j,\boldsymbol{e}_j)\right) \geq 0,  
\end{equation*}
together with \eqref{Q_N equation}, \eqref{S estimate}, we have
\begin{equation}\label{inequality}
\begin{aligned}
\gamma \leq &
-s-s'-(K-C)(P_{\varepsilon,\delta}+P'_{\varepsilon,\delta})\\
&-\mathbb{E}\big[
    \partial_x H(\xi_{1,\delta},\vartheta_\delta,\rho_\delta)
    \cdot(\xi_{1,\delta}-\xi_{2,\delta})
\big] -\mathbb{E}\big[
    \partial_x H(\xi'_{2,\delta},\vartheta'_\delta,\rho'_\delta)
    \cdot(\xi'_{2,\delta}-\xi'_{1,\delta})
\big] \\
&-\mathbb{E}\Bigg[
    \left(
        -\frac{1}{\alpha}(\xi_{1,\delta}-\xi'_{1,\delta})
        -\frac{1}{\alpha}(\xi_{2,\delta}-\xi'_{2,\delta})
        -r_{1,\delta}-r_{2,\delta}
        +\zeta_1+\zeta_2
    \right)
    \cdot
    \partial_p H(\xi_{1,\delta},\vartheta_\delta,\rho_\delta)
\Bigg] \\
&-\mathbb{E}\Bigg[
    \bigg(
        \frac{1}{\alpha}(\xi_{1,\delta}-\xi'_{1,\delta})
        +\frac{1}{\alpha}(\xi_{2,\delta}-\xi'_{2,\delta})
         -r'_{2,\delta}-r'_{1,\delta}
        +\zeta'_1+\zeta'_2
    \bigg)
    \cdot
    \partial_p H(\xi'_{2,\delta},\vartheta'_\delta,\rho'_\delta)
\Bigg].
\end{aligned}
\end{equation}
Combining the above terms, we obtain
\begin{equation*}
\begin{aligned}
\gamma \leq &
-s-s'-(K-C)(P_{\varepsilon,\delta}+P'_{\varepsilon,\delta})\\
&-\mathbb{E}\big[
    \big(\partial_x H(\xi_{1,\delta},\vartheta_\delta,\rho_\delta)-\partial_xH(\xi'_{2,\delta},\vartheta'_\delta,\rho'_\delta)
    \big)\cdot(\xi_{1,\delta}-\xi'_{2,\delta})
\big]\\
&+\mathbb{E}\big[\big(\partial_p H(\xi_{1,\delta},\vartheta_\delta,\rho_\delta)
    -\partial_p H(\xi'_{2,\delta},\vartheta'_\delta,\rho'_\delta)\big)\cdot\big(\vartheta_{\delta}-\vartheta'_{\delta}\big)\big]\\
    &+2\lambda\E\big[\big(\partial_p H(\xi_{1,\delta},\vartheta_\delta,\rho_\delta)
    -\partial_p H(\xi'_{2,\delta},\vartheta'_\delta,\rho'_\delta)\big)\cdot\big(\xi_{1,\delta}-\xi'_{2,\delta}\big)\big]\\
&+\mathbb{E}\left[
\partial_xH(\xi_{1,\delta},\vartheta_\delta,\rho_{\delta})\cdot(\xi_{2,\delta}-\xi'_{2,\delta})\right]
+\mathbb{E}\left[
\partial_xH(\xi'_{2,\delta},\vartheta'_\delta,\rho'_{\delta})\cdot(\xi'_{1,\delta}-\xi_{1,\delta})\right]\\
&-\mathbb{E}\left[\big(\zeta_1+\zeta'_1-r_{1,\delta}-r'_{1,\delta}+\lambda(\xi_{2,\delta}-\xi'_{2,\delta})+\lambda(\xi_{1,\delta}-\xi'_{1,\delta})\big)\cdot
\partial_pH(\xi_{1,\delta},\vartheta_\delta,\rho_{\delta})\right]\\
&-\mathbb{E}\left[\big(\zeta_2+\zeta'_2\-r_{2,\delta}-r'_{2,\delta}+\lambda(\xi'_{2,\delta}-\xi_{2,\delta})+\lambda(\xi'_{1,\delta}-\xi_{1,\delta})\big)\cdot
\partial_pH(\xi'_{2,\delta},\vartheta'_\delta,\rho'_{\delta})\right].
\end{aligned}
\end{equation*}
Applying the displacement $\lambda$-monotonicity \eqref{H displacement}, we get
\begin{equation*}
\begin{aligned}
\gamma\leq& -s-s'-(K-C)(P_{\varepsilon,\delta}+P'_{\varepsilon,\delta})\\
&+\mathbb{E}\left[
\partial_xH(\xi_{1,\delta},\vartheta_\delta,\rho_{\delta})\cdot(\xi_{2,\delta}-\xi'_{2,\delta})\right]
+\mathbb{E}\left[
\partial_xH(\xi'_{2,\delta},\vartheta'_\delta,\rho'_{\delta})\cdot(\xi'_{1,\delta}-\xi_{1,\delta})\right]\\
&-\mathbb{E}\left[\big(\zeta_1+\zeta'_1-r_{1,\delta}-r'_{1,\delta}+\lambda(\xi_{2,\delta}-\xi'_{2,\delta})+\lambda(\xi_{1,\delta}-\xi'_{1,\delta})\big)\cdot
\partial_pH(\xi_{1,\delta},\vartheta_\delta,\rho_{\delta})\right]\\
&-\mathbb{E}\left[\big(\zeta_2+\zeta'_2-r_{2,\delta}-r'_{2,\delta}+\lambda(\xi'_{2,\delta}-\xi_{2,\delta})+\lambda(\xi'_{1,\delta}-\xi_{1,\delta})\big)\cdot
\partial_pH(\xi'_{2,\delta},\vartheta'_\delta,\rho'_{\delta})\right].
\end{aligned}
\end{equation*}
Combining \eqref{Stegall perturbation elements} with the Lipschitz continuity of $\partial_x H$,  $\partial_p H$ and $\Phi$,
it follows that
\begin{equation*}
\begin{aligned}
&\big\vert\mathbb{E}\left[
\partial_xH(\xi_{1,\delta},\vartheta_\delta,\rho_{\delta})\cdot(\xi_{2,\delta}-\xi'_{2,\delta})\right]\big\vert+\big\vert\mathbb{E}\left[
\partial_xH(\xi'_{2,\delta},\vartheta'_\delta,\rho'_{\delta})\cdot(\xi'_{1,\delta}-\xi_{1,\delta})\right]\big\vert\\
\leq& C\big(\|\xi_{2,\delta}-\xi'_{2,\delta}\|_2+\|\xi'_{1,\delta}-\xi_{1,\delta}\|_2\big)\big(1+\|\xi_{1,\delta}\|_2+\|\xi_{2,\delta}\|_2+\|\xi'_{1,\delta}\|_2+\|\xi'_{2,\delta}\|_2\\
&+\|r_{2,\delta}\|_2+\|r'_{1,\delta}\|_2+\frac{1}{\alpha}\|\xi_{2,\delta}-\xi'_{2,\delta}\|_2+\frac{1}{\alpha}\|\xi_{1,\delta}-\xi'_{1,\delta}\|_2+\delta\big),
\end{aligned}
\end{equation*}
and
\begin{equation*}
\begin{aligned}
&\big\vert\mathbb{E}\left[\big(\zeta_1+\zeta'_1-r_{1,\delta}-r'_{1,\delta}+\lambda(\xi_{2,\delta}-\xi'_{2,\delta})+\lambda(\xi_{1,\delta}-\xi'_{1,\delta})\big)\cdot
\partial_pH(\xi_{1,\delta},\vartheta_\delta,\rho_{\delta})\right]\big\vert\\
&+\big\vert\mathbb{E}\left[\big(\zeta_2+\zeta'_2-r_{2,\delta}-r'_{2,\delta}+\lambda(\xi'_{2,\delta}-\xi_{2,\delta})+\lambda(\xi'_{1,\delta}-\xi_{1,\delta})\big)\cdot
\partial_pH(\xi'_{2,\delta},\vartheta'_\delta,\rho'_{\delta})\right]\big\vert\\
\leq& C\big(\delta+\|r_{1,\delta}\|_2+\|r'_{1,\delta}\|_2+\|r_{2,\delta}\|_2+\|r'_{2,\delta}\|_2+\|\xi_{2,\delta}-\xi'_{2,\delta}\|_2+\|\xi'_{1,\delta}-\xi_{1,\delta}\|_2\big)\big(1+\|\xi_{1,\delta}\|_2+\|\xi_{2,\delta}\|_2\\
&+\|\xi'_{1,\delta}\|_2+\|\xi'_{2,\delta}\|_2+\frac{1}{\alpha}\|\xi_{2,\delta}-\xi'_{2,\delta}\|_2+\frac{1}{\alpha}\|\xi_{1,\delta}-\xi'_{1,\delta}\|_2+\|r_{2,\delta}\|_2+\|r'_{1,\delta}\|_2+\delta\big).
\end{aligned}
\end{equation*}
Hence, using $|s|+|s'|\leq\delta$, we have
\begin{equation}\label{gamma inequality}
    \begin{aligned}
\gamma\leq&C\delta-(K-C)(P_{\varepsilon,\delta}+P'_{\varepsilon,\delta})\\ &+C\big(\delta+r_{\delta}+\|\xi_{2,\delta}-\xi'_{2,\delta}\|_2+\|\xi'_{1,\delta}-\xi_{1,\delta}\|_2\big)\\
&\cdot\big(\sqrt{R_{\delta}}+\sqrt{R'_{\delta}}+\frac{1}{\alpha}\|\xi_{2,\delta}-\xi'_{2,\delta}\|_2+\frac{1}{\alpha}\|\xi_{1,\delta}-\xi'_{1,\delta}\|_2+r_{\delta}+\delta\big).     
    \end{aligned}
\end{equation}
Since \(U,\widetilde U\in
C_{Lip}^0(\Theta)\), one has
\[
|W(t,\xi_1,\xi_2)-W(t,\xi'_1,\xi'_2)|
\leq
C\bigl(1+\|\xi_1\|_2+\|\xi'_1\|_2+\|\xi_2\|_2+\|\xi'_2\|_2\bigr)
\big(\|\xi_1-\xi_1'\|_2+\|\xi_2-\xi_2'\|_2\big),
\]
The same estimate holds for \(\widetilde W\).
Applying Lemma~\ref{slope estimate} gives
\[
\begin{aligned}
 &\left\|
   -\frac{1}{\alpha}\big( (\xi_{1,\delta},\xi_{2,\delta}) - (\xi'_{1,\delta},\xi'_{2,\delta}) \big) - (r_{1,\delta},r_{2,\delta}) + (\zeta_1,\zeta_2)
 \right\|_2
 \leq C\sqrt R_{\delta},\\
 &\left\|
   \frac{1}{\alpha}\big( (\xi_{2,\delta},\xi_{1,\delta}) - (\xi'_{2,\delta},\xi'_{1,\delta}) \big)-(r'_{2,\delta},r'_{1,\delta})+ (\zeta'_2,\zeta'_1)
 \right\|_2
 \leq C\sqrt{R'_{\delta}}.
\end{aligned}
\]
Consequently, by the triangle inequality and
\eqref{Stegall perturbation elements},
\begin{equation}\label{D estimate}
\begin{aligned}
\frac{1}{\alpha}\|\xi_{1,\delta}-\xi'_{1,\delta}\|_2
+\frac{1}{\alpha}\|\xi_{2,\delta}-\xi'_{2,\delta}\|_2
 \leq C(\sqrt{R_{\delta}}+\sqrt{R'_{\delta}})+r_{\delta}+\delta.
\end{aligned}
\end{equation}
Recalling the estimates \eqref{rdelta Rdelta estimate}-\eqref{delta rdelta estimate},
\begin{equation*}
\begin{aligned}
r_{\delta}\Big(\frac{1}{\alpha}\|\xi_{1,\delta}-\xi'_{1,\delta}\|_2
+\frac{1}{\alpha}\|\xi_{2,\delta}-\xi'_{2,\delta}\|_2\Big)
&\leq r_{\delta}\Big(C(\sqrt{R_{\delta}}+\sqrt{R'_{\delta}})+r_{\delta}+\delta\Big)\leq C\big(P_{\varepsilon,\delta}+P'_{\varepsilon,\delta}\big).
\end{aligned} 
\end{equation*}
Moreover, for $i=1,2$, by \eqref{eq:step12-boundedness}
$$\begin{aligned}  
\|\xi_{i,\delta}-\xi'_{i,\delta}\|_2\big(\sqrt{R_{\delta}}+\sqrt{R'_{\delta}}\big)
\leq&\frac{C}{\alpha}\|\xi_{i,\delta}-\xi'_{i,\delta}\|_2^2+C\alpha\big(R_{\delta}+R'_{\delta}\big)\\
\leq&\frac{C}{\alpha}\|\xi_{i,\delta}-\xi'_{i,\delta}\|_2^2+C_{\varepsilon(K)}\alpha,
\end{aligned}
$$
while
$$\frac{\delta}{\alpha}\|\xi_{i,\delta}-\xi'_{i,\delta}\|_2\leq\frac{1}{2\alpha}\|\xi_{i,\delta}-\xi'_{i,\delta}\|_2^2+\frac{\delta^2}{2\alpha},\ \delta\|\xi_{i,\delta}-\xi'_{i,\delta}\|_2\leq\frac{1}{2\alpha}\|\xi_{i,\delta}-\xi'_{i,\delta}\|_2^2+\frac{\alpha\delta^2}{2}.$$
Therefore, combining with \eqref{rdelta Rdelta estimate}-\eqref{alpha rdelta}, inequality \eqref{gamma inequality} becomes
\begin{equation*}
\begin{aligned}
\gamma\leq&-(K-C_*)\big(P_{\varepsilon,\delta}+P'_{\varepsilon,\delta}\big)\\ &+C\Big(\delta^2+\frac{\delta^2}{2\alpha}+C_{\varepsilon(K)}\delta+ C_{\varepsilon(K)}\alpha+\frac{1}{\alpha}(\|\xi_{1,\delta}-\xi'_{1,\delta}\|_2^2+\|\xi_{2,\delta}-\xi'_{2,\delta}\|_2^2)\Big).
\end{aligned}
\end{equation*}
where \(C_*>0\) depends only on the constants in the assumptions
and is independent of \(K,\varepsilon,\alpha,\delta\).
We now choose $K_0:=C_*+1$
and recall that \(K>K_0\). Since \(P_{\varepsilon,\delta}+P'_{\varepsilon,\delta}\geq0\), we obtain
\[
    \gamma
    \leq C\Big(\delta^2+\frac{\delta^2}{2\alpha}+C_{\varepsilon(K)}\delta+ C_{\varepsilon(K)}\alpha+\frac{1}{\alpha}(\|\xi_{1,\delta}-\xi'_{1,\delta}\|_2^2+\|\xi_{2,\delta}-\xi'_{2,\delta}\|_2^2)\Big).
\]
By \eqref{t14}, letting first $\delta\rightarrow 0$ for fixed $\alpha>0$, and then letting $\alpha\rightarrow0$, we obtain $\gamma\leq 0$, which contradicts $\gamma>0$.
Hence $U=\widetilde{U}$.





(ii) We first show that every weak solution is a Lasry--Lions monotone solution. Let \(V\) be a weak solution to the master equation
\eqref{master equation}. Fix
$(t_0,x,\mu_{t_0})
\in [0,T]\times\mathbb R^d\times\mathcal P_2(\mathbb R^d)$,
and choose
\(\xi\in \mathbb L^2(\mathcal F_0^1;\mathbb R^d)\) such that
\(\mathcal L(\xi)=\mu_{t_0}\). Let
\((X^\xi,\nabla Y^\xi)\) and
\((X^{\xi,x},\nabla Y^{\xi,x})\) denote the corresponding solutions
to \eqref{FBSDE xi}-\eqref{FBSDE xi,x}, and set
$\mu_t:=\mathcal L^0(X_t^\xi)$.
The similar argument used in the uniqueness proof of
Theorem \ref{weak solution theorem} yields
$\partial_xV(t,X_t^{\xi,x},\mu_t)
=
\nabla Y_t^{\xi,x}$, $t\in[t_0,T]$.
Moreover,
\begin{equation}\label{Larsy-Lions monotone solution V}
\begin{aligned}
V(t_0,x,\mu_{t_0})
=
\mathbb E\Big[
G(X_T^{\xi,x},\mu_T)
+\int_{t_0}^T
\big(
H(X_s^{\xi,x},\nabla Y_s^{\xi,x},\rho_s)
-
\nabla Y_s^{\xi,x}\cdot
\partial_pH(X_s^{\xi,x},\nabla Y_s^{\xi,x},\rho_s)
\big)\mathrm ds
\Big].
\end{aligned}
\end{equation}
Theorem \ref{FBSDE well-posed} further implies that \(V\) satisfies
\eqref{V terminal uniform continuity}. Repeating the argument in
Step 1.1 of part (i), we conclude that \(V\) is a Lasry-Lions monotone solution to \eqref{master equation}.
\par Conversely, suppose that \(V\) is a Lasry-Lions monotone solution.
For
$
(t_0,x,\mu_{t_0})
\in[0,T]\times\mathbb R^d\times\mathcal P_2(\mathbb R^d)$,
$\mathcal L(\xi)=\mu_{t_0}$,
define \(\widetilde V(t_0,x,\mu_{t_0})\) by the right-hand side of
\eqref{Larsy-Lions monotone solution V}, where
\((X^\xi,\nabla Y^\xi)\) and
\((X^{\xi,x},\nabla Y^{\xi,x})\) solve
\eqref{FBSDE xi}-\eqref{FBSDE xi,x}. By
Theorem \ref{weak solution theorem}, \(\widetilde V\) is a weak
solution to the master equation \eqref{master equation} and satisfies
$\partial_x\widetilde V(t_0,x,\mu_{t_0})
=
\nabla Y_{t_0}^{\xi,x}$.
The implication established above then shows that \(\widetilde V\)
is also a Lasry-Lions monotone solution.
For \((t,\xi_1,\xi_2)\in[0,T]\times\mathbb H\), define
\[
\widetilde Q(t,\xi_1,\xi_2)
:=
\mathbb E\big[
\widetilde V(t,\xi_1,\mathcal L(\xi_1))
-
\widetilde V(t,\xi_2,\mathcal L(\xi_1))
\big].
\]
Set \(\mu_1:=\mathcal L(\xi_1)\), $\mu_2=\mathcal L(\xi_2)$. By the fundamental theorem of
calculus,
\[
Q(t,\xi_1,\xi_2)
=
\mathbb E\left[
\int_0^1
\partial_xV\big(
t,\xi_2+\theta(\xi_1-\xi_2),\mu_1
\big)\cdot(\xi_1-\xi_2)\,\mathrm d\theta
\right].
\]
The same representation holds for \(\widetilde Q\), with \(V\)
replaced by \(\widetilde V\).
Since \(V(T,\cdot,\cdot)=G\), 
the above representation and 
\eqref{V terminal uniform continuity} implies that, for any \(R>0\),
\[
\lim_{t\rightarrow T}
\sup_{\substack{M_2(\mu_1)\leq R\\ M_2(\mu_2)\leq R}}
\Big|Q(t,\xi_1,\xi_2)-\mathbb E\big[
G(\xi_1,\mu_1)
-
G(\xi_2,\mu_1)
\big]\Big|
=0.
\]
By \eqref{U holder continuous} for \(\widetilde V\) derived in
Theorem \ref{FBSDE well-posed}, an analogous estimate holds for \(\widetilde Q\).
We next claim that
\begin{equation}\label{monotonicity Q}
Q(t,\xi_1,\xi_2)
+
\widetilde Q(t,\xi_2,\xi_1)
\ge 0,
\qquad
(t,\xi_1,\xi_2)\in[0,T]\times\mathbb H.
\end{equation}
Indeed, the proof follows from the same doubling-of-variables
argument as in Step 1.2 of part (i). 
By Lemma \ref{lemma:uniqueness_monotone},
\eqref{monotonicity Q} implies that
$\partial_xV=\partial_x\widetilde V$.
Since \(\partial_x\widetilde V\) is a vectorial weak solution to
the master equation \eqref{master equation}, by
Theorem \ref{weak solution theorem}, the same is true of
\(\partial_xV\). This completes the proof of part (ii).
\end{proof}

\begin{remark}
We briefly explain the relation between the Lasry-Lions monotone solution used here and that in \cite{cardaliaguet2022monotone}. The main arguments are similar, but there are some differences between the two settings. In \cite{cardaliaguet2022monotone}, the solution is assumed to be Lipschitz continuous
and semiconcave in $x$. Moreover, 
the Hilbert space formulation involves random variables whose laws are 
absolutely continuous with bounded densities, together with an additional 
penalization term involving the $L^\infty$-norm of the density. 
In the present setting, we impose the stronger Assumption \ref{G L convexity assumption}, which yields the boundedness of $\partial_{xx}V$ and hence stronger spatial regularity of $V$. This stronger regularity allows us to impose weaker conditions on the random variables in the definition of Lasry-Lions monotone solutions: they can be arbitrary elements of $\mathbb H$, and no absolute continuity or bounded-density assumption is required on their laws. Moreover, the construction of monotone solutions is different from that in \cite{cardaliaguet2022monotone}. There, monotone solutions are obtained through the analysis of the associated MFG system, whereas in our framework they are constructed via the well-posedness of the FBSDE system. 
\end{remark}
\begin{remark}\label{uniqueness_gradient}
It is worth noting that while a weak solution to the master equation \eqref{master equation} is necessarily a Lasry-Lions monotone solution, the converse does not generally hold. 
This asymmetry stems from the possible lack of uniqueness of Lasry-Lions monotone solutions. 
Indeed, by Lemma \ref{lemma:uniqueness_monotone}, such solutions are unique only up to an additive function 
$c:[0,T]\times\mathcal{P}_2(\mathbb{R}^d)\to\mathbb{R}$ satisfying $c(T,\cdot)=0$. 
In the case of classical solutions, one can further show that $c=0$. 
For the Lasry-Lions monotone solution $V$ introduced in Definition \ref{Larsy-Lion monotone solution definition}, our result guarantees the uniqueness of the spatial gradient $\partial_x V$, which coincides with the vectorial weak solution to the master equation \eqref{master equation}, rather than the uniqueness of the potential function $V$. 
To recover uniqueness of $V$, one may impose a stronger monotonicity condition; see Hypothesis 2 and Theorem 3.1 in \cite{bertucci2023monotone}.

In fact, we emphasize that the knowledge of $\partial_x V$ is central to applications, as the vector field $ \partial_p H(x, \partial_x V(t, x, \mu), \rho)$ provides the optimal feedback control for the MFGC problem. For a detailed discussion, see \cite[Section 3]{cardaliaguet2022monotone}, where the authors formulate all results regarding Lasry-Lions monotone solutions in MFG problems in terms of $\partial_x V$.
\end{remark}

\section*{Funding}
The work of the first author was supported by NSFC grant 12522122, NSFC/RGC JRS N\_CityU165/25. The work of the third author was supported by NSFC grant 12522122, NSFC/RGC JRS N\_CityU165/25, GRF 11311422, and GRF 11303223. The work of the fourth author was supported by the National Natural Science Foundation of China (Nos. 12521001, 62561160159), the National Key Research and Development Program of China (No. 2023YFA1009200), the Fundamental and Interdisciplinary Disciplines Breakthrough Plan of the Ministry of Education of China (No. JYB2025XDXM114), the Shandong Provincial Key Laboratory of Stochastic System Control and Scientific Computing.

\section*{Declarations}

\par \textbf{Conflict of interest} The authors declare that they do not have any conflicts of interests.
\par \noindent\textbf{Data availability} Data sharing not applicable to this article as no datasets were generated or analyzed during
the current study.

\section*{Appendix A. Proof of Lemma \ref{relationship}}
The characterization in Lemma \ref{relationship} is standard in the viscosity solution literature and is stated, in particular, in the work of \cite{ishii1993viscosity}. However, a detailed proof does not seem to be readily available in the references we have consulted. Although this characterization is frequently used without proof, for completeness, we provide the details below in the present Hilbert space setting.
\begin{proof}
Suppose first that there exists $\varphi\in \mathbb{C}^{1,2}([0,T]\times\mathbb H)$ such that $f-\varphi$ attains a local minimum at $(t_0,x_0)$ and
$\big(\partial_t\varphi(t_0,x_0),
D_x\varphi(t_0,x_0),
D_{xx}\varphi(t_0,x_0)\big)
=(q,p,\chi).$ Adding a constant to $\varphi$ if necessary, we may assume that
$\varphi(t_0,x_0)=f(t_0,x_0).$
Then there exists $\delta>0$ such that for any $(t,x)\in[0,T]\times\mathbb H$ with $|t-t_0|+\|x-x_0\|_2<\delta$, one has
$$f(t,x)-f(t_0,x_0)
\geq
\varphi(t,x)-\varphi(t_0,x_0).$$
Since $\varphi\in \mathbb{C}^{1,2}([0,T]\times\mathbb H)$, we have the expansion
\[
\begin{aligned}
\varphi(t,x)-\varphi(t_0,x_0)
=q(t-t_0)+\langle p,x-x_0\rangle_{\mathbb{H}}
+\frac{1}{2}\langle\chi(x-x_0),x-x_0\rangle_{\mathbb{H}}+o\big(|t-t_0|+\|x-x_0\|_2^2\big),
\end{aligned}
\]
as $(t,x)\to(t_0,x_0)$. 
Consequently, for any $(t,x)\in[0,T]\times\mathbb H$ with $|t-t_0|+\|x-x_0\|_2<\delta$,
\[
\begin{aligned}
f(t,x)-f(t_0,x_0)
-q(t-t_0)-\langle p,x-x_0\rangle_{\mathbb{H}}
-\frac{1}{2}\langle\chi(x-x_0),x-x_0\rangle_{\mathbb{H}}\geq o\big(|t-t_0|+\|x-x_0\|_2^2\big),
\end{aligned}
\]
After division by $|t-t_0|+\|x-x_0\|_2^2$ and taking the lower limit,
\[
\begin{aligned}
\liminf_{|t-t_0|+\|x-x_0\|_2^2\rightarrow 0} \frac{f(t,x)-f(t_0,x_0)
-q(t-t_0)-\langle p,x-x_0\rangle_{\mathbb{H}}
-\frac{1}{2}\langle\chi(x-x_0),x-x_0\rangle_{\mathbb{H}}}{|t-t_0|+\|x-x_0\|_2^2}\geq 0,
\end{aligned}
\]
Thus,
$(q,p,\chi)\in D^{1,2,-}f(t_0,x_0).$
\par Conversely, suppose that
$(q,p,\chi)\in D^{1,2,-}f(t_0,x_0).$
Set
$s=t-t_0$, $y=x-x_0,$
and define
$$R(s,y)
:=
f(t_0+s,x_0+y)-f(t_0,x_0)
-qs-\langle p,y\rangle_{\mathbb{H}}
-\frac12\langle\chi y,y\rangle_{\mathbb{H}}.$$
By the definition of $D^{1,2,-}f(t_0,x_0)$,
$$\liminf_{\substack{s\to0,\ t_0+s\in[0,T],\\ y\to0}}
\frac{R(s,y)}
{|s|+\|y\|_2^2}
\geq0.$$
Define
$d(s,y):=\left(s^2+\|y\|_2^4\right)^{\frac{1}{2}}, $
Since $d(s,y)\leq |s|+\|y\|_2^2\leq\sqrt{2}d(s,y),$ we also have
$$\liminf_{\substack{s\to0,\ t_0+s\in[0,T],\\ y\to0}}
\frac{R(s,y)}
{d(s,y)}
\geq0.$$
For $r>0$ sufficiently small, define
$$w(r)=\max\left\{0,\sup_{\substack{0<d(s,y)\leq r,\\ t_0+s\in[0,T]}}
\frac{-R(s,y)}
{d(s,y)}\right\}.$$
Then $w$ is nonnegative and nondecreasing, and
$w(r)\rightarrow 0$, as $r\rightarrow 0$. Moreover, for $(s,y)$ sufficiently close to $(0,0)$,
$$R(s,y)\geq -w\left(d(s,y)\right)d(s,y).$$
By the standard one-dimensional smoothing construction used in the proof of the corresponding finite-dimensional result in \cite[Lemma 4.1]{FlemingSoner}, there exists a nonnegative function
$g:[0,\infty)\to[0,\infty)$ such that, for $r>0$ sufficiently small,
$g(r)\geq rw(r),\ g(0)=0,\ \lim_{r\rightarrow 0}g'(r)=0,\ \lim_{r\rightarrow 0}rg''(r)= 0,$
and $\tilde{g}:=g\left((s^2+\|y\|^4_2)^{\frac{1}{2}}
\right)$ is $\mathbb{C}^{1,2}$ in a neighborhood of $(0,0)$, with
$\partial_s\tilde{g}\left(0,0
\right)=0,\ D_y\tilde{g}\left(0,0
\right)=0,\ D_{yy}\tilde{g}\left(0,0
\right)=0.$
The scalar construction of $g$ is the same as in the finite-dimensional case. The corresponding derivative computations remain valid in the present Hilbert space setting.
Define
$$\begin{aligned}
\varphi(t,x)
:=&
f(t_0,x_0)
+q(t-t_0)
+\langle p,x-x_0\rangle_{\mathbb{H}}
+\frac12\langle\chi(x-x_0),x-x_0\rangle_{\mathbb{H}}\\
&-
g\left(((t-t_0)^2+\|x-x_0\|_2^4)^{\frac{1}{2}}
\right).
\end{aligned}$$
After extending $g$ outside a sufficiently small neighborhood of $0$, if necessary, without changing its properties near $0$, we may assume that $\varphi\in \mathbb{C}^{1,2}([0,T]\times \mathbb{H})$. By the properties of $\tilde{g}$,
$$\big(
\partial_t\varphi(t_0,x_0),
D_x\varphi(t_0,x_0),
D_{xx}\varphi(t_0,x_0)
\big)
=(q,p,\chi).$$
Moreover,
$$\begin{aligned}
f(t_0+s,x_0+y)-\varphi(t_0+s,x_0+y)
=&
R(s,y)+g\left(d(s,y)
\right)\\
\geq&
-w\left(d(s,y)\right)d(s,y)
+g\left(d(s,y)\right)
\geq0
\end{aligned}$$
for $(s,y)$ sufficiently close to $(0,0)$, while
$f(t_0,x_0)-\varphi(t_0,x_0)=0.$
Thus $f-\varphi$ attains a local minimum at $(t_0,x_0)$.
This completes the proof.
\end{proof}
\bibliographystyle{plain}
\bibliography{main}

\end{document}